\tikzstyle{snake}=[decorate, decoration={snake, segment length=1mm, amplitude=.5mm}]
\newcommand{\tikzmath}[2][]
{\vcenter{\hbox{\begin{tikzpicture}[#1]#2\end{tikzpicture}}}
}
\tikzset{super thick/.style={line width=3pt}}
\tikzstyle{far>}=[decoration={markings, mark=at position 0.75 with {\arrow{>}}}, postaction={decorate}]
\tikzstyle{mid>}=[decoration={markings, mark=at position 0.55 with {\arrow{>}}}, postaction={decorate}]
\tikzstyle{mid<}=[decoration={markings, mark=at position 0.55 with {\arrow{<}}}, postaction={decorate}]
\tikzset{super thick/.style={line width=3pt}}
\tikzstyle{far>}=[decoration={markings, mark=at position 0.75 with {\arrow{>}}}, postaction={decorate}]
\tikzstyle{mid>}=[decoration={markings, mark=at position 0.55 with {\arrow{>}}}, postaction={decorate}]
\tikzstyle{mid<}=[decoration={markings, mark=at position 0.55 with {\arrow{<}}}, postaction={decorate}]
\tikzstyle{knot}=[preaction={super thick, white, draw}]
\tikzstyle{coupon}=[draw, very thick, rectangle, rounded corners=5pt]
\tikzset{Rightarrow/.style={double equal sign distance,>={Implies},->},
triplecd/.style={-,preaction={draw,Rightarrow}},
quadruplecd/.style={preaction={draw,Rightarrow,
shorten >=0pt
},
shorten >=1pt,
-,double,double
distance=0.2pt}}
\tikzset{
    tripleline/.style args={[#1] in [#2] in [#3]}{
        #1,preaction={preaction={draw,#3},draw,#2}
    }
}
\tikzstyle{triple}=[tripleline={[line width=.15mm,black] in
\tikzset{
    quadrupleline/.style args={[#1] in [#2] in [#3] in [#4]}{
        #1,preaction={preaction={preaction={draw,#4},draw,#3}, draw,#2}
    }
}
\tikzstyle{quadruple}=[quadrupleline={[line width=.3mm,white] in
\definecolor{violet}{RGB}{148,0,211}
\definecolor{DarkGreen}{RGB}{0,150,0}
\definecolor{rufous}{HTML}{A81C07}
\definecolor{boysenberry}{HTML}{873260}
\definecolor{OliveGreen}{HTML}{6D712E}
\definecolor{yellow}{RGB}{200, 120, 60}
\definecolor{medium-blue}{rgb}{0,0,.8}
\newcommand{\arxiv}[1]{\href{http://arxiv.org/abs/#1}{\tt arXiv:\nolinkurl{#1}}}
\newcommand{\arXiv}[1]{\href{http://arxiv.org/abs/#1}{\tt arXiv:\nolinkurl{#1}}}
\DeclareMathOperator{\Ad}{Ad}
\DeclareMathOperator{\aux}{aux}
\DeclareMathOperator{\End}{End}
\DeclareMathOperator{\Hom}{Hom}
\DeclareMathOperator{\id}{id}
\DeclareMathOperator{\rev}{rev}
\DeclareMathOperator{\loc}{loc}
\newcommand{\set}[2]{\left\{#1 \middle| #2\right\}}
\newcommand{\Mod}{\mathsf{Mod}}
\newcommand{\Bim}{\mathsf{Bim}}
\newcommand{\SSS}{\mathsf{SSS}}
\def\semicolon{;}
\def\applytolist#1{
    \expandafter\def\csname multi#1\endcsname##1{
        \def\multiack{##1}\ifx\multiack\semicolon
            \def\next{\relax}
        \else
            \csname #1\endcsname{##1}
            \def\next{\csname multi#1\endcsname}
        \fi
        \next}
    \csname multi#1\endcsname}
\def\calc#1{\expandafter\def\csname c#1\endcsname{{\mathcal #1}}}
\def\bbc#1{\expandafter\def\csname bb#1\endcsname{{\mathbb #1}}}
\def\bfc#1{\expandafter\def\csname bf#1\endcsname{{\mathbf #1}}}
\def\sfc#1{\expandafter\def\csname s#1\endcsname{{\sf #1}}}
\def\fc#1{\expandafter\def\csname f#1\endcsname{{\mathfrak #1}}}
\def\rmc#1{\expandafter\def\csname rm#1\endcsname{{\mathrm #1}}}
\numberwithin{equation}{section}
\theoremstyle{plain}
\newtheorem{thm}[equation]{Theorem}
\newtheorem*{thm*}{Theorem}
\newtheorem{cor}[equation]{Corollary}
\newtheorem{lem}[equation]{Lemma}
\newtheorem{prop}[equation]{Proposition}
\newtheorem*{claim*}{Claim}
\newtheorem{thmalpha}{Theorem}
\theoremstyle{definition}
\newtheorem{defn}[equation]{Definition}
\newtheorem*{trick*}{Trick}
\newtheorem{construction}[equation]{Construction}
\newtheorem{nota}[equation]{Notation}
\newtheorem{fact}[equation]{Fact}
\newtheorem{facts}[equation]{Facts}
\newtheorem{ex}[equation]{Example}
\newtheorem{rem}[equation]{Remark}
\title{Superselection sectors for posets of von Neumann algebras}
\author{\small{Anupama Bhardwaj, Tristen Brisky, Chian Yeong Chuah, Kyle Kawagoe, Joseph Keslin, David Penneys, and Daniel Wallick}}
\date{\today}
\begin{document}

\begin{abstract}
We study a commutant-closed collection of von Neumann algebras acting on a common Hilbert space indexed by a poset with an order-reversing involution.
We give simple geometric axioms for the poset which allow us to construct a braided tensor category of superselection sectors analogous to the construction of Gabbiani and Fr\"ohlich for conformal nets.
For cones in $\mathbb{R}^2$, we weaken our conditions to a bounded spread version of Haag duality and obtain similar results.
We show that intertwined nets of algebras have isomorphic braided tensor categories of superselection sectors.
Finally, we show that the categories constructed here are equivalent to those constructed by Naaijkens and Ogata for certain 2D quantum spin systems.
\end{abstract}

\maketitle

\section{Introduction}
Operator algebras afford a rigorous method to discuss the thermodynamic limit (as the number of sites goes to $\infty$) of a quantum spin system \cite{MR1441540}.
For example, given a $\bbZ^2$ lattice (viewed as a subset of $\bbR^2$) where each site hosts $\bbC^d$ spins, unless we choose a vector at each site, we cannot take the infinite tensor product of the `local' Hilbert spaces.
However, given an arbitrary region $\Lambda\subset \bbR^2$, we get a well-defined algebra of \emph{local operators} $\fA_\Lambda$,
and the inductive limit over all sites gives the UHF algebra of \emph{quasi-local} operators $\fA$.
$$
\fA_\Lambda:= \overline{\bigotimes_{v\in\Lambda} M_d(\bbC)}^{\|\cdot\|}
\qquad\qquad\qquad\qquad
\fA := \overline{\bigotimes_{v\in \bbZ^2} M_d(\bbC)}^{\|\cdot\|}
$$
The assignment $\Lambda\mapsto \fA_\Lambda$ gives a \emph{net of algebras}, where an inclusion of regions $\Lambda\leq \Delta$ gives an inclusion of algebras $\fA_\Lambda \subseteq \fA_\Delta$
and disjoint regions $\Lambda\subset \Sigma^c$ correspond to commuting algebras of local operators $[\fA_\Lambda, \fA_\Sigma]=0$.
These properties are usually called \emph{isotony} and \emph{locality} respectively.

Recent articles \cite{MR2804555, MR2956822, MR3426207} construct a unitary braided tensor category of superselection sectors from a topologically ordered spin system, 
which is equipped with a pure ground state $\omega$ for a family of interactions
whose cone von Neumann algebras 
$$
\fA_\Lambda '' \subset B(L^2(\fA,\omega))
$$
are properly infinite factors which 
satisfy \emph{Haag duality}: 
$$
\fA_\Lambda'' = \fA_{\Lambda^c}'
\qquad\qquad\qquad
\forall\text{ cones $\Lambda\subset \bbR^2$.}
$$
The article \cite{MR4362722} weakens this last property to a technical condition called \emph{approximate Haag duality}.
Both of these approaches are based on \cite{MR165864,MR0297259,MR660538}.

For conformal nets on $S^1$, the article \cite{MR1231644} gives a construction of a unitary braided tensor category of superselection sectors by looking at families of normal representations of the von Neumann algebras $A_I$ associated to intervals on $S^1$ satisfying various axioms, including isotony, locality, and Haag duality.
One can use this same approach to give an equivalent definition of superselection sectors for a quantum spin system.

The purpose of this article is to extend the construction of \cite{MR1231644} to an extremely general setting to obtain a unitary braided tensor category from a very short list of simple axioms.
Our basic object of study is a family of von Neumann subalgebras $\{A_p\}_{p\in\cP}$ of $B(H)$ for a fixed separable Hilbert space indexed by a poset $\cP$ equipped with an order-reversing involution: $p\leq q$ iff $q'\leq p'$ and $p=p''$ for all $p,q\in\cP$.
We require only 3 axioms for these von Neumann algebras:
\begin{itemize}
\item (isotony) If $p\leq q$, then $A_p\subseteq A_q$.
\item (Haag duality) For all $p\in \cP$, $A_p'=A_{p'}$.
\item (absorbing) The representation $H$ is \emph{absorbing} for every $A_p$, i.e., for any other normal separable representation $K$ of $A_p$,
$$
{}_{A_p}H \oplus {}_{A_p}K \cong {}_{A_p}H.
$$
\end{itemize}
We call such an object a $\cP$-\emph{net} of von Neumann algebras.
In \S\ref{sec:AbsorbingReps} below, we discuss absorbing representations, which also go by the name of \emph{maximal} representations in \cite{MR808930}.
Uniqueness of absorbing/maximal representations is used to \emph{localize} superselection sectors at any $p\in\cP$.
In Corollary \ref{cor:PNetsAreProperlyInfinite} we show that assuming Haag duality, the absorbing axiom is equivalent to the condition that each $A_p$ is properly infinite.

Following \cite{MR1231644}, a \emph{superselection sector} of a $\cP$-net of algebras $A$ is a separable Hilbert space $K$ together with a family of normal representations $\pi_p: A_p\to B(K)$ satisfying 3 compatibility axioms:
\begin{itemize}
\item (isotony) If $p\leq q$, then $\pi_q|_{A_p}=\pi_p$.
\item (locality) If $p\leq q'$, then $[\pi_pA_p,\pi_qA_q]=0$.
\item (absorbing) the representation $K$ is \emph{absorbing} for every $\pi_pA_p$.
\end{itemize}
In \S\ref{sec:SuperselectionSectors} below, we show that the absorbing property is equivalent to the condition that each $\pi_p$ is injective.
It is easily seen that superselection sectors form a $\rmW^*$-category which admits orthogonal direct sums, and we discuss when idempotents split in Proposition \ref{prop:IdempotentsSplit} below.
In particular, each $A_p$ being a factor is a sufficient condition.

In order to construct fusion and braiding, we look at the full subcategory $\SSS_p$ of superselection sectors $(H,\pi)$ \emph{localized} at $p\in\cP$, meaning the space of the sector is $H$ and $\pi_{p'}=\id_{A_{p'}}$.
Under the mild \emph{geometric} assumptions \ref{geom:SelfDisjoint}--\ref{geom:ZigZag} on our poset $\cP$, which we summarize in \S\ref{sec:GeometricAxioms} below, the techniques of \cite{MR260294,MR1231644} allow us to construct a strict fusion product and a unitary braiding.

\begin{thmalpha}
\label{thm:Main}
Given a poset $\cP$ satisfying the geometric axioms \ref{geom:SelfDisjoint}--\ref{geom:ZigZag}
and a $\cP$-net of von Neumann algebras $A$,
the superselection sectors $\SSS_p$ localized at $p$ carries a strict tensor product and a unitary braiding.
\end{thmalpha}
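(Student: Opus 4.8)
The plan is to reduce Theorem~\ref{thm:Main} to the Doplicher--Haag--Roberts calculus of localized endomorphisms, in the form used for conformal nets in \cite{MR260294,MR1231644}. The starting point is the elementary observation that Haag duality turns a sector localized at $p$ into an endomorphism: if $(H,\pi)\in\SSS_p$ and $p\leq q$, then locality applied to the pair $(q,q')$ --- legitimate since $q\leq q=(q')'$ --- gives $\pi_q(A_q)\subseteq(\pi_{q'}A_{q'})'$, while $q'\leq p'$ forces $\pi_{q'}=\id_{A_{q'}}$ by isotony, so that $\pi_q(A_q)\subseteq A_{q'}'=A_q$. Thus each $\pi_q$ with $q\geq p$ is a normal unital $*$-endomorphism of $A_q$, compatibly in $q$, and the same computation shows every morphism $(H,\pi)\to(H,\pi')$ in $\SSS_p$ commutes with $A_{p'}$, hence lies in $A_{p'}'=A_p$. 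In this way $\SSS_p$ is unitarily equivalent to a $\rmW^*$-category $\cC_p$ of (suitably extendable) normal unital $*$-endomorphisms of $A_p$ with hom-spaces computed inside $A_p$; since the absorbing axiom is equivalent to faithfulness of each $\pi_q$ (see \S\ref{sec:SuperselectionSectors}, and Corollary~\ref{cor:PNetsAreProperlyInfinite}), every object of $\cC_p$ is faithful.

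I would then put the \emph{strict} monoidal structure on $\cC_p$ by composition: $\rho\otimes\sigma:=\rho\circ\sigma$ on objects, and $s\otimes t:=s\,\rho(t)=\rho'(t)\,s$ for $s\in\Hom(\rho,\rho')$ and $t\in\Hom(\sigma,\sigma')$. Associativity and unitality (strict unit $\id_{A_p}$) are then literally strict, the interchange law is the usual one-line verification, and composites of normal faithful maps are again normal and faithful, so $\rho\sigma$ still satisfies the absorbing condition. The only place the geometric hypotheses are needed at this stage is to check that $\rho\circ\sigma$ (equivalently the family $(\pi_q\pi'_q)_{q\geq p}$) does come from a genuine $\cP$-net sector localized at $p$, so that $\otimes$ lands back in $\SSS_p$; I expect this to follow by propagating the definition coherently along $\cP$ using the geometric axioms \ref{geom:SelfDisjoint}--\ref{geom:ZigZag}. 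Direct sums are already available and idempotents split under the hypotheses of Proposition~\ref{prop:IdempotentsSplit}, but neither is needed here.

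For the unitary braiding I would first establish \emph{transportability}: for every $\sigma\in\SSS_p$ and every target $q$, there is a unitary charge transporter $u$, lying in a controlled subalgebra $A_r$ with $p\leq r$ and $q\leq r$, and a sector $\sigma^{(q)}\in\SSS_q$ with $\Ad(u)\circ\sigma_s=\sigma^{(q)}_s$ for all $s$. A single such move is produced from the uniqueness of absorbing (maximal) representations \cite{MR808930}, and arbitrary moves are obtained by chaining along a zig-zag in $\cP$ --- exactly the role of the connectivity axiom, as in \cite{MR260294,MR1231644}. Given $\rho,\sigma\in\SSS_p$, choose $q$ disjoint from $p$ (so $p\leq q'$, available by \ref{geom:SelfDisjoint}) and a transporter $u\colon\sigma\to\sigma^{(q)}$. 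Haag duality and locality show that a sector localized at $p$ and one localized at $q$ commute, so $\rho\,\sigma^{(q)}=\sigma^{(q)}\rho$ and the flip between them is the identity operator; conjugating it by $u$ produces a unitary
\[
\beta_{\rho,\sigma}:=(u^*\otimes\id_\rho)\circ(\id_\rho\otimes u)=u^*\rho(u)\in A_p,\qquad
\beta_{\rho,\sigma}\colon\ \rho\otimes\sigma\ \xrightarrow{\ \sim\ }\ \sigma\otimes\rho .
\]
Choosing $q$ on the opposite side of $p$ gives the inverse braiding.

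Finally I would verify the braiding axioms. Independence of the chosen transporter $u$ (for fixed target $q$) is a short computation --- two transporters differ by an intertwiner in $A_q\subseteq A_{p'}$, on which $\rho$ acts trivially, so it cancels --- and independence of $q$ within a fixed ``side'' of $p$ follows from the same mechanism together with the zig-zag axiom. Naturality in $\rho$ and $\sigma$ comes from the intertwiner calculus, and the two hexagon identities reduce, via strict associativity, to the observation that transporting $\sigma$ past $\rho_1\otimes\rho_2$ factors as transporting it past $\rho_1$ and then past $\rho_2$; unitarity of $\beta$ is built in. I expect the main obstacle to be transportability itself in this purely order-theoretic framework: constructing the transporters, pinning down the subalgebra each one lives in, and arranging the zig-zag chains so that $\beta$ is independent of all choices and satisfies the hexagons. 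In particular, the assertion that disjointly localized endomorphisms commute \emph{as $\cP$-net sectors} --- not merely on the two ``halves'' $A_p$ and $A_q$ --- is where the interplay between Haag duality and the geometric structure of $\cP$ must be used most carefully.
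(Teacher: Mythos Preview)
Your overall strategy---DHR-style localized endomorphisms with composition as tensor product and charge-transporter braidings---is the right framework and matches the paper's. However, the step you flag with ``I expect this to follow'' is the bulk of the work. A sector $\pi\in\SSS_p$ gives an endomorphism $\pi_q$ of $A_q$ only when $p\leq q$; for a general $q\in\cP$ with neither $p\leq q$ nor $p\leq q'$, the naive composite $\pi_q\circ\sigma_q$ is not even defined because $\sigma_q(A_q)\not\subseteq A_q$. The paper's Construction~\ref{constr:circp} handles this by choosing a $q$-small $q$-indicator $\widetilde p\leq p$ via~\ref{geom:qSmallqIndicator}, transporting both sectors to $\widetilde p$ (so that Case~1 or Case~2 applies), composing there, and conjugating back. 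Well-definedness (Lemma~\ref{lem:FusionWellDefined}) then genuinely requires the zig-zag axiom~\ref{geom:ZigZag} to compare different choices of $\widetilde p$, and this is where the geometric axioms do their real work. Without this construction you do not have a functor $\SSS_p\times\SSS_p\to\SSS_p$, and strict associativity (Lemma~\ref{lem:StrictAssociativity}) is a separate argument on top of it.

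Your braiding also differs from the paper's. You transport $\sigma$ to a region $q$ \emph{disjoint from} $p$---the ``point at infinity'' approach of \cite{MR1231644}---whereas the paper splits $p$ into disjoint $r,s\leq p$ and transports $\pi$ to $r$ and $\sigma$ to $s$ (Construction~\ref{constr:Braiding}). Your route can be made to work, and is essentially the Buchholz--Fredenhagen setup the paper treats separately in \S\ref{subsec:BFBraidedMonoidal}, but several points need repair: the transporter $u$ lies only in some $A_r$ with $p,q\leq r$, so $\rho(u)$ must be read as $\rho_r(u)$, and your assertion $\beta_{\rho,\sigma}\in A_p$ only follows \emph{after} you prove $\beta$ is an intertwiner, not before; your invocation of~\ref{geom:SelfDisjoint} to produce $q$ disjoint from $p$ should instead use~\ref{geom:splitting} applied to $p'$; and ``the opposite side of $p$'' is not a notion available from the axioms alone. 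The paper's inside-$p$ approach sidesteps these issues and makes independence of choices and the hexagons follow directly from~\ref{geom:SelfDisjoint}--\ref{geom:ZigZag} via the derived properties~\ref{geom:MutuallyDisjointZigZag} and~\ref{geom:Braid} (Theorem~\ref{thm:MutuallyDisjointZigZagsExist}, Proposition~\ref{prop:AdmitsReflection}), without fixing any global orientation.
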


Whenever $p\leq q$, there is a strict inclusion braided tensor equivalence $\SSS_p\hookrightarrow\SSS_q$, and by Lemma \ref{lem:Connected} below, given arbitrary $p,q\in \cP$, we always have a braided equivalence $\SSS_p\cong \SSS_q$.

When $\cP$ is the poset of intervals on $S^1$ and $A$ is a conformal net, this construction is equivalent to the one from \cite{MR1231644}.
However, our construction of the braiding stays local to the interval in which we localize, and we do not need to choose any `point at infinity' as in \cite[\S IV]{MR1231644}.

In order to make connections to \cite{MR2804555, MR2956822, MR3426207,MR4362722}, we then focus on the particular poset $\cC$ of cones in $\bbR^2$.\footnote{The techniques of this section can be applied to more general geometric posets, but we specialize to cones in $\bbR^2$ for simplicity.}
We generalize the above construction to a \emph{bounded spread} $\cC$-net which only satisfies \emph{bounded spread Haag duality} \cite{2304.00068}.
However, in order for locality to be preserved under superselection sectors, we add an additional `small generated' axiom.
\begin{itemize}
    \item (bounded spread Haag duality)
    For every cone $\Lambda$, $A_{\Lambda^c}'\subset A_{\Lambda^{+s}}$
    where $\Lambda^{+s}$ is the cone obtained by enlarging $\Lambda$ by distance $s$.
    \item (small generated) 
    For every cone $\Delta$, $A_\Delta$ is generated by the subalgebras $A_\Lambda$ such that $\Lambda\leq \Delta$ and $\Lambda$ is \emph{$\Delta$-small}, i.e., there are cones $\Gamma,\Sigma$ with $\Lambda,\Delta\leq \Gamma$ and $\Lambda, \Delta^c\leq \Sigma$.
\end{itemize}
We define bounded spread superselection sectors similarly as before, but without a locality axiom as it is implied by the small generated axiom.

\begin{thmalpha}
\label{thm:MainBoundedSpread}
Given a bounded spread $\cC$-net of von Neumann algebras $A$,
the superselection sectors $\SSS_\Lambda$ localized at $\Lambda$ carries a strict tensor product and a unitary braiding.
If moreover $A$ satisfies Haag duality, we get the same braided unitary tensor category as in Theorem \ref{thm:Main}.
\end{thmalpha}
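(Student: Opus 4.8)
The plan is to run the Doplicher--Haag--Roberts program, in the form used for conformal nets in \cite{MR1231644} and for two-dimensional spin systems in \cite{MR2804555,MR2956822,MR3426207,MR4362722}, but carrying every step through with the weaker \emph{bounded spread} hypotheses in place of exact Haag duality. There are two things to establish: first, the construction of the strict tensor product and unitary braiding on $\SSS_\Lambda$; and second, the identification of the resulting braided category with the one from Theorem~\ref{thm:Main} once exact Haag duality is assumed as well.

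\textbf{Tensor product and braiding.} Fix a cone $\Lambda$ and a bounded spread superselection sector $(H,\pi)$ localized at $\Lambda$, so $\pi_{\Lambda^c}=\id$. For any cone $\Delta\ge\Lambda$, isotony gives $\pi_{\Delta^c}=\id$, whence locality (which holds by the small generated axiom) together with bounded spread Haag duality yields $\pi_\Delta(A_\Delta)\subseteq A_{\Delta^c}{}'\subseteq A_{\Delta^{+s}}$; thus $\pi$ is an endomorphism up to the spread $s$, and by the small generated axiom the maps $(\pi_\Delta)_{\Delta}$ assemble into a single normal endomorphism $\rho_\pi$ of the von Neumann algebra $\widehat{A}$ generated by all $A_\Delta$, with $\rho_\pi|_{A_{\Lambda^c}}=\id$ and $\rho_\pi(A_\Delta)\subseteq A_{\Delta^{+s}}$. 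The first key step is a \emph{charge transport} lemma: for $(H,\pi)$ localized at $\Lambda$ and any cone $\Delta$, the absorbing axiom---equivalently proper infiniteness, Corollary~\ref{cor:PNetsAreProperlyInfinite}---produces a unitary $u\in\widehat{A}$ with $\Ad(u)\circ\rho_\pi$ localized at a bounded enlargement of $\Delta$, and the transporters are natural in the sense recorded for conformal nets in \cite[\S IV]{MR1231644}. Identifying a $\Lambda$-localized sector with its endomorphism, write $\rho,\sigma,\dots$; one then sets $\rho\otimes\sigma:=\rho\circ\sigma$ and, on intertwiners $r\colon\rho\to\rho'$ and $t\colon\sigma\to\sigma'$, $r\otimes t:=r\,\rho(t)=\rho'(t)\,r$. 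The product again lands in $\SSS_\Lambda$ since both factors are the identity on $A_{\Lambda^c}$, strictness is immediate from associativity of composition, and the interchange law is the usual DHR identity. For the braiding, transport $\sigma$ to a sector $\tilde\sigma$ localized at a cone disjoint from $\Lambda^{+s}$ lying on a fixed (say counterclockwise) side of $\Lambda$, via a unitary charge transporter $u\colon\sigma\to\tilde\sigma$, and set $\epsilon(\rho,\sigma):=u^*\rho(u)\in\Hom(\rho\sigma,\sigma\rho)$; this is well defined by naturality of transport and locality, it satisfies the hexagon identities by the interchange law, it is unitary by construction, and transporting to the clockwise side instead produces $\epsilon^{-1}$, so one obtains a genuine braiding rather than a symmetry.

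\textbf{Comparison with Theorem~\ref{thm:Main}.} Now suppose in addition $A$ satisfies exact Haag duality, $A_p{}'=A_{p'}$. Then $A$ is a bounded spread $\cC$-net with $s=0$, and since the poset $\cC$ of cones satisfies the geometric axioms \ref{geom:SelfDisjoint}--\ref{geom:ZigZag} the construction of Theorem~\ref{thm:Main} also applies, producing a braided category $\SSS_\Lambda^{\mathrm{geo}}$. First I would match the objects: with $s=0$, bounded spread Haag duality is exact Haag duality and the ``small generated plus derived locality'' package is equivalent to the locality axiom of Theorem~\ref{thm:Main}---one direction is hypothesis, and conversely locality together with exact Haag duality gives the small generated statement for cones---so the two notions of $\Lambda$-localized sector, and hence the two underlying $\rmW^*$-categories, coincide on the nose. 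When $s=0$ the endomorphism $\rho_\pi$ above is an honest localized endomorphism of the local algebras, precisely the object used in Theorem~\ref{thm:Main}; since both constructions define $\otimes$ by composing these endomorphisms and $\epsilon$ by the same formula $u^*\rho(u)$ for an opposite-side charge transporter, the identity functor $\SSS_\Lambda\to\SSS_\Lambda^{\mathrm{geo}}$ is a strict monoidal and braided equivalence. The only content is checking that the auxiliary choices (charge transporters and the ``side'' convention) in the two constructions can be matched, which reduces to the naturality statements already used.

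\textbf{Expected main obstacle.} The substantive work is the first part: running the Doplicher--Roberts machinery with endomorphisms that enlarge supports by $s$. The two delicate points are (a) showing these spread-$s$ endomorphisms are closed under composition and that composition is \emph{strictly} associative after passing to $\widehat{A}$, which is exactly what the small generated axiom is needed for; and (b) proving the braiding is well defined, i.e.\ independent of the disjoint cone used to transport $\sigma$, now that ``disjointness'' must be imposed with an $s$-collar so that $\rho$ and $\tilde\sigma$ still commute as endomorphisms. Once the $s=0$ specialization of the first part is seen to reproduce the Theorem~\ref{thm:Main} construction verbatim, the comparison is essentially bookkeeping.
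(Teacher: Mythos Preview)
Your overall strategy is in the right spirit, but there is a genuine gap at the step where you ``assemble the maps $(\pi_\Delta)_\Delta$ into a single normal endomorphism $\rho_\pi$ of the von Neumann algebra $\widehat{A}$ generated by all $A_\Delta$.'' For cones in $\bbR^2$, two cones $\Delta_1,\Delta_2$ need not be contained in any common cone (e.g.\ opposite half-planes), so there is no isotony relation forcing $\pi_{\Delta_1}$ and $\pi_{\Delta_2}$ to be jointly multiplicative on products $ab$ with $a\in A_{\Delta_1}$, $b\in A_{\Delta_2}$. The small generated axiom does not help here: it only says each $A_\Delta$ is generated by smaller cone algebras inside it, not that the global $\widehat{A}$ has a presentation the $\pi_\Delta$ respect. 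In the spin-system examples $\widehat{A}=B(H)$, and there is no reason the family $(\pi_\Delta)$ extends to a normal endomorphism of $B(H)$. The standard fix is to introduce a \emph{forbidden direction} and work with the $\rmC^*$-algebra $A^{\aux}=\overline{\bigcup_{\Lambda\Cap\Omega=\emptyset}A_\Lambda}$, on which the endomorphism picture does make sense; this is the Buchholz--Fredenhagen route, and the paper treats it separately in \S\ref{subsec:BFBraidedMonoidal}.

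The paper's own proof of Theorem~\ref{thm:MainBoundedSpread} takes a different route that avoids any global algebra or point at infinity. It defines $(\pi\circ_\Lambda\sigma)_\Delta$ cone by cone, choosing for each $\Delta$ a $\widetilde\Lambda\le\Lambda$ with $\widetilde\Lambda^{+2s}$ a $\Delta$-indicator and $\widetilde\Lambda^{+s}$ $\Delta^{+s}$-small, and setting $(\pi\circ_\Lambda\sigma)_\Delta=\Ad\bigl(u_{\widetilde\Lambda}\pi^{\widetilde\Lambda}_{\Lambda^{+s}}(v_{\widetilde\Lambda})\bigr)\circ(\pi^{\widetilde\Lambda}_{\Delta^{+s}}\circ\sigma^{\widetilde\Lambda}_\Delta)$; well-definedness and strict associativity follow by rerunning the zig-zag arguments of \S\ref{sec:Fusion} with every cone fattened by the appropriate multiple of $s$. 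The braiding is likewise local: one chooses a splitting $\Lambda_1,\Lambda_2\le\Lambda$ with $\Lambda_1^{+s}\le\Lambda_2^c$ and sets $\beta_{\pi,\sigma}=v_{\Lambda_2}\sigma^{\Lambda_2}_{\Lambda^{+s}}(u_{\Lambda_1})u_{\Lambda_1}^*\pi_{\Lambda^{+s}}(v_{\Lambda_2})^*$, transporting \emph{both} sectors inside $\Lambda$ rather than sending one off to infinity. Your formula $u^*\rho(u)$ with $\sigma$ transported outside $\Lambda$ is the BF/Naaijkens braiding, not the paper's; the two are shown to agree only later, in Theorem~\ref{thm:GF-BFBraidedEquivalence}. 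For the comparison with Theorem~\ref{thm:Main}, the paper's argument is that at $s=0$ the bounded-spread formulas above literally specialize to those of \S\ref{sec:Fusion}--\ref{sec:braiding}, so the identity functor is strictly braided monoidal; your claim that ``both constructions define $\epsilon$ by the same formula'' is therefore not quite right as stated, since the two braiding formulas you are comparing are formally different until one proves the reflection/splitting independence results of \S\ref{sec:braiding}.
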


Theorem \ref{thm:MainBoundedSpread} is especially useful when we are given two \emph{intertwined} bounded spread $\cC$-nets of algebras $A,B$, i.e., there is a global \emph{intertwining constant} $t>0$ such that
$$
A_{\Lambda} \subseteq B_{\Lambda^{+t}} \subseteq A_{\Lambda^{+2t}}  
\qquad\qquad\qquad\qquad
\forall\,\Lambda\in\cC.
$$

\begin{thmalpha}
Given two $t$-intertwined bounded spread $\cC$-nets of algebras $A,B$,
we have a strict isomorphism of braided unitary strict tensor categories $\SSS(A)_\Lambda \cong \SSS(B)_{\Lambda^{+t}}$.
\end{thmalpha}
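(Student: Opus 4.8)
The plan is to exhibit an explicit strict braided tensor functor $F\colon \SSS(A)_\Lambda\to\SSS(B)_{\Lambda^{+t}}$ (well-defined since $B$, as a bounded spread $\cC$-net, carries its own category $\SSS(B)_{\Lambda^{+t}}$ by Theorem~\ref{thm:MainBoundedSpread}) together with a strict inverse, both obtained from the intertwining inclusions by uniformly shifting the localization cone by $t$. On objects, put $F(H,\pi):=(H,\rho)$ where for a cone $\Delta$
\[
\rho_\Delta:=\pi_{\Delta^{+t}}\big|_{B_\Delta},
\]
which makes sense because $B_\Delta\subseteq A_{\Delta^{+t}}$; on morphisms $F$ is the identity of $B(H)$. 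Isotony of $\rho$ follows from isotony of $\pi$ and monotonicity of $\Delta\mapsto\Delta^{+t}$; the absorbing axiom for $\rho$ follows because, by \S\ref{sec:SuperselectionSectors}, it is equivalent to injectivity of each $\rho_\Delta$, and $\rho_\Delta$ is the restriction of the injective representation $\pi_{\Delta^{+t}}$ to the subalgebra $B_\Delta$. For localization: if a cone $\Delta$ is disjoint from $\Lambda^{+t}$ then $\Delta^{+t}$ is disjoint from $\Lambda$ (a point within distance $t$ of $\Delta$ has positive distance from $\Lambda$), so $\pi_{\Delta^{+t}}=\id$ and hence $\rho_\Delta=\id$. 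Faithfulness of $F$ is clear, and fullness holds because for $a\in A_\Delta\subseteq B_{\Delta^{+t}}$ one has $\rho_{\Delta^{+t}}(a)=\pi_{\Delta^{+2t}}(a)=\pi_\Delta(a)$ by isotony, so any $T\in B(H)$ intertwining all $\rho_\Delta$ automatically intertwines all $\pi_\Delta$.

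Next define $G\colon\SSS(B)_{\Lambda^{+t}}\to\SSS(A)_\Lambda$ by the mirror formula $G(H,\rho):=(H,\pi)$ with $\pi_\Delta:=\rho_{\Delta^{+t}}|_{A_\Delta}$, using $A_\Delta\subseteq B_{\Delta^{+t}}$, again the identity on morphisms. Using the sandwich $A_\Delta\subseteq B_{\Delta^{+t}}\subseteq A_{\Delta^{+2t}}$ and its mirror $B_\Delta\subseteq A_{\Delta^{+t}}\subseteq B_{\Delta^{+2t}}$ together with isotony, one checks $G\circ F=\id$ and $F\circ G=\id$ on the nose; for instance $(GF\pi)_\Delta=\pi_{\Delta^{+2t}}|_{A_\Delta}=\pi_\Delta$ and $(FG\rho)_\Delta=\rho_{\Delta^{+2t}}|_{B_\Delta}=\rho_\Delta$. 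The one step that is not purely formal is that $G(H,\rho)$ is genuinely localized at $\Lambda$, not merely at $\Lambda^{+2t}$; for this I would invoke the robustness of the localized subcategories under bounded enlargement of the localization cone in the bounded spread regime (a byproduct of the proof of Theorem~\ref{thm:MainBoundedSpread}, where the strict inclusion equivalence $\SSS_p\hookrightarrow\SSS_q$ becomes an equality when $q$ is a bounded enlargement of $p$). Granting this, $F$ is a strict isomorphism of $\rmW^*$-categories compatible with orthogonal direct sums.

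It remains to see that $F$ is strictly monoidal and strictly braided. Recall from the proof of Theorem~\ref{thm:MainBoundedSpread} that each $(H,\pi)\in\SSS(A)_\Lambda$ yields, via bounded spread Haag duality, a compatible family of almost-localized endomorphisms $\pi_\Delta$ (so $\pi_\Delta(A_\Delta)\subseteq A_{\Delta^{+c}}$ for a fixed $c$), that the strict tensor product is realized by composing these families, and that the braiding $\beta_{\pi,\pi'}$ is the unitary assembled from a charge transporter $u$ carrying $\pi'$ to a sector localized at an auxiliary cone disjoint from $\Lambda$ together with the $\pi_\Delta$-image of $u$, independently of the auxiliary cone chosen. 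Because $F$ is the identity on $H$ and on $B(H)$, sends $\pi_\Delta$ to $\pi_{\Delta^{+t}}|_{B_\Delta}$, and — by the sandwich — sends a composite $\pi_\Delta\circ\pi'_\Delta$ to the corresponding composite of the $F(\pi)$- and $F(\pi')$-endomorphisms once the spread constants used for $B$ are taken to be those for $A$ shifted by $t$, we obtain $F((H,\pi)\otimes(H,\pi'))=F(H,\pi)\otimes F(H,\pi')$ and $F(\beta_{\pi,\pi'})=\beta_{F\pi,F\pi'}$ exactly; the tensor unit $(H,\id)$ and the trivial associator and unitors are obviously preserved. Finally, when $A$ (equivalently $B$) also satisfies exact Haag duality, the last clause of Theorem~\ref{thm:MainBoundedSpread} identifies both sides with the categories of Theorem~\ref{thm:Main}.

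The main obstacle I anticipate is the bounded spread bookkeeping: verifying that $G$ lands in $\SSS(A)_\Lambda$ (the localization robustness above), and checking that the auxiliary cones, charge transporters, and spread constants entering the constructions of the tensor product and braiding for $A$ and for $B$ can be chosen compatibly under the shift $\Delta\mapsto\Delta^{+t}$, so that $F$ is literally a bijective strict braided tensor functor rather than merely a braided equivalence. Everything else reduces to routine manipulations with isotony and the intertwining inclusions.
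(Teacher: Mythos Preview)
Your approach is essentially the paper's: define $F(\pi)_\Delta:=\pi_{\Delta^{+t}}|_{B_\Delta}$ and $G(\rho)_\Delta:=\rho_{\Delta^{+t}}|_{A_\Delta}$, check $GF=\id$ and $FG=\id$ on the global categories $\SSS(A)\cong\SSS(B)$ via isotony and the sandwich inclusions, and then verify that $F$ restricted to $\SSS(A)_\Lambda$ is strictly monoidal and braided. The paper carries out this last step by an explicit computation with the localizing data $u_{\widetilde\Lambda},v_{\widetilde\Lambda}$ showing $(\pi\circ_\Lambda\sigma)^B_{\Delta^{+t}}=(\pi^B\circ_{\Lambda^{+t}}\sigma^B)_{\Delta^{+t}}$ and $T\circ_\Lambda^A S=T\circ_{\Lambda^{+t}}^B S$, whereas you summarize this as ``by the sandwich''; your outline is correct, but the paper's calculation is what actually pins down the claim.

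There is one genuine gap in your proposal. You correctly flag that $G$ applied to $\SSS(B)_{\Lambda^{+t}}$ only obviously lands in $\SSS(A)_{\Lambda^{+2t}}$, not $\SSS(A)_\Lambda$. But your proposed fix --- that the inclusion $\SSS_\Lambda\hookrightarrow\SSS_{\Lambda^{+2t}}$ ``becomes an equality'' in the bounded spread regime --- is false: a sector localized at $\Lambda^{+2t}$ need not satisfy $\pi_{\Lambda^c}=\id$ on the nose, only up to unitary equivalence. What you actually get is that $F\colon\SSS(A)_\Lambda\to\SSS(B)_{\Lambda^{+t}}$ is fully faithful (since the global $F$ is an isomorphism of $\rmW^*$-categories) and essentially surjective (via $G$ and the equivalence $\SSS(A)_\Lambda\hookrightarrow\SSS(A)_{\Lambda^{+2t}}$), hence a strict braided tensor \emph{equivalence}. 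Indeed, the paper's own detailed theorem in \S\ref{sec:BoundedSpread} states the conclusion as a ``strict unitary braided equivalence'' rather than a literal bijection on objects; the word ``isomorphism'' in the announcement should be read accordingly. So your argument is fine once you replace the claimed equality of localized subcategories by the equivalence that actually holds.
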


This theorem will be applied to intertwined bounded spread $\cC$-nets of algebras coming from the Levin-Wen model in \cite[\S IV.E]{MR4808260} in future work \cite{2DBraidedSpinSystems}.

Finally, as expected, we prove that the categories we construct in this article are braided equivalent to those constructed in \cite{MR2804555, MR3426207,MR4362722,2306.13762}.

\subsection{Geometric poset axioms for a braided tensor category of superselection sectors}
\label{sec:GeometricAxioms}

Here we summarize the list of \emph{geometric axioms} we assume for our poset $\cP$ in order to use the techniques of \cite{MR260294,MR1231644} to construct from our $\cP$-net of algebras $A$ a strict tensor product and a braiding on the $\rmW^*$ category $\SSS_p$ of superselection sectors localized at a chosen $p \in \cP$.
We illustrate each of these axioms for the poset $\cC$ of cones in $\bbR^2$.

For $p,q\in \cP$, we say $p,q$ are \emph{disjoint} if $p\leq q'$,
and we write
$$
p\Cap q := \set{r\in \cP}{r\leq p\text{ and }r\leq q}.
$$
Our first geometric axioms are as follows.
\begin{enumerate}[label=\textup{(GA\arabic*)}, series=geom]
\setcounter{enumi}{-1}
\item 
\label{geom:SelfDisjoint}
\underline{Disjoint complements:}
For every $p\in\cP$, $p\Cap p'=\emptyset$.

By Remark \ref{rem:SelfDisjointness}, the existence of a $\cP$-net of von Neumann algebras as in Definition \ref{defn:PNetOfAlgebras} forces $\cP$ to satisfy \ref{geom:SelfDisjoint}.

\item 
\label{geom:saturate}
\underline{Saturation:}
Every $p\in\cP$ is \emph{saturated}: for any $q\in\cP$, $p\Cap q\neq \emptyset$ or $p\Cap q'\neq \emptyset$ (possibly both).

Every cone in $\bbR^2$ is saturated.

\item 
\label{geom:splitting}
\underline{Splitting:}
Every $p \in \cP$ \emph{splits}: there are disjoint $r,s \leq p$.

Every cone in $\bbR^2$ splits.
$$
\tikzmath{
\clip (-2,-.1) rectangle (2,2);
\draw (135:3cm) -- (0,0) node[above]{$\scriptstyle p$} -- (45:3cm);
\coordinate (a) at (-.1,.3);
\fill[fill=blue!25] ($ (135:2.5cm) + (a) $) -- (a) -- ($ (90:4cm) + (a) $);
\draw ($ (135:2.5cm) + (a) $) -- (a) node[above, xshift=-.2cm,yshift=.2cm]{$\scriptstyle r$} -- ($ (90:4cm) + (a) $);
\coordinate (b) at (.1,.3);
\fill[fill=red!25] ($ (45:2.5cm) + (b) $) -- (b) -- ($ (90:4cm) + (b) $);
\draw ($ (45:2.5cm) + (b) $) -- (b) node[above, xshift=.2cm,yshift=.2cm]{$\scriptstyle s$} -- ($ (90:4cm) + (b) $);
}
$$
We use \ref{geom:saturate} and \ref{geom:splitting} to 
propose a definition of $\circ_p$.

\end{enumerate}

A \emph{zig-zag} between $p,q\in\cP$ is a sequence
$
(z_1,y_1,\dots, z_n,y_n,z_{n+1})\subset\cP
$
with $z_1=p$ and $z_{n+1}=q$
such that
$z_j,z_{j+1}\leq y_j$ for all $j$.

\begin{lem}
\label{lem:Connected}
The axioms \ref{geom:saturate} and \ref{geom:splitting} imply that the poset $\cP$ is \emph{connected}: for any $p,q\in\cP$, there is a zig-zag from $p$ to $q$.
\end{lem}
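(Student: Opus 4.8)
The plan is to deduce connectivity from two ingredients, exploiting the trivial fact that whenever $z\le y$ the triple $(z,y,y)$ is a zig-zag; in particular any common upper bound of two elements already witnesses a one-step zig-zag between them.

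The first ingredient is a base case coming directly from saturation. Given $p,q\in\cP$, axiom \ref{geom:saturate} produces an element $r$ lying in $p\Cap q$ or in $p\Cap q'$. If $r\in p\Cap q$ then $r\le p$ and $r\le q$, so $(p,p,r,q,q)$ is a zig-zag from $p$ to $q$; likewise $r\in p\Cap q'$ yields a zig-zag from $p$ to $q'$. Hence it suffices to produce, for an arbitrary $b\in\cP$, a zig-zag from $b$ to $b'$: applying this with $b=q'$ (and using $q''=q$) lets us concatenate a zig-zag $p\to q'$ with one $q'\to q$, so that $p$ and $q$ are connected in every case.

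The second ingredient, and the only place splitting is used, is that $b$ and $b'$ are always zig-zag connected. Axiom \ref{geom:splitting} gives disjoint $r,s\le b$, i.e.\ $r\le s'$; reversing $r\le b$ and $r\le s'$ with the involution gives $b'\le r'$ and $s\le r'$, and together with $s\le b$ this exhibits $(b,b,s,r',b')$ as a zig-zag from $b$ to $b'$. I expect no serious obstacle; the only point needing care is keeping track of which inequalities are flipped by the order-reversing involution, so that (as the definition of zig-zag demands) consecutive $z$'s acquire a common \emph{upper} bound rather than merely a common lower bound. Note that axiom \ref{geom:SelfDisjoint} plays no role in this lemma.
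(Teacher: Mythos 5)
Your proof is correct and follows essentially the same route as the paper's: use splitting to produce a short zig-zag from any $b$ to $b'$, then use saturation to reach either $q$ directly or $q'$ (and concatenate). The explicit zig-zag $(b,b,s,r',b')$ you write down is, after renaming, identical to the one in the paper.
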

\begin{proof}
\item[\underline{Step 1:}]
For an arbitrary $p\in\cP$, there is a zig-zag from $p$ to $p'$.
Indeed, choose a splitting $p_1,p_2\leq p$ with $p_1\leq p_2'$, and note that
$
p \leq p \geq p_1 \leq p_2' \geq p'
$
is the desired zig-zag.

\item[\underline{Step 2:}]
If $p\Cap q\neq \emptyset$, then choosing $r\in \cP$ with $r\leq p$ and $r\leq q$, we have $p\leq p \geq r \leq q \geq q$.

\item[\underline{Step 3:}]
If $p\Cap q'\neq \emptyset$, then we can compose a zig-zags from $p$ to $q'$ from Step 2 with the zig-zag from $q'$ to $q$ from Step 1.
\end{proof}

The axioms \ref{geom:saturate} and \ref{geom:splitting} are equivalent to the single axiom \ref{geom:qSmallqIndicator} stated in the lemma below which is how we use \ref{geom:saturate} and \ref{geom:splitting} in practice.
To set up this equivalent axiom, for $p,q\in \cP$, we define the following notions.
\begin{itemize}
\item 
We say $p$ is \emph{$q$-small} if there are $r,s\in\cP$ such that $p,q\leq r$ and $p,q'\leq s$.
(Observe $q$-small is identical to $q'$-small.)
\item
A $\widetilde{p}\leq p$ is called a \emph{$q$-indicator} if $\widetilde{p}\leq q$ or $\widetilde{p}\leq q'$.
\end{itemize}

\begin{lem}
The axioms \ref{geom:saturate} and \ref{geom:splitting} are equivalent to the following axiom.
\begin{enumerate}[label=\textup{(GA1.5)}, series=geomconsequence]
\item
\label{geom:qSmallqIndicator}
\textup{\underline{Small indicators:}
For any $p,q\in\cP$, there is a $q$-small $q$-indicator $\widetilde{p}\leq p$.
}
\end{enumerate}
\end{lem}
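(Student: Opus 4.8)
The plan is to prove the two implications \ref{geom:qSmallqIndicator} $\Rightarrow$ (\ref{geom:saturate} and \ref{geom:splitting}) and (\ref{geom:saturate} and \ref{geom:splitting}) $\Rightarrow$ \ref{geom:qSmallqIndicator} separately, keeping in mind throughout that a $q$-indicator is automatically a $q'$-indicator and that $q$-small is the same as $q'$-small, so that any case split on ``$\le q$ versus $\le q'$'' collapses under the symmetry $q\leftrightarrow q'$.

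For the forward direction, saturation is essentially a restatement: given $p,q\in\cP$, a $q$-small $q$-indicator $\widetilde p\le p$ satisfies $\widetilde p\le q$ or $\widetilde p\le q'$, hence $\widetilde p\in p\Cap q$ or $\widetilde p\in p\Cap q'$, and so one of these is nonempty. For splitting, I would apply \ref{geom:qSmallqIndicator} to the pair $(p,p)$: this produces a $p$-small $\widetilde p\le p$, so there is $v\in\cP$ with $\widetilde p\le v$ and $p'\le v$; applying the order-reversing involution gives $v'\le p$ and $v'\le\widetilde p'$, so $\widetilde p$ and $v'$ are disjoint and both lie below $p$, which is exactly a splitting of $p$.

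For the reverse direction, fix $p,q\in\cP$. First I would use \ref{geom:saturate} to locate $r\le p$ with $r\le q$ or $r\le q'$; by the $q\leftrightarrow q'$ symmetry, assume $r\le q$. Then I would use \ref{geom:splitting} to split $r$ into disjoint $a,b\le r$ and set $\widetilde p:=a$. One then checks: $\widetilde p\le r\le p$; $\widetilde p\le r\le q$, so $\widetilde p$ is a $q$-indicator and $u:=q$ witnesses $\widetilde p,q\le u$; and since $b\le r\le q$ we get $q'\le b'$, while disjointness of $a,b$ gives $\widetilde p=a\le b'$, so $v:=b'$ witnesses $\widetilde p,q'\le v$. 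Hence $\widetilde p$ is the desired $q$-small $q$-indicator below $p$.

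The only bookkeeping is applying $x\le y\iff y'\le x'$ and $x''=x$ correctly when converting joint upper bounds involving $q'$ into disjointness statements, together with the conceptual point that \ref{geom:saturate} alone only produces a $q$-indicator below $p$ — it is \ref{geom:splitting} that supplies the room to shrink it to something $q$-small. I do not expect a genuine obstacle here; the lemma is essentially an unpacking of the definitions once this division of labor between the two axioms is identified.
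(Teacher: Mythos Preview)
Your proposal is correct and follows essentially the same approach as the paper's proof: both use saturation to find a $q$-indicator below $p$ and then split it to achieve $q$-smallness in one direction, and in the other direction both extract saturation directly from the indicator property and obtain a splitting of $p$ by applying \ref{geom:qSmallqIndicator} with $q\in\{p,p'\}$ (which are the same application, since $p$-small coincides with $p'$-small) and taking the complement of the witnessing upper bound for $\widetilde p$ and $p'$.
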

\begin{proof}
Suppose $\cP$ satisfies \ref{geom:saturate} and \ref{geom:splitting}.
If $p\Cap q\neq \emptyset$, choose $a\in \cP$ with $a\leq p$ and $a \leq q$.
Otherwise, by \ref{geom:saturate}, $p\Cap q'\neq \emptyset$, so we may choose $a\in \cP$ with $a\leq p$ and $a \leq q'$.
In either case, since $a$ splits by \ref{geom:splitting}, there is a $q$-small $\widetilde{p}\leq a$.   

Conversely, the existence of a $q$-indicator immediately implies \ref{geom:saturate}.
If $\widetilde{p}\leq p$ is a $p'$-small $p'$-indicator, then
the existence of an $r$ containing $\widetilde{p}$ and $p'$ yields a splitting $p_1=\widetilde{p}$ and $p_2=r'$ of $p$, establishing \ref{geom:splitting}.
\end{proof}
Here is a cartoon of cones $p,q$ in $\bbR^2$ with two choices of $\widetilde{p},\widehat{p}$ of $q$-small $q$-indicator.
$$
\tikzmath{
\clip (-3,-.05) rectangle (4.3,1.7);
\fill[fill=red!50, opacity=.5] (150:4cm) -- (0,0) -- (45:2.5cm);
\draw (150:4cm) -- (0,0) node[above]{$\scriptstyle q$} -- (45:2.5cm);
\coordinate (b) at (1,0);
\fill[fill=blue!50, opacity=.5] ($ (135:2.5cm) + (b) $) -- (b) -- ($ (30:4cm) + (b) $);
\draw ($ (135:2.5cm) + (b) $) -- (b) node[above]{$\scriptstyle p$} -- ($ (30:4cm) + (b) $);
\coordinate (ptt) at (1.5,.5);
\fill[fill=orange!50, opacity=.5] ($ (45:2.5cm) + (ptt) $) -- (ptt) -- ($ (30:2.5cm) + (ptt) $);
\draw ($ (45:2.5cm) + (ptt) $) -- (ptt) node[above]{$\scriptstyle \widehat{p}$} -- ($ (30:2.5cm) + (ptt) $);
\node at (.5,.8) {$\scriptstyle \widetilde{p}$};
}
$$

\begin{rem}
\label{rem:SmallerqSmallqIndicators}
Note that if $p, q \in \cP$ and $\widetilde{p} \leq p$ is a $q$-small $q$-indicator, then any $\widehat{p} \leq \widetilde{p}$ is also a $q$-small $q$-indicator.  
Indeed, since $\widetilde{p} \leq q$ or $\widetilde{p} \leq q'$, $\widehat{p} \leq q$ or $\widehat{p} \leq q'$, so $\widehat{p}$ is also a $q$-indicator.
Similarly, since $\widetilde{p}$ is $q$-small, we have that there are $r, s \in \cP$ such that $\widetilde{p},q\leq r$ and $\widetilde{p},q'\leq s$.
Since $\widehat{p} \leq \widetilde{p}$, $\widehat{p},q\leq r$ and $\widehat{p},q'\leq s$, so $\widehat{p}$ is $q$-small.
\end{rem}

We require an additional geometric axiom about the existence of zig-zags in order to complete the construction of our fusion product.

\begin{enumerate}[resume*=geom]
\item
\label{geom:ZigZag}
\underline{Zig-zag:}
For every $p,q\in\cP$ and any two choices of $q$-small $q$-indicator $\widetilde{p},\widehat{p}\leq p$ as in \ref{geom:qSmallqIndicator},
there is a zig-zag between $\widetilde{p},\widehat{p}$ contained in $p$ such that each $z_j$ is a $q$-indicator and each $y_j$ is $q$-small.
(Observe this immediately implies each $z_j$ is also $q$-small.)

Here is a cartoon of a zig-zag of cones in $\bbR^2$ contained in $p$ and disjoint from $q$.
\begin{equation}
\label{eq:ZigZagOfCones}
\tikzmath{
\fill[fill=blue!20] (-2.25,-1) rectangle (2.25,1);
\fill[fill=red!50, opacity=.5] (135:2.5cm) -- (0,0) -- (45:2.5cm);
\draw (135:2.5cm) -- (0,0) -- (45:2.5cm);
\draw (-2.25,1) -- (2.25,1);
\draw (-2.25,.25) -- (-1,.25) -- (-1.2,-1);
\draw (2.25,.25) -- (1,.25) -- (1.2,-1);
\draw (-2.25,.5) -- (-1,.5) -- (1,-1);
\draw (2.25,.5) -- (1,.5) -- (-1,-1);
\draw (.8,-1) -- (0,-.4) -- (-.8,-1);
\node at (-1.75,-.5) {$\scriptstyle \widetilde{p}$};
\node at (1.75,-.5) {$\scriptstyle \widehat{p}$};
\node at (-.75,-.3) {$\scriptstyle y_1$};
\node at (.75,-.3) {$\scriptstyle y_2$};
\node at (0,-.75) {$\scriptstyle z_2$};
\node at (0,1.3) {$\scriptstyle q$};
\node at (-1,.75) {$\scriptstyle p$};
}
\qquad\qquad
(z_1=\widetilde{p},y_1,z_2,y_2,z_3=\widehat{p})
\end{equation}
We use \ref{geom:ZigZag} along with \ref{geom:saturate} and \ref{geom:splitting} to show that $\circ_p$ is a well-defined and strictly associative, 
making $\SSS_p$ a strict $\rmW^*$ tensor category.
Moreover, if $p\leq q$, the `inclusion' $\SSS_p\hookrightarrow \SSS_q$ is a strict tensor equivalence.
By connectivity of $\cP$, for arbitrary $p,q\in\cP$, there is a zig-zag of strict tensor equivalences $\SSS_p\cong \SSS_q$.
\end{enumerate}

The axioms \ref{geom:SelfDisjoint}--\ref{geom:ZigZag} hold for a wide variety of posets including
intervals in $S^1$, cones in $\bbR^2$, disks in $S^2$, cones in $\bbR^3$, and more.
We refer the reader to Appendices \ref{appendix:IntervalsSatisfyGAs} and \ref{appendix:ConesSatisfyGAs} for proofs.

The geometric axioms above are also sufficient to construct a unitary braiding on the strict $\rmW^*$-tensor category $(\SSS_p,\circ_p)$.
To help with the construction, we prove that 
our geometric axioms \ref{geom:SelfDisjoint}--\ref{geom:ZigZag}
imply two other helpful geometric axioms
in
Theorem \ref{thm:MutuallyDisjointZigZagsExist}
and
Proposition \ref{prop:AdmitsReflection} in the sequel.
The axiom \ref{geom:Braid} below is inspired by \cite[Lem.~2.2]{MR260294}.

\begin{enumerate}[resume*=geom]
\item
\label{geom:MutuallyDisjointZigZag}
\underline{Mutually disjoint zig-zag:}
Let $r_1, s_1, r_2, s_2 \in \cP$ with $r_1 \leq s_1'$ and $r_2 \leq s_2'$.  
A \emph{mutually disjoint zig-zag} $(r_1,s_1)\leftrightsquigarrow(r_2,s_2)$ is a pair of zig-zags 
\begin{align*}
&(r_1=x_1, 
\textcolor{blue}{w_1}, \textcolor{red}{x_2},
\dots, x_n, \textcolor{blue}{w_n}, \textcolor{red}{x_{n + 1}=r_2})
\\
&(\textcolor{blue}{s_1=z_1}, \textcolor{red}{y_1}, z_2, \dots, \textcolor{blue}{z_n}, \textcolor{red}{y_n}, z_{n + 1}=s_2)    
\end{align*}
\
such that 
for all $i = 1, \dots, n$, $\textcolor{blue}{w_i}$ is disjoint from $\textcolor{blue}{z_i}$ and $\textcolor{red}{y_i}$ is disjoint from $\textcolor{red}{x_{i + 1}}$.

In Theorem \ref{thm:MutuallyDisjointZigZagsExist}, we prove that the axioms \ref{geom:SelfDisjoint}--\ref{geom:ZigZag}
imply that
for any two splittings $(r_1,s_1), (r_2,s_2)$ of $p$, there is a mutually disjoint zig-zag
$(r_1,s_1)\leftrightsquigarrow(r_2,s_2)$ or $(r_1,s_1)\leftrightsquigarrow(s_2,r_2)$ contained in $p$.

We use \ref{geom:splitting} to propose a definition for the braiding on $\SSS_p$, and we use \ref{geom:MutuallyDisjointZigZag} to show that this definition is independent of the choice of splitting, up to getting the reverse braiding (see Lemma \ref{lem:ReverseBraiding} below).

\item 
\label{geom:Braid}
\underline{Reflection:}
Given a  splitting $(r,s)$ of $p$, a \emph{reflection} is a splitting $(a,b)$ of $p'$
and some $c$ such that $a,r\leq c$ and $b,s\leq c'$. 

In Proposition \ref{prop:AdmitsReflection}, we prove that the axioms \ref{geom:SelfDisjoint}--\ref{geom:ZigZag}
imply that for any $p\in\cP$, 
there is a splitting that admits a reflection.

Every splitting of a cone in $\bbR^2$ admits a reflection.
\begin{equation}
\label{eq:Reflection}
\tikzmath{
\draw (-1.6,0) -- (1.6,0);
\draw (0,-1.6) -- (0,1.6);
\fill[blue!25] (-1.2,.2) rectangle (-.2,1.2);
\draw (-1.2,.2) -- (-.2,.2) -- (-.2,1.2);
\node at (-.4,.4) {$\scriptstyle r$};
\fill[red!25] (1.2,.2) rectangle (.2,1.2);
\draw (1.2,.2) -- (.2,.2) -- (.2,1.2);
\node at (.4,.4) {$\scriptstyle s$};
\fill[green!25] (-1.2,-.2) rectangle (-.2,-1.2);
\draw (-1.2,-.2) -- (-.2,-.2) -- (-.2,-1.2);
\node at (-.4,-.4) {$\scriptstyle a$};
\fill[orange!25] (1.2,-.2) rectangle (.2,-1.2);
\draw (1.2,-.2) -- (.2,-.2) -- (.2,-1.2);
\node at (.4,-.4) {$\scriptstyle b$};
\node at (-1.4,.2) {$\scriptstyle p$};
\node at (-1.4,-.2) {$\scriptstyle p'$};
\node at (1.4,.2) {$\scriptstyle p$};
\node at (1.4,-.2) {$\scriptstyle p'$};
\node at (.2,-1.4) {$\scriptstyle c'$};
\node at (-.2,-1.4) {$\scriptstyle c$};
\node at (.2,1.4) {$\scriptstyle c'$};
\node at (-.2,1.4) {$\scriptstyle c$};
}
\end{equation}

We use \ref{geom:Braid} to show the proposed definition of a braiding for $\SSS_p$ is well-defined and satisfies the braid relations. 
Moreover, the equivalences $\SSS_p\cong \SSS_q$ for arbitrary $p,q\in\cP$ constructed above are all braided equivalences.

\end{enumerate}

\begin{rem}
\label{rem:ReverseBraidingIntro}
If $(r,s)$ is a splitting of $p$ and there is a mutually disjoint zig-zag $(r,s)\leftrightsquigarrow(s,r)$ contained in $p$, then the braiding is symmetric by Lemma \ref{lem:ReverseBraiding} below (see Remark \ref{rem:ReverseBraiding}).
Such a mutually disjoint zig-zag can be built in the poset of disks in $S^2$ from 
$r_1 ,r_2, r_3 ,s_1,s_2,s_3 \leq p$
satisfying $r_i\leq s_i'$ for all $i$ and 
$r_i,r_j \leq s_k$ for $i,j,k$ distinct.
\begin{equation}
\label{eq:SmallMutuallyDisjointZigZagSwap}
\tikzmath{
\draw[fill=red!50, opacity=.5] (30:1) circle (.76cm);
\draw[fill=blue!50, opacity=.5] ($ (1,0) + (150:1) $) circle (.76cm);
\draw[fill=green!50, opacity=.5] ($ (60:1) + (270:1) $) circle (.76cm);
\draw[fill=white] (0,0) node{$\scriptstyle r_1$} circle (.2cm);
\draw[fill=white] (1,0) node{$\scriptstyle r_2$} circle (.2cm);
\draw[fill=white] (60:1) node{$\scriptstyle r_3$} circle (.2cm);
\draw (30:.57735) circle (1.25cm);
\node at ($ (30:1) + (30:.4) $) {$\scriptstyle s_1$};
\node at ($ (1,0) + (150:1) + (150:.4) $) {$\scriptstyle s_2$};
\node at ($ (60:1) + (270:1) + (270:.4) $) {$\scriptstyle s_3$};
\node at ($ (60:1) + (0,.5) $) {$\scriptstyle p$};
}
\qquad\qquad
\begin{aligned}
&(r_1, 
\textcolor{blue}{s_2}, \textcolor{red}{r_3},\textcolor{blue}{s_1}, \textcolor{red}{r_2})
\\
&(\textcolor{blue}{r_2}, \textcolor{red}{s_3}, \textcolor{blue}{r_1}, \textcolor{red}{r_1}, r_1)    
\end{aligned}
\end{equation}
We can also build such a mutually disjoint zig-zag for cones in $\bbR^3$.
\end{rem}

\subsection*{Acknowledgements}
The content of \S\ref{sec:Fusion}-\ref{sec:braiding} and the appendix of this article was the Summer 2024 undergraduate research project of Tristen Brisky and Joseph Keslin,
who were supervised by Anupama Bhardwaj, Chian Yeong Chuah, Kyle Kawagoe, David Penneys, and Daniel Wallick during Summer 2024,
supported by Penneys’ NSF grant DMS 2154389.
Additionally, this work was supported by NSF DMS 1928930 while David Penneys was in residence at the Mathematical Sciences Research Institute/SLMath in Berkeley, California, during Summer 2024.
The authors would like to thank
Andr\'e Henriques, 
Brett Hungar,
Peter Huston,
Corey Jones, 
Pieter Naaijkens, 
Sean Sanford, and
Siddharth Vadnerkar
for helpful conversations.

\section{Nets of algebras and their superselection sectors}

Our main object of study is a poset of von Neumann subalgebras of $B(H)$. 
The main goal is to construct a braided tensor category of \emph{superselection sectors}, which can be viewed as the representation theory of our poset of von Neumann algebras.
To this end, we begin with a general poset, and to define fusion and braiding for our superselection sectors, we will us the \emph{geometric axioms} from \S\ref{sec:GeometricAxioms} above.

For ease of exposition, all Hilbert spaces in this article are assumed to be separable (and infinite dimensional), and all von Neumann algebras are assumed to have separable preduals.

\begin{nota}
For this section, $\cP$ denotes a poset equipped with an order-reversing involution denoted $p\mapsto p'$, i.e., $p\leq q$ implies $q'\leq p'$ and $p''=p$.
For $p,q\in\cP$, we say that $p$ and $q$ are \emph{disjoint} if $p \leq q'$ (equivalently, $q \leq p'$).
We write $p\Cap q:=\set{r\in\cP}{r\leq p\text{ and }r\leq q}$.
\end{nota}

\begin{defn}
\label{defn:PNetOfAlgebras}
A $\cP$-\emph{net} of algebras is a 
Hilbert space $H$ together with an assignment of a von Neumann algebra $A_p\subseteq B(H)$ to each $p\in \cP$ such that 
\begin{itemize}
\item (isotone)
$p\leq q$ implies $A_p\subseteq A_q$, \item (Haag duality)
$A_{p'}=A_p'$ for all $p\in\cP$, and
\item (absorbing) 
for each $p\in\cP$, the representation $H$ for $A_p$ is \emph{absorbing}, defined in \S\ref{sec:AbsorbingReps} below.
\end{itemize}
\end{defn}

While these are a minimal set of axioms for the nets of algebras we consider in this article, we have the following generating property which is useful when discussing locality of superselection sectors (see Lemma \ref{lem:SmallGenerationImpliesIsoneRepsAreLocal} below).

\begin{defn}
We say a $\cP$-net of algebras has the \emph{small generating property} or is \emph{small generated} if for every $q\in\cP$, $A_q$ is generated by 
$\set{A_p}{\text{$p\leq q$ and $p$ is $q$-small}}$.
\end{defn}

\begin{ex}
The small generating property is satisfied by the net of cone algebras coming from a quantum spin system \cite{MR2804555, MR3426207, MR4362722,2306.13762}. 
More specifically, we let $\cC$ denote poset of cones in $\bbR^2$, and we let $\Gamma$ be an infinite planar lattice where each site $v$ hosts $\bbC^{d_v}$ spins.
The \emph{quasilocal algebra} is the AF $\rmC^*$-algebra $\fA \coloneqq \bigotimes_{v \in \Gamma} M_{d_v}(\bbC)$, and for an arbitrary subset $\Lambda \subseteq \Gamma$, we define $\fA_\Lambda \coloneqq \bigotimes_{v \in \Lambda} M_{d_v}(\bbC)$.
For a representation $\pi \colon \fA \to B(H)$ and a cone $\Lambda \in \cC$, we define $\cR_\Lambda \coloneqq \pi(\fA_\Lambda)''$.
(In this context, the notation $\cR_\Lambda$ is most commonly used in the literature.)
For certain representations $\pi$, $\{\cR_\Lambda\}_{\Lambda \in \cC}$ satisfies the axioms of a $\cC$-net of algebras \cite{MR2804555, MR2956822, MR3426207}.  
Furthermore, $\{\cR_\Lambda\}_{\Lambda \in \cC}$ always satisfies the small generating property.  
Indeed, 
for every $v \in \Delta$, there exists $\Lambda \subseteq \Delta$ such that $\Lambda$ is $\Delta$-small and $v \in \Lambda$.
\end{ex}

\subsection{Absorbing representations of von Neumann algebras}
\label{sec:AbsorbingReps}

The notion of an absorbing representation is included in the definition of a $\cP$-net of algebras in order to allow for nets of arbitrary von Neumann algebras, not just nets of properly infinite factors which appear in the AQFT literature.

\begin{defn}
Given a von Neumann algebra $M$ (with separable predual), $\Mod(M)$ denotes the category of separable $M$-modules (normal representations of $M$), i.e., separable Hilbert spaces equipped with normal actions of $M$.
An $M$-module ${}_MH\in \Mod(M)$ is called:
\begin{itemize}
\item
a \emph{generator} if for any non-zero ${}_MK\in\Mod(M)$, $\Hom({}_MH\to {}_MK)\neq 0$,
\item 
\emph{absorbing} if for any other ${}_MK\in\Mod(M)$,
${}_MH\oplus{}_MK \cong {}_MH$, i.e., there is an $M$-linear unitary $u:H\oplus K \to H$ satisfying $u(m\cdot \xi)=m\cdot u\xi$ for all $\xi\in H\oplus K$.
\item 
\emph{self-absorbing} if ${}_MH\oplus{}_MH \cong {}_MH$, and
\item
\emph{maximal} \cite[p100]{MR808930} if for any other normal representation ${}_MK$, there is an $M$-linear isometry $v: K\to H$. 
\end{itemize}
\end{defn}

\begin{rem}
It is well-known that an $M$-module ${}_MH$ is a generator for $\Mod(M)$ if and only if it is \emph{faithful}, i.e., the map $M\to B(H)$ is injective.
For the forward direction, it is clear that generators must be faithful.
Conversely, if ${}_MH$ is faithful, then the central support of $z(p_H)$ in $\End_M(H\oplus K)$ is $1$ for any non-zero $M$-module ${}_MK$.
Thus
$$
\Hom({}_MH \to {}_MK)
=
p_K\End_M(H\oplus K)p_H
\neq 0.
$$
\end{rem}

Observe that an absorbing representation is automatically unique up to unitary isomorphism, as if ${}_MH,{}_MK$ are both absorbing, then
$$
{}_MH \cong {}_MH\oplus {}_MK \cong {}_MK\oplus {}_MH \cong {}_MK.
$$

\begin{lem}
\label{lem:CantorSchroederBernsteinForW*}
If $\cC$ is a $\rmW^*$-category which admits orthogonal direct sums and we have isometries $u:a\to b$ and $v:b\to a$ in $\cC$, then $a\cong b$.
\end{lem}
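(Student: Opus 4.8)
**Proof proposal for Lemma \ref{lem:CantorSchroederBernsteinForW*}.**

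The plan is to mimic the classical Cantor–Schröder–Bernstein argument, but carried out entirely inside the $\mathrm{W}^*$-category using projections, orthogonal direct sums, and the fact that subobjects cut out by projections behave well. First I would replace the isometries $u\colon a\to b$ and $v\colon b\to a$ by the projections $p := uu^* \in \End(b)$ and $q := vv^* \in \End(a)$; since $u,v$ are isometries, $u$ implements an isomorphism $a \cong (b,p)$ and $v$ implements an isomorphism $b \cong (a,q)$, where $(b,p)$ denotes the subobject of $b$ picked out by the projection $p$ (this exists because $p$ is a projection and we may split it — but to avoid assuming idempotents split in general, I would instead work with the ambient objects and track the projections directly, never actually forming the subobjects).

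The cleaner route, avoiding idempotent-splitting, is the ``telescoping'' / Eilenberg swindle-style decomposition. Using the orthogonal direct sums, form $b \cong a \oplus c$ for some formal complement — except that again this requires splitting $1-p$. So the safest approach is: set $w := vu \colon a \to a$, an isometry with range projection $e := ww^* = v(uu^*)v^* \le q \le 1_a$. Iterating, $w^n$ has range projections $e_n := w^n (w^*)^n$, a decreasing sequence of projections in $\End(a)$ with $e_0 = 1_a$. Let $f_n := e_n - e_{n+1}$, a sequence of mutually orthogonal projections, and let $e_\infty := \bigwedge_n e_n = \mathrm{SOT}\text{-}\lim e_n$. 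Then on the ``wandering'' part $\bigoplus_n f_n$ the map $w$ shifts $f_n$ to $f_{n+1}$, so $w$ restricts to an isomorphism from $\bigoplus_{n\ge 0} f_n \oplus e_\infty$ (all of $a$) onto $\bigoplus_{n\ge 1} f_n \oplus e_\infty$ (the range $e_1 = q$-ish part). Meanwhile $a$ itself decomposes as $f_0 \oplus \big(\bigoplus_{n\ge 1} f_n \oplus e_\infty\big)$, and $b$ decomposes via $v$ as $b \cong (a,q)$, and $q$ sits between $e_1$ and $1_a$; one then checks $(a,q) \cong f_0' \oplus \big(\bigoplus_{n\ge1} f_n \oplus e_\infty\big)$ in a way matching $a \cong f_0 \oplus (\text{same tail})$, giving $a \cong b$.

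The key steps in order: (1) reduce to the endomorphism $w = vu$ of $a$ and its range-projection telescope $1_a = e_0 \ge e_1 \ge \cdots$; (2) extract the mutually orthogonal ``wandering'' projections $f_n = e_n - e_{n+1}$ and the residual projection $e_\infty$; (3) use that $\End(a)$ is a von Neumann algebra, so these infinite orthogonal sums of projections converge and the corresponding orthogonal direct sum decompositions of $a$ exist inside $\cC$ (here is where ``admits orthogonal direct sums'' is used in an essential, infinitary way); (4) observe $w$ furnishes an isomorphism between $a$ and the subobject cut out by $e_1$; (5) likewise $v$ identifies $b$ with the subobject cut out by $q$, and $u$ identifies $a$ with the subobject cut out by $p = uu^*$, so that $q = v p v^* $ and $e_1 = q$-conjugate-of-$p$; then a bookkeeping of the three projections $p \le 1_b$, $q\le 1_a$, $e_1 = vpv^* \le q$ lets one match the telescopes on the $a$-side and the $b$-side. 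The main obstacle I anticipate is step (3)–(5): making precise, without assuming idempotents split, that an orthogonal family of projections in $\End(a)$ summing to $1_a$ actually yields a direct-sum decomposition $a \cong \bigoplus_n (a, f_n)$ in $\cC$ — this should follow from the definition of ``admits orthogonal direct sums'' together with the fact that a projection $f\le f'$ inside a direct summand gives a further summand, but it needs to be argued carefully (possibly the paper's notion of orthogonal direct sum is designed exactly so that a projection-valued partition of unity in $\End(a)$ gives a decomposition, in which case this is quick). The Cantor–Schröder–Bernstein combinatorics themselves are then routine.
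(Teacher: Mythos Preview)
Your telescope argument is essentially a direct re-derivation of the Murray--von Neumann Cantor--Schr\"oder--Bernstein theorem for projections, carried out in $\End(a)$ and then transported to $b$ via $v$. It can be made to work, but the obstacle you flag in step (3)--(5) is real: turning an orthogonal family of projections in $\End(a)$ into an honest direct-sum decomposition of $a$ requires splitting idempotents, which is not among the hypotheses. You would have to argue everything in terms of partial isometries without ever forming the subobjects $(a,f_n)$, and the bookkeeping of matching the $a$-side and $b$-side telescopes becomes delicate.

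The paper's proof sidesteps all of this with one move: form the orthogonal direct sum $a \oplus b$ (this is the only place the hypothesis is used, and only for a \emph{finite} sum), and observe that $p = \id_a$ and $q = \id_b$ are now projections in the single von Neumann algebra $\End_\cC(a \oplus b)$. The isometries $u,v$ witness $p \preceq q$ and $q \preceq p$ there, so the standard comparison theorem gives a partial isometry $w$ with $w^*w = p$, $ww^* = q$; since $p,q$ are already split (they correspond to the summands $a,b$), $w = qwp \in \Hom(a,b)$ is the desired unitary. No infinite sums, no idempotent-splitting, no telescopes --- the heavy lifting is delegated to the classical result inside one von Neumann algebra. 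Your approach proves that classical result from scratch in a setting where it is awkward to do so; the paper simply arranges to invoke it.
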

\begin{proof}
Observe that $p:=\id_a$ and $q:=\id_b$ are projections in the von Neumann algebra $\End_\cC(a\oplus b)$.
The existence of the isometry $u$ implies $p\preceq q$ and the existence of the isometry $v$ implies $q\preceq p$.
We conclude $p\approx q$ in $\End_\cC(a\oplus b)$, i.e., there is a $w\in \End_\cC(a\oplus b)$ such that $ww^\dag = q$ and $w^\dag w=p$.
Then $w=qwp$ is our desired unitary isomorphism $a\to b$.
\end{proof}

\begin{cor}
A maximal representation is unique when it exists.
\end{cor}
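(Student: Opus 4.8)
The plan is to deduce uniqueness of a maximal representation directly from Lemma~\ref{lem:CantorSchroederBernsteinForW*}, mimicking the argument already given for absorbing representations. Suppose ${}_MH$ and ${}_MK$ are both maximal representations of $M$. First I would apply the defining property of maximality of ${}_MH$ to the module ${}_MK$: this yields an $M$-linear isometry $v\colon K\to H$. Then I would apply the defining property of maximality of ${}_MK$ to the module ${}_MH$, obtaining an $M$-linear isometry $u\colon H\to K$.

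Next I would observe that $\Mod(M)$ is a $\rmW^*$-category admitting orthogonal direct sums (this is standard and used implicitly throughout the section), so Lemma~\ref{lem:CantorSchroederBernsteinForW*} applies to the two isometries $u\colon H\to K$ and $v\colon K\to H$ in this category. The lemma then gives an $M$-linear unitary $H\cong K$, which is exactly the desired conclusion.

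There is essentially no obstacle here: the only mild point is making sure that ``maximal'' is being used in both directions (once for each of the two candidate representations) so that one genuinely gets isometries going \emph{both} ways, which is precisely the hypothesis of the Cantor--Schröder--Bernstein lemma. I would write the proof in two or three sentences, citing Lemma~\ref{lem:CantorSchroederBernsteinForW*} for the final step.

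\begin{proof}
Suppose ${}_MH$ and ${}_MK$ are both maximal representations of $M$.
Applying maximality of ${}_MH$ to the module ${}_MK$ gives an $M$-linear isometry $v\colon K\to H$, and applying maximality of ${}_MK$ to the module ${}_MH$ gives an $M$-linear isometry $u\colon H\to K$.
Since $\Mod(M)$ is a $\rmW^*$-category admitting orthogonal direct sums, Lemma~\ref{lem:CantorSchroederBernsteinForW*} yields an $M$-linear unitary ${}_MH\cong {}_MK$.
\end{proof}
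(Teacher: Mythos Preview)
Your proof is correct and is exactly the argument the paper intends: the corollary is stated immediately after Lemma~\ref{lem:CantorSchroederBernsteinForW*} with no separate proof, since applying maximality in both directions and invoking the lemma is the evident deduction. Your write-up makes this explicit and is fine as-is.
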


\begin{lem}
\label{lem:AbsEquiv}
The following are equivalent for a normal representation ${}_MH$.
\begin{enumerate}
\item ${}_MH$ is a self-absorbing generator.
\item ${}_MH$ is maximal.
\item ${}_MH$ is absorbing.
\end{enumerate}
\end{lem}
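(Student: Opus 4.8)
The plan is to prove the cycle of implications $(3)\Rightarrow(1)\Rightarrow(2)\Rightarrow(3)$, using Lemma~\ref{lem:CantorSchroederBernsteinForW*}, the characterization of generators as faithful modules from the remark above, and standard structure theory for normal representations of $M$. Two of the three directions are short. For $(3)\Rightarrow(1)$: if ${}_MH$ is absorbing then taking ${}_MK={}_MH$ shows it is self-absorbing, and for any non-zero ${}_MK$ the composite of an $M$-linear unitary ${}_MH\to{}_MH\oplus{}_MK$ with the orthogonal projection onto $K$ is a non-zero element of $\Hom({}_MH\to{}_MK)$, so ${}_MH$ is a generator. For $(2)\Rightarrow(3)$: if ${}_MH$ is maximal then for any ${}_MK$, applying maximality to the module ${}_MH\oplus{}_MK$ gives an $M$-linear isometry ${}_MH\oplus{}_MK\to{}_MH$, while the first-coordinate inclusion gives an $M$-linear isometry ${}_MH\to{}_MH\oplus{}_MK$; since $\Mod(M)$ is a $\rmW^*$-category admitting orthogonal direct sums, Lemma~\ref{lem:CantorSchroederBernsteinForW*} yields ${}_MH\oplus{}_MK\cong{}_MH$, and as ${}_MK$ was arbitrary, ${}_MH$ is absorbing.

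The substantive direction is $(1)\Rightarrow(2)$: assume ${}_MH$ is a self-absorbing generator and fix an arbitrary ${}_MK\in\Mod(M)$; we must produce an $M$-linear isometry ${}_MK\to{}_MH$. First I would upgrade self-absorption to ${}_MH\cong{}_MH\otimes\ell^2(\bbN)$: the inclusion into the first coordinate is an $M$-linear isometry ${}_MH\to{}_MH\otimes\ell^2$, and iterating the unitary ${}_MH\oplus{}_MH\cong{}_MH$ writes $H$ as an internal orthogonal direct sum of countably many submodules each $\cong{}_MH$ (plus a residual summand), giving an $M$-linear isometry ${}_MH\otimes\ell^2\to{}_MH$, so Lemma~\ref{lem:CantorSchroederBernsteinForW*} applies. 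Next, since ${}_MH$ is a generator it is faithful, so choosing a dense sequence of unit vectors $(\xi_n)$ in $H$, the vector $\eta=(2^{-n}\xi_n)_n\in H\otimes\ell^2$ is cyclic and separating for the action of $M$ on its cyclic subspace, which is therefore isomorphic to the standard form ${}_ML^2(M)$; hence there is an $M$-linear isometry ${}_ML^2(M)\hookrightarrow{}_MH\otimes\ell^2\cong{}_MH$, and amplifying, an $M$-linear isometry ${}_ML^2(M)\otimes\ell^2\hookrightarrow{}_MH$. Finally, by the classification of separable normal representations, every ${}_MK\in\Mod(M)$ admits an $M$-linear isometry into ${}_ML^2(M)\otimes\ell^2$; composing gives the desired ${}_MK\to{}_MH$, so ${}_MH$ is maximal.

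The main obstacle is the chain of structural inputs in $(1)\Rightarrow(2)$: that a vector cyclic and separating for $M$ realizes the standard form ${}_ML^2(M)$, and that ${}_ML^2(M)\otimes\ell^2(\bbN)$ contains every separable normal representation of $M$ as a submodule. Both are classical (uniqueness of the standard form and the structure theory of normal representations of a von Neumann algebra with separable predual), but arranging the amplifications and the Cantor--Schr\"oder--Bernstein applications to line up correctly is the delicate part; everything else is formal.
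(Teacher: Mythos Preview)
Your proof is correct and follows essentially the same architecture as the paper's: both use Lemma~\ref{lem:CantorSchroederBernsteinForW*} for the implications involving maximality, and both reduce self-absorbing to ${}_MH\cong{}_MH\otimes\ell^2$. The only real difference is in $(1)\Rightarrow(2)$: the paper simply cites \cite[Prop.~6.5 and Cor.~7.16]{MR808930} for the statement that a generator with infinite multiplicity is maximal, whereas you unpack this by exhibiting a separating vector in $H\otimes\ell^2$, embedding $L^2(M)$, amplifying, and invoking the structure theory of separable normal representations. Your route is more self-contained but longer; the paper's is a one-line citation. The remaining implications match almost verbatim.
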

\begin{proof}
\item[\underline{(1)$\Rightarrow$(2):}]
This follows from \cite[Prop.~6.5 and Cor.~7.16]{MR808930}.
Indeed, self-absorbing implies ${}_MH\cong {}_MH\otimes \ell^2$, which is a generator with infinite multiplicity and is thus maximal.
\item[\underline{(2)$\Rightarrow$(1):}]
If ${}_MH$ is maximal, then there are $M$-linear isometries
$$
{}_MH\oplus {}_MH
\hookrightarrow 
{}_MH 
\hookrightarrow 
{}_MH\oplus {}_MH,
$$
and thus ${}_MH$ is self-absorbing by Lemma \ref{lem:CantorSchroederBernsteinForW*}.
Moreover, for any non-zero ${}_MK$, there is an $M$-linear isometry ${}_MK\hookrightarrow {}_MH$ whose adjoint is non-zero.
Thus ${}_MH$ is a generator.
\item[\underline{(2)$\Rightarrow$(3):}]
If ${}_MH$ is maximal, then for any ${}_MK\in\Mod(M)$, there are $M$-linear isometries
$$
{}_MH\oplus {}_MK
\hookrightarrow 
{}_MH 
\hookrightarrow 
{}_MH\oplus {}_MK,
$$
and thus ${}_MH$ is absorbing by Lemma \ref{lem:CantorSchroederBernsteinForW*}.
\item[\underline{(3)$\Rightarrow$(2):}]
If ${}_MH$ is absorbing, then for any ${}_MK\in\Mod(M)$, we have $M$-linear isometries
$$
{}_MK 
\hookrightarrow 
{}_MH\oplus {}_MK 
\cong
{}_MH
$$
and thus ${}_MH$ is maximal.
\end{proof}

\begin{prop}
The representation $L^2M\otimes \ell^2$ is absorbing for $M$.
\end{prop}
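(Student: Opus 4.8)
The plan is to invoke Lemma~\ref{lem:AbsEquiv} and verify its condition~(1): that ${}_M(L^2M\otimes\ell^2)$ is a self-absorbing generator, and hence absorbing. Separability is automatic from the standing assumption that $M$ has separable predual, so the two points to check are the generator property and self-absorption.

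For the generator property, I would recall the standard fact that the left action of $M$ on its standard form $L^2M$ is faithful; hence ${}_M(L^2M)$ is faithful and therefore a generator by the Remark characterizing generators as exactly the faithful modules. Since ${}_M(L^2M\otimes\ell^2)\cong\bigoplus_{n\in\bbN}{}_M(L^2M)$ contains ${}_M(L^2M)$ as an orthogonal direct summand, it is again a generator: given any nonzero ${}_MK$ and any nonzero $M$-linear $f\colon L^2M\to K$, precomposing $f$ with the projection $L^2M\otimes\ell^2\twoheadrightarrow L^2M$ gives a nonzero element of $\Hom({}_M(L^2M\otimes\ell^2)\to{}_MK)$.

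For self-absorption, fix any unitary $w\colon\ell^2\oplus\ell^2\to\ell^2$ of Hilbert spaces. Then $\id_{L^2M}\otimes w$ is a unitary $(L^2M\otimes\ell^2)\oplus(L^2M\otimes\ell^2)\to L^2M\otimes\ell^2$, and it is $M$-linear because the $M$-action lives entirely on the $L^2M$ tensor factor and is untouched by $w$. Hence ${}_M(L^2M\otimes\ell^2)$ is self-absorbing, and Lemma~\ref{lem:AbsEquiv} then gives that it is absorbing.

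I do not expect a genuine obstacle: the only external input is faithfulness of the standard representation $M\curvearrowright L^2M$, one of its defining features. An alternative would be to check maximality directly, invoking the structure theory of normal representations---every separable normal representation of $M$ is unitarily equivalent to a subrepresentation of the amplification $L^2M\otimes\ell^2$---but the self-absorbing generator route is shorter and uses only what has already been set up in the excerpt.
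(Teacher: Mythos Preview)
Your proposal is correct and takes essentially the same approach as the paper: both verify condition~(1) of Lemma~\ref{lem:AbsEquiv} by observing that $L^2M$ is a generator (the paper cites a density-of-$\Hom$ argument, you use faithfulness of the standard representation---equivalent by the Remark preceding the lemma) and that tensoring with $\ell^2$ makes the module self-absorbing.
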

\begin{proof}
Observe that $L^2M$ is a generator for $\Mod(M)$ as 
$\Hom({}_ML^2M\to {}_MH)$ is dense in $H$ by \cite[Lem.~2]{MR561983}.
(Indeed, $L^2M \cong L^2(M,\phi)$ for any faithful normal state $\phi$.)
Since $L^2M\otimes \ell^2$ is a self-absorbing generator, it is absorbing by Lemma \ref{lem:AbsEquiv}. 
\end{proof}

The following lemma characterizes absorbing representations of von Neumann algebras.

\begin{lem}
\label{lem:absorbing}
A normal representation ${}_MH$ is absorbing if and only if 
it is faithful and $M'\cap B(H)$ is properly infinite.
\end{lem}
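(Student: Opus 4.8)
The plan is to prove both directions using the equivalence from Lemma \ref{lem:AbsEquiv} that ``absorbing'' $=$ ``maximal'' $=$ ``self-absorbing generator'', together with standard facts about comparison theory in the commutant von Neumann algebra $M' \cap B(H)$. Throughout I write $N := M' \cap B(H)$.

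\emph{Faithfulness is equivalent to being a generator.} This is recalled in the remark after the definition of generator, so in both directions the content is really about the properly infinite condition on $N$. For the forward direction, suppose ${}_MH$ is absorbing. By Lemma \ref{lem:AbsEquiv} it is a self-absorbing generator, hence faithful. To see $N$ is properly infinite, use self-absorption: an $M$-linear unitary $u \colon H \oplus H \to H$ exhibits $H \cong H \oplus H$ as $M$-modules, so $\id_H$ is an infinite projection when viewed inside $\End_M(H \oplus H \oplus \cdots)$; more directly, $H \cong H \otimes \ell^2$ as $M$-modules, and then $N = M' \cap B(H) \cong (M \otimes \bbC)' \cap B(H \otimes \ell^2) \cong N_0 \mathbin{\overline{\otimes}} B(\ell^2)$ where $N_0$ is the commutant before tensoring; any von Neumann algebra of the form $N_0 \mathbin{\overline{\otimes}} B(\ell^2)$ is properly infinite since it contains a copy of $B(\ell^2)$ with the same unit (spell out: the central projections of $N$ are $z \otimes 1$, and $1 \otimes e$ for a rank-one $e$ is a finite projection only if... rather, $1 = 1 \otimes 1$ dominates the infinite family $1 \otimes e_{ii}$ of pairwise-equivalent nonzero projections summing to $1$, and this persists after cutting by any central projection). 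Hence $N$ is properly infinite.

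\emph{Converse.} Suppose ${}_MH$ is faithful and $N$ is properly infinite. Faithful gives ``generator'' by the remark. By Lemma \ref{lem:AbsEquiv} it suffices to show ${}_MH$ is self-absorbing, i.e.\ $H \cong H \oplus H$ as $M$-modules, equivalently $\id_H \sim \id_H \oplus \id_H$ as projections... the cleanest route: since $N$ is properly infinite, there is a sequence of pairwise orthogonal projections $p_n \in N$ with $\sum_n p_n = 1$ and $p_n \sim 1$ in $N$ for every $n$ (this is the standard characterization of proper infiniteness via the existence of a unital copy of $B(\ell^2)$, or via halving; here we should cite the fact that a properly infinite von Neumann algebra admits such a ``$1 = \sum 1$'' decomposition). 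Picking partial isometries $v_n \in N$ with $v_n^* v_n = 1$, $v_n v_n^* = p_n$, the map $\xi \mapsto (v_1^* \xi, v_2^* \xi, \dots)$ is an $M$-linear unitary $H \to \bigoplus_n H$, so ${}_MH \cong {}_M(H \otimes \ell^2)$; in particular ${}_MH \cong {}_MH \oplus {}_MH$. Thus ${}_MH$ is a self-absorbing generator, hence absorbing by Lemma \ref{lem:AbsEquiv}.

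\emph{Main obstacle.} The only subtle point is the precise statement and proof that ``$N$ properly infinite'' yields the decomposition $1 = \sum_n p_n$ with each $p_n \sim 1$ in $N$, uniformly after restriction to central summands — i.e.\ that a properly infinite von Neumann algebra contains a unital copy of $B(\ell^2)$. This is classical (Takesaki, Vol.~I, or via halving lemma plus a maximality/exhaustion argument over central projections), so the proof should just invoke it with a reference rather than reprove it; the rest is bookkeeping translating projection equivalences in $N$ into $M$-linear unitaries between Hilbert space direct sums. I would also double-check the direction ``absorbing $\Rightarrow$ properly infinite'' handles the case where $N$ might a priori have a finite central summand: self-absorption forces $N \cong N \mathbin{\overline{\otimes}} B(\ell^2)$, and no von Neumann algebra is isomorphic to its tensor with $B(\ell^2)$ unless it is properly infinite, which closes that gap.
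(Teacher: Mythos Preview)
Your proposal is correct and follows essentially the same route as the paper: both directions go through Lemma~\ref{lem:AbsEquiv} and the equivalence between proper infiniteness of $N=M'$ and ${}_MH\cong {}_MH\otimes\ell^2$. The only cosmetic difference is in the forward direction: the paper uses uniqueness of the absorbing representation to identify ${}_MH$ with $L^2M\otimes\ell^2$, whence $M'\cong JMJ\,\overline{\otimes}\,B(\ell^2)$ is visibly properly infinite, whereas you argue via self-absorption that $N\cong N\,\overline{\otimes}\,B(\ell^2)$; for the converse the paper cites \cite[Prop.~6.5]{MR808930} for exactly the $1=\sum p_n$ decomposition you write out by hand.
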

\begin{proof}
First, note that $L^2M\otimes \ell^2$ is faithful and $M' \cong JMJ\otimes B(\ell^2)$ is properly infinite.
Conversely, if $M'=\End({}_MH)$ is properly infinite, then ${}_MH\cong {}_MH\otimes \ell^2$ by \cite[Prop.~6.5]{MR808930} and is thus self-absorbing.
Since an $M$-module is a generator if and only if it is faithful, the result follows by Lemma \ref{lem:AbsEquiv}.
\end{proof}

\begin{cor}
\label{cor:PNetsAreProperlyInfinite}
In the definition of a $\cP$-net of algebras, the absorbing property may be equivalently replaced with the property that all $A_p$ are properly infinite.
\end{cor}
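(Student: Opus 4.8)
The plan is to read this off directly from Lemma \ref{lem:absorbing}. The key observation is that, in a $\cP$-net of algebras, each $A_p$ is \emph{by construction} a von Neumann subalgebra of $B(H)$, so the module ${}_{A_p}H$ is nothing but the defining representation; in particular the structure map $A_p \to B(H)$ is the inclusion, hence injective, so ${}_{A_p}H$ is automatically faithful. Consequently, for this particular module the criterion of Lemma \ref{lem:absorbing} collapses to the single requirement that $A_p' \cap B(H)$ be properly infinite, and since the commutant $A_p'$ is already computed inside $B(H)$ we have $A_p' \cap B(H) = A_p'$.

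Next I would feed in Haag duality, $A_p' = A_{p'}$, to conclude that ${}_{A_p}H$ is absorbing if and only if $A_{p'}$ is properly infinite. Quantifying over all $p \in \cP$ and using that $p \mapsto p'$ is an involution of $\cP$ (hence a bijection), the family of conditions $\{\,A_{p'}\text{ properly infinite}\,\}_{p\in\cP}$ coincides verbatim with $\{\,A_q\text{ properly infinite}\,\}_{q\in\cP}$. Therefore ``$H$ is absorbing for every $A_p$'' holds if and only if every $A_p$ is properly infinite, which is exactly the asserted equivalence of the two formulations of Definition \ref{defn:PNetOfAlgebras} (with isotony and Haag duality kept fixed).

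I do not expect a genuine obstacle; the only points requiring a moment's care are bookkeeping ones: that faithfulness of ${}_{A_p}H$ comes for free because $A_p$ already sits inside $B(H)$ (so Lemma \ref{lem:absorbing} contributes only the proper-infiniteness clause), that the relative commutant in that lemma is the same object as the $A_p'$ appearing in Haag duality, and that the involution lets one rewrite the quantifier over $p'$ as a quantifier over $q$. Each of these is immediate, so the corollary follows in a few lines.
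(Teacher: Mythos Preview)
Your proposal is correct and matches the paper's own proof essentially line for line: both apply Lemma~\ref{lem:absorbing} together with Haag duality $A_p' = A_{p'}$, then use that $p \mapsto p'$ is a bijection to turn ``every $A_{p'}$ properly infinite'' into ``every $A_p$ properly infinite.'' Your write-up simply makes the faithfulness and involution bookkeeping more explicit than the paper does.
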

\begin{proof}
Haag duality and Lemma \ref{lem:absorbing} imply that every von Neumann algebra $A_p$ in a $\cP$-net of algebras is properly infinite.
Conversely, if each $A_p$ is properly infinite, then $A_p'=A_{p'}$ is properly infinite, so ${}_{A_p}H$ is absorbing by Lemma \ref{lem:absorbing}.
\end{proof}

\begin{rem}
\label{rem:SelfDisjointness}
It follows immediately from Corollary \ref{cor:PNetsAreProperlyInfinite} that if the poset $\cP$ hosts a net of algebras, 
then for all $p\in\cP$, $p\Cap p'=\emptyset$.
Indeed any von Neumann subalgebra of $A_p\cap A_{p'} = Z(A_p)$ is necessarily finite.
\end{rem}

One utility of absorbing representations is the following fact.

\begin{fact}
\label{fact:AbsorbingGivesEndomorphism}
Suppose ${}_MH_N$ and ${}_MK_N$ are $M-N$ bimodules which are absorbing for $N$ such that $H$ is invertible (i.e., $M$ and $N$ are each other's commutants in $B(H)$).
Let $u: H\to K$ be a right $N$-linear unitary, which exists as $H_N,K_N$ are both absorbing.
Observe that for all $m\in M$ and $n\in N$,
$$
u^*\pi_K(m)un 
=  
u^*\pi_K(m)\pi_K(n)u
= 
u^*\pi_K(n)\pi_K(m)u 
= 
n u^*\pi_K(m)u.
$$
Hence $\rho:M\to M$ given by $\rho(m):=u^*\pi_K(m)u \in N'=M$ is a normal unital endomorphism of $M$ such that ${}_MK_N\cong {}_{\rho(M)}H_N$.
\end{fact}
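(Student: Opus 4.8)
The plan is to assemble the pieces, since the essential computation is already displayed in the statement and only routine verifications remain. First I would justify the existence of the right $N$-linear unitary $u\colon H\to K$: because $H_N$ and $K_N$ are both absorbing representations of $N$, they are isomorphic as right $N$-modules by the uniqueness of absorbing representations noted just after the definition (if ${}_NH$ and ${}_NK$ are absorbing, then $H\cong H\oplus K\cong K\oplus H\cong K$). Fix such a $u$, so $u(\xi\cdot n)=u(\xi)\cdot n$ for all $n\in N$ and $\xi\in H$.

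Next I would check that $\rho(m):=u^*\pi_K(m)u$ lands in $M$. The four-term computation in the statement shows $\rho(m)$ commutes with the right action of every $n\in N$ on $H$; since $H$ is invertible, the commutant of the right $N$-action in $B(H)$ is exactly $M$, so $\rho(m)\in M$. I would then verify that $\rho\colon M\to M$ is a normal unital $*$-endomorphism: it is unital since $\pi_K$ is unital and $u$ is a unitary, so $\rho(1)=u^*u=\id_H$; it is multiplicative since $uu^*=\id_K$ gives $\rho(m_1)\rho(m_2)=u^*\pi_K(m_1)u\,u^*\pi_K(m_2)u=u^*\pi_K(m_1m_2)u=\rho(m_1m_2)$; it is $*$-preserving because $\pi_K$ and conjugation by a unitary both preserve adjoints; and it is normal because $\pi_K$ is a normal representation and conjugation by a fixed unitary is weak-$*$ continuous.

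Finally I would exhibit the bimodule isomorphism ${}_MK_N\cong{}_{\rho(M)}H_N$ via $u^*\colon K\to H$. It is right $N$-linear because $u$ is; and for the left action, $m$ acts on $K$ as $\pi_K(m)$ while on $H$ with the $\rho$-twisted structure it acts as $\rho(m)$, and $u^*\pi_K(m)=u^*\pi_K(m)uu^*=\rho(m)u^*$, so $u^*$ intertwines the two $M$-actions. Hence $u^*$ is a bimodule unitary realizing the claimed isomorphism.

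I do not expect a genuine obstacle here: the only points needing care are that ``absorbing for $N$'' is applied to the \emph{right} module structures (so that $u$ can be taken right $N$-linear) and that invertibility of $H$ is precisely what identifies the commutant of the right $N$-action in $B(H)$ with $M$, so that $\rho$ has the asserted target; the rest is bookkeeping among the four module actions in play.
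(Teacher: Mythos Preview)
Your proposal is correct and follows exactly the approach indicated in the paper: the Fact is stated with its essential computation already embedded, and you have simply filled in the routine verifications (existence of $u$, that $\rho$ is a normal unital $*$-endomorphism, and that $u^*$ realizes the bimodule isomorphism) that the paper leaves implicit. There is nothing to add or correct.
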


Recall that for a properly infinite von Neumann algebra $M$, the $\rmW^*$-tensor category $\Bim(M)$ of bimodules of $M$ is equivalent to the projection completion of the $\rmW^*$-tensor category $\End(M)$ of normal unital endomorphisms of $M$\footnote{When $M$ is properly infinite, the $\rmW^*$-tensor category $\End(M)$ admits orthogonal direct sums by \cite[Halving Lemma 6.3.3]{MR1468230}.}
via the map 
$$
\End(M) \ni \rho 
\longmapsto
{}_\rho L^2M
\in \Bim(M)
$$
where on the right hand side, the left action is twisted by $\rho$.

\begin{defn}
We call $\rho\in\End(M)$ \emph{absorbing} if the left $M$-module ${}_\rho L^2M$ is absorbing.
\end{defn}

\begin{lem}
\label{lem:EndoInjectiveIffAbs}
An endomorphism of a properly infinite von Neumann algebra is absorbing if and only if it is injective.
\end{lem}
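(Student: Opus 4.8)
The plan is to unwind the definition of an absorbing endomorphism and feed it into the characterization of absorbing representations from Lemma \ref{lem:absorbing}. Recall that $\rho\in\End(M)$ is absorbing exactly when the left $M$-module ${}_\rho L^2M$ — on which $m\in M$ acts by left multiplication by $\rho(m)$ — is an absorbing representation, which by Lemma \ref{lem:absorbing} holds if and only if this representation is faithful \emph{and} its commutant $\rho(M)'\subseteq B(L^2M)$ is properly infinite. So the whole statement reduces to checking these two conditions against injectivity of $\rho$.

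For the forward direction I would argue: if $\rho$ is absorbing then ${}_\rho L^2M$ is absorbing, hence a generator by Lemma \ref{lem:AbsEquiv}, hence faithful; since left multiplication by $M$ on $L^2M$ is already faithful, faithfulness of ${}_\rho L^2M$ forces $m\mapsto\rho(m)$ to be injective, i.e.\ $\rho$ is injective.

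For the converse, assume $\rho$ is injective; then ${}_\rho L^2M$ is faithful by the same reasoning, so by Lemma \ref{lem:absorbing} it suffices to show $\rho(M)'$ is properly infinite. Since $\rho$ is unital, $\rho(M)\subseteq M$ is a unital von Neumann subalgebra acting on $L^2M$, so $\rho(M)'\supseteq M'=JMJ$, and both contain the identity operator. Now $JMJ\cong M^{\mathrm{op}}$ is properly infinite because $M$ is, and any von Neumann algebra containing a properly infinite von Neumann subalgebra with the same unit is itself properly infinite — a sequence of isometries in $JMJ$ with pairwise orthogonal ranges witnessing proper infiniteness of $JMJ$ also lies in $\rho(M)'$ and witnesses it there. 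Hence $\rho(M)'$ is properly infinite, and Lemma \ref{lem:absorbing} yields that ${}_\rho L^2M$ is absorbing, i.e.\ $\rho$ is absorbing.

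There is essentially no serious obstacle here; the one point worth stating carefully is the observation that the commutant of the \emph{twisted} left action always contains the \emph{untwisted} right action $JMJ$, so the ``properly infinite commutant'' half of Lemma \ref{lem:absorbing} is automatic from proper infiniteness of $M$, and one only needs to note that the inclusion $JMJ\subseteq\rho(M)'$ is unital so that proper infiniteness transfers upward.
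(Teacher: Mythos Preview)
Your proof is correct. The forward direction matches the paper exactly. For the converse, both you and the paper reduce to the proper infiniteness of $JMJ=M'$ on $L^2M$, but package it differently: you invoke Lemma~\ref{lem:absorbing} directly and use the unital inclusion $JMJ\subseteq \rho(M)'$ together with Lemma~\ref{lem:ProperlyInfiniteInclusion} to conclude $\rho(M)'$ is properly infinite, whereas the paper instead observes that proper infiniteness of $M$ gives ${}_ML^2M\cong {}_ML^2M\otimes\ell^2$ as $M$-modules, hence \emph{a fortiori} as $\rho(M)$-modules, so ${}_{\rho(M)}L^2M$ is self-absorbing and one concludes via Lemma~\ref{lem:AbsEquiv}. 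Your route is arguably more direct and stays closer to the commutant characterization; the paper's route highlights the module-theoretic viewpoint. Both are short and rest on the same underlying fact.
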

\begin{proof}
Clearly absorbing endomorphisms are always injective by Lemma \ref{lem:absorbing}.

Conversely, if $\rho$ is injective, ${}_{\rho(M)}L^2M$ is a generator.
Since $M$ is properly infinite, ${}_ML^2M \cong {}_ML^2M \otimes \ell^2$, and thus ${}_{\rho(M)}L^2M \cong {}_{\rho(M)}L^2M\otimes \ell^2$.\footnote{We thank Andr\'e Henriques for pointing out this simple, elegant fact.}
Thus ${}_{\rho(M)}L^2M$ is self-absorbing.
We conclude ${}_{\rho(M)}L^2M$ is absorbing by Lemma \ref{lem:AbsEquiv}.
\end{proof}

\begin{cor}
Suppose $M$ is a properly infinite von Neumann algebra.
The composite of absorbing endomorphisms of $M$ is absorbing.
\end{cor}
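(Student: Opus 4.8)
The plan is to deduce this immediately from Lemma~\ref{lem:EndoInjectiveIffAbs}, which identifies the absorbing endomorphisms of a properly infinite von Neumann algebra with the injective ones. So the whole statement becomes the observation that injectivity is stable under composition.

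Concretely, I would argue as follows. Let $\rho,\sigma\in\End(M)$ be absorbing. By the forward direction of Lemma~\ref{lem:EndoInjectiveIffAbs}, both $\rho$ and $\sigma$ are injective as maps $M\to M$. Hence the composite $\rho\circ\sigma\colon M\to M$ is again a normal unital $*$-endomorphism (normality and unitality are clearly preserved under composition, so $\rho\circ\sigma$ genuinely lies in $\End(M)$), and it is injective, being a composite of injective maps. Applying the converse direction of Lemma~\ref{lem:EndoInjectiveIffAbs} to $\rho\circ\sigma$ then gives that $\rho\circ\sigma$ is absorbing, which is the claim.

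There is no real obstacle here: the only nontrivial content — the equivalence between an endomorphism being injective and the twisted left module ${}_\rho L^2M$ being a self-absorbing generator — has already been absorbed into Lemma~\ref{lem:EndoInjectiveIffAbs}, so the corollary is purely formal. If one wanted a more self-contained argument one could instead work directly at the level of modules, noting that ${}_{\rho\sigma}L^2M$ corresponds to the relative tensor product ${}_\rho L^2M \boxtimes_M {}_\sigma L^2M$ under the equivalence $\End(M)\simeq\Bim(M)$ recalled before Lemma~\ref{lem:EndoInjectiveIffAbs}, and checking that a tensor product of absorbing bimodules stays absorbing; but this is strictly more work than invoking the injectivity criterion, so the first route is the one I would take.
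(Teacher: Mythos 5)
Your argument is exactly the paper's: invoke Lemma~\ref{lem:EndoInjectiveIffAbs} to translate "absorbing" into "injective," and observe that a composite of injective endomorphisms is injective. The extra aside about the bimodule tensor product is fine but unnecessary, as you note.
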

\begin{proof}
Using the equivalence in Lemma \ref{lem:EndoInjectiveIffAbs}, we simply note that the composite of injective endomorphisms is injective.
\end{proof}

The next lemma is known to experts.
We include a proof for completeness and convenience of the reader.

\begin{lem}
\label{lem:ProperlyInfiniteInclusion}
If $N\subset M$ is an inclusion of von Neumann algebras with $N$ properly infinite, then $M$ is properly infinite.
\end{lem}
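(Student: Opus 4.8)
The plan is to recall that a von Neumann algebra $N$ is properly infinite if and only if there is a sequence of mutually orthogonal projections $\{p_i\}_{i\in\bbN}$ in $N$, each equivalent to $1_N$ in $N$, summing to $1_N$; equivalently, $1_N$ is an infinite projection and no non-zero central projection of $N$ is finite. Since $N\subset M$ is unital (so $1_N=1_M$), I want to transport such data from $N$ into $M$. First I would take, from proper infiniteness of $N$, an isometry $v\in N$ with $vv^*\neq 1_N$ and $v^*v=1_N$, i.e.\ a witness that $1_N$ is an infinite projection in $N$; since $v\in N\subset M$, the same $v$ witnesses that $1_M=1_N$ is an infinite projection in $M$.

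Next I would handle the ``proper'' part: I must rule out non-zero finite central projections of $M$. Suppose $z\in Z(M)$ is a non-zero projection with $Mz$ finite. The subtlety is that $z$ need not lie in $N$, so I cannot directly contradict proper infiniteness of $N$. Instead I would use the isometry $v\in N$ above (with $v^*v=1$, $vv^*\le 1-q$ for some non-zero projection $q$, obtained by iterating so that $1$ dominates infinitely many mutually orthogonal copies of itself inside $N$). Compressing by $z$: the elements $vz=zv$ (they commute since $z$ is central in $M$ and $v\in M$) satisfy $(vz)^*(vz)=v^*vz=z=1_{Mz}$ and $(vz)(vz)^*=vv^*z\le z$, and by choosing the copies carefully $vv^*z$ is a proper subprojection of $z$ — unless $vv^*z=z$. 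So the clean approach is: from proper infiniteness of $N$ pick mutually orthogonal projections $\{e_i\}_{i\ge 1}\subset N$ with $\sum e_i=1$ and each $e_i\sim_N 1$ via partial isometries $w_i\in N$ ($w_i^*w_i=1$, $w_iw_i^*=e_i$). Then $\{ze_i\}_{i\ge1}$ are mutually orthogonal projections in $M$ summing to $z$, and $w_iz\in M$ gives $(w_iz)^*(w_iz)=z$, $(w_iz)(w_iz)^*=ze_i$, so $ze_i\sim_M z=1_{Mz}$ for all $i$. Thus $Mz$ contains an infinite sequence of mutually orthogonal projections each equivalent to its identity, contradicting that $Mz$ is finite (a finite von Neumann algebra cannot have its identity dominate infinitely many orthogonal copies of itself). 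Hence no such $z$ exists and $M$ is properly infinite.

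The main obstacle — really the only point requiring care — is the ``centrality mismatch'': the central projections of $M$ over which one must check finiteness are not controlled by $N$, so one cannot argue purely inside $N$. The resolution, as sketched, is that the \emph{partial isometries} realizing proper infiniteness live in $N\subset M$, and they remain partial isometries in $M$ after compressing by any central projection $z$ of $M$ (since $z$ commutes with everything in $M$, in particular with the $w_i$), which is exactly what is needed to exhibit $Mz$ as non-finite. An alternative packaging of the same idea: $1_N=1_M$ is a properly infinite projection in $N$, hence ``$N$ is properly infinite'' says $1_M$ is properly infinite \emph{as a projection in $N$}; and properly infiniteness of a projection is inherited by any larger von Neumann algebra containing that projection and the relevant partial isometries, so $1_M$ is properly infinite in $M$, which is the definition of $M$ being a properly infinite von Neumann algebra. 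I would write it in the first, more hands-on form to keep it self-contained.
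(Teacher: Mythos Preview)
Your proof is correct and follows essentially the same approach as the paper's: fix a non-zero central projection $z\in Z(M)$, take partial isometries in $N$ witnessing that $1_N$ is equivalent to proper subprojections of itself, and compress by $z$ (which commutes with everything in $M$, hence with these partial isometries) to conclude $z$ is infinite in $M$. The only cosmetic difference is that the paper invokes the Halving Lemma to produce just two orthogonal projections $p,q\in N$ with $p+q=1$ and $p\sim 1\sim q$, whereas you use a full sequence $\{e_i\}$; two suffice, so your preliminary step showing $1_M$ is infinite is also redundant once you have the central-projection argument.
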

\begin{proof}
Fix a non-zero projection $z\in P(Z(M))$.
Since $N$ is properly infinite, by  \cite[Halving Lemma 6.3.3]{MR1468230}, there are projections $p,q\in P(N)$ with $p+q=1$ and $p\approx 1\approx q$.
Then $pz\approx z\approx qz$, so they are all non-zero.
Since $pz\leq z$ and $z-pz=qz\neq 0$, we see that $z$ is infinite.
\end{proof}

\subsection{Superselection sectors}
\label{sec:SuperselectionSectors}

Suppose $\cP$ is a poset equipped with an order-reversing involution $p\mapsto p'$.

\begin{defn}
A \emph{superselection sector} of a $\cP$-net $A$ is a Hilbert space $K$ together with a family of normal representations $\pi_p:A_p\to B(K)$ satisfying the \emph{superselection criteria}: 
\begin{itemize}
\item (isotone) $p\leq q$ implies $\pi_q|_{A_p}=\pi_p$
\item (local) $p\leq q'$ implies $[\pi_pA_p, \pi_qA_q]=0$
\item (absorbing) $K$ is absorbing for every $A_p$, or equivalently, each $\pi_p$ is faithful.\footnote{ 
Clearly $K$ absorbing for $\pi_p(A_p)$ implies each $\pi_p$ is injective by Lemma \ref{lem:absorbing}.
Conversely, since each $A_p$ is properly infinite by Corollary \ref{cor:PNetsAreProperlyInfinite}, if each $\pi_p$ is injective, then $\pi_{p'}(A_{p'})\subset \pi_p(A_p)'$ forces $\pi_p(A_p)'$ to be properly infinite by Lemma \ref{lem:ProperlyInfiniteInclusion}.
We conclude by Lemma \ref{lem:absorbing} that $K$ is absorbing for $A_p$.
}
\end{itemize}
A \emph{morphism} from the superselection sector $(K,\pi)$ to the superselection sector $(L,\sigma)$ is a bounded operator $T: K\to L$ called an \emph{intertwiner} such that
$T\pi_p(x)\xi=\sigma_p(x)T\xi$
for all $\xi\in K$, $x\in A_p$, and $p\in\cP$.
Two superselection sectors are \emph{equivalent} if there is a unitary intertwiner between them.
\end{defn}

Superselection sectors form a $\rmW^*$-category $\SSS$ in the sense of \cite{MR808930}.
Clearly $\SSS$ admits orthogonal direct sums.

\begin{prop}
\label{prop:IdempotentsSplit}
Suppose $(K,\pi)$ is a superselection sector of the $\cP$-net $A$ and $P\in \End(K,\pi)$ is an intertwining projection.
If the central support $z_p(P)\in Z(A_p)$ is 1 for every $p\in\cP$, then $P$ splits orthogonally.
\end{prop}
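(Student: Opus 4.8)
The plan is to promote the single projection $P \in \End(K,\pi)$ to a compatible family of projections inside each $A_p$ and then splice together the resulting sub-representations into a superselection sector. Concretely, for each $p \in \cP$ we have $\pi_p : A_p \to B(K)$ and $P \in \pi_p(A_p)'$ since $P$ is an intertwiner. Because $\pi_p$ is faithful (the absorbing axiom), the module ${}_{\pi_p(A_p)}PK$ has central support equal to $z_p(P)$ under the identification $Z(\pi_p(A_p)) \cong Z(A_p)$; by hypothesis this is $1$, so ${}_{\pi_p(A_p)}PK$ is a faithful (generating) $A_p$-module via $x \mapsto \pi_p(x)|_{PK}$. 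I would denote this compressed representation $\pi_p^P : A_p \to B(PK)$.

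Next I would verify the three superselection criteria for $(PK, \pi^P)$. Isotony is immediate: if $p \leq q$ then $\pi_q|_{A_p} = \pi_p$, and compressing by $P$ commutes with restriction, so $\pi_q^P|_{A_p} = \pi_p^P$. Locality is likewise inherited: if $p \leq q'$ then $[\pi_p(A_p), \pi_q(A_q)] = 0$ on $K$, hence certainly $[\pi_p^P(A_p), \pi_q^P(A_q)] = 0$ on $PK$. The one nontrivial point is the absorbing/faithfulness criterion, and this is exactly where the central-support hypothesis $z_p(P) = 1$ is used: faithfulness of $\pi_p^P$ is equivalent to ${}_{\pi_p(A_p)}PK$ being a generator, which by the generator-equals-faithful remark holds iff the central support of $P$ in $\End_{A_p}(K) = \pi_p(A_p)'$ is $1$. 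Transporting this central support across the faithful normal isomorphism $\pi_p$ gives precisely the hypothesis. So $(PK, \pi^P)$ is a genuine superselection sector, and $P$ viewed as a coisometry $K \twoheadrightarrow PK$ (or the inclusion $PK \hookrightarrow K$) is an intertwiner realizing the splitting of $P$.

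The only subtlety I anticipate is bookkeeping the identification of centers and the claim ``central support of $P$ in $\pi_p(A_p)'$ corresponds to $z_p(P) \in Z(A_p)$ under $\pi_p$.'' Since $\pi_p$ is a faithful normal unital $*$-homomorphism, it restricts to a normal $*$-isomorphism $Z(A_p) \to Z(\pi_p(A_p))$, and $Z(\pi_p(A_p)) = \pi_p(A_p) \cap \pi_p(A_p)' = Z(\pi_p(A_p)')$; the central support of the projection $P \in \pi_p(A_p)'$ is computed in $\pi_p(A_p)'$ and lands in this common center, so $z_p(P)$ is well-defined and the hypothesis $z_p(P) = 1$ says exactly that $P$ is a ``full'' projection in $\pi_p(A_p)'$. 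I would spell this out in a line or two but expect no real obstacle beyond care with which algebra each central support is taken in. One should also note $PK$ is separable and (being a nonzero faithful module over a properly infinite algebra) is of the right form, so it sits in $\SSS$; this is automatic.

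\begin{proof}
Since $P\in\End(K,\pi)$, we have $P\in \pi_p(A_p)'$ for every $p\in\cP$.
For each $p$, let $\pi_p^P\colon A_p\to B(PK)$ be the compression $\pi_p^P(x):=\pi_p(x)|_{PK}=P\pi_p(x)P$, which is a normal representation of $A_p$ on $PK$.

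We check that $(PK,\pi^P)$ is a superselection sector.
For isotony, if $p\leq q$ then $\pi_q|_{A_p}=\pi_p$, and compressing both sides by $P$ gives $\pi_q^P|_{A_p}=\pi_p^P$.
For locality, if $p\leq q'$ then $[\pi_p(A_p),\pi_q(A_q)]=0$ on $K$, so compressing by $P$ gives $[\pi_p^P(A_p),\pi_q^P(A_q)]=0$ on $PK$.

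For the absorbing criterion, it suffices to show each $\pi_p^P$ is faithful.
Since $\pi_p$ is faithful by hypothesis, it restricts to a normal $*$-isomorphism $Z(A_p)\xrightarrow{\ \sim\ }Z(\pi_p(A_p))$, and $Z(\pi_p(A_p))=\pi_p(A_p)\cap\pi_p(A_p)'$ is also the center of $\pi_p(A_p)'=\End_{A_p}(K)$.
The central support of the projection $P$ in $\pi_p(A_p)'$, which lies in this common center, is by hypothesis $\pi_p(z_p(P))=\pi_p(1)=1$.
Equivalently, the $\pi_p(A_p)$-module $PK$ has central support $1$, hence is a generator for $\Mod(\pi_p(A_p))$, hence is faithful over $\pi_p(A_p)$.
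Precomposing with the faithful map $\pi_p$, we conclude $\pi_p^P\colon A_p\to B(PK)$ is faithful, so $PK$ is absorbing for $A_p$.
As $PK$ is separable and nonzero, $(PK,\pi^P)$ is a superselection sector.

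Finally, the inclusion $v\colon PK\hookrightarrow K$ is an isometry satisfying $v\,\pi_p^P(x)=\pi_p(x)\,v$ for all $x\in A_p$ and all $p\in\cP$, hence is an intertwiner $(PK,\pi^P)\to(K,\pi)$, and $vv^\dag=P$.
Thus $P$ splits orthogonally.
\end{proof}
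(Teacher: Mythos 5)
Your proof is correct and follows essentially the same route as the paper's: compress $\pi_p$ by $P$ to define $\sigma_p := \pi_p(\cdot)|_{PK}$, observe isotony and locality are inherited since $P \in \pi_p(A_p)'$ for all $p$, translate $z_p(P)=1$ into faithfulness of the compressed representation via the standard "central support $1$ $\Leftrightarrow$ multiplication by $P$ is injective" fact, and exhibit the inclusion isometry $v: PK \hookrightarrow K$ as the splitting intertwiner with $vv^\dag = P$. Your extra paragraph making explicit the identification $Z(A_p) \cong Z(\pi_p(A_p)) = Z(\pi_p(A_p)')$ under the faithful normal map $\pi_p$ is a useful clarification of a point the paper leaves implicit, but it is not a different argument.
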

\begin{proof}
Observe that the maps $\sigma_p: A_p\to B(PK)$ given by $\sigma_p(x):=\pi_p(x)P$ satisfy isotony and locality as $P$ commutes with $\pi_p(A_p)$ for all $p$.
The condition $z_p(P)=1$ is exactly the condition that 
multiplication by $P$ is injective on $A_p'=A_{p'}$.
Hence $\sigma_{p}$ is injective for every $p$, and thus $(PK,\sigma)\in \SSS$.
It is clear that the inclusion isometry $v: PK\hookrightarrow K$ is an intertwiner such that $vv^\dag = P$, so $(PK,\sigma)$ splits $(K,\pi)$.
\end{proof}

We have the following immediate corollary.

\begin{cor}
If each $A_p$ is a factor, then $\SSS$ is unitarily Cauchy complete.
\end{cor}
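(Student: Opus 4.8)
The plan is to derive this from Proposition \ref{prop:IdempotentsSplit} together with the standard fact that a $\rmW^*$-category admitting orthogonal direct sums is unitarily Cauchy complete precisely when every intertwining projection splits. So first I would recall that $\SSS$ is a $\rmW^*$-category admitting orthogonal direct sums (already noted in the text). It then remains to check that every intertwining projection $P \in \End(K,\pi)$ splits orthogonally, and for this I want to invoke Proposition \ref{prop:IdempotentsSplit}, whose hypothesis is that the central support $z_p(P) \in Z(A_p)$ equals $1$ for every $p \in \cP$.

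Here is where the factor assumption enters. For each $p$, the center $Z(A_p)$ is trivial since $A_p$ is a factor, so $Z(A_p) = \bbC 1$. The central support $z_p(P)$ of the nonzero projection $P$ (we may assume $P \neq 0$, else the splitting is trivial) is a nonzero projection lying in $Z(A_p) = \bbC 1$, hence $z_p(P) = 1$. Thus the hypothesis of Proposition \ref{prop:IdempotentsSplit} is automatically satisfied, and $P$ splits. One small point worth spelling out: $z_p(P)$ is the central support of $P$ viewed as a projection in $A_p' \cap B(K)$-linear operators, i.e., as an element of the commutant relevant to the proposition; but since $A_p$ is a factor iff $A_p'$ is a factor, and $P \in \End(K,\pi) \subseteq \pi_p(A_p)'$, the central support of $P$ relative to $\pi_p(A_p)'' = \pi_p(A_p)$ (using faithfulness of $\pi_p$, so $A_p \cong \pi_p(A_p)$) is again a projection in the trivial center, hence $1$. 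I would phrase the argument so that it is clear $z_p(P)$ lives in a trivial center regardless of the precise convention, which is the entire content.

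Putting these together: given any intertwining projection $P$, either $P = 0$ (which splits trivially) or $P \neq 0$ and then $z_p(P) = 1$ for all $p$ by the factor property, so $P$ splits by Proposition \ref{prop:IdempotentsSplit}. Since $\SSS$ admits orthogonal direct sums and all its idempotents split, $\SSS$ is unitarily Cauchy complete, completing the proof. I do not anticipate any real obstacle here; the only thing requiring a moment of care is matching the notion of "central support" in Proposition \ref{prop:IdempotentsSplit} with the factoriality hypothesis, and confirming that faithfulness of each $\pi_p$ (guaranteed by the absorbing axiom for superselection sectors) lets us transport factoriality of $A_p$ to $\pi_p(A_p)$.

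\begin{proof}
We already observed that $\SSS$ admits orthogonal direct sums, so it suffices to show every intertwining projection splits orthogonally.
Let $(K,\pi)\in\SSS$ and let $P\in\End(K,\pi)$ be an intertwining projection.
If $P=0$ there is nothing to prove, so assume $P\neq 0$.
Since $(K,\pi)$ is a superselection sector, each $\pi_p$ is faithful, so $\pi_p(A_p)\cong A_p$ is a factor, and hence $Z(\pi_p(A_p))=\bbC 1$.
For each $p\in\cP$, the central support $z_p(P)$ is a non-zero projection in $Z(A_p)\cong Z(\pi_p(A_p))=\bbC 1$, and therefore $z_p(P)=1$.
By Proposition \ref{prop:IdempotentsSplit}, $P$ splits orthogonally.
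Thus every intertwining projection in $\SSS$ splits, and since $\SSS$ admits orthogonal direct sums, $\SSS$ is unitarily Cauchy complete.
\end{proof}
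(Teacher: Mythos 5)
Your proof is correct and is exactly the argument the paper has in mind (the paper simply labels it an ``immediate corollary'' of Proposition \ref{prop:IdempotentsSplit} and gives no written proof). The key observations — that factoriality forces $Z(A_p)=\bbC 1$, hence $z_p(P)=1$ for any nonzero intertwining projection $P$, and that faithfulness of $\pi_p$ lets one identify $Z(A_p)$ with $Z(\pi_p(A_p))$ — are precisely what makes the corollary immediate.
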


\begin{rem}
Assuming $(K,\pi)$ is isotone, locality is equivalent to 
\begin{equation}
\label{eq:LocalRepCondition}
[\pi_qA_q, \pi_{q'}A_{q'}]=0
\qquad\qquad
\forall\,q\in\cP.
\end{equation}
\end{rem}

Locality is usually not listed as an axiom for superselection sectors of conformal 
nets essentially because conformal nets satisfy the small generating property. 
We provide a proof in our setup below.

\begin{lem}
\label{lem:SmallGenerationImpliesIsoneRepsAreLocal}
Suppose $A=\{A_p\}_{p\in\cP}$ is a collection of von Neumann subalgebras of $B(H)$ which satisfies isotony, locality, and the small generation property.
Any isotone family of normal representations $\pi_p: A_p\to B(K)$ is automatically local.
\end{lem}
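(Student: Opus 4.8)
The plan is to reduce locality of the isotone family $\pi$ to the commutation of the representations of sufficiently ``small'' subalgebras, where ``small'' is exactly the notion appearing in the small generating property. Fix $q\in\cP$; by the preceding remark it suffices to show $[\pi_qA_q,\pi_{q'}A_{q'}]=0$, i.e.\ $[\pi_qA_q,\pi_{q'}A_{q'}]=0$ where we use that isotony plus \eqref{eq:LocalRepCondition} is equivalent to full locality. By the small generating property applied to $q$, the algebra $A_q$ is generated by the subalgebras $A_p$ with $p\leq q$ and $p$ $q$-small; applying it again to $q'$, the algebra $A_{q'}$ is generated by the $A_{\tilde q}$ with $\tilde q\leq q'$ and $\tilde q$ $q'$-small (equivalently $q$-small). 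Since each $\pi$ is a normal $*$-homomorphism, $\pi_qA_q$ is generated (as a von Neumann algebra) by the images $\pi_q(A_p)=\pi_p(A_p)$ (using isotony), and similarly $\pi_{q'}A_{q'}$ is generated by the $\pi_{\tilde q}(A_{\tilde q})=\pi_{\tilde q}(A_{\tilde q})$. So it is enough to check that $\pi_p(A_p)$ commutes with $\pi_{\tilde q}(A_{\tilde q})$ whenever $p$ is a $q$-small subalgebra below $q$ and $\tilde q$ is a $q$-small subalgebra below $q'$.

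The key step is then a purely order-theoretic observation: if $p$ is $q$-small and $p\leq q$, and $\tilde q$ is $q$-small and $\tilde q\leq q'$, then $p$ and $\tilde q$ are disjoint, i.e.\ $p\leq \tilde q'$. Here is where I would use the definition of $q$-smallness: since $\tilde q$ is $q$-small there is some $s\in\cP$ with $\tilde q, q'\leq s$; since $p\leq q$ we have $p\leq q \leq \dots$ — hmm, the direction has to be watched, so more carefully: $\tilde q$ $q$-small means there is $r$ with $\tilde q, q\leq r$ and $s$ with $\tilde q, q'\leq s$. From $\tilde q,q'\leq s$ we get $s'\leq \tilde q'$ and $s'\leq q$, i.e.\ $s'\in \tilde q'\Cap q$; and $p\leq q$... this still doesn't directly give $p\leq \tilde q'$. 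Instead I would use that $p$ is $q$-small: there is $s'$ with $p,q'\leq s'$, hence $s'{}'=s'^{\,\prime}$... Let me restate the clean version: from $p$ $q$-small we get $a$ with $p,q\leq a$ — that's not it either. The correct route: $p$ being $q$-small gives $r$ with $p,q\leq r$ and $s$ with $p,q'\leq s$. Using $\tilde q\leq q'\leq s$ and $p\leq s$ does not yet give commutation. The honest statement is that $q$-smallness of both $p$ and $\tilde q$ together with $p\le q$, $\tilde q\le q'$ forces $p\le \tilde q'$: indeed $\tilde q\le q'$ and $p\le q = q''\le \tilde q'$ — wait, $\tilde q\le q'$ implies $q'' \le \tilde q'$, i.e.\ $q\le \tilde q'$, and $p\le q$, so $p\le \tilde q'$. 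That is the whole content, and it uses only the involution axioms, not smallness at all. So in fact the smallness hypotheses are not needed for \emph{this} particular commutation; smallness is needed only to run the generation argument with subalgebras that are honestly among the $A_p$'s indexed by $\cP$.

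With that observation, the proof assembles as follows. For $p\leq q$ and $\tilde q\leq q'$ we have $p\leq q\leq \tilde q'$ (two applications of order-reversal), so $p$ and $\tilde q$ are disjoint in $\cP$; by locality of the net $A$, $[A_p,A_{\tilde q}]=0$. Applying the (isotone) representation and using $\pi_p=\pi_q|_{A_p}$, $\pi_{\tilde q}=\pi_{q'}|_{A_{\tilde q}}$, together with the fact that a $*$-homomorphism sends commuting elements to commuting elements, yields $[\pi_p(A_p),\pi_{\tilde q}(A_{\tilde q})]=0$. Now fix a normal faithful coupling: since $\pi_q$ is normal, $\pi_q(A_q)$ is the von Neumann algebra generated by $\bigcup \pi_q(A_p)=\bigcup\pi_p(A_p)$ over $q$-small $p\le q$, by the small generating property and normality (a normal representation intertwines the generation of von Neumann algebras); likewise $\pi_{q'}(A_{q'})=\big(\bigcup \pi_{\tilde q}(A_{\tilde q})\big)''$ over $q$-small $\tilde q\le q'$. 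Since each generator of the first commutes with each generator of the second, and the commutant is a von Neumann algebra, $\pi_q(A_q)\subseteq \pi_{q'}(A_{q'})'$, i.e.\ $[\pi_qA_q,\pi_{q'}A_{q'}]=0$. As $q$ was arbitrary, equation \eqref{eq:LocalRepCondition} holds, which together with isotony gives locality of $\pi$.

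The main obstacle — really the only subtlety — is the interchange of ``generated by'' with the representation: one must know that if a von Neumann algebra $N=(\bigcup_i N_i)''$ and $\pi$ is a normal unital $*$-representation of $N$, then $\pi(N)=(\bigcup_i \pi(N_i))''$. This is standard (a normal $*$-homomorphism has weakly closed image and is weakly continuous on bounded sets, so it commutes with the double-commutant closure of a family of subalgebras), but it should be invoked explicitly since the whole argument rests on it; everything else is bookkeeping with the order-reversing involution and the locality axiom of the net $A$.
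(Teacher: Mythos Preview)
Your argument has a genuine gap at the step where you conclude $[\pi_p(A_p),\pi_{\tilde q}(A_{\tilde q})]=0$ from $[A_p,A_{\tilde q}]=0$. You invoke ``a $*$-homomorphism sends commuting elements to commuting elements,'' but $\pi_p$ and $\pi_{\tilde q}$ are \emph{different} representations on $K$: $\pi_p=\pi_q|_{A_p}$ lives on $A_q$, while $\pi_{\tilde q}=\pi_{q'}|_{A_{\tilde q}}$ lives on $A_{q'}$, and neither $A_q$ nor $A_{q'}$ contains both $A_p$ and $A_{\tilde q}$. There is no single $*$-homomorphism being applied, so the commutation in $B(H)$ does not automatically transfer to $B(K)$. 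Your explicit claim that ``the smallness hypotheses are not needed for this particular commutation'' is precisely where the argument breaks.

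The fix is exactly what $q$-smallness provides, and is how the paper argues: if $p\leq q$ is $q$-small, there exists $r\in\cP$ with $p,q'\leq r$. Then $A_p,A_{q'}\subseteq A_r$, and the \emph{single} homomorphism $\pi_r$ satisfies $\pi_r|_{A_p}=\pi_p$ and $\pi_r|_{A_{q'}}=\pi_{q'}$ by isotony; now $\pi_p(x)\pi_{q'}(y)=\pi_r(xy)=\pi_r(yx)=\pi_{q'}(y)\pi_p(x)$ for $x\in A_p$, $y\in A_{q'}$. Note this already gives $[\pi_p(A_p),\pi_{q'}(A_{q'})]=0$ directly, so there is no need to invoke small generation for $q'$ at all---the paper applies it only once, to $q$, and then lets the $q$-small $p$'s generate $\pi_q(A_q)$ inside $\pi_{q'}(A_{q'})'$. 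Your double application of small generation and the subsequent commutant/bicommutant discussion are unnecessary once this point is handled correctly.
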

\begin{proof}
Suppose $p\leq q$ is $q$-small.
Let $r\in\cP$ with $p,q'\leq r$.
By isotony of $\pi$ and isotony and locality of $A$,
for all $x\in A_p$ and $y\in A_{q'}$,
\[
\pi_p(x)\pi_{q'}(y)
\underset{\text{(isotony)}}{=}
\pi_r(x)\pi_{r}(y)
=
\pi_r(xy)
\underset{\text{(locality of $A$)}}{=}
\pi_r(yx)
=
\pi_r(y)\pi_{r}(x)
\underset{\text{(isotony)}}{=}
\pi_{q'}(y)\pi_{p}(x).
\]
We then see that
$$
\bigcup_{q\text{-small }p\leq q} \pi_pA_p 
\underset{\text{(isotony)}}{=} 
\bigcup_{q\text{-small }p\leq q} \pi_qA_p 
=
\pi_q\left(\bigcup_{q\text{-small }p\leq q} A_p\right) 
\subseteq 
\left(\pi_{q'}A_q'\right)'
$$
and thus \eqref{eq:LocalRepCondition} holds.
\end{proof}

The absorbing axiom for superselection sectors is included in order to \emph{localize} a superselection sector at any chosen $p\in\cP$.

\begin{defn}
A superselection sector $(H,\pi)$ on the Hilbert space $H$ is said to be \emph{localized} at $p$ if $\pi_{p'}=\id_{A_{p'}}$.
\end{defn}

\begin{facts}
\label{facts:SSSFacts}
Here are some general facts about superselection sectors.
As above, $A$ is a $\cP$-net on the Hilbert space $H$.
\begin{enumerate}[label=\textup{(SSS\arabic*)}]

\item 
\label{SSS:AbilityToLocalize}
Given any superselection sector $(K,\pi)$ of our $\cP$-net and any $p\in\cP$, we can find an equivalent superselection sector $(H,\pi)$ localized at $p$.
We may thus restrict our attention to superselection sectors 
whose underlying Hilbert space is $H$.
\begin{proof}
Since $K$ is absorbing for $A_{p'}$, there is a unitary $u: H\to K$ intertwining the $A_{p'}$-actions.
Hence we can define an equivalent superselection sector on $H$ by setting $\pi:=\Ad(u)\circ \pi$, which is localized in $p$ by construction.
\end{proof}

\item    
\label{SSS:LocalizedPreservesAlgebra}
If the superselection sector $(H,\pi)$ is localized at $p$, then $\pi_p A_p \subseteq A_p$.
\begin{proof}
For all $x\in A_p$ and all $y\in A_{p'}$, we have
$$
\pi_p(x)y
=
\pi_p(x)\pi_{p'}(y)
\underset{(\pi \text{ is local)}}{=}
\pi_{p'}(y)\pi_p(x)
=
y\pi_p(x)
$$
and thus $\pi_p(x) \in A_{p'}' = A_p$ by Haag duality.
\end{proof}

\item 
\label{SSS:LocalizedInLarger}
If $(H,\pi)$ is localized at $p$ and $p\leq q$, then 
$q'\leq p'$, and so $\pi_{q'}=\pi_{p'}|_{A_{q'}}=\id_{A_{q'}}$.
Thus $\pi_q A_q\subseteq A_q$, and $\pi$ is also localized in $q$.

\item
\label{SSS:IntertwinerLocalized}
If $(H,\pi)$ is localized at $p$, $(H,\sigma)$ is localized at $q$, and $p,q\leq r$,
then every intertwiner $u\in B(H)$ between $\pi$ and $\sigma$ lies in $A_r$.
\begin{proof}
For all $x\in A_{r'}$,
$$
ux=u\pi_{r'}(x) =\sigma_{r'}(x)u=xu,
$$
and thus $u\in A_{r'}'=A_r$.
\end{proof}

\end{enumerate}
\end{facts}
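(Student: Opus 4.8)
The four assertions separate cleanly: \ref{SSS:AbilityToLocalize} is where the absorbing axiom does its work, while \ref{SSS:LocalizedPreservesAlgebra}--\ref{SSS:IntertwinerLocalized} are formal manipulations with isotony, locality, and Haag duality. I would prove them in the stated order.

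For \ref{SSS:AbilityToLocalize}, the plan is to invoke uniqueness of absorbing representations. The Hilbert space $H$ is absorbing for $A_{p'}$ by the defining axiom of a $\cP$-net, and $K$ is absorbing for $\pi_{p'}(A_{p'})$ by the absorbing superselection criterion; since $\pi_{p'}$ is faithful we may regard $K$ as an absorbing $A_{p'}$-module. Absorbing modules are unique up to unitary isomorphism (by the observation following the definition of absorbing), so there is an $A_{p'}$-linear unitary $u \colon H \to K$, that is, $u x = \pi_{p'}(x) u$ for all $x \in A_{p'}$. Transport the whole family: put $\widetilde{\pi}_q := u^{*}\pi_q(\,\cdot\,)u$ for each $q \in \cP$. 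Conjugation by a single fixed unitary preserves commutation relations and injectivity, so $(H,\widetilde{\pi})$ again satisfies isotony, locality, and the absorbing criterion, and $u$ is a unitary intertwiner $(H,\widetilde{\pi}) \to (K,\pi)$. Finally $\widetilde{\pi}_{p'}(x) = u^{*}\pi_{p'}(x)u = u^{*}ux = x$, so $(H,\widetilde{\pi})$ is localized at $p$; renaming $\widetilde\pi$ as $\pi$ gives the claim.

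For \ref{SSS:LocalizedPreservesAlgebra}, apply locality to the disjoint pair $p, p'$ (the hypothesis $p \leq (p')' = p$ is automatic) to get $[\pi_p A_p, \pi_{p'} A_{p'}] = 0$; since localization at $p$ means $\pi_{p'} = \id_{A_{p'}}$, each $\pi_p(x)$ commutes with all of $A_{p'}$, hence lies in $A_{p'}' = A_p$ by Haag duality. For \ref{SSS:LocalizedInLarger}, if $p \leq q$ then $q' \leq p'$, so isotony of $\pi$ gives $\pi_{q'} = \pi_{p'}|_{A_{q'}} = \id_{A_{q'}}$; thus $\pi$ is localized at $q$ as well, and \ref{SSS:LocalizedPreservesAlgebra} yields $\pi_q A_q \subseteq A_q$. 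For \ref{SSS:IntertwinerLocalized}, $p, q \leq r$ gives $r' \leq p'$ and $r' \leq q'$, so by \ref{SSS:LocalizedInLarger} both $\pi$ and $\sigma$ are localized at $r$, i.e. $\pi_{r'} = \sigma_{r'} = \id_{A_{r'}}$; the intertwiner identity $u \pi_{r'}(x) = \sigma_{r'}(x) u$ then reads $ux = xu$ for all $x \in A_{r'}$, so $u \in A_{r'}' = A_r$ by Haag duality.

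The only step with genuine content is \ref{SSS:AbilityToLocalize}, and even there the difficulty has already been packaged into the uniqueness statement for absorbing representations; the one point to check with care is that $\Ad(u^{*})$ preserves all three superselection criteria, which it does precisely because each criterion is phrased in terms of commutants and faithfulness, both invariant under conjugation by a fixed unitary. I do not anticipate any real obstacle.
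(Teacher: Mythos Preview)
Your proof is correct and follows essentially the same approach as the paper: uniqueness of absorbing representations for \ref{SSS:AbilityToLocalize}, and straightforward applications of locality, isotony, and Haag duality for \ref{SSS:LocalizedPreservesAlgebra}--\ref{SSS:IntertwinerLocalized}. Your write-up is somewhat more explicit (e.g., spelling out that both $H$ and $K$ are absorbing $A_{p'}$-modules and that conjugation by a fixed unitary preserves the superselection criteria), but the underlying arguments are identical.
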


\section{Fusion of superselection sectors}
\label{sec:Fusion}
We now construct a family of fusion products $\circ_p$ for superselection sectors, one for each $p\in\cP$ at which we may localize.
To do so, we will end up introducing some geometric axioms for $\cP$ to satisfy that will help with the construction.
For notational simplicity, we will write $\circ$ for $\circ_p$ when no confusion can arise. 

\begin{nota}
\label{nota:LocalizingNotation}
Given a $\cP$-net of algebras $A$, we often consider three  superselection sectors denoted $\pi, \sigma, \tau$ simultaneously.
By \ref{SSS:AbilityToLocalize}, we may localize them at any $p\in\cP$, meaning there exist superselection sectors
$\pi^p, \sigma^p, \tau^p$ localized at $p$
and unitary intertwiners
$u_p,v_p,w_p$ such that
$$
\pi_q(-) = u_p \pi^p_q(-) u_p^{*},
\qquad
\sigma_q(-) = v_p \sigma^p_q(-) v_p^{*},
\qquad
\text{and}
\qquad
\tau_q(-) = w_p \tau^p_q(-) w_p^{*} 
\qquad
\forall\, q\in \cP.
$$
That is, a superscript $p\in\cP$ on a superselection sector denotes that it is localized in $p$,
and a subscript on a unitary denotes that it intertwines a superselection sector to one localized in $p$.
\end{nota}

\subsection{Definition of fusion}

In this section, we define a fusion operation on $\SSS_p$.
We do so by generalizing the following example.

\begin{ex}
Consider the poset $\cP$ of non-empty proper subsets of a two-point set $\{p,p'\}$.
A net of algebras on $\cP$ is a properly infinite von Neumann algebra $A_p\subset B(H)$ with properly infinite commutant $A_{p'}=A_p'$.
A superselection sector is a Hilbert space $K$ with commuting faithful normal actions of $A_p$ and $A_{p'}$.
Since both $H,K$ are absorbing for $A_{p'}$, as in Fact \ref{fact:AbsorbingGivesEndomorphism}, there is an $A_{p'}$-linear unitary $u:H\to K$ and $\rho(x):=u^*\pi_K(x)u$ is an absorbing (injective) endomorphism of $A_p$.
In this way, we see that $\SSS_p$ is equivalent to $\End_{\rm inj}(\sB A_p)$, the category of faithful $*$-endomorphisms of $A_p$ and intertwiners.
This latter category clearly has a tensor product, namely composition.
\end{ex}

To introduce our first geometric axiom, for $p,q\in \cP$, we define the following notions.
\begin{itemize}
\item 
We say $p$ is \emph{$q$-small} if there are $r,s\in\cP$ such that $p,q\leq r$ and $p,q'\leq s$.
(Observe $q$-small is identical to $q'$-small.)
\item
A $\widetilde{p}\leq p$ is called a \emph{$q$-indicator} if $\widetilde{p}\leq q$ or $\widetilde{p}\leq q'$.
\end{itemize}
\begin{itemize}
\item[\text{\ref{geom:qSmallqIndicator}}]
\textup{\underline{Small indicators:}
For any $p,q\in\cP$, there is a $q$-small $q$-indicator $\widetilde{p}\leq p$.
}
\end{itemize}

\begin{construction}
\label{constr:circp}
Fix $p\in\cP$, and suppose we have superselection sectors $\pi,\sigma$ localized at $p$.
For an arbitrary $q\in\cP$, there are 3 cases.
\begin{enumerate}[label=\underline{Case \arabic*:}]
\item 
If $p\leq q'$,
then $\pi_q|_{A_q}=\id_{A_q}=\sigma_q|_{A_q}$,
so we may define
$(\pi\circ \sigma)_q := \id_{A_q}=\pi_q\circ \sigma_q$.
\item 
If $p\leq q$,
then by \ref{SSS:LocalizedInLarger}, 
$\pi_q A_q\subseteq A_q$ and $\sigma_q A_q\subseteq A_q$, so we may define
$(\pi\circ \sigma)_q := \pi_q\circ \sigma_q$.
\item
If neither of the first two cases hold, then we invoke the 
existence of a $q$-small $q$-indicator $\widetilde{p}\leq p$ from \ref{geom:qSmallqIndicator} in order to reduce to Case 1 or Case 2.

We know that $\widetilde{p}\leq q$ or $\widetilde{p}\leq q'$.
(One may choose to prioritize the former rather than the latter if they prefer, but it is not necessary.)
In either case, 
there exist 
superselection sectors $\pi^{\widetilde{p}}, \sigma^{\widetilde{p}}$ localized at $\widetilde{p}\leq p$ and intertwining unitaries 
$u_{\widetilde{p}},v_{\widetilde{p}}\in A_p$
by \ref{SSS:IntertwinerLocalized}
such that
$\pi(-)=u_{\widetilde{p}}\pi^{\widetilde{p}}(-)u_{\widetilde{p}}^*$
and
$\sigma(-)=v_{\widetilde{p}}\sigma^{\widetilde{p}}(-)v_{\widetilde{p}}^*$.

The main idea now is to define the composite $\pi^{\widetilde{p}}_q\circ \sigma^{\widetilde{p}}_q$ of the equivalent superselection sectors 
$\pi^{\widetilde{p}}, \sigma^{\widetilde{p}}$, and then use the operator 
$u_{\widetilde{p}}\pi^{\widetilde{p}}_p(v_{\widetilde{p}})$
as an `intertwiner' to define the composite $(\pi\circ \sigma)_q$.
\begin{enumerate}[label=\underline{Case 3\alph*:}]
\item 
If $\widetilde{p}\leq q'$,
then as in Case 1,
$\pi^{\widetilde{p}}_q = \id_{A_q}=\sigma^{\widetilde{p}}_q$, so we may define
$$
(\pi\circ\sigma)_q(x)
:=
u_{\widetilde{p}}\pi^{\widetilde{p}}_p(v_{\widetilde{p}})
x
\pi^{\widetilde{p}}_p(v_{\widetilde{p}})^*u_{\widetilde{p}}^*
=
\Ad(u_{\widetilde{p}}\pi^{\widetilde{p}}_p(v_{\widetilde{p}}))(x)
\qquad\qquad
\forall\, x\in A_q.
$$

\item 
If $\widetilde{p}\leq q$,
then as in Case 2, 
$\pi^{\widetilde{p}}_q A_q\subseteq A_q$
and
$\sigma^{\widetilde{p}}_q A_q\subseteq A_q$,
so we may define
$$
(\pi\circ\sigma)_q(x)
:=
u_{\widetilde{p}}\pi^{\widetilde{p}}_p(v_{\widetilde{p}})(\pi^{\widetilde{p}}_q\circ \sigma^{\widetilde{p}}_q)(x)
\pi^{\widetilde{p}}_p(v_{\widetilde{p}})^*u_{\widetilde{p}}^*
\qquad\qquad
\forall\, x\in A_q.
$$
\end{enumerate}
\end{enumerate}
\end{construction}

\begin{rem}
Note that in all the cases above, we have 
\begin{equation}
\label{eq:UniformDefOfComposite}
(\pi\circ\sigma)_q(x)
=
u_{\widetilde{p}}\pi^{\widetilde{p}}_p(v_{\widetilde{p}})(\pi^{\widetilde{p}}_q\circ \sigma^{\widetilde{p}}_q)(x)
\pi^{\widetilde{p}}_p(v_{\widetilde{p}})^*u_{\widetilde{p}}^*
\qquad\qquad
\forall\, x\in A_q.
\end{equation}
Indeed, in both Cases 1 and 2, 
$\widetilde{p}=p$, $u_{\widetilde{p}}=1$, and $v_{\widetilde{p}}=1$.
This uniform definition makes things easier to check later.
\end{rem}

\subsection{Fusion is well-defined}

Our next task is to show that the above composition operation $\circ_p$ is well-defined, that is, if in Case 3 we made 2 different choices, then we would get the same $(\pi\circ\sigma)_q$ for all $q\in\cP$.

\begin{rem}
\label{rem:CompositeEquivalence}
The intuition of the proof below comes from considering two endomorphisms $\pi,\sigma$ of a single von Neumann algebra $M$.
Suppose we have endomorphisms 
$\pi^{\widetilde{p}}, \sigma^{\widetilde{p}}$
and
$\pi^{\widehat{p}},\sigma^{\widehat{p}}$
equivalent to $\pi,\sigma$ respectively via unitaries
$u_{\widetilde{p}},v_{\widetilde{p}},u_{\widehat{p}},v_{\widehat{p}}\in M$, i.e., 
$$
\pi(-)
=
u_{\widetilde{p}}\pi^{\widetilde{p}}(-)u_{\widetilde{p}}^*
=
u_{\widehat{p}}\pi^{\widehat{p}}(-)u_{\widehat{p}}^*
\qquad\qquad
\text{and}
\qquad\qquad
\sigma(-)
=
v_{\widetilde{p}}\sigma^{\widetilde{p}}(-)v_{\widetilde{p}}^*
=v_{\widehat{p}}\sigma^{\widehat{p}}(-)v_{\widehat{p}}^*
.
$$
For the unitary $w:=
\pi^{\widehat{p}}(v_{\widehat{p}}^*)u_{\widehat{p}}^*
u_{\widetilde{p}}\pi^{\widetilde{p}}(v_{\widetilde{p}})\in M$,
we have
$$
\pi^{\widehat{p}}(v_{\widehat{p}}^*v_{\widetilde{p}})
u_{\widehat{p}}^*u_{\widetilde{p}}
=
w
=
u_{\widehat{p}}^*u_{\widetilde{p}}
\pi^{\widetilde{p}}(v_{\widehat{p}}^*v_{\widetilde{p}}),
$$
and a straightforward calculation proves that
$$
w(\pi^{\widetilde{p}}\circ \sigma^{\widetilde{p}})(-)w^*
=
(\pi^{\widehat{p}}\circ \sigma^{\widehat{p}})(-).
$$
\end{rem}

In the case of superselection sectors, the difficulty in proving that $\pi\circ_p \sigma$ is well-defined is that we must introduce subscripts $q\in\cP$ on the $\pi$'s in the equations above which do not \emph{a priori} match with the subscripts on the localizing unitaries.
Since these subscripts do not match, we cannot immediately see that the $w$ defined in the above way will work to intertwine our two composites for every $q\in\cP$.
To overcome this obstacle, we introduce a second geometric axiom \ref{geom:ZigZag} in order to make the subscripts agree so that the method from Remark \ref{rem:CompositeEquivalence} can be made to work.

\begin{lem}
\label{lem:FusionWellDefined}
Assuming \ref{geom:SelfDisjoint}--\ref{geom:ZigZag} below, for every $q\in\cP$, $(\pi\circ_p\sigma)_q$ is well-defined, i.e, independent of any choices in Case 3 of Construction \ref{constr:circp}.
\end{lem}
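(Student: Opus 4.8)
The plan is to reduce the problem to the model calculation of Remark \ref{rem:CompositeEquivalence} by using the geometric axioms to upgrade the localized data into data whose subscripts are uniform across all $q$. Fix $q\in\cP$ and suppose in Case 3 we have made two choices of $q$-small $q$-indicator $\widetilde p,\widehat p\leq p$, together with the associated localized sectors $\pi^{\widetilde p},\sigma^{\widetilde p},\pi^{\widehat p},\sigma^{\widehat p}$ and intertwining unitaries $u_{\widetilde p},v_{\widetilde p},u_{\widehat p},v_{\widehat p}\in A_p$. Write $(\pi\circ\sigma)_q^{\widetilde p}$ and $(\pi\circ\sigma)_q^{\widehat p}$ for the two resulting operators, both of the uniform shape \eqref{eq:UniformDefOfComposite}. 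We want these to agree.

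First I would handle the case $\widetilde p=\widehat p$ (same indicator, possibly different localized representatives and unitaries): here the two choices differ only by unitaries inside $A_p$, and the computation is exactly Remark \ref{rem:CompositeEquivalence} applied inside $A_p$ — the subscripts on the $\pi^{\widetilde p}$'s are all consistent because everything relevant happens at $\widetilde p$ or is being conjugated by elements of $A_p$; one checks that the candidate intertwiner $w:=\pi^{\widehat p}_p(v_{\widehat p}^*)u_{\widehat p}^*u_{\widetilde p}\pi^{\widetilde p}_p(v_{\widetilde p})\in A_p$ does the job for every $q$, using isotony of $\pi$ (and the fact that $\pi^{\widetilde p}$, $\pi^{\widehat p}$ localized at $\widetilde p$ have $\pi_r=\id$ off $\widetilde p$) to move the subscript from $p$ down to whatever is needed. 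The key point is that $w\in A_p$, so $w$ is itself an intertwiner of the relevant superselection sectors localized at $p$ (by \ref{SSS:IntertwinerLocalized}-type reasoning), and hence the two composites, being $\Ad(w)$-conjugate at each $q$, define the same superselection sector.

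Next I would reduce the general case (two genuinely different indicators $\widetilde p,\widehat p$) to a chain of the previous case using \ref{geom:ZigZag}. By that axiom there is a zig-zag $(\widetilde p=z_1,y_1,z_2,\dots,y_n,z_{n+1}=\widehat p)$ inside $p$ in which every $z_j$ is a $q$-indicator (hence a legitimate choice in Case 3 for this particular $q$) and every $y_j$ is $q$-small. For each consecutive pair $z_j,z_{j+1}\leq y_j$, both $z_j$ and $z_{j+1}$ are $\leq y_j\leq p$, so one can relocalize $\pi,\sigma$ at $z_j$, at $z_{j+1}$, and at the common larger $y_j$; the intertwining unitaries between the $z_j$-localized version and the $y_j$-localized version lie in $A_{y_j}\subseteq A_p$ by \ref{SSS:IntertwinerLocalized}, and similarly for $z_{j+1}$. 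Applying the same-larger-element argument (again the Remark \ref{rem:CompositeEquivalence} computation, now carried out inside $A_{y_j}$, with $z_j$ playing the role of one indicator and $z_{j+1}$ the other, mediated through $y_j$) shows $(\pi\circ\sigma)_q^{z_j}=(\pi\circ\sigma)_q^{z_{j+1}}$ for every $q$. Composing these equalities along the zig-zag gives $(\pi\circ\sigma)_q^{\widetilde p}=(\pi\circ\sigma)_q^{\widehat p}$, which is the claim. (One also notes $n$ and the zig-zag may depend on $q$, but that is harmless since we are proving the statement one $q$ at a time.)

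The main obstacle, and the reason \ref{geom:ZigZag} is needed at all, is precisely the subscript-matching issue flagged in the paragraph before the lemma: the localizing unitaries $u_{\widetilde p},v_{\widetilde p}$ carry the subscript $\widetilde p$, but to verify the intertwining identity at a given $q$ one must push $\pi^{\widetilde p}$ down to subscripts like $z_j,y_j$, and the naive $w$ need not conjugate $(\pi^{\widetilde p}\circ\sigma^{\widetilde p})_q$ to $(\pi^{\widehat p}\circ\sigma^{\widehat p})_q$ unless $\widetilde p,\widehat p$ are comparable with a common upper bound below $p$. The zig-zag splits the incomparable situation into a sequence of comparable ones (each $z_j,z_{j+1}\leq y_j$), at which point the elementary computation of Remark \ref{rem:CompositeEquivalence} applies verbatim inside $A_{y_j}$. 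So the bulk of the work is bookkeeping: carefully tracking which algebra each unitary lives in, invoking isotony of $\pi$ and \ref{SSS:IntertwinerLocalized} to relocate subscripts, and assembling the telescoping product of $\Ad(w_j)$'s into a single equality of superselection sectors.
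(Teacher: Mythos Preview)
Your high-level architecture is right and matches the paper: use \ref{geom:ZigZag} to connect $\widetilde p$ and $\widehat p$ by a zig-zag $(z_1,y_1,\dots,z_{n+1})$ inside $p$ with each $z_j$ a $q$-indicator and each $y_j$ $q$-small, then verify the intertwining one step $z_j\to z_{j+1}$ at a time. But the justification you give for each step has a real gap.

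You write that for consecutive $z_j,z_{j+1}\leq y_j$ the computation of Remark~\ref{rem:CompositeEquivalence} can simply be ``carried out inside $A_{y_j}$.'' That does not work: the element $x$ you must intertwine lives in $A_q$, and $q$ is not assumed $\leq y_j$, so $A_q\not\subseteq A_{y_j}$ in general. The purpose of requiring each $y_j$ to be \emph{$q$-small} (not merely to contain $z_j,z_{j+1}$) is exactly to produce an $r_j\in\cP$ with $y_j,q\leq r_j$ (or an $s_j$ with $y_j,q'\leq s_j$). Only then can isotony push the subscripts on $\pi^{z_j}_p(\,\cdot\,)$ from $p$ down to $y_j$ and then up to $r_j$, so that the unitary $w$ and the operators $\pi^{z_j}_q(\sigma^{z_j}_q(x))$ all live in the \emph{same} algebra $A_{r_j}$, where the Remark~\ref{rem:CompositeEquivalence} calculation goes through. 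You mention $q$-smallness but never cash it in; as written, your argument would go through for an arbitrary zig-zag inside $p$, which is false.

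Second, you suppress a case distinction that cannot be avoided. Because each $z_j$ is only a $q$-\emph{indicator}, you have $z_j\leq q$ or $z_j\leq q'$, and the two alternatives behave differently: when $z_j\leq q'$ the composite $\pi^{z_j}_q\circ\sigma^{z_j}_q$ is the identity on $A_q$, whereas when $z_j\leq q$ it is a genuine endomorphism. The paper accordingly splits into three cases (both $z_j,z_{j+1}\leq q'$; both $\leq q$; one of each), and the computation is different in each. Your ``same-indicator'' preliminary step is in fact just the degenerate zig-zag $(\widetilde p,\widetilde p,\widetilde p)$ and does not avoid this analysis. So: the plan is correct, but you still owe the crucial use of $q$-smallness of $y_j$ to obtain a common roof $r_j$ over $y_j$ and $q$, and the case split on where the $z_j$ sit relative to $q$.
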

\begin{proof}
Suppose $\widetilde{p},\widehat{p}\leq p$ are $q$-small $q$-indicators, and choose superselection sectors $\pi^{\widetilde{p}}, \sigma^{\widetilde{p}}$ localized at $\widetilde{p}$
and
$\pi^{\widehat{p}},\sigma^{\widehat{p}}$ localized at $\widehat{p}$ 
together with intertwining unitaries 
$u_{\widetilde{p}},v_{\widetilde{p}},u_{\widehat{p}},v_{\widehat{p}} \in A_p$
(by \ref{SSS:IntertwinerLocalized})
such that
$$
\pi_q(\cdot)
=
u_{\widetilde{p}}\pi^{\widetilde{p}}_q(\cdot)u_{\widetilde{p}}^*
=
u_{\widehat{p}}\pi^{\widehat{p}}_q(\cdot)u_{\widehat{p}}^*
\qquad\qquad
\text{and}
\qquad\qquad
\sigma_q(\cdot)
=
v_{\widetilde{p}}\sigma^{\widetilde{p}}_q(\cdot)v_{\widetilde{p}}^*
=v_{\widehat{p}}\sigma^{\widehat{p}}_q(\cdot)v_{\widehat{p}}^*
\quad\qquad
\forall\,q.
$$
We need to prove that 
$$
u_{\widetilde{p}}\pi^{\widetilde{p}}_p(v_{\widetilde{p}})(\pi^{\widetilde{p}}_q\circ \sigma^{\widetilde{p}}_q)(x)
\pi^{\widetilde{p}}_p(v_{\widetilde{p}})^*u_{\widetilde{p}}^*
=
u_{\widehat{p}}\pi^{\widehat{p}}_p(v_{\widehat{p}})(\pi^{\widehat{p}}_q\circ \sigma^{\widehat{p}}_q)(x)
\pi^{\widehat{p}}_p(v_{\widehat{p}})^*u_{\widehat{p}}^*
\qquad\qquad
\forall\, x\in A_q.
$$
This is equivalent to proving 
\begin{equation}
\label{eq:IntertwineComposite}
w(\pi^{\widetilde{p}}_q\circ \sigma^{\widetilde{p}}_q)(x)w^*
=
(\pi^{\widehat{p}}_q\circ \sigma^{\widehat{p}}_q)(x)
\qquad\qquad
\forall\, x\in A_q
\end{equation}
where
\begin{equation*}
\begin{aligned}
w:=
\pi^{\widehat{p}}_p(v_{\widehat{p}}^*)u_{\widehat{p}}^*
u_{\widetilde{p}}\pi^{\widetilde{p}}_p(v_{\widetilde{p}})
&=
\pi^{\widehat{p}}_p(v_{\widehat{p}}^*)u_{\widehat{p}}^*
\pi_p(v_{\widetilde{p}})u_{\widetilde{p}}
=
\pi^{\widehat{p}}_p(v_{\widehat{p}}^*)\pi^{\widehat{p}}_p(v_{\widetilde{p}})
u_{\widehat{p}}^*u_{\widetilde{p}}
=
\pi^{\widehat{p}}_p(v_{\widehat{p}}^*v_{\widetilde{p}})
u_{\widehat{p}}^*u_{\widetilde{p}}
\\
&=
u_{\widehat{p}}^*\pi_p(v_{\widehat{p}}^*)
u_{\widetilde{p}}\pi^{\widetilde{p}}_p(v_{\widetilde{p}})
=
u_{\widehat{p}}^*u_{\widetilde{p}}
\pi^{\widetilde{p}}_p(v_{\widehat{p}}^*)\pi^{\widetilde{p}}_p(v_{\widetilde{p}})
=
u_{\widehat{p}}^*u_{\widetilde{p}}
\pi^{\widetilde{p}}_p(v_{\widehat{p}}^*v_{\widetilde{p}}) \in A_p.
\end{aligned}
\end{equation*}

The main idea of the proof is to show that the assumptions of $q$-indicator and $q$-small imply that it makes sense to say that $w\in A_p$ intertwines the composites
$\pi^{\widetilde{p}}_q\circ \sigma^{\widetilde{p}}_q$ and $\pi^{\widehat{p}}_q\circ \sigma^{\widehat{p}}_q$.
To do so, we introduce another geometric axiom.

\begin{rem}
If we are not careful, we might introduce an axiom that is far too strong to work for all the cases we wish to consider.
For example, if we focused too heavily on intervals in $S^1$ following \cite{MR1231644}, we might be tempted to enforce the following axiom.
\begin{enumerate}[label=($!$)]
\item
\label{geom:StrongConnected}
If $p$ is not contained in $q$ or $q$ is not contained in $p$ (so that we are not in Case 1 or 2 of Construction \ref{constr:circp}), then whenever $\widetilde{p},\widehat{p}\leq p$ and $\widetilde{p},\widehat{p}\leq q'$, there is an $r\leq p$ with $r\leq q'$ such that $\widetilde{p},\widehat{p}\leq r$.
\end{enumerate}
Indeed, \ref{geom:StrongConnected} is satisfied for intervals in $S^1$ (as $p\setminus q$ is connected), but fails for cones in $\bbR^2$ (see \eqref{eq:ZigZagOfCones}).
\end{rem}

Instead, we enforce a much weaker `connectivity' axiom of $\leq$
involving `zig-zags' which allow us to wiggle between elements of $\cP$.

\begin{nota}
A \emph{zig-zag} $\widetilde{p}\leftrightsquigarrow\widehat{p}$ is a sequence
$
(z_1,y_1,\dots, z_n,y_n,z_{n+1})\subset\cP
$
with $z_1=\widetilde{p}$ and $z_{n+1}=\widehat{p}$
such that
$z_j,z_{j+1}\leq y_j$ for all $j$.
We say a zig-zag $(\widetilde{p}=z_1,y_1,\dots, z_n,y_n,z_{n+1}=\widehat{p})$
is 
\begin{itemize}
\item
\emph{contained in} $p\in\cP$
if every $y_i,z_j\leq p$ and
\item 
\emph{disjoint from} $q\in\cP$
if it is contained in $q'$.
\end{itemize}
For an example of a zig-zag of cones in $\bbR^2$, see \eqref{eq:ZigZagOfCones}.
Our zig-zag axiom is as follows.
\begin{enumerate}[label=\textup{\ref{geom:ZigZag}}]
\item 
\underline{Zig-zag:}
For every $p,q\in\cP$ and any two choices of $q$-small $q$-indicator $\widetilde{p},\widehat{p}\leq p$ as in \ref{geom:qSmallqIndicator},
there is a zig-zag between $\widetilde{p},\widehat{p}$ contained in $p$ such that each $z_j$ is a $q$-indicator and each $y_j$ is $q$-small.
(Observe this immediately implies each $z_j$ is also $q$-small.)
\begin{equation*}
\tag{\ref{eq:ZigZagOfCones}}
\tikzmath{
\fill[fill=blue!20] (-2.25,-1) rectangle (2.25,1);
\fill[fill=red!50, opacity=.5] (135:2.5cm) -- (0,0) -- (45:2.5cm);
\draw (135:2.5cm) -- (0,0) -- (45:2.5cm);
\draw (-2.25,1) -- (2.25,1);
\draw (-2.25,.25) -- (-1,.25) -- (-1.2,-1);
\draw (2.25,.25) -- (1,.25) -- (1.2,-1);
\draw (-2.25,.5) -- (-1,.5) -- (1,-1);
\draw (2.25,.5) -- (1,.5) -- (-1,-1);
\draw (.8,-1) -- (0,-.4) -- (-.8,-1);
\node at (-1.75,-.5) {$\scriptstyle \widetilde{p}$};
\node at (1.75,-.5) {$\scriptstyle \widehat{p}$};
\node at (-.75,-.3) {$\scriptstyle y_1$};
\node at (.75,-.3) {$\scriptstyle y_2$};
\node at (0,-.75) {$\scriptstyle z_2$};
\node at (0,1.3) {$\scriptstyle q$};
\node at (-1,.75) {$\scriptstyle p$};
}
\qquad\qquad
(z_1=\widetilde{p},y_1,z_2,y_2,z_3=\widehat{p})
\end{equation*}
\end{enumerate}
\end{nota}

Clearly \ref{geom:StrongConnected} implies \ref{geom:ZigZag}.
Moreover, \ref{geom:ZigZag} holds for cones in $\bbR^2$.
We sketch a proof in Appendix \ref{appendix:ConesSatisfyGAs} below for the convenience of the reader.

We now resume our proof that $\pi\circ_p \sigma$ is well-defined using our zig-zag axiom \ref{geom:ZigZag}. 
Fix $q\in\cP$, and let
$(\widetilde{p}=z_1,y_1,\dots, z_n,y_n,z_{n+1}=\widehat{p})$
be a zig-zag
contained in $p$
as in \ref{geom:ZigZag}.
To prove that
$$
u_{\widetilde{p}}\pi^{\widetilde{p}}_p(v_{\widetilde{p}})(\pi^{\widetilde{p}}_q\circ \sigma^{\widetilde{p}}_q)(-)
\pi^{\widetilde{p}}_p(v_{\widetilde{p}})^*u_{\widetilde{p}}^*
=
u_{\widehat{p}}\pi^{\widehat{p}}_p(v_{\widehat{p}})(\pi^{\widehat{p}}_q\circ \sigma^{\widehat{p}}_q)(-)
\pi^{\widehat{p}}_p(v_{\widehat{p}})^*u_{\widehat{p}}^*
$$
it suffices to prove that
\begin{equation}
\label{eq:IntertwineComposite-j}
u_{z_j}\pi^{z_j}_p(v_{z_j})(\pi^{z_j}_q\circ \sigma^{z_j}_q)(x)
\pi^{z_j}_p(v_{z_j})^*u_{z_j}^*
=
u_{z_{j+1}}\pi^{z_{j+1}}_p(v_{z_{j+1}})(\pi^{z_{j+1}}_q\circ \sigma^{z_{j+1}}_q)(x)
\pi^{z_{j+1}}_p(v_{z_{j+1}})^*u_{z_{j+1}}^*
\end{equation}
for all $j=1,\dots, n$ and all $x\in A_q$.
Fix a $j \in \{1, \dots, n \}$
and consider the unitary 
$$
w:= \pi^{z_{j+1}}_p(v^*_{z_{j+1}})u^*_{j+1}u_{z_j}\pi^{z_j}_p(v_{z_j}) = u^*_{z_{j+1}}u_{z_j} \pi^{z_j}_p(v^*_{z_{j+1}})\pi^{z_j}_p(v_{z_j}) = u^*_{z_{j+1}}u_{z_j} \pi^{z_j}_p(v^*_{z_{j+1}}v_{z_j}).
$$
\item[\underline{Case 1:}]
Suppose $z_j,z_{j+1}\leq q'$.
Since $y_j$ is $q$-small, there is an $s_j \geq y_j, q'$, and since the zig-zag $(z_1, y_1, \dots, z_n, y_n, z_{n + 1})$ is contained in $p$, $z_j,z_{j+1}\leq y_j\leq p$.
Since $z_j,z_{j+1}\leq q'$,
$$
(\pi^{z_j}_q\circ \sigma^{z_j}_q)(x)
=
(\pi^{z_{j+1}}_q\circ \sigma^{z_{j+1}}_q)(x)
=
x
\qquad\qquad
\forall\, x\in A_q.
$$
To verify \eqref{eq:IntertwineComposite-j}, we must show $wxw^*=x$ for all $x\in A_q$, i.e., $w\in A_q'=A_{q'}$.
Since
$v_{z_{j+1}}^*v_{z_j}$ intertwines $\sigma^{z_j}$ and $\sigma^{z_{j+1}}$
and $u_{z_{j+1}}^*u_{z_j}$
intertwines $\pi^{z_j}$ and $\pi^{z_{j+1}}$,
by \ref{SSS:IntertwinerLocalized}, $u_{z_{j+1}}^*u_{z_j},v_{z_{j+1}}^*v_{z_j}$
each lie in $A_{q'}\cap A_{y_j}\cap A_p\cap  A_{s_j}$.
By isotony,
$$
\pi^{z_j}_p(v_{z_{j+1}}^*v_{z_j})
\underset{(y_j\leq p)}{=}
\pi^{z_j}_{y_j}(v_{z_{j+1}}^*v_{z_j})
\underset{(y_j\leq s_j)}{=}
\pi^{z_j}_{s_j}(v_{z_{j+1}}^*v_{z_j})
\underset{(q'\leq s_j)}{=}
\pi^{z_j}_{q'}(v_{z_{j+1}}^*v_{z_j}) \in A_{q'},
$$
and thus our unitary 
$$
w= u^*_{z_{j+1}}u_{z_j} \pi^{z_j}_p(v^*_{z_{j+1}}v_{z_j})  \in A_q'.
$$

\item[\underline{Case 2:}]
Suppose $z_j,z_{j+1} \leq q$.
Since $y_j$ is $q$-small, there is an $r_j \geq y_j, q$ where $z_j,z_{j+1}\leq y_j\leq p$ by \ref{geom:ZigZag}.
Similar to Case 1, $u_{z_{j+1}}^*u_{z_j},v_{z_{j+1}}^*v_{z_j}$
each lie in $A_{y_j}$.
To verify \eqref{eq:IntertwineComposite-j} above, we note that for $x \in A_q$, 
\begin{align*}
w \pi^{z_j}_{q}(\sigma^{z_j}_q(x)) w^* 
&=
u^*_{z_{j+1}}u_{z_j} \pi^{z_j}_p(v^*_{z_{j+1}}v_{z_j}) \pi^{z_j}_{q}(\sigma^{z_j}_q(x)) \pi^{z_j}_p(v^*_{z_j} v_{z_{j+1}})u^*_{z_j}u_{z_{j+1}}
\\&
=
u^*_{z_{j+1}}u_{z_j} \pi^{z_j}_{y_j}(v^*_{z_{j+1}}v_{z_j}) \pi^{z_j}_{q}(\sigma^{z_j}_q(x)) \pi^{z_j}_{y_j}(v^*_{z_j} v_{z_{j+1}})u^*_{z_j}u_{z_{j+1}} 
&&
(y_j\leq p)
\\&
=
u^*_{z_{j+1}}u_{z_j} \pi^{z_j}_{r_j}(v^*_{z_{j+1}}v_{z_j}) \pi^{z_j}_{r_j}(\sigma^{z_j}_q(x)) \pi^{z_j}_{r_j}(v^*_{z_j} v_{z_{j+1}})u^*_{z_j}u_{z_{j+1}} 
&&
(y_j,q\leq r_j)
\\&= 
u^*_{z_{j+1}}u_{z_j} \pi^{z_j}_{r_j}(v^*_{z_{j+1}}v_{z_j} \sigma^{z_j}_q(x) v^*_{z_j} v_{z_{j+1}})u^*_{z_j}u_{z_{j+1}} 
\\&= 
u^*_{z_{j+1}}u_{z_j} \pi^{z_j}_{r_j}(\sigma^{z_{j+1}}_q(x))u^*_{z_j}u_{z_{j+1}} 
\\&= 
\pi^{z_{j+1}}_{r_j}(\sigma^{z_{j+1}}_q(x)) 
\\&
=
\pi^{z_{j+1}}_{q}(\sigma^{z_{j+1}}_q(x))
&&
(z_{j+1}\leq q\leq r_j).
\end{align*}

\item[\underline{Case 3:}]
Suppose without loss of generality that $z_j \leq q$, $z_{j+1}\leq q'$.
Since $y_j$ is $q$-small, there is an $r_j\geq y_j,q$, and since the zig-zag $(z_1, y_1, \dots, z_n, y_n, z_{n + 1})$ is contained in $p$, 
$z_j,z_{j+1} \leq y_j \leq p$.
In this case, checking \eqref{eq:IntertwineComposite-j} reduces to checking that
$w(\pi^{z_j}_q\circ \sigma^{z_j}_q)(x)w^*=x$ for all $x\in A_q$.
As in the previous cases,  
$u_{z_{j+1}}^*u_{z_j},v_{z_{j+1}}^*v_{z_{j}}\in A_{y_j}\subset A_{r_j}$.
Hence by isotony,
$$
\pi_p(v_{z_{j+1}}^*v_{z_{j}})
\underset{(y_j\leq p)}{=}
\pi_{y_j}(v_{z_{j+1}}^*v_{z_{j}})
\underset{(y_j\leq r_j)}{=}
\pi_{r_j}(v_{z_{j+1}}^*v_{z_{j}}).
$$
Hence we may work completely in $A_{r_j}$ to check \eqref{eq:IntertwineComposite-j}, as it contains both $w$
and $A_q$.
Explicitly,
\begin{align*}
w(\pi^{z_{j}}_q\circ \sigma^{z_{j}}_q)(x)w^*
&=
u_{z_{j+1}}^*u_{z_{j}}
\pi^{z_{j}}_{r_j}(v_{z_{j+1}}^*v_{z_{j}})
(\pi^{z_{j}}_q\circ \sigma^{z_{j}}_q)(x)
\pi^{z_{j}}_{r_j}(v_{z_{j}}^*v_{z_{j+1}})
u_{z_{j}}^*u_{z_{j+1}} 
\\&=
u_{z_{j+1}}^*u_{z_{j}}
\pi^{z_{j}}_{r_j}(
v_{z_{j+1}}^*v_{z_{j}}
\sigma^{z_{j}}_q(x)
v_{z_{j}}^*v_{z_{j+1}}
)
u_{z_{j}}^*u_{z_{j+1}}
\\&=
u_{z_{j+1}}^*u_{z_{j}}
\pi^{z_{j}}_{r_j}(
\sigma^{z_{j+1}}_q(x)
)
u_{z_{j}}^*u_{z_{j+1}}.
\end{align*}
Now since $z_{j+1}\leq q'$, $\sigma^{z_{j+1}}_q = \id_{A_q}$,
so the final term above equals
$$
u_{z_{j+1}}^*u_{z_{j}}
\pi^{z_{j}}_{r_j}(x)
u_{z_{j}}^*u_{z_{j+1}}
\underset{(q\leq r_j)}{=}
u_{z_{j+1}}^*u_{z_{j}}
\pi^{z_{j}}_q(x)
u_{z_{j}}^*u_{z_{j+1}}
=
\pi^{z_{j+1}}_q(x)
=
x
$$
as again $\pi^{z_{j+1}}_q = \id_{A_q}$.
\end{proof}

\begin{lem}
Assuming \ref{geom:SelfDisjoint}--\ref{geom:ZigZag},
$\pi\circ_p \sigma$ is a superselection sector localized at $p$.
\end{lem}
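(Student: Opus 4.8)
The plan is to verify, in turn, that the family $\{(\pi\circ_p\sigma)_q\}_{q\in\cP}$ consists of normal faithful unital representations, that it is localized at $p$, and that it satisfies isotony and locality, using throughout the uniform formula~\eqref{eq:UniformDefOfComposite} together with the well-definedness from Lemma~\ref{lem:FusionWellDefined} (which lets one evaluate $(\pi\circ_p\sigma)_q$ using \emph{any} $q$-small $q$-indicator $\widetilde{p}\le p$ and any compatible choice of localizing data). First, for each $q\in\cP$, \eqref{eq:UniformDefOfComposite} exhibits $(\pi\circ_p\sigma)_q$ as $\Ad(u_{\widetilde{p}}\pi^{\widetilde{p}}_p(v_{\widetilde{p}}))$ precomposed with either $\id_{A_q}$ or the composite $\pi^{\widetilde{p}}_q\circ\sigma^{\widetilde{p}}_q$ of the normal faithful unital representations $\sigma^{\widetilde{p}}_q$ and $\pi^{\widetilde{p}}_q$; since $u_{\widetilde{p}}\pi^{\widetilde{p}}_p(v_{\widetilde{p}})$ is a unitary lying in $A_p$, each $(\pi\circ_p\sigma)_q$ is a normal faithful unital $*$-homomorphism $A_q\to B(H)$. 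Taking $q=p'$ we have $p\le p=(p')'$, so Case~1 of Construction~\ref{constr:circp} applies and $(\pi\circ_p\sigma)_{p'}=\id_{A_{p'}}$; hence the family is localized at $p$.

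For isotony, fix $q\le r$. First I would produce a single $\widetilde{p}\le p$ that is simultaneously a $q$-small $q$-indicator and an $r$-small $r$-indicator: apply~\ref{geom:qSmallqIndicator} to obtain a $q$-small $q$-indicator $\widetilde{p}_0\le p$, then apply~\ref{geom:qSmallqIndicator} to $\widetilde{p}_0$ and $r$ to obtain an $r$-small $r$-indicator $\widetilde{p}\le\widetilde{p}_0$, which remains a $q$-small $q$-indicator by Remark~\ref{rem:SmallerqSmallqIndicators}. Fixing superselection sectors $\pi^{\widetilde{p}},\sigma^{\widetilde{p}}$ localized at $\widetilde{p}$ and intertwining unitaries $u_{\widetilde{p}},v_{\widetilde{p}}\in A_p$ (via~\ref{SSS:AbilityToLocalize} and~\ref{SSS:IntertwinerLocalized}), Lemma~\ref{lem:FusionWellDefined} lets me compute both $(\pi\circ_p\sigma)_q$ and $(\pi\circ_p\sigma)_r$ from~\eqref{eq:UniformDefOfComposite} using this \emph{same} data, so $(\pi\circ_p\sigma)_r|_{A_q}=(\pi\circ_p\sigma)_q$ reduces to $(\pi^{\widetilde{p}}_r\circ\sigma^{\widetilde{p}}_r)|_{A_q}=\pi^{\widetilde{p}}_q\circ\sigma^{\widetilde{p}}_q$. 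Since $\widetilde{p}$ is both a $q$-indicator and an $r$-indicator, each of the pairs $(\widetilde{p},q)$ and $(\widetilde{p},r)$ falls into Case~1 or Case~2 of Construction~\ref{constr:circp}, so only the possibilities $\widetilde{p}\le r'$ (forcing $\widetilde{p}\le q'$), $\widetilde{p}\le q$ (forcing $\widetilde{p}\le r$), and $\widetilde{p}\le q'$ with $\widetilde{p}\le r$ occur, and each is dispatched by a short computation using isotony of $\pi^{\widetilde{p}}$ and $\sigma^{\widetilde{p}}$, isotony of $A$, and~\ref{SSS:LocalizedInLarger} applied to $\sigma^{\widetilde{p}}$ when $\widetilde{p}\le q$. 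I expect this isotony step to be the main obstacle: the genuine content is arranging a single localization point $\widetilde{p}\le p$ adapted to both $q$ and $r$ at once, which is precisely what makes the subscripts in~\eqref{eq:UniformDefOfComposite} line up so that everything collapses to isotony of the already-built sectors $\pi^{\widetilde{p}},\sigma^{\widetilde{p}}$.

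For locality, since isotony is now available it suffices by~\eqref{eq:LocalRepCondition} to prove $[(\pi\circ_p\sigma)_qA_q,(\pi\circ_p\sigma)_{q'}A_{q'}]=0$ for every $q\in\cP$. A $q$-small $q$-indicator $\widetilde{p}\le p$ (from~\ref{geom:qSmallqIndicator}) is automatically a $q'$-small $q'$-indicator, so by Lemma~\ref{lem:FusionWellDefined} both $(\pi\circ_p\sigma)_q$ and $(\pi\circ_p\sigma)_{q'}$ are obtained by conjugating $\pi^{\widetilde{p}}_q\circ\sigma^{\widetilde{p}}_q$, respectively $\pi^{\widetilde{p}}_{q'}\circ\sigma^{\widetilde{p}}_{q'}$, by the \emph{same} unitary $u_{\widetilde{p}}\pi^{\widetilde{p}}_p(v_{\widetilde{p}})$; hence it is enough to see $[(\pi^{\widetilde{p}}_q\circ\sigma^{\widetilde{p}}_q)A_q,(\pi^{\widetilde{p}}_{q'}\circ\sigma^{\widetilde{p}}_{q'})A_{q'}]=0$. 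As $\widetilde{p}$ is a $q$-indicator, interchanging $q$ and $q'$ if needed I may assume $\widetilde{p}\le q$; then $(\widetilde{p},q)$ is Case~2, so $(\pi^{\widetilde{p}}_q\circ\sigma^{\widetilde{p}}_q)A_q\subseteq A_q$ by~\ref{SSS:LocalizedInLarger}, while $(\widetilde{p},q')$ is Case~1, so $(\pi^{\widetilde{p}}_{q'}\circ\sigma^{\widetilde{p}}_{q'})A_{q'}=A_{q'}$, and $A_q$ commutes with $A_{q'}=A_q'$ by Haag duality. Finally, with isotony and locality established, faithfulness of each $(\pi\circ_p\sigma)_q$ together with Corollary~\ref{cor:PNetsAreProperlyInfinite} and Lemmas~\ref{lem:ProperlyInfiniteInclusion} and~\ref{lem:absorbing} shows $H$ is absorbing for every $(\pi\circ_p\sigma)_qA_q$, so $(\pi\circ_p\sigma)$ is a superselection sector localized at $p$.
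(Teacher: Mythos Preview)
Your proof is correct and follows essentially the same approach as the paper: localize at a single $\widetilde{p}\le p$ adapted to the relevant cones, then reduce everything to isotony/locality of the already-constructed sectors $\pi^{\widetilde{p}},\sigma^{\widetilde{p}}$. Your isotony argument is in fact slightly more explicit than the paper's (which simply writes ``by isotony for $\pi^{\widehat{p}}$ and $\sigma^{\widehat{p}}$'' and leaves the case split implicit), and your locality argument is a touch cleaner in reducing directly to $[A_q,A_{q'}]=0$ rather than expanding the conjugations, but these are stylistic differences rather than a different route.
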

\begin{proof}
\item[\underline{Localized at $p$:}]
Since $p\leq p''$, $(\pi\circ\sigma)_{p'}=\pi_{p'}\circ\sigma_{p'}=\id_{A_{p'}}$.
\item[\underline{Isotone:}]
Suppose $q\leq r$.
It suffices to show that there exists $\widehat{p} \leq p$ that is both a $q$-small $q$-indicator and an $r$-small $r$-indicator.  
Indeed, suppose we have shown this.  
Then for all $x \in A_q$, we have by isotony for $\pi^{\widehat{p}}$ and $\sigma^{\widehat{p}}$ that
$$
  (\pi \circ \sigma)_r(x)  =   u_{\widehat{p}}\pi^{\widehat{p}}_p(v_{\widehat{p}})(\pi^{\widehat{p}}_r\circ \sigma^{\widehat{p}}_r)(x)
\pi^{\widehat{p}}_p(v_{\widehat{p}})^*u_{\widehat{p}}^* =  u_{\widehat{p}}\pi^{\widehat{p}}_p(v_{\widehat{p}})(\pi^{\widehat{p}}_q\circ \sigma^{\widehat{p}}_q)(x)
\pi^{\widehat{p}}_p(v_{\widehat{p}})^*u_{\widehat{p}}^* =   (\pi \circ \sigma)_q(x).
$$

We now show that there exists $\widehat{p} \leq p$ that is both a $q$-small $q$-indicator and an $r$-small $r$-indicator.  
Let $\widetilde{p} \leq p$ be a $q$-small $q$-indicator.
We then let $\widehat{p}\leq \widetilde{p}$ be an $r$-small $r$-indicator.
Since $\widehat{p} \leq \widetilde{p}$ and $\widetilde{p} \leq p$ is a $q$-small $q$-indicator, $\widehat{p} \leq p$ is also a $q$-small $q$-indicator by Remark \ref{rem:SmallerqSmallqIndicators}.

\item[\underline{Local:}]
We prove \eqref{eq:LocalRepCondition} for $q\in\cP$, i.e., $[(\pi\circ \sigma)_qA_q, (\pi\circ \sigma)_{q'}A_{q'}]=0$.
Up to the symmetry $q\leftrightarrow q'$, 
we may assume there is a $q$-small $q$-indicator $\widetilde{p}\leq p$ with $\widetilde{p}\leq q'$.
There then exists an $r\in\cP$ such that $\widetilde{p},q\leq r$.
Let $\pi^{\widetilde{p}},\sigma^{\widetilde{p}},u_{\widetilde{p}},v_{\widetilde{p}}$ be as in Notation \ref{nota:LocalizingNotation} so that
\begin{align*}
(\pi\circ \sigma)_q(x) 
&= 
u_{\widetilde{p}}\pi^{\widetilde{p}}_p(v_{\widetilde{p}})
x
\pi^{\widetilde{p}}_p(v_{\widetilde{p}})^*u_{\widetilde{p}}^*
&&
\forall\,x\in A_q
\\
(\pi\circ \sigma)_{q'}(y) 
&= 
u_{\widetilde{p}}\pi^{\widetilde{p}}_p(v_{\widetilde{p}})(\pi^{\widetilde{p}}_{q'}\circ \sigma^{\widetilde{p}}_{q'})(y)
\pi^{\widetilde{p}}_p(v_{\widetilde{p}})^*u_{\widetilde{p}}^*
&&
\forall\,y\in A_{q'}.
\end{align*}
Now since $\widetilde{p}\leq q'$, $\pi^{\widetilde{p}}$ and $\sigma^{\widetilde{p}}$ are localized in $q'$ by \ref{SSS:LocalizedInLarger}, and thus $\pi^{\widetilde{p}}_{q'}A_{q'}\subseteq A_{q'}$ 
and 
$\sigma^{\widetilde{p}}_{q'}A_{q'}\subseteq A_{q'}$.
It immediately follows that
\begin{align*}
(\pi\circ \sigma)_q(x) 
(\pi\circ \sigma)_{q'}(y) 
&=
u_{\widetilde{p}}\pi^{\widetilde{p}}_p(v_{\widetilde{p}})
x
\pi^{\widetilde{p}}_p(v_{\widetilde{p}})^*u_{\widetilde{p}}^*
u_{\widetilde{p}}\pi^{\widetilde{p}}_p(v_{\widetilde{p}})(\pi^{\widetilde{p}}_{q'}\circ \sigma^{\widetilde{p}}_{q'})(y)
\pi^{\widetilde{p}}_p(v_{\widetilde{p}})^*u_{\widetilde{p}}^*
\\&=
u_{\widetilde{p}}\pi^{\widetilde{p}}_p(v_{\widetilde{p}})
x
(\pi^{\widetilde{p}}_{q'}\circ \sigma^{\widetilde{p}}_{q'})(y)
\pi^{\widetilde{p}}_p(v_{\widetilde{p}})^*u_{\widetilde{p}}^*
\\&=
u_{\widetilde{p}}\pi^{\widetilde{p}}_p(v_{\widetilde{p}})
(\pi^{\widetilde{p}}_{q'}\circ \sigma^{\widetilde{p}}_{q'})(y)
x
\pi^{\widetilde{p}}_p(v_{\widetilde{p}})^*u_{\widetilde{p}}^*
\\&=
u_{\widetilde{p}}\pi^{\widetilde{p}}_p(v_{\widetilde{p}})
(\pi^{\widetilde{p}}_{q'}\circ \sigma^{\widetilde{p}}_{q'})(y)
\pi^{\widetilde{p}}_p(v_{\widetilde{p}})^*u_{\widetilde{p}}^*
u_{\widetilde{p}}\pi^{\widetilde{p}}_p(v_{\widetilde{p}})
x
\pi^{\widetilde{p}}_p(v_{\widetilde{p}})^*u_{\widetilde{p}}^*
\\&=
(\pi\circ \sigma)_{q'}(y) 
(\pi\circ \sigma)_q(x).
\end{align*}

\item[\underline{Absorbing:}]
By the absorbing axiom and Lemma \ref{lem:EndoInjectiveIffAbs} for each of $\pi$ and $\sigma$,
the normal representations $\pi_q,\sigma_q$ of $A_q$ are injective for every $q\in\cP$, as are $\pi^{\widetilde{p}}_q, \sigma^{\widetilde{p}}_q$ for any choice of $q$-small $q$-indicator $\widetilde{p} \leq p$.
We conclude by Lemma \ref{lem:EndoInjectiveIffAbs} that the normal representation $(\pi\circ_p \sigma)_q: A_q\to B(H)$ is absorbing for every $q\in\cP$.
In all cases, the definition of $(\pi\circ_p \sigma)_q$ in \eqref{eq:UniformDefOfComposite} is a unitary conjugate of a composite of injective homomorphisms, which is thus injective.
\end{proof}

\begin{rem}
\label{rem:ChangeTheOutside}
Note that for $q \in \cP$ and for $\widetilde{p} \leq p$ a $q$-small $q$-indicator, we have that 
\begin{align*}
u_{\widetilde{p}}\pi^{\widetilde{p}}_p(v_{\widetilde{p}})(\pi^{\widetilde{p}}_q\circ \sigma^{\widetilde{p}}_q)(-)
\pi^{\widetilde{p}}_p(v_{\widetilde{p}})^*u_{\widetilde{p}}^*
&=
u_{\widetilde{p}}\pi^{\widetilde{p}}_p(v_{\widetilde{p}})u_{\widetilde{p}}^*
u_{\widetilde{p}}(\pi^{\widetilde{p}}_q\circ \sigma^{\widetilde{p}}_q)(-)u_{\widetilde{p}}^*
u_{\widetilde{p}}\pi^{\widetilde{p}}_p(v_{\widetilde{p}})^*u_{\widetilde{p}}^*
\\&=
\pi_p(v_{\widetilde{p}})(\pi_q\circ \sigma^{\widetilde{p}}_q)(-)
\pi_p(v_{\widetilde{p}})^*.
\end{align*}
While this is a very simple observation, it will be helpful to us in our computations.
\end{rem}

\begin{lem}
\label{lem:MorphismTensorProduct}
Given $T \colon \pi \to \check{\pi}$ and $S \colon \sigma \to \check{\sigma}$ morphisms in $\SSS_p$, $T \circ_p S \coloneqq T\pi_p(S) = \check{\pi}_p(S)T$ intertwines $\pi \circ_p \sigma$ and $\check{\pi} \circ_p \check{\sigma}$.  
Hence the map $- \circ_p - \colon \SSS_p \times \SSS_p \to \SSS_p$ is a functor.
\end{lem}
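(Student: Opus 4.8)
The plan is to first show $T,S\in A_p$, then that $T\circ_p S$ is a well-defined element of $A_p$, then verify the intertwining identity index by index following the case split of Construction~\ref{constr:circp}, and finally dispatch functoriality (the object part being the preceding lemma).

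Since $\pi,\check\pi$ are both localized at $p$, \ref{SSS:IntertwinerLocalized} with $r=p$ gives $T\in A_p$, and likewise $S\in A_p$. By \ref{SSS:LocalizedPreservesAlgebra}, $\pi_p(S)$ and $\check\pi_p(S)$ lie in $A_p$, so $T\circ_p S\in A_p$; the two formulas $T\pi_p(S)$ and $\check\pi_p(S)T$ agree because $T$ intertwines $\pi$ and $\check\pi$, applied at index $p$ to the element $S$.

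For the intertwining property, fix $q\in\cP$; the goal is $(T\circ_p S)(\pi\circ_p\sigma)_q(x)=(\check\pi\circ_p\check\sigma)_q(x)(T\circ_p S)$ for all $x\in A_q$. In Case~1 ($p\leq q'$) both $(\pi\circ_p\sigma)_q$ and $(\check\pi\circ_p\check\sigma)_q$ equal $\id_{A_q}$, so the claim reduces to $T\circ_p S\in A_q'=A_{q'}$, which holds by isotony since $T\circ_p S\in A_p\subseteq A_{q'}$. In Case~2 ($p\leq q$) one rewrites $\pi_p(S)=\pi_q(S)$ by isotony and then slides $S$ through $\sigma$ and $T$ through $\pi$ using the intertwiner relations $S\sigma_q(x)=\check\sigma_q(x)S$ and $T\pi_q(\check\sigma_q(x))=\check\pi_q(\check\sigma_q(x))T$ together with multiplicativity of $\pi_q$, obtaining the identity in a few lines. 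In Case~3, fix a $q$-small $q$-indicator $\widetilde p\leq p$ with localizations $\pi^{\widetilde p},\sigma^{\widetilde p},\check\pi^{\widetilde p},\check\sigma^{\widetilde p}$ at $\widetilde p$ and intertwining unitaries $u_{\widetilde p},v_{\widetilde p},\check u_{\widetilde p},\check v_{\widetilde p}\in A_p$ (using the same $\widetilde p$ for both composites, which is legitimate by Lemma~\ref{lem:FusionWellDefined}). Then $\widetilde T:=\check u_{\widetilde p}^*Tu_{\widetilde p}$ intertwines $\pi^{\widetilde p}$ and $\check\pi^{\widetilde p}$, so $\widetilde T\in A_{\widetilde p}$ by \ref{SSS:IntertwinerLocalized}, and similarly $\widetilde S:=\check v_{\widetilde p}^*Sv_{\widetilde p}$; since $\widetilde p$ is a $q$-indicator, $(\pi^{\widetilde p}\circ_{\widetilde p}\sigma^{\widetilde p})_q$ and $(\check\pi^{\widetilde p}\circ_{\widetilde p}\check\sigma^{\widetilde p})_q$ are instances of Cases~1--2, so by those cases $\widetilde T\circ_{\widetilde p}\widetilde S$ intertwines them. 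Writing $X:=u_{\widetilde p}\pi^{\widetilde p}_p(v_{\widetilde p})$ and $\check X:=\check u_{\widetilde p}\check\pi^{\widetilde p}_p(\check v_{\widetilde p})$, the uniform formula \eqref{eq:UniformDefOfComposite} expresses $(\pi\circ_p\sigma)_q=\Ad(X)\circ(\pi^{\widetilde p}\circ_{\widetilde p}\sigma^{\widetilde p})_q$ and the analogous statement with checks, so it remains only to verify the operator identity $T\circ_p S=\check X\,(\widetilde T\circ_{\widetilde p}\widetilde S)\,X^*$; this unwinds using $u_{\widetilde p}\pi^{\widetilde p}_p(-)=\pi_p(-)u_{\widetilde p}$ (and its checked version), the isotony $\pi^{\widetilde p}_{\widetilde p}=\pi^{\widetilde p}_p|_{A_{\widetilde p}}$, multiplicativity of $\pi_p$, and the intertwiner relation for $T$.

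Functoriality is then routine: $\id_\pi\circ_p\id_\sigma=1\cdot\pi_p(1)=1=\id_{\pi\circ_p\sigma}$ since $\pi_p$ is unital, and for composable $T,T'$ and $S,S'$, $(T'\circ_p S')(T\circ_p S)=T'\check\pi_p(S')\,T\pi_p(S)=T'T\,\pi_p(S')\pi_p(S)=(T'T)\circ_p(S'S)$, using $\check\pi_p(S')T=T\pi_p(S')$ (intertwiner relation, $S'\in A_p$) and multiplicativity of $\pi_p$. I expect the only genuinely fiddly step to be the Case~3 operator identity $T\circ_p S=\check X(\widetilde T\circ_{\widetilde p}\widetilde S)X^*$: keeping the four localizing unitaries, the two representations, and the various indices straight is the crux, while the remaining steps are short manipulations of intertwiner relations.
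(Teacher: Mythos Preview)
Your proof is correct and close in spirit to the paper's, but the organization differs in a useful way. The paper treats all $q$ uniformly: it rewrites $\check\pi_p(S)T$ using the localizing unitaries, applies Remark~\ref{rem:ChangeTheOutside} to replace $u_{\widetilde p}\pi^{\widetilde p}_p(v_{\widetilde p})(\pi^{\widetilde p}_q\circ\sigma^{\widetilde p}_q)(-)\cdots$ by $\pi_p(v_{\widetilde p})(\pi_q\circ\sigma^{\widetilde p}_q)(-)\cdots$, and then isolates a single commutation identity $\check\pi_p(\check v_{\widetilde p}^*Sv_{\widetilde p})\check\pi_q(\sigma^{\widetilde p}_q(x))=\check\pi_q(\check\sigma^{\widetilde p}_q(x))\check\pi_p(\check v_{\widetilde p}^*Sv_{\widetilde p})$ which it checks in the two sub-cases $\widetilde p\leq q$ and $\widetilde p\leq q'$. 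You instead settle Cases~1 and~2 of Construction~\ref{constr:circp} directly and then, in Case~3, reduce to those base cases for the $\widetilde p$-localized sectors via the conjugation identity $T\circ_p S=\check X(\widetilde T\circ_{\widetilde p}\widetilde S)X^*$. Your reduction is a bit more structural (it makes the recursion to a smaller region explicit), while the paper's version avoids invoking $\circ_{\widetilde p}$ at all and keeps everything phrased in terms of $\pi$ and $\check\pi$; the underlying algebra is the same. Your functoriality check via identities and the interchange of vertical composition is equivalent to the paper's verification of the exchange relation.
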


\begin{proof}
The proof proceeds by expanding out $\check{\pi}_p(S)T$ into a more complicated formula, which we will then show intertwines $\pi \circ_p \sigma$ and $\check{\pi} \circ_p \check{\sigma}$.
Let $q \in \cP$, and let $\widetilde{p} \leq p$ be a $q$-small $q$-indicator.  
We choose $\pi^{\widetilde{p}}, \sigma^{\widetilde{p}}, \check{\pi}^{\widetilde{p}}, \check{\sigma}^{\widetilde{p}}$ localized at $\widetilde{p}$ together with unitaries $u_{\widetilde{p}}, v_{\widetilde{p}}, \check{u}_{\widetilde{p}}, \check{v}_{\widetilde{p}}$ satisfying that for all $q \in \cP$, 
\[
\pi_q(-) = u_{\widetilde{p}} \pi^{\widetilde{p}}_q(-) u_{\widetilde{p}}^{*},
\qquad
\sigma_q(-) = v_{\widetilde{p}} \sigma^{\widetilde{p}}_q(-) v_{\widetilde{p}}^{*},
\qquad
\check{\pi}_q(-) = \check{u}_{\widetilde{p}} \check{\pi}^{\widetilde{p}}_q(-) \check{u}_{\widetilde{p}}^{*}
\qquad
\check{\sigma}_q(-) = \check{v}_{\widetilde{p}} \check{\sigma}^{\widetilde{p}}_q(-) \check{v}_{\widetilde{p}}^{*}.
\]
Note that since $\widetilde{p} \leq p$, $u_{\widetilde{p}}, v_{\widetilde{p}}, \check{u}_{\widetilde{p}}, \check{v}_{\widetilde{p}} \in \cP$.
We observe that 
\[
\check{\pi}_p(S)T
=
\check{\pi}_p(\check{v}_{\widetilde{p}}\check{v}_{\widetilde{p}}^*Sv_{\widetilde{p}}v_{\widetilde{p}}^*)T
=
\check{\pi}_p(\check{v}_{\widetilde{p}})
\check{\pi}_p(\check{v}_{\widetilde{p}}^*Sv_{\widetilde{p}})
\check{\pi}_p(v_{\widetilde{p}}^*)T
=
\check{\pi}_p(\check{v}_{\widetilde{p}})
\check{\pi}_p(\check{v}_{\widetilde{p}}^*Sv_{\widetilde{p}})
T\pi_p(v_{\widetilde{p}})^*.
\]
Now, let $x \in A_q$. 
By Remark \ref{rem:ChangeTheOutside}, we have that
\[(\pi \circ_p \sigma)_q(x) 
= 
u_{\widetilde{p}}\pi^{\widetilde{p}}_p(v_{\widetilde{p}})
(\pi^{\widetilde{p}}_q\circ \sigma^{\widetilde{p}}_q)(x)
\pi^{\widetilde{p}}_p(v_{\widetilde{p}})^*u_{\widetilde{p}}^*
=
\pi_p(v_{\widetilde{p}})
(\pi_q\circ \sigma^{\widetilde{p}}_q)(x)
\pi_p(v_{\widetilde{p}})^*.
\]
Therefore, we have that 
\begin{align*}
\check{\pi}_p(S)T(\pi \circ_p \sigma)_q(x)
&=
\check{\pi}_p(\check{v}_{\widetilde{p}})
\check{\pi}_p(\check{v}_{\widetilde{p}}^*Sv_{\widetilde{p}})
T\pi_p(v_{\widetilde{p}})^*
\pi_p(v_{\widetilde{p}})
(\pi_q\circ \sigma^{\widetilde{p}}_q)(x)
\pi_p(v_{\widetilde{p}})^*
\\&=
\check{\pi}_p(\check{v}_{\widetilde{p}})
\check{\pi}_p(\check{v}_{\widetilde{p}}^*Sv_{\widetilde{p}})
T\pi_q( \sigma^{\widetilde{p}}_q(x))
\pi_p(v_{\widetilde{p}})^*
\\&=
\check{\pi}_p(\check{v}_{\widetilde{p}})
\check{\pi}_p(\check{v}_{\widetilde{p}}^*Sv_{\widetilde{p}})
\check{\pi}_q( \sigma^{\widetilde{p}}_q(x))
T\pi_p(v_{\widetilde{p}})^*.
\end{align*}

We now claim that the following equation holds:
\begin{equation}
\label{eq:UsefulEquationInMorphismTensorProduct}
\check{\pi}_p(\check{v}_{\widetilde{p}}^*Sv_{\widetilde{p}})
\check{\pi}_q( \sigma^{\widetilde{p}}_q(x))
=
\check{\pi}_q( \check{\sigma}^{\widetilde{p}}_q(x))
\check{\pi}_p(\check{v}_{\widetilde{p}}^*Sv_{\widetilde{p}}).
\end{equation}
Suppose we have shown this.  
In that case, we have that 
\begin{align*}
\check{\pi}_p(S)T(\pi \circ_p \sigma)_q(x)
&=
\check{\pi}_p(\check{v}_{\widetilde{p}})
\check{\pi}_p(\check{v}_{\widetilde{p}}^*Sv_{\widetilde{p}})
\check{\pi}_q( \sigma^{\widetilde{p}}_q(x))
T\pi_p(v_{\widetilde{p}})^*
\\&=
\check{\pi}_p(\check{v}_{\widetilde{p}})
\check{\pi}_q( \check{\sigma}^{\widetilde{p}}_q(x))
\check{\pi}_p(\check{v}_{\widetilde{p}}^*Sv_{\widetilde{p}})
T\pi_p(v_{\widetilde{p}})^*
\\&=
\check{\pi}_p(v_{\widetilde{p}})
(\check{\pi}_q\circ \check{\sigma}^{\widetilde{p}}_q)(x)
\check{\pi}_p(v_{\widetilde{p}})^*
\check{\pi}_p(\check{v}_{\widetilde{p}})
\check{\pi}_p(\check{v}_{\widetilde{p}}^*Sv_{\widetilde{p}})
T\pi_p(v_{\widetilde{p}})^*
\\&=
(\check{\pi} \circ_p \check{\sigma})_q(x)
\check{\pi}_p(S)T,
\end{align*}
where in the last line we again apply Remark \ref{rem:ChangeTheOutside}.
It therefore suffices to show that \eqref{eq:UsefulEquationInMorphismTensorProduct} holds.  
Observe that $\check{v}_{\widetilde{p}}^*Sv_{\widetilde{p}}$ intertwines $\sigma^{\widetilde{p}}$ and $\check{\sigma}^{\widetilde{p}}$, so $\check{v}_{\widetilde{p}}^*Sv_{\widetilde{p}} \in A_{\widetilde{p}}$.  
Thus, by isotony, we can rewrite \eqref{eq:UsefulEquationInMorphismTensorProduct} as
\[
\check{\pi}_{\widetilde{p}}(\check{v}_{\widetilde{p}}^*Sv_{\widetilde{p}})
\check{\pi}_q( \sigma^{\widetilde{p}}_q(x))
=
\check{\pi}_q( \check{\sigma}^{\widetilde{p}}_q(x))
\check{\pi}_{\widetilde{p}}(\check{v}_{\widetilde{p}}^*Sv_{\widetilde{p}}).
\]
Furthermore, by conjugating both sides of the above equation by $\check{u}_{\widetilde{p}}^*$, we can rewrite \eqref{eq:UsefulEquationInMorphismTensorProduct} as
\[
\check{\pi}^{\widetilde{p}}_{\widetilde{p}}(\check{v}_{\widetilde{p}}^*Sv_{\widetilde{p}})
\check{\pi}^{\widetilde{p}}_q( \sigma^{\widetilde{p}}_q(x))
=
\check{\pi}^{\widetilde{p}}_q( \check{\sigma}^{\widetilde{p}}_q(x))
\check{\pi}^{\widetilde{p}}_{\widetilde{p}}(\check{v}_{\widetilde{p}}^*Sv_{\widetilde{p}}).
\]
Now, since $\widetilde{p} \leq p$ is a $q$-indicator, we have that either $\widetilde{p} \leq q$ or $\widetilde{p} \leq q'$. 
In the first case, the desired result follows since 
\begin{align*}
\check{\pi}^{\widetilde{p}}_{\widetilde{p}}(\check{v}_{\widetilde{p}}^*Sv_{\widetilde{p}})
\check{\pi}^{\widetilde{p}}_q( \sigma^{\widetilde{p}}_q(x))
&=
\check{\pi}^{\widetilde{p}}_{q}(\check{v}_{\widetilde{p}}^*Sv_{\widetilde{p}})
\check{\pi}^{\widetilde{p}}_q( \sigma^{\widetilde{p}}_q(x))
=
\check{\pi}^{\widetilde{p}}_{q}(\check{v}_{\widetilde{p}}^*Sv_{\widetilde{p}} \sigma^{\widetilde{p}}_q(x))
\\&=
\check{\pi}^{\widetilde{p}}_{q}(\check{\sigma}^{\widetilde{p}}_q(x) \check{v}_{\widetilde{p}}^*Sv_{\widetilde{p}})
=
\check{\pi}^{\widetilde{p}}_{\widetilde{p}}(\check{v}_{\widetilde{p}}^*Sv_{\widetilde{p}})
\check{\pi}^{\widetilde{p}}_q( \check{\sigma}^{\widetilde{p}}_q(x)).
\end{align*}
In the second case, we have that $\check{\pi}^{\widetilde{p}}_q ({\sigma}^{\widetilde{p}}_q(x)) = x = \check{\pi}^{\widetilde{p}}_q (\check{\sigma}^{\widetilde{p}}_q(x))$ and $\check{\pi}^{\widetilde{p}}_{\widetilde{p}}(\check{v}_{\widetilde{p}}^*Sv_{\widetilde{p}}) \in A_{\widetilde{p}} \subseteq A_q'$ by Haag duality, so we have that 
\[
\check{\pi}^{\widetilde{p}}_{\widetilde{p}}(\check{v}_{\widetilde{p}}^*Sv_{\widetilde{p}})
\check{\pi}^{\widetilde{p}}_q( \sigma^{\widetilde{p}}_q(x))
=
\check{\pi}^{\widetilde{p}}_{\widetilde{p}}(\check{v}_{\widetilde{p}}^*Sv_{\widetilde{p}}) x
=
x \check{\pi}^{\widetilde{p}}_{\widetilde{p}}(\check{v}_{\widetilde{p}}^*Sv_{\widetilde{p}})
=
\check{\pi}^{\widetilde{p}}_{\widetilde{p}}(\check{v}_{\widetilde{p}}^*Sv_{\widetilde{p}})
\check{\pi}^{\widetilde{p}}_q( \check{\sigma}^{\widetilde{p}}_q(x)).
\]

In order to show that $- \circ_p - \colon \SSS_p \times \SSS_p \to \SSS_p$ is a functor, it remains to show that the exchange relation is satisfied:  
\[
(T\circ_p \id_{\check{\sigma}})(\id_{\pi}\circ_p S)
=
T\pi_p(S)
=
\check{\pi}_p(S)T
=
(\id_{\check{\pi}}\circ_p S) (T\circ_p \id_{\sigma}).
\qedhere
\]
\end{proof}

\subsection{Fusion is strictly associative}

We now wish to prove that our fusion operation $\circ_p$ is strictly associative, similar to how composition of endomorphisms in $\End(M)$ is strictly associative.

\begin{lem}
\label{lem:StrictAssociativity}
Assuming \ref{geom:SelfDisjoint}--\ref{geom:ZigZag}, $\circ_p$ is strictly associative, i.e.,
$(\pi\circ \sigma)\circ \tau =\pi\circ (\sigma\circ \tau)$.
\end{lem}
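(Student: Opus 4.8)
The plan is to fix $q\in\cP$ and prove $((\pi\circ_p\sigma)\circ_p\tau)_q=(\pi\circ_p(\sigma\circ_p\tau))_q$; as $q$ is arbitrary this gives the lemma. First I would use \ref{geom:qSmallqIndicator} to choose a $q$-small $q$-indicator $\widetilde p\leq p$ and localize all three sectors there: pick $\pi^{\widetilde p},\sigma^{\widetilde p},\tau^{\widetilde p}$ localized at $\widetilde p$ with unitary intertwiners $u_{\widetilde p},v_{\widetilde p},w_{\widetilde p}$, which lie in $A_p$ by \ref{SSS:IntertwinerLocalized}, so that $u_{\widetilde p}\colon\pi^{\widetilde p}\to\pi$, $v_{\widetilde p}\colon\sigma^{\widetilde p}\to\sigma$ and $w_{\widetilde p}\colon\tau^{\widetilde p}\to\tau$ are unitary isomorphisms in $\SSS_p$. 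Since $\widetilde p$ is a $q$-indicator, $\widetilde p\leq q$ or $\widetilde p\leq q'$, so by \ref{SSS:LocalizedInLarger} and \ref{SSS:LocalizedPreservesAlgebra} each of $\pi^{\widetilde p}_q,\sigma^{\widetilde p}_q,\tau^{\widetilde p}_q$ maps $A_q$ into $A_q$ (all three are $\id_{A_q}$ in the second case); in particular these homomorphisms may be composed, and composition of homomorphisms is honestly associative --- this is the ``moral reason'' the lemma holds.

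The one genuinely new point I would need is: \emph{the $\circ_p$-composite of two sectors localized at $\widetilde p$ is again localized at $\widetilde p$} (for $\widetilde p\leq p$). To prove it, it suffices to see the composite acts as $\id$ on $A_{\widetilde p'}$; assuming $\widetilde p\neq p$ (otherwise this is the already-established fact that $\pi\circ_p\sigma$ is localized at $p$), neither Case~1 nor Case~2 of Construction~\ref{constr:circp} applies at level $r=\widetilde p'$ (by \ref{geom:SelfDisjoint}), and by Lemma~\ref{lem:FusionWellDefined} I may compute the Case~3 value using an $r$-small $r$-indicator $\widehat p\leq\widetilde p$ obtained from \ref{geom:qSmallqIndicator}, which by \ref{geom:SelfDisjoint} must satisfy $\widehat p\leq r'=\widetilde p$. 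In the resulting Case~3a formula the conjugating operator is built from intertwiners lying in $A_{\widetilde p}=A_{r'}$ (by \ref{SSS:IntertwinerLocalized}) together with an application of a homomorphism localized at $\widehat p\leq\widetilde p$ (which preserves $A_{\widetilde p}$ by \ref{SSS:LocalizedPreservesAlgebra}), hence lies in $A_{r'}=A_r'$; so conjugating $A_r$ by it is trivial, as needed. Applying this shows $\pi^{\widetilde p}\circ_p\sigma^{\widetilde p}$ and $\sigma^{\widetilde p}\circ_p\tau^{\widetilde p}$ are localized at $\widetilde p$.

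Then I would compute both groupings at $q$ by peeling the unitaries $u_{\widetilde p},v_{\widetilde p},w_{\widetilde p}$ to the outside using functoriality of $\circ_p$ (Lemma~\ref{lem:MorphismTensorProduct}) --- e.g.\ $u_{\widetilde p}\circ_p v_{\widetilde p}=u_{\widetilde p}\pi^{\widetilde p}_p(v_{\widetilde p})$ is a unitary isomorphism $\pi^{\widetilde p}\circ_p\sigma^{\widetilde p}\to\pi\circ_p\sigma$, and $v_{\widetilde p}\circ_p w_{\widetilde p}=v_{\widetilde p}\sigma^{\widetilde p}_p(w_{\widetilde p})\colon\sigma^{\widetilde p}\circ_p\tau^{\widetilde p}\to\sigma\circ_p\tau$ --- exactly the manipulation of Remark~\ref{rem:CompositeEquivalence}, but now with subscripts $q$ present. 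For the composites of $\widetilde p$-localized sectors that remain, \eqref{eq:UniformDefOfComposite} with indicator $\widetilde p$ and trivial localizing unitaries, Remark~\ref{rem:ChangeTheOutside}, the preliminary point, and Case~2 of Construction~\ref{constr:circp} at level $p$ all turn $\circ_p$ into honest composition: $(\pi^{\widetilde p}\circ_p\sigma^{\widetilde p})_q=\pi^{\widetilde p}_q\circ\sigma^{\widetilde p}_q$, $(\pi^{\widetilde p}\circ_p\sigma^{\widetilde p})_p=\pi^{\widetilde p}_p\circ\sigma^{\widetilde p}_p$, $\big((\pi^{\widetilde p}\circ_p\sigma^{\widetilde p})\circ_p\tau^{\widetilde p}\big)_q=\pi^{\widetilde p}_q\circ\sigma^{\widetilde p}_q\circ\tau^{\widetilde p}_q$, and the analogues with $\sigma^{\widetilde p}\circ_p\tau^{\widetilde p}$ on the inside (here the preliminary point is what lets me present $\sigma\circ_p\tau$ as localized at $\widetilde p$ with localizing unitary $v_{\widetilde p}\sigma^{\widetilde p}_p(w_{\widetilde p})$). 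Both groupings should then collapse to
$$u_{\widetilde p}\,\pi^{\widetilde p}_p\!\big(v_{\widetilde p}\,\sigma^{\widetilde p}_p(w_{\widetilde p})\big)\;\pi^{\widetilde p}_q\Big(\sigma^{\widetilde p}_q\big(\tau^{\widetilde p}_q(x)\big)\Big)\;\pi^{\widetilde p}_p\!\big(v_{\widetilde p}\,\sigma^{\widetilde p}_p(w_{\widetilde p})\big)^{*}u_{\widetilde p}^{*}\qquad(x\in A_q),$$
with associativity of composition of $\pi^{\widetilde p}_q,\sigma^{\widetilde p}_q,\tau^{\widetilde p}_q$ (and a parallel bracket-rearrangement at level $p$) making the two sides literally coincide.

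The step I expect to be the main obstacle is the preliminary point that composites of $\widetilde p$-localized sectors stay $\widetilde p$-localized: this is exactly what is needed to invoke \eqref{eq:UniformDefOfComposite} at indicator $\widetilde p$ for the \emph{outer} product in $\pi\circ_p(\sigma\circ_p\tau)$, and without it the subscripts $q$ on the various $\pi^{\widetilde p}$'s will not line up with those on the localizing unitaries --- the same mismatch that forced the introduction of \ref{geom:ZigZag} in Lemma~\ref{lem:FusionWellDefined}. Once that is in hand, the rest is conjugation bookkeeping along the lines of Remark~\ref{rem:CompositeEquivalence}.
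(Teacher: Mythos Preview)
Your argument is correct, and it is organized more cleanly than the paper's. Both proofs arrive at the same target expression
\[
u_{\widetilde p}\,\pi^{\widetilde p}_p\!\big(v_{\widetilde p}\,\sigma^{\widetilde p}_p(w_{\widetilde p})\big)\,(\pi^{\widetilde p}_q\circ\sigma^{\widetilde p}_q\circ\tau^{\widetilde p}_q)(x)\,(\cdots)^*,
\]
but they get there differently. The paper treats the two bracketings asymmetrically: for $((\pi\circ\sigma)\circ\tau)_q$ it introduces an auxiliary $\widetilde p_1$-small $\widetilde p_1$-indicator $\widetilde p_2$ and hand-builds a localizing unitary $t$ for $\pi\circ\sigma$ at $\widetilde p_1$, while for $(\pi\circ(\sigma\circ\tau))_q$ it uses a splitting $\widetilde p_1,\widetilde p_2\leq\widetilde p$ and verifies directly that $(v_{\widetilde p_1}\sigma^{\widetilde p_1}_p(w_{\widetilde p_1}))^*$ localizes $\sigma\circ\tau$ at $\widetilde p$. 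You instead isolate the single ``preliminary point'' --- that $\circ_p$-composites of sectors localized at $\widetilde p$ remain localized at $\widetilde p$ --- prove it once via Case~3a of Construction~\ref{constr:circp}, and then apply it symmetrically together with functoriality (Lemma~\ref{lem:MorphismTensorProduct}) to reduce both bracketings to triple compositions of localized endomorphisms. Your route avoids the paper's bespoke unitary $t$ and the separate use of \ref{geom:splitting}; the only mild cost is that your preliminary point is proved a bit tersely (the invocation of \ref{geom:SelfDisjoint} really serves to rule out Case~2 and to confirm you are in Case~3a rather than 3b, which you could make more explicit).
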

\begin{proof}
Let $q \in \cP$.  
It suffices to show that $((\pi \circ \sigma) \circ \tau)_q = (\pi \circ (\sigma \circ \tau))_q$.
This proof proceeds by the following steps.
\item[\underline{Step 1:}] Expand $((\pi \circ \sigma) \circ \tau)_q$.

Let $q \in \cP$.  
We choose $\widetilde{p}_1 \leq p$ a $q$-small $q$-indicator.  
We may assume that $\widetilde{p}_1$ is also $p$-small, so that $\widetilde{p}_1$ is a $p$-small $p$-indicator.
We also choose $\widetilde{p}_2 \leq p$ a $\widetilde{p}_1$-small $\widetilde{p}_1$-indicator.  
We claim that the unitary $t \coloneqq v_{\widetilde{p}_1}^* v_{\widetilde{p}_2} u_{\widetilde{p}_1}^* u_{\widetilde{p}_2}(u_{\widetilde{p}_2} \pi^{\widetilde{p}_2}_p(v_{\widetilde{p}_2}))^*$ localizes $\pi \circ \sigma$ to $\widetilde{p}_1$.  
Indeed, since $(\pi \circ \sigma)_{\widetilde{p}_1'}(x) = u_{\widetilde{p}_2} \pi^{\widetilde{p}_2}_{p}(v_{\widetilde{p}_2}) (\pi^{\widetilde{p}_2}_{\widetilde{p}_1'} \circ \sigma^{\widetilde{p}_2}_{\widetilde{p}_1'})(x) (u_{\widetilde{p}_2} \pi^{\widetilde{p}_2}_{p}(v_{\widetilde{p}_2}))^{*}$, we have that for $x \in A_{\widetilde{p}_1'}$, 
\begin{align*}
t(\pi \circ \sigma)_{\widetilde{p}_1'}(x)t^*
&=
v_{\widetilde{p}_1}^* v_{\widetilde{p}_2} 
u_{\widetilde{p}_1}^* u_{\widetilde{p}_2}
(u_{\widetilde{p}_2} \pi^{\widetilde{p}_2}_p(v_{\widetilde{p}_2}))^*
(\pi \circ \sigma)_{\widetilde{p}_1'}(x)
(u_{\widetilde{p}_2} \pi^{\widetilde{p}_2}_p(v_{\widetilde{p}_2}))
u_{\widetilde{p}_2}^* u_{\widetilde{p}_1}
v_{\widetilde{p}_2}^* v_{\widetilde{p}_1}
\\&=
v_{\widetilde{p}_1}^* v_{\widetilde{p}_2} 
u_{\widetilde{p}_1}^* u_{\widetilde{p}_2}
(\pi^{\widetilde{p}_2}_{\widetilde{p}_1'} \circ \sigma^{\widetilde{p}_2}_{\widetilde{p}_1'})(x) 
u_{\widetilde{p}_2}^* u_{\widetilde{p}_1}
v_{\widetilde{p}_2}^* v_{\widetilde{p}_1}
\\&=
v_{\widetilde{p}_1}^* v_{\widetilde{p}_2} 
(\pi^{\widetilde{p}_1}_{\widetilde{p}_1'} \circ \sigma^{\widetilde{p}_2}_{\widetilde{p}_1'})(x) 
v_{\widetilde{p}_2}^* v_{\widetilde{p}_1}
\\&=
v_{\widetilde{p}_1}^* v_{\widetilde{p}_2} 
\sigma^{\widetilde{p}_2}_{\widetilde{p}_1'}(x) 
v_{\widetilde{p}_2}^* v_{\widetilde{p}_1}
\\&=
\sigma^{\widetilde{p}_1}_{\widetilde{p}_1'}(x)
=
x.
\end{align*}
In the third equality, we used that $u_{\widetilde{p}_1}^* u_{\widetilde{p}_2}$ intertwines $\pi^{\widetilde{p}_2}$ and $\pi^{\widetilde{p}_1}$, and in the fourth equality we use that $\pi^{\widetilde{p}_1}$ is localized at $\widetilde{p}_1$.  
Analogous facts for $\sigma$ are used in the last two equalities.  

We now define $(\pi \circ \sigma)^{\widetilde{p}_1}$ by $(\pi \circ \sigma)^{\widetilde{p}_1}_r(x) \coloneqq t(\pi \circ \sigma)_r(x)t^*$ for $r \in \cP$ and $x \in A_r$.
We then have that for $x \in A_q$, 
\begin{align*}
((\pi \circ \sigma) \circ \tau)_q(x) 
&= 
t^{*} (\pi \circ \sigma)^{\widetilde{p}_1}_{p}(w_{\widetilde{p}_1}) 
(\pi \circ \sigma)^{\widetilde{p}_1}_{q}(\tau^{\widetilde{p}_1}_{q}(x)) 
(t^{*} (\pi \circ \sigma)^{\widetilde{p}_1}_{p}(w_{\widetilde{p}_1}))^{*}
\\&=
t^{*} (\pi \circ \sigma)^{\widetilde{p}_1}_{p}(w_{\widetilde{p}_1}) t
t^*(\pi \circ \sigma)^{\widetilde{p}_1}_{q}(\tau^{\widetilde{p}_1}_{q}(x)) t
t^*(\pi \circ \sigma)^{\widetilde{p}_1}_{p}(w_{\widetilde{p}_1}^*)t
\\&=
(\pi \circ \sigma)_p(w_{\widetilde{p}_1})
(\pi \circ \sigma)_{q}(\tau^{\widetilde{p}_1}_{q}(x))
(\pi \circ \sigma)_{p}(w_{\widetilde{p}_1})^*
\\&=
u_{\widetilde{p}_1} \pi^{\widetilde{p}_1}_{p}(v_{\widetilde{p}_1}) 
\pi^{\widetilde{p}_1}_{p}(\sigma^{\widetilde{p}_1}_{p}(w_{\widetilde{p}_1})) 
\pi^{\widetilde{p}_1}_{q}(\sigma^{\widetilde{p}_1}_{q}(\tau^{\widetilde{p}_1}_q(x))) 
\pi^{\widetilde{p}_1}_{p}(\sigma^{\widetilde{p}_1}_{p}(w_{\widetilde{p}_1}))^* 
\pi^{\widetilde{p}_1}_{p}(v_{\widetilde{p}_1})^*u_{\widetilde{p}_1}^{*}
\\&=
\pi_{p}(v_{\widetilde{p}_1}\sigma^{\widetilde{p}_1}_{p}(w_{\widetilde{p}_1})) 
\pi_{q}(\sigma^{\widetilde{p}_1}_{q}(\tau^{\widetilde{p}_1}_q(x))) 
\pi_{p}(v_{\widetilde{p}_1}\sigma^{\widetilde{p}_1}_{p}(w_{\widetilde{p}_1}))^*,
\end{align*}
where we used that $\widetilde{p}_1$ is $p$-small in the fourth equality and used Remark \ref{rem:ChangeTheOutside} in the last equality.
We now have an expanded but simplified formula for $((\pi \circ \sigma) \circ \tau)_q$, namely
\begin{equation}
\label{eq:ThreeWayComposition}
((\pi \circ \sigma) \circ \tau)_q(x) 
=
\pi_{p}(v_{\widetilde{p}_1}\sigma^{\widetilde{p}_1}_{p}(w_{\widetilde{p}_1})) 
\pi_{q}(\sigma^{\widetilde{p}_1}_{q}(\tau^{\widetilde{p}_1}_q(x))) 
\pi_{p}(v_{\widetilde{p}_1}\sigma^{\widetilde{p}_1}_{p}(w_{\widetilde{p}_1}))^*.
\end{equation}
Note the above equation holds for all $q$-small $q$-indicators $\widetilde{p}_1 \leq p$ that are also $p$-small.

\item[\underline{Step 2:}] $(\pi \circ (\sigma \circ \tau))_q$ agrees with \eqref{eq:ThreeWayComposition} for some $q$-small $q$-indicator $\widetilde{p}_1 \leq p$.

Let $\widetilde{p} \leq p$ be a $q$-small $q$-indicator.
By \ref{geom:splitting}, there exist $\widetilde{p}_1, \widetilde{p}_2 \leq \widetilde{p}$ with $\widetilde{p}_1 \leq \widetilde{p}_2'$.  
We first wish find an unitary localizing $\sigma \circ \tau$ at $\widetilde{p}$.  
We claim that $(v_{\widetilde{p}_1}\sigma^{\widetilde{p}_1}_p(w_{\widetilde{p}_1}))^*$ is such a unitary. 
Indeed, $\widetilde{p}_1$ is a $\widetilde{p}'$-indicator since $\widetilde{p}_1 \leq \widetilde{p}''$, and $\widetilde{p}_1$ is $\widetilde{p}'$-small since $\widetilde{p}_1, \widetilde{p}' \leq \widetilde{p}_2'$ and $\widetilde{p}_1, \widetilde{p} \leq \widetilde{p}$. 
We therefore have that for $x \in A_{\widetilde{p}'}$, 
\[
(\sigma \circ \tau)_{\widetilde{p}'}(x) 
= 
v_{\widetilde{p}_1} \sigma^{\widetilde{p}_1}_p(w_{\widetilde{p}_1})
\sigma^{\widetilde{p}_1}_{\widetilde{p}'}(\tau^{\widetilde{p}_1}_{\widetilde{p}'}(x))
\sigma^{\widetilde{p}_1}_p(w_{\widetilde{p}_1}^*)v_{\widetilde{p}_1}^*
=
v_{\widetilde{p}_1} \sigma^{\widetilde{p}_1}_p(w_{\widetilde{p}_1})
x
\sigma^{\widetilde{p}_1}_p(w_{\widetilde{p}_1}^*)v_{\widetilde{p}_1}^*,
\]
where the last equality follows since $\sigma^{\widetilde{p}_1}$ and $\tau^{\widetilde{p}_1}$ are localized in $\widetilde{p}_1 \leq \widetilde{p}$.
We can therefore define $(\sigma \circ \tau)^{\widetilde{p}}$ by 
\[
(\sigma \circ \tau)^{\widetilde{p}}_r(x)
\coloneqq
(v_{\widetilde{p}_1}\sigma^{\widetilde{p}_1}_p(w_{\widetilde{p}_1}))^*
(\sigma \circ \tau)_r(x)
v_{\widetilde{p}_1}\sigma^{\widetilde{p}_1}_p(w_{\widetilde{p}_1})
\]
for $r \in \cP$ and $x \in A_r$.  

Now, since $\widetilde{p}_1 \leq \widetilde{p}$ and $\widetilde{p} \leq p$ is a $q$-small $q$-indicator, $\widetilde{p}_1$ is also a $q$-small $q$-indicator by Remark \ref{rem:SmallerqSmallqIndicators}.  
Therefore, we have that for $x \in A_q$,
\[
(\sigma \circ \tau)_q(x)
=
v_{\widetilde{p}_1}\sigma^{\widetilde{p}_1}_p(w_{\widetilde{p}_1})
(\sigma_q^{\widetilde{p}_1} \circ \tau_q^{\widetilde{p}_1})(x)
(v_{\widetilde{p}_1}\sigma^{\widetilde{p}_1}_p(w_{\widetilde{p}_1}))^*.
\]
We therefore have that 
\(
(\sigma \circ \tau)^{\widetilde{p}}_q(x)
=
(\sigma_q^{\widetilde{p}_1} \circ \tau_q^{\widetilde{p}_1})(x).
\)
We can thus compute $(\pi \circ (\sigma \circ \tau))_q$ as follows: for $x \in A_q$, we have that
\begin{align*}
(\pi \circ (\sigma \circ \tau))_q(x)
&=
u_{\widetilde{p}} \pi^{\widetilde{p}}_p(v_{\widetilde{p}_1} \sigma^{\widetilde{p}_1}_p (w_{\widetilde{p}_1})) \pi^{\widetilde{p}}_q((\sigma \circ \tau)^{\widetilde{p}}_q(x)) (u_{\widetilde{p}} \pi^{\widetilde{p}}_p(v_{\widetilde{p}_1} \sigma^{\widetilde{p}_1}_p (w_{\widetilde{p}_1})))^*
\\&=
\pi_p(v_{\widetilde{p}_1} \sigma^{\widetilde{p}_1}_p(w_{\widetilde{p}_1})) \pi_q((\sigma \circ \tau)^{\widetilde{p}}_q(x)) 
(\pi_p(v_{\widetilde{p}_1} \sigma^{\widetilde{p}_1}_p (w_{\widetilde{p}_1})))^*
\\&=
\pi_p(v_{\widetilde{p}_1} \sigma^{\widetilde{p}_1}_p (w_{\widetilde{p}_1})) \pi_q(\sigma^{\widetilde{p}_1}_{q}(\tau^{\widetilde{p}_1}_{q}(x))) (\pi_p(v_{\widetilde{p}_1} \sigma^{\widetilde{p}_1}_p(w_{\widetilde{p}_1})))^*.
\end{align*}
Note that the second equality follows by Remark \ref{rem:ChangeTheOutside}.
The last line in the above equation is exactly \eqref{eq:ThreeWayComposition}.
\end{proof}

\begin{lem}
Assuming \ref{geom:SelfDisjoint}--\ref{geom:ZigZag}, $(\SSS_p,\circ_p)$ is a strict tensor category. 
\end{lem}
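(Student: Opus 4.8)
The plan is to fill in the three remaining pieces of a strict tensor category structure on $\SSS_p$: a monoidal unit, the strict unit constraints, and strict associativity of $\circ_p$ on morphisms; everything else is already in hand. Functoriality of $\circ_p\colon\SSS_p\times\SSS_p\to\SSS_p$ is Lemma~\ref{lem:MorphismTensorProduct}, strict associativity on objects is Lemma~\ref{lem:StrictAssociativity}, and $\SSS_p$ is a $\rmW^*$-category admitting orthogonal direct sums since it is equivalent to $\SSS$ by~\ref{SSS:AbilityToLocalize}. The $*$-compatibility $(T\circ_p S)^*=T^*\circ_p S^*$, additivity, and separate normality of $\circ_p$ will follow directly from $T\circ_p S=T\pi_p(S)=\check\pi_p(S)T$, since $\pi_p,\check\pi_p$ are normal unital $*$-representations and one-sided multiplication by a fixed bounded operator is $\sigma$-weakly continuous.

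For the unit I would take $\mathbbm{1}:=(H,\iota)$ with $\iota_q:=\id_{A_q}$ for every $q\in\cP$. I first check $\mathbbm{1}\in\SSS_p$: isotony and locality are inherited from the net $A$, $H$ is absorbing for each $A_q$ by the defining axioms of a $\cP$-net, and $\iota_{q'}=\id_{A_{q'}}$ for all $q$, so $\mathbbm{1}$ is localized at every element of $\cP$, in particular at $p$. To see $\mathbbm{1}\circ_p\sigma=\sigma$ and $\pi\circ_p\mathbbm{1}=\pi$ on the nose, I would fix $q\in\cP$ and a $q$-small $q$-indicator $\widetilde p\leq p$ and plug $\mathbbm{1}$ into the uniform formula~\eqref{eq:UniformDefOfComposite}: since $\mathbbm{1}$ is already localized at $\widetilde p$, the relevant localizing unitary can be taken to be $1$, collapsing that conjugation, and since $\widetilde p$ is a $q$-indicator, the inner composite $\pi^{\widetilde p}_q\circ\sigma^{\widetilde p}_q$ in~\eqref{eq:UniformDefOfComposite} always falls under Case~1 or Case~2 of Construction~\ref{constr:circp} and can be computed outright; a one-line computation then yields $(\mathbbm{1}\circ_p\sigma)_q=\sigma_q$ and $(\pi\circ_p\mathbbm{1})_q=\pi_q$. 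On morphisms, using $\id_H\in A_p$, $\iota_p=\id_{A_p}$, and the fact that an intertwiner of sectors localized at $p$ lies in $A_p$ by~\ref{SSS:IntertwinerLocalized}, I get $T\circ_p\id_{\mathbbm{1}}=T\pi_p(\id_H)=T$ and $\id_{\mathbbm{1}}\circ_p S=\iota_p(S)=S$.

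For strict associativity on morphisms, let $T\colon\pi\to\check\pi$, $S\colon\sigma\to\check\sigma$, $R\colon\tau\to\check\tau$ be morphisms in $\SSS_p$. Because $\pi,\sigma$ are localized at $p$ and $p\leq p$, Case~2 of Construction~\ref{constr:circp} gives $(\pi\circ_p\sigma)_p=\pi_p\circ\sigma_p$, so the computation collapses to
\[
(T\circ_p S)\circ_p R=\big(T\pi_p(S)\big)(\pi\circ_p\sigma)_p(R)=T\pi_p\!\big(S\,\sigma_p(R)\big)=T\pi_p(S\circ_p R)=T\circ_p(S\circ_p R).
\]
Together with Lemma~\ref{lem:StrictAssociativity} this makes the functors $(-\circ_p-)\circ_p-$ and $-\circ_p(-\circ_p-)$ on $\SSS_p^{\times3}$ equal, so all associators and unitors may be taken to be identities, the pentagon and triangle axioms are vacuous, and $(\SSS_p,\circ_p,\mathbbm{1})$ is a strict (indeed $\rmW^*$-) tensor category.

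Since the substantive analysis is already done in Lemmas~\ref{lem:FusionWellDefined}--\ref{lem:StrictAssociativity}, I do not expect a serious obstacle here. The only point that genuinely needs care is verifying that the unit constraints are \emph{literally} the identity rather than merely coherent unitaries, and that is exactly what the uniform formula~\eqref{eq:UniformDefOfComposite} together with the $q$-indicator dichotomy delivers; no geometric hypothesis beyond~\ref{geom:SelfDisjoint}--\ref{geom:ZigZag} is required.
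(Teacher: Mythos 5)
Your proof is correct and takes essentially the same route as the paper: cite Lemmas \ref{lem:FusionWellDefined}, \ref{lem:MorphismTensorProduct}, \ref{lem:StrictAssociativity} for well-definedness, functoriality, and object-level associativity; take $\mathbbm{1}_q=\id_{A_q}$ as unit; and verify strict unitality by plugging $\mathbbm{1}$ into the uniform formula \eqref{eq:UniformDefOfComposite}, using that $\mathbbm{1}$ is localized at $\widetilde p$ so its localizing unitary can be taken to be $1$. You are somewhat more careful than the paper in that you explicitly verify that $\mathbbm{1}\in\SSS_p$ and check associativity and unitality at the level of morphisms (via $(T\circ_p S)\circ_p R=T\pi_p(S\sigma_p(R))=T\circ_p(S\circ_p R)$), points the paper's proof leaves implicit; these checks are easy but worth recording, and your computations are correct.
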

\begin{proof}
By Lemma \ref{lem:FusionWellDefined}, $- \circ_p -$ is well-defined on objects, and by Lemma \ref{lem:MorphismTensorProduct}, $- \circ_p - \colon \SSS_p \times \SSS_p \to \SSS_p$ is a functor. 
By Lemma \ref{lem:StrictAssociativity}, $- \circ_p -$ is strictly associative.  
It remains to check strict unitality.  
We claim that the identity superselection sector $\mathbbm{1} \in \SSS_p$ given by $\mathbbm{1}_q \coloneqq \id_{A_q}$ for $q \in \cP$ is a strict tensor unit.  
Indeed, for each $q \in \cP$, we let $\widetilde{p} \leq p$ be a $q$-small $q$-indicator.  
We then have that 
\begin{align*}
(\pi \circ_p \mathbbm{1})_q(-)
&=
u_{\widetilde{p}} \pi^{\widetilde{p}}_p(1) \pi^{\widetilde{p}}_q(\id_{A_q}(-))  \pi^{\widetilde{p}}_p(1) u^*_{\widetilde{p}}
=
u_{\widetilde{p}} \pi^{\widetilde{p}}_q(-) u^*_{\widetilde{p}} 
=
\pi_q(-)
\\
(\mathbbm{1} \circ_p \pi)_q(-)
&=
u_{\widetilde{p}} \pi^{\widetilde{p}}_q(-) u^*_{\widetilde{p}} 
=
\pi_q(-).
\qedhere
\end{align*}
\end{proof}

\subsection{Equivalence of categories}

We now show that localizing our superselection sectors in different $p\in\cP$ yields equivalent $\rmW^*$ tensor categories.
Suppose $a\leq b$ both satisfy \ref{geom:SelfDisjoint}--\ref{geom:ZigZag}.
By \ref{SSS:LocalizedInLarger}, we may consider
a superselection sector localized at $a$ to be localized at $b$, which gives us a fully faithful functor $\SSS_a\hookrightarrow \SSS_b$, which we call the \emph{inclusion afforded by} \ref{SSS:LocalizedInLarger}.


\begin{thm} 
\label{restrictingequiv}
Suppose $a\leq b$ both satisfy \ref{geom:SelfDisjoint}--\ref{geom:ZigZag}.
The inclusion $\SSS_a\hookrightarrow \SSS_b$ afforded by \ref{SSS:LocalizedInLarger} is a strict tensor equivalence, i.e., the inclusion is a tensor equivalence with trivial tensorator.
\end{thm}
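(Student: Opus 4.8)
The plan is to show that the fully faithful inclusion $F\colon\SSS_a\hookrightarrow\SSS_b$ afforded by \ref{SSS:LocalizedInLarger} is essentially surjective and is a strict tensor functor (identity tensorator, tensor unit preserved on the nose); combined with full faithfulness this is precisely a strict tensor equivalence. Essential surjectivity is immediate from \ref{SSS:AbilityToLocalize}: given $(K,\tau)\in\SSS_b$, there is an equivalent superselection sector $(H,\tau')$ localized at $a$ together with a unitary intertwiner $\tau'\to\tau$, and this intertwiner is an isomorphism in $\SSS_b$ since $(H,\tau')$, being localized at $a\le b$, is an object of $\SSS_b$ by \ref{SSS:LocalizedInLarger}; hence $\tau\cong F(\tau')$. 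Preservation of the unit is also immediate, as the tensor unit of $\SSS_p$ is $\mathbbm 1_q:=\id_{A_q}$ independently of $p$.

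The substance is to prove that for $\pi,\sigma\in\SSS_a$ (viewed as objects of $\SSS_b$) one has $\pi\circ_a\sigma=\pi\circ_b\sigma$, and that $T\circ_aS=T\circ_bS$ for morphisms $T,S$ of $\SSS_a$; this makes $F$ strictly monoidal with identity tensorator. Fix $q\in\cP$ and choose, by \ref{geom:qSmallqIndicator} applied with $a$ in place of $p$, a $q$-small $q$-indicator $\widetilde p\le a$; since also $\widetilde p\le b$, this $\widetilde p$ is admissible in Construction \ref{constr:circp} for both $\circ_a$ and $\circ_b$. Pick $\pi^{\widetilde p},\sigma^{\widetilde p}$ localized at $\widetilde p$ with intertwining unitaries $u_{\widetilde p},v_{\widetilde p}$; since $\pi,\sigma$ and $\pi^{\widetilde p},\sigma^{\widetilde p}$ are all localized at $\widetilde p\le a$, hence at $a$ by \ref{SSS:LocalizedInLarger}, \ref{SSS:IntertwinerLocalized} forces $u_{\widetilde p},v_{\widetilde p}\in A_a\subseteq A_b$, so this single choice of data computes both fusion products. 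By the uniform formula \eqref{eq:UniformDefOfComposite}, in the form given in Remark \ref{rem:ChangeTheOutside}, for every $x\in A_q$
\[
(\pi\circ_a\sigma)_q(x)=\pi_a(v_{\widetilde p})\,\pi_q(\sigma^{\widetilde p}_q(x))\,\pi_a(v_{\widetilde p})^{*},\qquad
(\pi\circ_b\sigma)_q(x)=\pi_b(v_{\widetilde p})\,\pi_q(\sigma^{\widetilde p}_q(x))\,\pi_b(v_{\widetilde p})^{*},
\]
where $\pi_q$ and $\sigma^{\widetilde p}_q$ are literally the same operators in both lines. Since $v_{\widetilde p}\in A_a$ and $a\le b$, isotony of the superselection sector $\pi$ gives $\pi_b(v_{\widetilde p})=\pi_a(v_{\widetilde p})$, so the two right-hand sides coincide and $\pi\circ_a\sigma=\pi\circ_b\sigma$. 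For a morphism $S\colon\sigma\to\check\sigma$ in $\SSS_a$, \ref{SSS:IntertwinerLocalized} gives $S\in A_a$ (source and target both localized at $a$), so $\pi_b(S)=\pi_a(S)$ by isotony, whence $T\circ_bS=T\pi_b(S)=T\pi_a(S)=T\circ_aS$ for any $T$. Thus $F$ is fully faithful, essentially surjective, and strict monoidal, hence a strict tensor equivalence.

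The one delicate point is the assertion that the uniform formula \eqref{eq:UniformDefOfComposite} (equivalently Remark \ref{rem:ChangeTheOutside}) computes $(\pi\circ_p\sigma)_q$ for \emph{every} $q$-small $q$-indicator $\widetilde p\le p$, not merely for the $\widetilde p$ arising in Case~3 of Construction \ref{constr:circp}. This matters because the $\circ_a$ computation can fall into Case~1 or Case~2 while the $\circ_b$ computation falls into Case~3 (the reverse cannot occur, since $b\le q$ implies $a\le q$ and $b\le q'$ implies $a\le q'$). This uniformity is exactly what is invoked throughout Section \ref{sec:Fusion} — for instance at the opening of Lemma \ref{lem:MorphismTensorProduct} — and it follows from Lemma \ref{lem:FusionWellDefined} together with the observation that when $p\le q$ (resp.\ $p\le q'$) one necessarily has $\widetilde p\le q$ (resp.\ $\widetilde p\le q'$), so the Case~3b (resp.\ Case~3a) recipe applies, and a short computation using isotony, \ref{SSS:IntertwinerLocalized}, and \ref{SSS:LocalizedPreservesAlgebra} shows it reproduces the Case~2 value $\pi_q\circ\sigma_q$ (resp.\ the Case~1 value $\id_{A_q}$). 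Everything else in the argument is routine bookkeeping with isotony.
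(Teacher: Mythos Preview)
Your proof is correct and follows essentially the same approach as the paper: choose a $q$-small $q$-indicator $\widetilde p\le a$ (hence $\le b$), use the uniform formula for the fusion, and observe that the only difference between the $\circ_a$ and $\circ_b$ expressions is $\pi_a$ versus $\pi_b$ applied to an element of $A_a$, which isotony equates; the morphism-level argument is identical. Your additional paragraph on the ``delicate point'' about Cases 1--2 versus Case 3 is a valid clarification that the paper leaves implicit.
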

\begin{proof}
This map is clearly fully faithful, and it is essentially surjective by \ref{SSS:AbilityToLocalize}.
It remains to show that the inclusion map is a strict tensor functor.  
To do so, we must show that for $\pi, \sigma \in \SSS_a$, $\pi \circ_a \sigma = \pi \circ_b \sigma$, and for $T \colon \pi \to \check{\pi}$ and $S \colon \sigma \to \check{\sigma}$ morphisms in $\SSS_a$, $T \circ_a S = T \circ_b S$. 
Let $q \in \cP$
and
let $\widetilde{a} \leq a$ be a $q$-small $q$-indicator.
Note that in this case we also have that $\widetilde{a} \leq b$.  
We then have that 
\[
(\pi \circ_a \sigma)_q(-)
=
u_{\widetilde{a}}\pi^{\widetilde{a}}_a(v_{\widetilde{a}})(\pi^{\widetilde{a}}_q\circ \sigma^{\widetilde{a}}_q)(-)
\pi^{\widetilde{a}}_a(v_{\widetilde{a}})^*u_{\widetilde{a}}^*
=
u_{\widetilde{a}}\pi^{\widetilde{a}}_b(v_{\widetilde{a}})(\pi^{\widetilde{a}}_q\circ \sigma^{\widetilde{a}}_q)(-)
\pi^{\widetilde{a}}_b(v_{\widetilde{a}})^*u_{\widetilde{a}}^*
=
(\pi \circ_b \sigma)_q(-).
\]
Now suppose $T \colon \pi \to \check{\pi}$ and $S \colon \sigma \to \check{\sigma}$ are morphisms in $\SSS_a$.  
Then $T, S \in A_a$.
Since $a \leq b$, we have by isotony that 
\[
T \circ_a S
=
T\pi_a(S)
=
T\pi_b(S)
=
T \circ_b S.
\qedhere
\]
\end{proof}

\begin{cor} 
If $p,q\in \cP$ satisfy \ref{geom:SelfDisjoint}--\ref{geom:ZigZag},
then any zig-zag from $p$ to $q$
(one exists by Lemma \ref{lem:Connected})
gives a tensor equivalence
\[
\SSS_p
=
\SSS_{z_1}
\overset{\cong}{\hookrightarrow}
\SSS_{y_1}
\overset{\cong}{\hookleftarrow}
\SSS_{z_2}
\overset{\cong}{\hookrightarrow}
\cdots
\overset{\cong}{\hookleftarrow}
\SSS_{z_n}
\overset{\cong}{\hookrightarrow}
\SSS_{y_n}
\overset{\cong}{\hookleftarrow}
\SSS_{z_{n+1}}
=
\SSS_q.
\pushQED{\qed} 
\qedhere
\popQED
\]
\end{cor}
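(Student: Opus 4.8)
The plan is to deduce this immediately from Theorem \ref{restrictingequiv} together with the formal fact that a composite of tensor equivalences, with some arrows inverted, is again a tensor equivalence. First I would invoke Lemma \ref{lem:Connected}, which applies since $\cP$ satisfies \ref{geom:saturate} and \ref{geom:splitting}, to produce a zig-zag $(p=z_1,y_1,z_2,\dots,z_n,y_n,z_{n+1}=q)$ with $z_j,z_{j+1}\leq y_j$ for every $j$. For each index $j$ both comparisons $z_j\leq y_j$ and $z_{j+1}\leq y_j$ are instances of Theorem \ref{restrictingequiv}, so the inclusions afforded by \ref{SSS:LocalizedInLarger},
$$
\SSS_{z_j}\overset{\cong}{\hookrightarrow}\SSS_{y_j}\overset{\cong}{\hookleftarrow}\SSS_{z_{j+1}},
$$
are strict tensor equivalences.

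Then I would assemble these into a single functor $\SSS_p\to\SSS_q$. Each backward inclusion $\SSS_{z_{j+1}}\hookrightarrow\SSS_{y_j}$ is an equivalence of $\rmW^*$-categories, hence admits a quasi-inverse; being a strict tensor functor, it transports a canonical monoidal structure to the quasi-inverse, making the latter a tensor equivalence as well (with a generally nontrivial tensorator). Composing the forward inclusions $\SSS_{z_j}\hookrightarrow\SSS_{y_j}$ with the chosen quasi-inverses of the backward ones gives a chain
$$
\SSS_p=\SSS_{z_1}\longrightarrow\SSS_{y_1}\longrightarrow\SSS_{z_2}\longrightarrow\cdots\longrightarrow\SSS_{y_n}\longrightarrow\SSS_{z_{n+1}}=\SSS_q
$$
of tensor equivalences, whose composite is therefore a tensor equivalence.

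The only point worth flagging is a mild loss of strictness: although every inclusion along the zig-zag is a \emph{strict} tensor equivalence, the quasi-inverses inserted to reverse the backward arrows need not be strict, so the resulting composite is a tensor equivalence but in general not a strict one --- which is exactly what the statement asserts. There is no real obstacle here; all the substance lies in Theorem \ref{restrictingequiv}, and what remains is the routine bookkeeping of composing equivalences along a zig-zag, so I would keep this proof short.
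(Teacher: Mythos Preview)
Your proposal is correct and takes essentially the same approach as the paper: the corollary is stated with an inline \qed and no separate proof, since each arrow in the displayed zig-zag is a strict tensor equivalence by Theorem \ref{restrictingequiv}. Your additional discussion of quasi-inverses and the possible loss of strictness when composing into a single functor is accurate extra detail that the paper omits.
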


\section{Braiding of superselection sectors}
\label{sec:braiding}

We now outline our strategy to define a braiding on $\SSS_p$ which is manifestly `localized in $p$' and does not require any choice of a `point at infinity' as in \cite[\S~IV]{MR1231644}.
We choose a splitting $(r,s)$ of $p$ and for $\pi,\sigma\in \SSS_p$, we define a `putative braiding' $\beta_{\pi,\sigma}$ (which ought to exist) by
$$
\pi\circ_p \sigma
\xleftrightarrow{\cong}
\pi^{r}\circ_p \sigma^{s}
\xleftrightarrow{\text{should be }=}
\sigma^{s}\circ_p \pi^{r}
\xleftrightarrow{\cong}
\sigma\circ_r \pi,
$$
where we have made choices of $\pi^r$ localized in $r$ equivalent to $\pi$ and $\sigma^s$ localized in $s$ equivalent to $\sigma$.
We then prove this putative braiding is well-defined.
The main techniques we use are:
\begin{itemize}
\item 
a notion of \emph{mutually disjoint zig-zags} in order to prove $\beta_{\pi,\sigma}$ 
does not depend on the choice of splitting, and
\item 
a \emph{reflection} of a splitting $(r,s)$ of $p$ to a splitting of $p'$ based on \cite[Lem.~2.2]{MR260294} in order to prove $\beta_{\pi,\sigma}$  is an intertwiner.
\end{itemize}

\subsection{Definition of the putative braiding}

\begin{construction}
\label{constr:Braiding}
Suppose $\pi,\sigma\in \SSS_p$.
Choose a splitting $(r, s)$ of $p$.
Choose unitaries $u_r,v_s\in A_p$ which intertwine $\pi$ with $\pi^r\in \SSS_r$ and $\sigma$ with $\sigma^s\in \SSS_s$ respectively, i.e.,
$$
\pi(-) = u_r\pi^r(-)u_r^*
\qquad\qquad\qquad
\sigma(-) = v_s \sigma^s(-) v_s^*.
$$
We define a putative braiding from $\pi\circ_p \sigma$ to $\sigma\circ_p \pi$ by
$$
\beta_{\pi,\sigma} 
\coloneqq 
(v_{s} \circ_p u_{r})(u_{r}^* \circ_p v_{s}^*)
=
v_s\sigma^s_p(u_r) u_r^* \pi_p(v_s)^*.
$$
\end{construction}

\begin{lem}
\label{lem:ReverseBraiding}
Switching the roles of $r,s$ in Construction \ref{constr:Braiding} produces the `reverse' putative braiding
$\beta^{\rev}_{\pi,\sigma}=\beta_{\sigma,\pi}^{-1}$.
\end{lem}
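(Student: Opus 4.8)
The plan is to unwind both sides into explicit operator formulas built from the \emph{same} localizing unitaries, and then match them termwise using the two intertwining identities. First I would pin down what ``switching the roles of $r,s$'' means: we localize $\pi$ in $s$ via a unitary $u_s$ with $\pi(-)=u_s\pi^s(-)u_s^*$ and localize $\sigma$ in $r$ via a unitary $v_r$ with $\sigma(-)=v_r\sigma^r(-)v_r^*$, where $u_s,v_r\in A_p$ by \ref{SSS:IntertwinerLocalized} since $\pi,\pi^s$ and $\sigma,\sigma^r$ are all localized at $p$. Running Construction \ref{constr:Braiding} verbatim with $r$ and $s$ interchanged then gives
\[
\beta^{\rev}_{\pi,\sigma}
=
(v_r\circ_p u_s)(u_s^*\circ_p v_r^*)
=
v_r\,\sigma^r_p(u_s)\,u_s^*\,\pi_p(v_r)^*.
\]

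Next I would write $\beta_{\sigma,\pi}$ using the \emph{original} splitting $(r,s)$ but the same unitaries: now $\sigma$ is the sector localized in $r$ (via $v_r$) and $\pi$ is the sector localized in $s$ (via $u_s$), so Construction \ref{constr:Braiding} gives
\[
\beta_{\sigma,\pi}
=
(u_s\circ_p v_r)(v_r^*\circ_p u_s^*)
=
u_s\,\pi^s_p(v_r)\,v_r^*\,\sigma_p(u_s)^*,
\]
and since each of the four factors is unitary,
\[
\beta_{\sigma,\pi}^{-1}
=
\sigma_p(u_s)\,v_r\,\pi^s_p(v_r)^*\,u_s^*.
\]

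Finally I would match the two expressions. From $\sigma(-)=v_r\sigma^r(-)v_r^*$ we get $v_r\,\sigma^r_p(u_s)=\sigma_p(u_s)\,v_r$, and from $\pi(-)=u_s\pi^s(-)u_s^*$ we get $\pi_p(v_r)\,u_s=u_s\,\pi^s_p(v_r)$, whose adjoint rearranges to $u_s^*\,\pi_p(v_r)^*=\pi^s_p(v_r)^*\,u_s^*$. Substituting both identities into the formula for $\beta^{\rev}_{\pi,\sigma}$ converts it termwise into $\beta_{\sigma,\pi}^{-1}$, which is the claim; combined with the (subsequent) independence of the putative braiding from the choice of localizing data, the identity then holds for arbitrary choices, not only matched ones. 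I do not anticipate any genuine difficulty here: the entire content is bookkeeping --- keeping straight which component of the splitting localizes which sector, and in which direction each intertwining unitary points, so that the two expressions are assembled from literally the same unitaries before the intertwining relations are applied.
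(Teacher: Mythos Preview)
Your proposal is correct and follows essentially the same approach as the paper: both reduce the claim to a one-line unitary computation using the intertwining identities $v\,\sigma^{\bullet}_p(u)=\sigma_p(u)\,v$ and $u\,\pi^{\bullet}_p(v)=\pi_p(v)\,u$. The only cosmetic difference is that the paper reuses the original unitaries $u_r,v_s$ (showing $\beta_{\pi,\sigma}\cdot\big(u_r\pi^r_p(v_s)v_s^*\sigma_p(u_r)^*\big)=1$ and identifying the second factor as $\beta_{\sigma,\pi}$ with the roles of $r,s$ reversed), whereas you introduce fresh unitaries $u_s,v_r$; your final remark about invoking the subsequent independence lemma is unnecessary, since at this stage the statement is about matched choices of data and holds as a formula-level identity.
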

\begin{proof}
It suffices to prove
\(
\beta_{\pi, \sigma}^{-1}
=
u_r \pi^r_p(v_s)v_s^*\sigma_p(u_r)^*,
\)
as the right hand side is the formula one would obtain for $\beta_{\sigma,\pi}$ in Construction \ref{constr:Braiding} after reversing the roles of $r,s$.
Indeed,
\[
v_s\sigma^s_p(u_r) u_r^* \pi_p(v_s)^*
u_r \pi^r_p(v_s)v_s^*\sigma_p(u_r)^*
=
\sigma_p(u_r)v_s u_r^* \pi_p(v_s)^*
\pi_p(v_s)u_r v_s^*\sigma_p(u_r)^*
=
1.
\qedhere
\]
\end{proof}

\begin{lem}
\label{lem:IndependenceOfBraidingOnChoiceOfSectorFixedRS}
The formula for $\beta_{\pi, \sigma}$ does not depend on the choice of $\pi^r$ localized at $r$, $\sigma^s$ localized at $s$, and intertwiners $u_r \colon \pi^r \to \pi$ and $v_s \colon \sigma^s \to \sigma$.
\end{lem}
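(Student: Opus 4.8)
The plan is to reduce the comparison of two choices to a short computation inside the commuting algebras $A_r$ and $A_s$. Suppose $(\pi^r,\sigma^s,u_r,v_s)$ and $(\widehat\pi^r,\widehat\sigma^s,\widehat u_r,\widehat v_s)$ are two admissible collections of choices, and write $\widehat\beta_{\pi,\sigma}=\widehat v_s\,\widehat\sigma^s_p(\widehat u_r)\,\widehat u_r^*\,\pi_p(\widehat v_s)^*$ for the putative braiding built from the hatted data. Set $a\coloneqq\widehat u_r^*u_r$ and $b\coloneqq\widehat v_s^*v_s$, so that $\widehat u_r=u_ra^*$, $\widehat v_s=v_sb^*$, and, reading off the definitions of $v_s,\widehat v_s$, we have $\widehat\sigma^s_q(-)=b\,\sigma^s_q(-)\,b^*$ for all $q$.

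First I would record the relevant properties of $a$ and $b$. Since $a$ intertwines $\pi^r$ with $\widehat\pi^r$, both of which are localized at $r$, \ref{SSS:IntertwinerLocalized} gives $a\in A_r$; likewise $b\in A_s$. Since $(r,s)$ is a splitting of $p$ we have $r\leq s'$, so $A_r\subseteq A_{s'}=A_s'$ by Haag duality, and hence $[a,b]=0$. Moreover, since $\sigma^s$ is localized at $s$ and $a\in A_r\subseteq A_{s'}$, isotony gives $\sigma^s_p(a)=\sigma^s_{s'}(a)=a$; symmetrically $\pi^r_p(b)=b$, using that $\pi^r$ is localized at $r$ and $b\in A_s\subseteq A_{r'}$.

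Then I would substitute these relations into $\widehat\beta_{\pi,\sigma}$. Expanding $\widehat\sigma^s_p(\widehat u_r)=b\,\sigma^s_p(u_r)\,\sigma^s_p(a^*)\,b^*=b\,\sigma^s_p(u_r)\,a^*b^*$ and $\pi_p(\widehat v_s)^*=\pi_p(b)\,\pi_p(v_s)^*$, the $b$'s at the two ends cancel against the $b^*$'s, leaving
\[
\widehat\beta_{\pi,\sigma}
=v_s\,\sigma^s_p(u_r)\,\bigl(a^*b^*a\,u_r^*\,\pi_p(b)\bigr)\,\pi_p(v_s)^*.
\]
The bracketed factor equals $u_r^*$: since $[a,b]=0$ we have $a^*b^*a=b^*$, and $\pi_p(b)=u_r\,\pi^r_p(b)\,u_r^*=u_r\,b\,u_r^*$ gives $u_r^*\pi_p(b)=b\,u_r^*$, whence $a^*b^*a\,u_r^*\,\pi_p(b)=b^*b\,u_r^*=u_r^*$. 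Therefore $\widehat\beta_{\pi,\sigma}=v_s\,\sigma^s_p(u_r)\,u_r^*\,\pi_p(v_s)^*=\beta_{\pi,\sigma}$.

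The computation is routine once it is set up correctly; the only care required is bookkeeping of adjoints and subscripts and, crucially, checking via \ref{SSS:IntertwinerLocalized} and isotony that $a,b$ land in $A_r,A_s$ (and not their complements) and are fixed by $\sigma^s_p,\pi^r_p$ respectively. I do not anticipate any genuine obstacle.
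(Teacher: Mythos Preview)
Your proof is correct and takes essentially the same approach as the paper's: both reduce the comparison to showing that the ``difference'' unitaries $a=\widehat u_r^*u_r\in A_r$ and $b=\widehat v_s^*v_s\in A_s$ can be eliminated using \ref{SSS:IntertwinerLocalized}, the disjointness $r\leq s'$, and the fact that $\sigma^s$ (respectively $\pi^r$) acts trivially on $A_r$ (respectively $A_s$). The paper organizes the algebra slightly differently---it first rewrites both sides using $\sigma_p$ rather than $\sigma^s_p$ and then reduces to a single equation $\sigma_p(\check u_r^*u_r)v_s u_r^*\pi_p(v_s^*\check v_s)=\check v_s\check u_r^*$---but the key ingredients and the overall strategy are identical.
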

\begin{proof}
Consider $\pi^r, \check{\pi}^r$ localized at $r$, $\sigma^s, \check{\sigma}^s$ localized at $s$ and intertwiners $u_r \colon \pi^r \to \pi$, $\check{u}_r \colon \check{\pi}^r \to \pi$, $v_s \colon \sigma^s \to \sigma$, $\check{v}_s \colon \check{\sigma}^s \to \sigma$. 
We wish to show that 
\[
v_s \sigma_p^s (u_r) u_r^* \pi_p (v_s)^* = \check{v}_s \check{\sigma}_p^s (\check{u}_r) \check{u}_r^* \check{\pi}_p (\check{v}_s)^*.
\]
Observe that the above equation is equivalent to
\[
\sigma_p (u_r) v_s u_r^* \pi_p (v_s^*) = \sigma_p (\check{u}_r) \check{v}_s \check{u}_r^* \pi_p (\check{v}_s^*),
\]
which is equivalent to 
\[
\sigma_p (\check{u}_r^* u_r) v_s u_r^* \pi_p (v_s^* \check{v}_s) 
=
\check{v}_s\check{u}_r^*.
\]
Now, since $\pi^r, \check{\pi}^r$ are localized in $r$ and $\sigma^s, \check{\sigma}^s$ are localized in $s$, we have that $\check{u}_r^* u_r \in A_r$ and $v_s^* \check{v}_s \in A_s$ by \ref{SSS:IntertwinerLocalized}.
Therefore, since $r \leq s^\prime$, we have
\begin{align*}
\sigma_p (\check{u}_r^* u_r) v_s u_r^* \pi_p (v_s^* \check{v}_s) 
&=
v_s\sigma_p^s (\check{u}_r^* u_r) u_r^* \pi_p (v_s^* \check{v}_s) 
=
v_s\check{u}_r^* u_r u_r^* \pi_p (v_s^* \check{v}_s) 
=
v_s\check{u}_r^*\pi_p (v_s^* \check{v}_s) 
\\&=
v_s\check{\pi}_p^r(v_s^* \check{v}_s)\check{u}_r^*
=
v_s v_s^* \check{v}_s\check{u}_r^*
=
\check{v}_s\check{u}_r^*.
\qedhere
\end{align*}
\end{proof}

\subsection{Independence of the putative braiding up to mutually disjoint zig-zag}

\begin{defn}
Let $r_1, s_1, r_2, s_2 \in \cP$ with $r_1 \leq s_1'$ and $r_2 \leq s_2'$.  
A \emph{mutually disjoint zig-zag} $(r_1, s_1)\leftrightsquigarrow(r_2,s_2)$ is a pair of zig-zags 
$$
\left[
\begin{array}{*{10}c}
r_1=x_1& 
\textcolor{blue}{w_1}& \textcolor{red}{x_2}&
\dots& x_n& \textcolor{blue}{w_n}& \textcolor{red}{x_{n + 1}=r_2}
\\
\textcolor{blue}{s_1=z_1}& \textcolor{red}{y_1}& z_2& \dots& \textcolor{blue}{z_n}& \textcolor{red}{y_n}& z_{n + 1}=s_2
\end{array}
\right]
$$
such that 
for all $i = 1, \dots, n$, $\textcolor{blue}{w_i}$ is disjoint from $\textcolor{blue}{z_i}$ and $\textcolor{red}{y_i}$ is disjoint from $\textcolor{red}{x_{i + 1}}$.
\end{defn}

The existence of a mutually disjoint zig-zag from 
$(r_1, s_1)$ to $(r_2,s_2)$
allows one to first wiggle $r_1=x_1 \leq w_1 \geq x_2 =:\widetilde{r}_1$ while staying disjoint from $s_1=z_1$,
and we can then wiggle $s_1=z_1\leq y_1 \geq z_2=:\widetilde{s}_1$ 
while staying disjoint from $\widetilde{r}_1=x_2$.
We may then repeat this process along the entire mutually disjoint zig-zag.
Observe that mutually disjoint zig-zags can be trivially extended on either side to allow for multiple wiggles for the $r_i$'s keeping $s_i$ constant, and vice versa.

\begin{lem}
\label{lem:MutuallyDisjointZigZagIndependence}
If  $(r_1,s_1)$ and $(r_2,s_2)$ are two splittings in $p$
and
there is a mutually disjoint zig-zag
$(r_1,s_1)\leftrightsquigarrow (r_2,s_2)$ contained in $p$,
then the map $\beta_{\pi,\sigma}$ constructed in Construction \ref{constr:Braiding} above is independent of the choice of splitting $(r_1,s_1)$ versus $(r_2,s_2)$.
\end{lem}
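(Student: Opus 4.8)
The plan is to isolate a single elementary ``wiggle'' move and chain it along the mutually disjoint zig-zag. Say two splittings $(r,s)$ and $(\widetilde r,s)$ of $p$ are \emph{related in the first slot} if there is some $w\leq p$ with $r,\widetilde r\leq w$ and $w\leq s'$; symmetrically, $(r,s)$ and $(r,\widetilde s)$ are \emph{related in the second slot} if there is $y\leq p$ with $s,\widetilde s\leq y$ and $y\leq r'$. First I would prove the first-slot version: if $(r,s)$ and $(\widetilde r,s)$ are related in the first slot, then the putative braiding from Construction~\ref{constr:Braiding} computed with $(r,s)$ equals the one computed with $(\widetilde r,s)$. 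The second-slot version is then automatic: by Lemma~\ref{lem:ReverseBraiding} the braiding computed with the swapped splitting $(s,r)$ equals $\beta_{\sigma,\pi}^{-1}$, and a pair of splittings of $p$ related in the second slot for $(\pi,\sigma)$ becomes, after swapping the two coordinates, a pair related in the first slot for $(\sigma,\pi)$; so applying the first-slot version to $(\sigma,\pi)$ and taking inverses gives the claim.

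For the first-slot version, by Lemma~\ref{lem:IndependenceOfBraidingOnChoiceOfSectorFixedRS} I may fix once and for all a $\sigma^s\in\SSS_s$ and an intertwiner $v_s\colon\sigma^s\to\sigma$, and I pick $\pi^r\in\SSS_r$, $\pi^{\widetilde r}\in\SSS_{\widetilde r}$ with intertwiners $u_r\colon\pi^r\to\pi$ and $u_{\widetilde r}\colon\pi^{\widetilde r}\to\pi$ (these exist by \ref{SSS:AbilityToLocalize}, and lie in $A_p$ by \ref{SSS:IntertwinerLocalized}). Spelling out Construction~\ref{constr:Braiding}, the two braidings are $v_s\sigma^s_p(u_r)u_r^*\pi_p(v_s)^*$ and $v_s\sigma^s_p(u_{\widetilde r})u_{\widetilde r}^*\pi_p(v_s)^*$, so after cancelling the common unitaries $v_s$ on the left and $\pi_p(v_s)^*$ on the right it suffices to show $\sigma^s_p(u_{\widetilde r}^*u_r)=u_{\widetilde r}^*u_r$. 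Since $r,\widetilde r\leq w$, both $\pi^r$ and $\pi^{\widetilde r}$ are localized at $w$ by \ref{SSS:LocalizedInLarger}, so the intertwiner $u_{\widetilde r}^*u_r$ lies in $A_w$ by \ref{SSS:IntertwinerLocalized}. Because $w\leq p$ and $w\leq s'$, isotony of the family $\sigma^s$ gives $\sigma^s_p|_{A_w}=\sigma^s_w=\sigma^s_{s'}|_{A_w}=\id_{A_w}$, the last equality because $\sigma^s$ is localized at $s$; hence $\sigma^s_p$ fixes $u_{\widetilde r}^*u_r$, as needed.

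To finish, I would walk along the given mutually disjoint zig-zag $(r_1=x_1,w_1,x_2,\dots,w_n,x_{n+1}=r_2)$, $(s_1=z_1,y_1,z_2,\dots,y_n,z_{n+1}=s_2)$ through the intermediate pairs
\[
(x_1,z_1)\;\to\;(x_2,z_1)\;\to\;(x_2,z_2)\;\to\;(x_3,z_2)\;\to\;\cdots\;\to\;(x_{n+1},z_n)\;\to\;(x_{n+1},z_{n+1}).
\]
Each such pair is genuinely a splitting of $p$: all the elements involved are $\leq p$ (the zig-zag is contained in $p$), and $x_{i+1}\leq w_i\leq z_i'$ while $z_{i+1}\leq y_i\leq x_{i+1}'$ by the disjointness conditions built into the definition of a mutually disjoint zig-zag. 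Moreover consecutive pairs in the list are related in the first slot via $w_i$ (for the steps $(x_i,z_i)\to(x_{i+1},z_i)$) or in the second slot via $y_i$ (for the steps $(x_{i+1},z_i)\to(x_{i+1},z_{i+1})$), with $w_i,y_i\leq p$ throughout. So the two versions of the wiggle move apply at each of the $2n$ steps, and since the putative braiding is unchanged at every step, its value at $(r_1,s_1)$ equals its value at $(r_2,s_2)$.

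The only real delicacy, and the step I would be most careful about, is the bookkeeping in the wiggle move: one must arrange the localized sectors and intertwiners so that the two formulas for $\beta_{\pi,\sigma}$ differ literally only in the $u_r$ versus $u_{\widetilde r}$ slot, and then check that the correction unitary $u_{\widetilde r}^*u_r$ really lands in $A_w$ for a $w$ that is simultaneously contained in $p$ and disjoint from $s$, so that isotony of $\sigma^s$ can be invoked. Everything else, including the reduction of the second-slot move to the first-slot move and the chaining along the zig-zag, is routine.
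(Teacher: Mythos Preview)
Your proof is correct and follows essentially the same strategy as the paper's: reduce by induction along the mutually disjoint zig-zag to a single elementary move where only one slot changes, then verify that move. The paper handles the single move a touch more slickly: rather than expanding the two formulas and showing $\sigma^s_p(u_{\widetilde r}^*u_r)=u_{\widetilde r}^*u_r$ directly, it observes that since $r,\widetilde r\leq w$, both $\pi^{r}$ and $\pi^{\widetilde r}$ are localizations of $\pi$ at $w$, so the two computations of $\beta_{\pi,\sigma}$ are literally two choices of localizing data for the \emph{same} splitting $(w,s)$, and Lemma~\ref{lem:IndependenceOfBraidingOnChoiceOfSectorFixedRS} applies verbatim. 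Your direct computation is of course exactly what that lemma unpacks to in this case, and your explicit reduction of the second-slot move to the first via Lemma~\ref{lem:ReverseBraiding} spells out what the paper covers with the phrase ``by symmetry.''
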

\begin{proof}
 Given unitary intertwiners $u_{r_1}: \pi^{r_1} \to \pi$, $v_{s_1}: \sigma^{s_1} \to \sigma$, and  $u_{r_2}: \pi^{r_2} \to \pi$, $v_{s_2}: \sigma^{s_2} \to \sigma$, we want to show that
 \[
  v_{s_1}\sigma^{s_1}_p(u_{r_1}) u_{r_1}^* \pi_p(v_{s_1})^*
  = 
  v_{s_2}\sigma^{s_2}_p(u_{r_2}) u_{r_2}^* \pi_p(v_{s_2})^*.
 \]
 Note that by induction and by symmetry, it suffices to consider the case $s_1 = s_2 \eqqcolon s$ and $r_1, r_2 \leq r \leq p$ with $r \leq s^\prime$. 
 In that case, the equation becomes
 \[
 v_{s}\sigma^{s}_p(u_{r_1}) u_{r_1}^* \pi_p(v_{s})^*
  = 
  v_{s}\sigma^{s}_p(u_{r_2}) u_{r_2}^* \pi_p(v_{s})^*.
 \]
 Now, $u_{r_1}: \pi^{r_1} \to \pi$ and $u_{r_2}: \pi^{r_2} \to \pi$ are two choices for localizing $\pi$ to $r$ since $r_1, r_2 \leq r$.
Therefore, by Lemma \ref{lem:IndependenceOfBraidingOnChoiceOfSectorFixedRS}, the result follows.
\end{proof}

\begin{rem}
\label{rem:ReverseBraiding}
As stated in Remark \ref{rem:ReverseBraidingIntro} in the introduction,
if there is a mutually disjoint zig-zag $(r,s)\leftrightsquigarrow(s,r)$ in $p$,
then by Lemmas \ref{lem:ReverseBraiding} and \ref{lem:MutuallyDisjointZigZagIndependence}, 
for all $\pi,\sigma\in \SSS_p$,
$\beta_{\pi,\sigma}=\beta^{\rev}_{\pi,\sigma}$, which is exactly the condition that our putative braiding $\beta$ is symmetric.
An explicit example where such a mutually disjoint zig-zag exists can be constructed from disks in $\bbR^2$.
\begin{equation*}
\tag{\ref{eq:SmallMutuallyDisjointZigZagSwap}}
\tikzmath{
\draw[fill=red!50, opacity=.5] (30:1) circle (.76cm);
\draw[fill=blue!50, opacity=.5] ($ (1,0) + (150:1) $) circle (.76cm);
\draw[fill=green!50, opacity=.5] ($ (60:1) + (270:1) $) circle (.76cm);
\draw[fill=white] (0,0) node{$\scriptstyle r_1$} circle (.2cm);
\draw[fill=white] (1,0) node{$\scriptstyle r_2$} circle (.2cm);
\draw[fill=white] (60:1) node{$\scriptstyle r_3$} circle (.2cm);
\draw (30:.57735) circle (1.25cm);
\node at ($ (30:1) + (30:.4) $) {$\scriptstyle s_1$};
\node at ($ (1,0) + (150:1) + (150:.4) $) {$\scriptstyle s_2$};
\node at ($ (60:1) + (270:1) + (270:.4) $) {$\scriptstyle s_3$};
\node at ($ (60:1) + (0,.5) $) {$\scriptstyle p$};
}
\qquad\qquad
\begin{aligned}
&(r_1, 
\textcolor{blue}{s_2}, \textcolor{red}{r_3},\textcolor{blue}{s_1}, \textcolor{red}{r_2})
\\
&(\textcolor{blue}{r_2}, \textcolor{red}{s_3}, \textcolor{blue}{r_1}, \textcolor{red}{r_1}, r_1)    
\end{aligned}
\end{equation*}
\end{rem}

We now show that \ref{geom:SelfDisjoint}--\ref{geom:ZigZag}
imply the existence of mutually disjoint zig-zags.

\begin{facts}
Here are some helpful facts to construct mutually disjoint zig-zags.
\begin{enumerate}[label=(ZZ\arabic*)]
\item 
\label{ZZ:shorten}
Suppose $a,b\leq p$
and $(a=z_1,y_1,\dots, y_n,z_{n+1}=b)$ is a zig-zag contained in $p$.
If there is a $j<n$ with $y_j\Cap b \neq \emptyset$, then 
choosing $\widehat{b}\in y_j\Cap b$, there is a shorter zig-zag 
$(a=z_1,y_1,\dots, y_j,z_{j+1}=\widehat{b})$.

\item 
\label{ZZ:Concatenate}
Suppose  $a,b,c\leq p$
and $(a=z_1,y_1,\dots, y_n,z_{n+1}=b)$ is a zig-zag in $p$ such that $y_j\Cap c \neq \emptyset$ for some $j$.
There choosing any $\widetilde{c}\in y_j\Cap c$, our zig-zag is a concatenation of the zig-zags
$(a=z_1,y_1,\dots, z_j,y_j,\widetilde{c})$
and
$(\widetilde{c},y_j, z_{j+1},y_{j+1},z_{j+2},\dots,y_n,z_n=b)$.

\item 
\label{ZZ:RemoveFromThird}
Suppose $a,b,c\leq p$
and $(a=z_1,y_1,\dots, y_n,z_{n+1}=b)$ is a zig-zag in $p$ such that $y_j\Cap c = \emptyset$ for all $j$.
Then there is a $\widetilde{c}\leq c$ such that our zig-zag is contained in $\widetilde{c}'$.
\begin{proof}
Setting $c_0:=c$, observe that by induction and \ref{geom:saturate}, for all $j=1,\dots, n$, there is a $c_j\leq y_j'\Cap c_{j-1}$.
Setting $\widetilde{c}:=c_n$ works.
\end{proof}

\item 
\label{ZZ:Disjointify}
Suppose $a,b\leq p$ are disjoint
and $(a=z_1,y_1,\dots, y_n,z_{n+1}=b)$ is a zig-zag contained in $p$.
Applying \ref{ZZ:shorten}
and replacing $a,b$ with $\widehat{a}\leq a$ and $\widehat{b}\leq b$ if necessary, we may assume that our zig-zag satisfies 
$y_j\Cap a =\emptyset$ for all $j>1$
and
$y_j\Cap b =\emptyset$ for all $j<n$.
Applying \ref{ZZ:RemoveFromThird}, there are 
$\widetilde{a}\leq a$
and 
$\widetilde{b}\leq b$
so that the zig-zag
$(\widetilde{a},y_1,\dots, y_n,\widetilde{b})$ is a zig-zag contained in $p$
such that
\begin{equation}
\label{eq:MutuallyDisjoint}
\text{$y_j\leq \widetilde{a}'$ for all $j>1$
and
$y_j\leq \widetilde{b}'$ for all $j<n$.}
\end{equation}

\item 
\label{ZZ:MutuallyDisjointExists1}
Suppose $a,b,c\leq p$ are all disjoint
and $(a=z_1,y_1,\dots, y_n,z_{n+1}=b)$ is a zig-zag contained in $p$.
If $y_j\Cap c\neq \emptyset$ for some $j$, then there is a mutually disjoint zig-zag from $(a,c)\leftrightsquigarrow(a,b)\leftrightsquigarrow(c,b)$.
\begin{proof}
First, applying \ref{ZZ:Disjointify} and passing to a sub-zig-zag if necessary, there are $\widetilde{a}\leq a$ and $\widetilde{b}\leq b$ such that \eqref{eq:MutuallyDisjoint} holds.
Suppose 
$y_j\Cap c \neq \emptyset$ for some $j$.
Let 
$j$ be minimal so that $y_j\Cap c \neq \emptyset$
and
$k$ be maximal so that $y_k\Cap c \neq \emptyset$.
Choosing
$\widetilde{c}\leq y_j\Cap c$
and
$\widehat{c}\leq y_k\Cap c$,
we have zig-zags
$(\widetilde{a}=z_1,y_1,\dots, y_j,\widetilde{c})$,
$(\widehat{c},y_k,\dots, y_n,z_n=\widetilde{b})$,
and 
$(\widetilde{c}, c, \widehat{c})$
which concatenate to give a zig-zag from $\widetilde{a}$ to $\widetilde{b}$ satisfying \eqref{eq:MutuallyDisjoint}.
Then
$$
\left[
\begin{array}{*{15}c}
a& a& \widetilde{a}& \widetilde{a}& \dots& \widetilde{a}&\widetilde{a}&a& \widetilde{a}& y_1& \dots&y_j& \widetilde{c}& c& c
\\
c& c& \widehat{c}& y_k&\dots& y_n& \widetilde{b}&b&\widetilde{b}& \widetilde{b}& \dots& \widetilde{b}&\widetilde{b}&b&b
\end{array}
\right]
$$
is a mutually disjoint zig-zag $(a,c)\leftrightsquigarrow(a,b)\leftrightsquigarrow(c,b)$.
\end{proof}

\item 
\label{ZZ:MutuallyDisjointExists2}
Suppose $a,b,c\leq p$ are all disjoint and
$(a=z_1,y_1,\dots, y_n,z_{n+1}=b)$ is a zig-zag contained in $p$.
If $y_j\Cap c = \emptyset$ for all $j$,
then there is a mutually disjoint zig-zag 
$(a,b)\leftrightsquigarrow(c,b)\leftrightsquigarrow(c,a)$
or 
$(a,b)\leftrightsquigarrow(a,c)\leftrightsquigarrow(b,c)$.
\begin{proof}
First, use \ref{ZZ:Disjointify} to find $\widetilde{a}\leq a$ and $\widetilde{b}\leq b$
satisfying \eqref{eq:MutuallyDisjoint}.
We may then use \ref{ZZ:RemoveFromThird} to find a $\widetilde{c}\leq c$ so that our zig-zag is contained in $\widetilde{c}'$.

Now use \ref{geom:ZigZag} to find a zig-zag from $(\widetilde{a}=x_1,w_1,x_2,\dots, x_m,w_m,x_{m+1}=\widetilde{c})$
where each $x_i$ is a $\widetilde{b}$-indicator and each $w_i$ is $\widetilde{b}$-small.
Pick $k$ maximal so that 
$w_k\Cap \widetilde{a}\neq \emptyset$
or
$w_k\Cap \widetilde{b}\neq \emptyset$.
\item[\underline{Case 1:}]
If 
$w_k\Cap \widetilde{a}\neq \emptyset$, 
then
since $w_k$ is $\widetilde{b}$-small,
there is a $\widehat{b}\leq \widetilde{b}$ such that $\widehat{b}\leq w_k'$.
Moreover, by maximality of $k$, by \ref{ZZ:RemoveFromThird}, we may assume $w_j\leq\widehat{b}'$ for all $j\geq k$.
Then choosing $\widehat{a}\in w_k\Cap \widetilde{a}$,
$$
\left[
\begin{array}{*{15}c}
a & a &\widehat{a} &w_k &\dots & w_m &\widetilde{c} &c &\widetilde{c} &\widetilde{c} &\dots &\widetilde{c} &\widetilde{c} &c &c
\\
b&b&\widehat{b}&\widehat{b}&\dots&\widehat{b}&\widehat{b}&b&b&y_n&\dots&y_1&a&a&a
\end{array}
\right]
$$
is a mutually disjoint zig-zag  $(a,b)\leftrightsquigarrow(c,b)\leftrightsquigarrow(c,a)$.

\item[\underline{Case 2:}]
If 
$w_k\Cap \widetilde{b}\neq \emptyset$ 
but
$w_k\Cap \widetilde{a}=\emptyset$,
then
by
\ref{ZZ:RemoveFromThird}, 
there is a $\widehat{a}\leq \widetilde{a}$
such that $w_j\leq \widehat{a}'$ for all $j\geq k$.
Then choosing $\widehat{b}\in w_k\Cap \widetilde{b}$,
$$
\left[
\begin{array}{*{15}c}
a & a &\widehat{a} &\widehat{a} &\dots & \widehat{a} &\widehat{a} &a &\widetilde{a}&y_1&\dots& y_n&\widetilde{b}&b&b
\\
b&b&\widehat{b}&w_k&\dots&w_m&\widetilde{c}&c&\widetilde{c}&\widetilde{c}&\dots &\widetilde{c}&\widetilde{c}&c&c
\end{array}
\right]
$$
is a mutually disjoint zig-zag  $(a,b)\leftrightsquigarrow(a,c)\leftrightsquigarrow(b,c)$.
\end{proof}

\item 
\label{ZZ:TriangleDance}
Suppose $a,b,c\leq p$ are all disjoint.
Combining \ref{ZZ:MutuallyDisjointExists1} and \ref{ZZ:MutuallyDisjointExists2},
we see that $a,b,c$ participate in one of the following mutually disjoint zig-zags.
$$
\tikzmath{
\filldraw (-150:.5cm) node[below, xshift=-.1cm]{$\scriptstyle c$} circle (.05cm);
\filldraw (-30:.5cm) node[below, xshift=.1cm]{$\scriptstyle a$} circle (.05cm);
\filldraw (90:.5cm) node[below]{$\scriptstyle b$} circle (.05cm);
\draw[thick, red, <->] (-45:.5cm) arc(-45:-135:.5cm); 
\draw[thick, red, <->] (105:.5cm) arc(105:195:.5cm); 
\node at (-90:1cm) {$\scriptstyle (a,c)\leftrightarrow(a,b)\leftrightarrow(c,b)$};
}
\qquad\qquad\qquad
\tikzmath{
\filldraw (-150:.5cm) node[below, xshift=-.1cm]{$\scriptstyle c$} circle (.05cm);
\filldraw (-30:.5cm) node[below, xshift=.1cm]{$\scriptstyle a$} circle (.05cm);
\filldraw (90:.5cm) node[below]{$\scriptstyle b$} circle (.05cm);
\draw[thick, red, <->] (-45:.5cm) arc(-45:-135:.5cm); 
\draw[thick, red, <->] (-15:.5cm) arc(-15:75:.5cm); 
\node at (-90:1cm) {$\scriptstyle (b,a)\leftrightarrow(b,c)\leftrightarrow(a,c)$};
}
\qquad\qquad\qquad
\tikzmath{
\filldraw (-150:.5cm) node[below, xshift=-.1cm]{$\scriptstyle c$} circle (.05cm);
\filldraw (-30:.5cm) node[below, xshift=.1cm]{$\scriptstyle a$} circle (.05cm);
\filldraw (90:.5cm) node[below]{$\scriptstyle b$} circle (.05cm);
\draw[thick, red, <->] (105:.5cm) arc(105:195:.5cm); 
\draw[thick, red, <->] (-15:.5cm) arc(-15:75:.5cm); 
\node at (-90:1cm) {$\scriptstyle (a,b)\leftrightarrow(a,c)\leftrightarrow(b,c)$};
}
$$

\end{enumerate}
\end{facts}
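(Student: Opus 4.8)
The plan is to keep the given zig-zag from $a$ to $b$ and graft onto it a second zig-zag reaching $c$ that runs ``transverse'' to the first, then interleave the two into a single mutually disjoint zig-zag. Write $\zeta = (a = z_1, y_1, \dots, y_n, z_{n+1} = b)$. I would first normalize: by \ref{ZZ:Disjointify}, shrinking $a, b$ to $\widetilde a \le a$, $\widetilde b \le b$ if necessary, I may assume $\zeta = (\widetilde a, y_1, \dots, y_n, \widetilde b)$ is contained in $p$ and satisfies \eqref{eq:MutuallyDisjoint}; and since $y_j \Cap c = \emptyset$ for all $j$, \ref{ZZ:RemoveFromThird} gives $\widetilde c \le c$ with $\zeta$ contained in $\widetilde c'$. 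Both $\widetilde a$ and $\widetilde c$ are $\widetilde b$-small $\widetilde b$-indicators in $p$: they are indicators since $\widetilde a \le a \le b' \le \widetilde b'$ and $\widetilde c \le c \le b' \le \widetilde b'$, and they are $\widetilde b$-small since $\widetilde a, \widetilde b \le p$, $\widetilde a, \widetilde b' \le \widetilde b'$ (and likewise for $\widetilde c$). Once the conclusion is proved for $\widetilde a, \widetilde b, \widetilde c$ it follows for $a, b, c$ by appending trivial wiggles $a \leftrightsquigarrow \widetilde a$, $b \leftrightsquigarrow \widetilde b$, $c \leftrightsquigarrow \widetilde c$ on the relevant coordinate and a short detour so that the array literally passes through $(c,b)$ (resp.\ $(a,c)$); these are legitimate because $\widetilde a, \widetilde b, \widetilde c$ are disjoint from $b,c$, from $a,c$, and from $a,b$ respectively.

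Next I would build the bridge: applying \ref{geom:ZigZag} to the pair $p, \widetilde b$ and the $\widetilde b$-small $\widetilde b$-indicators $\widetilde a, \widetilde c \le p$ produces a zig-zag $\eta = (\widetilde a = x_1, w_1, x_2, \dots, w_m, x_{m+1} = \widetilde c)$ contained in $p$ in which every $w_i$ is $\widetilde b$-small. Since $\widetilde a \le w_1$, the set of indices $j$ with $w_j \Cap \widetilde a \neq \emptyset$ or $w_j \Cap \widetilde b \neq \emptyset$ is nonempty; let $k$ be its maximum, so $w_j \Cap \widetilde a = w_j \Cap \widetilde b = \emptyset$ for every $j > k$. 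The whole construction now splits according to whether $w_k$ still meets $\widetilde a$.

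If $w_k \Cap \widetilde a \neq \emptyset$, then $\widetilde b$-smallness of $w_k$ gives $s$ with $w_k, \widetilde b' \le s$, so $\widehat b := s' \le \widetilde b$ and $\widehat b \le w_k'$; since $w_j \Cap \widehat b \subseteq w_j \Cap \widetilde b = \emptyset$ for $j > k$ and $w_k \Cap \widehat b \subseteq w_k \Cap w_k' = \emptyset$ by \ref{geom:SelfDisjoint}, \ref{ZZ:RemoveFromThird} applied to the tail $(x_k, w_k, \dots, x_{m+1} = \widetilde c)$ of $\eta$ lets me shrink $\widehat b$ further so that $w_j \le \widehat b'$ for all $j \ge k$. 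Choosing $\widehat a \in w_k \Cap \widetilde a$, I assemble a mutually disjoint zig-zag $(a,b) \leftrightsquigarrow (c,b) \leftrightsquigarrow (c,a)$ by first moving the first coordinate from $\widehat a$ along $w_k, \dots, w_m$ to $\widetilde c$ with the second coordinate held at $\widehat b$ (valid because each $w_j$ with $j \ge k$, and the intervening $x_j \le w_j$, is disjoint from $\widehat b$), then moving the second coordinate backwards along $\zeta$ from $\widetilde b$ to $\widetilde a$ with the first coordinate held at $\widetilde c$ (valid because all of $\zeta$ lies in $\widetilde c'$), the switch of active coordinate being legitimate since $\widehat a \le w_k' \le \widehat b'$, and padding the ends as in the normalization step. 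If instead $w_k \Cap \widetilde a = \emptyset$, then $w_k \Cap \widetilde b \neq \emptyset$, and since $w_j \Cap \widetilde a = \emptyset$ for all $j \ge k$, \ref{ZZ:RemoveFromThird} applied to the same tail yields $\widehat a \le \widetilde a$ with $w_j \le \widehat a'$ for all $j \ge k$; choosing $\widehat b \in w_k \Cap \widetilde b$, the mirror-image assembly (move the second coordinate along the tail of $\eta$ to $\widetilde c$ with the first held at $\widehat a$, then move the first coordinate forward along $\zeta$ to $\widetilde b$ with the second held at $\widetilde c$) gives a mutually disjoint zig-zag $(a,b) \leftrightsquigarrow (a,c) \leftrightsquigarrow (b,c)$.

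The routine work is to check that at every column of the interleaved arrays both conditions ($w_i$ disjoint from $z_i$ and $y_i$ disjoint from $x_{i+1}$) hold; these all come down to the three facts highlighted above — the tail $w_k, \dots, w_m$ of $\eta$ is disjoint from $\widehat b$ (resp.\ $\widehat a$), all of $\zeta$ lies in $\widetilde c'$, and $\widehat a \le w_k' \le \widehat b'$ at the junction — together with the evident mutual disjointness of $a, b, c$ and their shrinks. The one genuinely delicate point, and the reason \ref{geom:ZigZag} rather than bare connectivity (Lemma \ref{lem:Connected}) is needed, is the existence of the bridge $\eta$ with $\widetilde b$-small intermediate terms: that smallness is exactly what allows the bridge, past the last index $k$ at which it touches $\widetilde a$ or $\widetilde b$, to be pushed completely off $\widetilde b$ in the first case or $\widetilde a$ in the second, so that the second zig-zag can be run disjointly from the first; without it the interleaving would not close up.
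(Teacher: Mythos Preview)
Your proposal is correct and follows essentially the same approach as the paper's proof: normalize via \ref{ZZ:Disjointify} and \ref{ZZ:RemoveFromThird}, build a bridge zig-zag $\eta$ from $\widetilde a$ to $\widetilde c$ via \ref{geom:ZigZag} with $\widetilde b$-small $w_i$'s, take $k$ maximal with $w_k$ meeting $\widetilde a$ or $\widetilde b$, and split into the same two cases with the same assembled arrays. You even supply justifications the paper leaves implicit (why $\widetilde a,\widetilde c$ are $\widetilde b$-small $\widetilde b$-indicators; why $\widehat b$ exists in Case~1). One small slip: at the junction in Case~1 you write ``$\widehat a \le w_k' \le \widehat b'$'', but in fact $\widehat a \le w_k$ (since $\widehat a \in w_k \Cap \widetilde a$) and $w_k \le \widehat b'$ (since $\widehat b \le w_k'$), so the correct chain is $\widehat a \le w_k \le \widehat b'$; the conclusion that $\widehat a$ and $\widehat b$ are disjoint is unaffected.
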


\begin{thm}
\label{thm:MutuallyDisjointZigZagsExist}
The axioms \ref{geom:SelfDisjoint}--\ref{geom:ZigZag} imply 
that given any two splittings $(r_1,s_1), (r_2,s_2)$ of $p$, there is a mutually disjoint zig-zag
$(r_1,s_1)\leftrightsquigarrow(r_2,s_2)$ 
or 
$(r_1,s_1)\leftrightsquigarrow(s_2,r_2)$ 
contained in $p$.
\end{thm}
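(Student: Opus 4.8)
The plan is to derive this from the toolkit of Facts \ref{ZZ:shorten}--\ref{ZZ:TriangleDance}, using two elementary reductions: mutually disjoint zig-zags contained in $p$ concatenate, and for a splitting $(r,s)$ of $p$ and any $\widetilde r\leq r$, the pair of zig-zags $(r,r,\widetilde r)$ and $(s,s,s)$ is a mutually disjoint zig-zag contained in $p$ witnessing $(r,s)\leftrightsquigarrow(\widetilde r,s)$ (and likewise one may enlarge, or shrink/enlarge the second coordinate). Hence it is harmless to replace any of $r_1,s_1,r_2,s_2$ by a sub-element during the argument, and it suffices to join $(r_1,s_1)$ to $(r_2,s_2)$ by a finite chain of such shrink/enlarge moves and triangle dances \ref{ZZ:TriangleDance}.

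Concretely I would work in two stages. First, using \ref{geom:saturate} and \ref{geom:splitting} together with the shrinking moves above, reduce to the case where $r_1,s_1,r_2$ are pairwise disjoint below $p$; then \ref{geom:ZigZag} with $q=s_1$ supplies a genuinely interior zig-zag from $r_1$ to $r_2$ contained in $p$ (each bridge is $s_1$-small, so none of them is $p$), and the triangle dance \ref{ZZ:TriangleDance} for $\{r_1,s_1,r_2\}$, fed this zig-zag, connects $(r_1,s_1)$ to a pair of the form $(r_2,t)$ or $(t,r_2)$ with $t\in\{r_1,s_1\}$, the precise form depending on which of the three cases of \ref{ZZ:TriangleDance} holds. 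Second, after again shrinking so the relevant triple is pairwise disjoint, one more application of \ref{ZZ:TriangleDance} to a triple containing $r_2$ and $s_2$ carries the remaining coordinate onto $s_2$; concatenating all the moves produces either $(r_1,s_1)\leftrightsquigarrow(r_2,s_2)$ or $(r_1,s_1)\leftrightsquigarrow(s_2,r_2)$. The dichotomy in the conclusion is precisely the record of which dance-cases occurred, as one should expect since the braiding on $\SSS_p$ is not symmetric in general.

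The step I expect to be the main obstacle is the reduction to pairwise-disjoint triples carrying a usable zig-zag. One cannot in general replace $x\leq p$ by a sub-element disjoint from a prescribed $y\leq p$ (for instance when $x\leq y$), so this disjointification cannot be done once and for all at the start; it must be interleaved with the dances, applying \ref{geom:saturate} and \ref{geom:splitting} stage by stage. One must also check that each invocation of \ref{ZZ:MutuallyDisjointExists1} or \ref{ZZ:MutuallyDisjointExists2} is fed a zig-zag that survives the disjointification step \ref{ZZ:Disjointify}: the trivial zig-zag $(a,p,b)$ will not do, since for it \ref{ZZ:Disjointify} is vacuous and $p$ then persists among the bridges of the output, which is not disjoint from $a$; this is exactly why the interior zig-zags from \ref{geom:ZigZag} are genuinely needed. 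Once these geometric preliminaries are arranged, assembling the mutually disjoint zig-zag and tracking the reversal is routine bookkeeping.
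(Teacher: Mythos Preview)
Your proposal identifies the correct toolkit (shrink/enlarge moves, concatenation, and the triangle dances \ref{ZZ:TriangleDance}) and the right overall shape: replace one coordinate at a time via dances on pairwise-disjoint triples. This is essentially the paper's strategy as well.

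The gap is exactly the one you flag but do not close: the reduction to pairwise-disjoint triples. Your first stage asserts ``reduce to the case where $r_1,s_1,r_2$ are pairwise disjoint,'' but when $r_2\Cap r_1'=\emptyset$ no amount of shrinking $r_2$ achieves this, and you offer no alternative. The paper resolves this not by interleaving but by a case split on which of the sets $r_1\Cap r_2$, $r_1\Cap s_2$, $s_1\Cap r_2$, $s_1\Cap s_2$ are empty. When two ``matching'' intersections are nonempty (Case~1), a direct two-step shrink/enlarge gives the MDZZ with no triangle dance at all. When exactly one matching intersection survives (Case~2), one reduces to a single disjoint triple and uses one invocation of \ref{ZZ:TriangleDance} plus a shrink/enlarge move. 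Only when all four intersections vanish (Case~3) does one need two dances, and there \ref{geom:saturate} applied four times really does produce four pairwise-disjoint elements. So the case analysis is not bookkeeping layered on top of your plan; it \emph{is} the mechanism by which the non-disjointifiable situations are dispatched.

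Your point about needing genuinely interior zig-zags (so that bridges are $q$-small, hence not~$p$) is well taken and is indeed what makes \ref{ZZ:MutuallyDisjointExists1}--\ref{ZZ:MutuallyDisjointExists2} go through; the paper relies on this implicitly via \ref{geom:ZigZag} inside the proof of \ref{ZZ:MutuallyDisjointExists2}.
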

\begin{proof}
To ease the notation, suppose $(r,s)=(r_1,s_1)$ and $(a,b)=(r_2,s_2)$ are two splittings of $p$.
Up swapping $r\leftrightarrow s$, $a\leftrightarrow b$, and the pair $(r,s)\leftrightarrow(a,b)$
(notice that the statement of the theorem is preserved under these swaps!),
we have one of the following 3 cases.

\item[\underline{Case 1:}]
Suppose $r\Cap a \neq \emptyset$ and $s\Cap b\neq \emptyset$.
Choose $x\leq r\Cap a$ and $y\leq s\Cap b$, and note that
$$
\left[
\begin{array}{*{15}c}
r & r & x & a & a
\\
s & s & y & b & b
\end{array}
\right]
$$
is a mutually disjoint zig-zag $(r,s)\leftrightsquigarrow (a,b)$ contained in $p$.

\item[\underline{Case 2:}]
Suppose $r\Cap a = \emptyset$ but $s\Cap b \neq \emptyset$.
There are two subcases based on $r\Cap b$ via \ref{geom:saturate}:
\begin{itemize}
\item 
\underline{Case 2a:}
Suppose $r\Cap b=\emptyset$.
By two uses of \ref{geom:saturate},
we may choose $\widehat{r}\in r\Cap b'\neq \emptyset$ 
and then choose $\widetilde{r}\in \widehat{r}\Cap a'\neq \emptyset$.
Moreover, we may replace $b$ with $\widetilde{b}\in s\Cap b$.
Then $\widetilde{r},a,\widetilde{b}$ are all disjoint
and $\widetilde{b}\leq s$

\item 
\underline{Case 2b:}
Suppose $r\Cap b\neq\emptyset$.
Choose $\widetilde{r}\in r\Cap b$ and $\widetilde{s}\in s\Cap b$.
Then $\widetilde{r},\widetilde{s},a$ are all disjoint and $\widetilde{s}\leq b$.
\end{itemize}
In either case above, we may reduce to the case that $a,b,r$ are
all disjoint and $b\leq s$.
By \ref{ZZ:TriangleDance}, $a,b,r$ participate in a mutually disjoint zig-zag 
$(a,b)\leftrightsquigarrow (r,b)$
or 
$(a,b)\leftrightsquigarrow (b,r)$.
Observe that 
$$
\left[
\begin{array}{*{15}c}
r & r & r
\\
b & s &s
\end{array}
\right]
$$
is a mutually disjoint zig-zag $(r,b)\leftrightsquigarrow (r,s)$.
We can now concatenate to get the desired mutually disjoint zig-zag.

\item[\underline{Case 3:}]
Suppose $r\Cap a = \emptyset= r\Cap b$ and $s\Cap a = \emptyset = s\Cap b$.
Then using 4 instances of \ref{geom:saturate},
we may choose $\widehat{a}\in r'\Cap a\neq \emptyset$
and $\widetilde{a}\in \widehat{a}\Cap s'\neq\emptyset$,
and then we may choose $\widehat{b}\in r'\Cap b\neq \emptyset$
and $\widetilde{b}\in \widehat{b}\Cap s'\neq\emptyset$.
We can thus reduce to the case that
$a,b,r,s$ are all disjoint.
By \ref{ZZ:TriangleDance}, $a,b,r$ participate in a mutually disjoint zig-zag 
$(a,b)\leftrightsquigarrow (r,b)$
or 
$(a,b)\leftrightsquigarrow (b,r)$.
Again by \ref{ZZ:TriangleDance}, $b,r,s$ participate in a mutually disjoint zig-zag 
$(r,b)\leftrightsquigarrow (r,s)$
or 
$(r,b)\leftrightsquigarrow (s,r)$.
We can thus concatenate to get the desired mutually disjoint zig-zag.
\end{proof}

\subsection{The putative braiding is an intertwiner}

For this section, we assume $\cP$ satisfies \ref{geom:SelfDisjoint}--\ref{geom:ZigZag}.

\begin{defn}
Given a splitting $(r,s)$ of $p$,
a \emph{reflection is}
a splitting
$(a,b)$ of $p'$
and some $c$
such that 
$r,a\leq c$
and
$s,b\leq c'$. 
\begin{equation*}
\tag{\ref{eq:Reflection}}
\tikzmath{
\draw (-1.6,0) -- (1.6,0);
\draw (0,-1.6) -- (0,1.6);
\fill[blue!25] (-1.2,.2) rectangle (-.2,1.2);
\draw (-1.2,.2) -- (-.2,.2) -- (-.2,1.2);
\node at (-.4,.4) {$\scriptstyle r$};
\fill[red!25] (1.2,.2) rectangle (.2,1.2);
\draw (1.2,.2) -- (.2,.2) -- (.2,1.2);
\node at (.4,.4) {$\scriptstyle s$};
\fill[green!25] (-1.2,-.2) rectangle (-.2,-1.2);
\draw (-1.2,-.2) -- (-.2,-.2) -- (-.2,-1.2);
\node at (-.4,-.4) {$\scriptstyle a$};
\fill[orange!25] (1.2,-.2) rectangle (.2,-1.2);
\draw (1.2,-.2) -- (.2,-.2) -- (.2,-1.2);
\node at (.4,-.4) {$\scriptstyle b$};
\node at (-1.4,.2) {$\scriptstyle p$};
\node at (-1.4,-.2) {$\scriptstyle p'$};
\node at (1.4,.2) {$\scriptstyle p$};
\node at (1.4,-.2) {$\scriptstyle p'$};
\node at (.2,-1.4) {$\scriptstyle c'$};
\node at (-.2,-1.4) {$\scriptstyle c$};
\node at (.2,1.4) {$\scriptstyle c'$};
\node at (-.2,1.4) {$\scriptstyle c$};
}
\end{equation*}
We say $(r,s)$ \emph{admits a reflection} if such a reflection exists.
\end{defn}

\begin{prop}
\label{prop:AdmitsReflection}
For any $p\in\cP$, there is a splitting of $p$ which admits a reflection.
\end{prop}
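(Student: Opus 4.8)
The plan is to first reduce the statement to producing a single element $c$ that is ``transverse to $p$'', and then to build such a $c$ from splittings of $p$ and of $p'$ using \ref{geom:splitting}, \ref{geom:qSmallqIndicator}, and \ref{geom:ZigZag}.

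\textbf{Reduction to a transverse element.} Call $c \in \cP$ \emph{transverse to $p$} if all four of $c \Cap p$, $c' \Cap p$, $c \Cap p'$, $c' \Cap p'$ are non-empty. I claim it suffices to produce a transverse $c$. Indeed, given such a $c$, choose $r \in c \Cap p$, $s \in c' \Cap p$, $a \in c \Cap p'$, and $b \in c' \Cap p'$. Since $r \leq c$ we get $c' \leq r'$, hence $s \leq c' \leq r'$, i.e.\ $r \leq s'$, so $(r,s)$ is a splitting of $p$; likewise $b \leq c' \leq a'$ gives $a \leq b'$, so $(a,b)$ is a splitting of $p'$; and by construction $r,a \leq c$ and $s,b \leq c'$, so $(a,b)$ together with $c$ is a reflection of $(r,s)$. (Conversely every reflection exhibits a transverse $c$, so the two problems are equivalent.) Thus the entire content of the proposition is the existence of an element transverse to $p$.

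\textbf{Building $c$.} Use \ref{geom:splitting} to fix a splitting $(r_0,s_0)$ of $p$ and a splitting $(a_0,b_0)$ of $p'$; these four elements are then pairwise disjoint, since e.g.\ $r_0 \leq p \leq a_0'$. We seek $c$ with $r_0, a_0 \leq c$ and $s_0, b_0 \leq c'$ — possibly after interchanging the roles of $a_0,b_0$, which is harmless because $(b_0,a_0)$ is also a splitting of $p'$, exactly as in the $(r_1,s_1)\leftrightsquigarrow(r_2,s_2)$ versus $(r_1,s_1)\leftrightsquigarrow(s_2,r_2)$ dichotomy of Theorem \ref{thm:MutuallyDisjointZigZagsExist}. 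Applying \ref{geom:qSmallqIndicator} to the pair $(r_0, a_0)$ produces $r \leq r_0$ that is an $a_0$-small $a_0$-indicator (the indicator condition is automatic here by \ref{geom:SelfDisjoint}); $a_0$-smallness supplies an upper bound $c$ of $\{r, a_0\}$, together with a companion upper bound $f$ of $\{r, a_0'\}$. Then $r \leq c$ with $r \leq p$ gives $c \Cap p \neq \emptyset$, and $a_0 \leq c$ with $a_0 \leq p'$ gives $c \Cap p' \neq \emptyset$; likewise, using the corresponding pieces of the splittings, $s_0 \in c' \Cap p$ and $b_0 \in c' \Cap p'$ \emph{provided} $s_0, b_0 \leq c'$.

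\textbf{The main obstacle (trimming $c$).} The axiom \ref{geom:qSmallqIndicator} asserts only the \emph{existence} of the amalgamating upper bound $c$, with no control on its size, so a priori $c$ might engulf all of $p$ (forcing $c' \leq p'$ and $c' \Cap p = \emptyset$) or all of $p'$. The crux is therefore to \emph{trim} $c$ down to an element below $s_0' \Cap b_0'$ while keeping $r$ and $a_0$ below it. This is the manipulation that \ref{geom:ZigZag} was designed for: using the companion bound $f$ and the zig-zag axiom, one wiggles the candidate along a zig-zag of $a_0$-small $a_0$-indicators inside a suitable ambient element, at each step shrinking the pieces — in the style of \ref{ZZ:shorten}, \ref{ZZ:Concatenate}, and \ref{ZZ:RemoveFromThird} — so that $s_0$ and $b_0$ become disjoint from the current element, and finally replaces $c$ by an element of that zig-zag lying below both $s_0'$ and $b_0'$. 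Showing that such a zig-zag can be arranged while \emph{simultaneously} preserving $r, a_0 \leq c$ is the delicate step, and it is precisely where the hypothesis uses strictly more than \ref{geom:saturate} and \ref{geom:splitting}. As a sanity check, for cones in $\bbR^2$ this is just the picture \eqref{eq:Reflection}: if $p$ is a half-plane and $(r_0,s_0)$ splits $p$ into two sub-cones, one takes for $c$ a half-plane whose boundary is transverse to that of $p$ and which contains $r_0$; this is the abstraction of \cite[Lem.~2.2]{MR260294}.
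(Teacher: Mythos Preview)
Your reduction to a transverse element $c$ is correct and is in fact equivalent to the paper's statement; that part is clean.  The gap is in your construction of $c$.  You produce $c$ from $a_0$-smallness of some $r\leq r_0$, obtaining $r,a_0\leq c$, but then have no control over $c$ relative to $s_0$ and $b_0$.  The ``trimming'' paragraph does not constitute an argument: the zig-zag axiom \ref{geom:ZigZag} and the facts \ref{ZZ:shorten}--\ref{ZZ:RemoveFromThird} let you wiggle \emph{small} elements along a zig-zag and find things disjoint from a fixed zig-zag, but they give no mechanism for shrinking an \emph{upper bound} $c$ while preserving two prescribed lower bounds $r,a_0\leq c$.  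You explicitly flag this as ``the delicate step'' and then do not carry it out; the references to the cone picture and to \cite[Lem.~2.2]{MR260294} are intuition, not proof.

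The paper's argument avoids this difficulty entirely by a different idea.  Rather than first building an oversized $c$ and trying to trim it, one applies \ref{geom:ZigZag} to $r_0$ and $a_0$ viewed as $p$-small $p$-indicators inside $s_0'$ (note $r_0\leq p$, $a_0\leq p'$, both $\leq s_0'$, and both are $p$-small because of the splittings).  This yields a zig-zag $(r_0=z_1,y_1,\dots,z_{n+1}=a_0)$ contained in $s_0'$ in which every $z_j$ is a $p$-indicator and every $y_j$ is $p$-small.  Since $z_1\leq p$ and $z_{n+1}\leq p'$, by \ref{geom:SelfDisjoint} there is a first index $j$ with $z_j\leq p$ and $z_{j+1}\leq p'$.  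Then $c:=y_j$ already does everything: $r:=z_j\leq c\leq s_0'$ and $a:=z_{j+1}\leq c$ give $c\Cap p,\,c\Cap p'\neq\emptyset$ and $s_0\leq c'$; and $p$-smallness of $c$ supplies $b'$ with $c,p\leq b'$, so $b\leq p'$ and $b\leq c'$.  The crossing-point trick is the missing idea in your proposal: it arranges $c\leq s_0'$ \emph{from the outset} rather than after the fact, and uses $p$-smallness of $c$ (not of $r$) to produce $b$.
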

\begin{proof}
Suppose $(r_0,s_0)$ is a splitting of $p$.
Since $p'$ splits by \ref{geom:splitting}, there is an $a_0\leq p'$ which is $p$-small.
Thus $a_0$ is a $p$-small $p$-indicator, as is $r_0$.
By \ref{geom:ZigZag} applied to $r_0,a_0\leq s_0'$, there is a zig-zag from $r_0$ to $a_0$ contained in $s_0'$ such that each $z_j$ is an $p$-indicator and each $y_j$ is $p$-small.
Let $j$ be minimal such that $r:=z_j\leq p$ and $a:=z_{j+1}\leq p'$.
Note that $c:=y_j$ is $p$-small, so there is a $b'$ such that $c,p\leq b'$, which implies $b\leq p'$ and $b\leq c'\leq a'$.
We have thus constructed a splitting $r,s:=s_0$ of $p$ and a splitting $a,b$ of $p'$ such that there is a $c$ such that $r,a\leq c$, but $s,b\leq c'$.
\end{proof}

\begin{lem}
\label{lem:LocalizedInDisjointCommute}
Suppose $r, s$ is a splitting of $p$
and
$\pi, \sigma$ are localized at $r, s$ respectively.
\begin{itemize}
\item
Whenever $p \leq q'$, $(\pi \circ_p \sigma)_q = \id_{A_q} = (\sigma \circ_p \pi)_q$.
\item 
$(\pi \circ_p \sigma)_p = (\sigma \circ_p \pi)_p$.
\item
Whenever $p\leq q$, $(\pi \circ_p \sigma)_q = (\sigma \circ_p \pi)_q$.
\end{itemize}
\end{lem}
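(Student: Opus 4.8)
The first bullet should be immediate from the construction: since $r,s\le p$, both $\pi$ and $\sigma$ are localized at $p$ by \ref{SSS:LocalizedInLarger}, so $\pi_{p'}=\sigma_{p'}=\id$; hence if $p\le q'$, i.e.\ $q\le p'$, isotony gives $\pi_q=\sigma_q=\id_{A_q}$, and Case~1 of Construction~\ref{constr:circp} already defines $(\pi\circ_p\sigma)_q=\id_{A_q}=(\sigma\circ_p\pi)_q$.

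For the remaining two bullets I would first reduce to a single uniform statement. Since $p\le q$ (with $q=p$ allowed), Case~2 of Construction~\ref{constr:circp} gives $(\pi\circ_p\sigma)_q=\pi_q\circ\sigma_q$ and $(\sigma\circ_p\pi)_q=\sigma_q\circ\pi_q$, so it suffices to prove that $\pi_q$ and $\sigma_q$ commute on $A_q$. Moreover, since $r,s\le p\le q$ the pair $(r,s)$ is also a splitting of $q$ and $\pi,\sigma$ are localized at $q$, and since $\SSS_p\hookrightarrow\SSS_q$ is a strict tensor functor (Theorem~\ref{restrictingequiv}) the fusions $\circ_p$ and $\circ_q$ agree on these sectors; so the third bullet will follow from the uniform claim below, applied with base point $q$. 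Thus it remains to prove, for \emph{every} $p$, every splitting $(r,s)$ of $p$, and $\pi$ localized at $r$, $\sigma$ localized at $s$, that $\pi_p\circ\sigma_p=\sigma_p\circ\pi_p$ on $A_p$.

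The plan for this key commutation is to transport \emph{both} sectors off $p$ at once, so that the two localizing unitaries land in mutually commuting algebras. Using Proposition~\ref{prop:AdmitsReflection} — after, if necessary, first moving $\pi$ and $\sigma$ by intertwiners lying in $A_p$ (available by \ref{SSS:IntertwinerLocalized}) to a splitting of $p$ that admits a reflection — I would obtain disjoint $\widetilde r,\widetilde s\le p'$ and some $c\in\cP$ with $r,\widetilde r\le c$ and $s,\widetilde s\le c'$. Then by \ref{SSS:AbilityToLocalize} I choose $\pi^{\widetilde r}$ localized at $\widetilde r$, $\sigma^{\widetilde s}$ localized at $\widetilde s$, and unitary intertwiners $u\colon\pi^{\widetilde r}\to\pi$, $v\colon\sigma^{\widetilde s}\to\sigma$; by \ref{SSS:IntertwinerLocalized} we have $u\in A_c$ and $v\in A_{c'}=A_c'$, so $[u,v]=0$ by Haag duality. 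Since $\widetilde r\le p'$, $\pi^{\widetilde r}$ is localized at $p'$ by \ref{SSS:LocalizedInLarger}, hence $\pi^{\widetilde r}_p=\id_{A_p}$ and $\pi_p=\Ad(u)|_{A_p}$; likewise $\sigma_p=\Ad(v)|_{A_p}$. Using that $\pi$ and $\sigma$ preserve $A_p$ (\ref{SSS:LocalizedPreservesAlgebra}), for $x\in A_p$ one then computes
\[
\pi_p(\sigma_p(x))=u\,v\,x\,v^*u^*=v\,u\,x\,u^*v^*=\sigma_p(\pi_p(x)),
\]
the middle equality being $[u,v]=0$.

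The hard part will be the geometric/bookkeeping step producing the reflection compatible with the given splitting: namely, reducing to a reflectable splitting by intertwiners in $A_p$ and checking that the conjugations so introduced cancel between $\pi\circ_p\sigma$ and $\sigma\circ_p\pi$. Once the two localizing unitaries sit in mutually commuting algebras $A_c$ and $A_{c'}$ the rest is the one-line computation above. This is exactly where Proposition~\ref{prop:AdmitsReflection} — hence the geometric axioms \ref{geom:SelfDisjoint}--\ref{geom:ZigZag}, in particular \ref{geom:splitting} applied to $p'$ — is used.
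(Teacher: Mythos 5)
Your handling of the first bullet matches the paper.  Your reduction of the third bullet to the second is also what the paper does (via $(\pi\circ_p\sigma)_q = (\pi\circ_q\sigma)_q$, using that $(r,s)$ is a splitting of $q$ and Theorem~\ref{restrictingequiv}).  And your closing computation is essentially the paper's final step: with $\pi^a,\sigma^b$ localized in a splitting $(a,b)$ of $p'$ and intertwiners $u\in A_c$, $v\in A_{c'}$, one gets $\pi_p=\Ad(u)|_{A_p}$, $\sigma_p=\Ad(v)|_{A_p}$ and $uvxv^*u^* = vuxu^*v^*$ since $A_c$ and $A_{c'}=A_c'$ commute.

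However, the step you wave off as ``the hard part -- reducing to a reflectable splitting by intertwiners in $A_p$ and checking that the conjugations so introduced cancel'' is a genuine gap, and it is not supplied by Proposition~\ref{prop:AdmitsReflection} alone.  That proposition only guarantees that \emph{some} splitting $(r_1,s_1)$ of $p$ admits a reflection, not the given one.  After moving $\pi$ and $\sigma$ to $(r_1,s_1)$ via unitaries $u_{r_1}\colon\pi^{r_1}\to\pi$ and $v_{s_1}\colon\sigma^{s_1}\to\sigma$, the cancellation you need is exactly the identity
\[
v_{s_1}\,\sigma^{s_1}_p(u_{r_1}) \;=\; u_{r_1}\,\pi^{r_1}_p(v_{s_1}),
\]
which is precisely the statement that the braiding unitary $\beta_{\pi,\sigma}$ from Construction~\ref{constr:Braiding}, computed with the splitting $(r_1,s_1)$, agrees with the one computed with the original splitting $(r,s)$ (where it equals $1$ since one may take $u=v=1$).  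Deriving this is the content of Theorem~\ref{thm:MutuallyDisjointZigZagsExist} together with Lemma~\ref{lem:MutuallyDisjointZigZagIndependence} (and Lemma~\ref{lem:IndependenceOfBraidingOnChoiceOfSectorFixedRS}), which your proposal never invokes.  Without them, the unitaries $u_{r_1}, v_{s_1}$ only lie in $A_p$ and have no reason to cooperate: there need not be a common element of $\cP$ containing both $r$ and $r_1$ and disjoint from $s,s_1$, so one cannot argue directly that $\sigma^{s_1}_p(u_{r_1})=u_{r_1}$.  (For cones in $\bbR^2$ every splitting admits a reflection, so the reduction disappears; but for a general poset satisfying \ref{geom:SelfDisjoint}--\ref{geom:ZigZag} it does not.)  In short: the approach is the paper's, but the crux -- well-definedness of $\beta_{\pi,\sigma}$ across splittings, via mutually disjoint zig-zags -- is missing, and it cannot be dismissed as bookkeeping.
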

\begin{proof}
The first statement is obvious.
The third statement follows immediately from the second, since
$(r,s)$ is also a splitting of $q$, and
$$
(\pi \circ_p \sigma)_q 
= 
(\pi \circ_q \sigma)_q 
= 
(\sigma \circ_q \pi)_q 
= 
(\sigma \circ_p \pi)_q.
$$
It remains to prove the second statement.
By Proposition \ref{prop:AdmitsReflection},
there is a splitting $(r_1,s_1)$ of $p$ which admits a reflection.
Up to swapping $r_1\leftrightarrow s_1$,
there is a mutually disjoint zig-zag $(r,s)\leftrightsquigarrow(r_1,s_1)$
by Theorem \ref{thm:MutuallyDisjointZigZagsExist}.
By Lemma \ref{lem:MutuallyDisjointZigZagIndependence},
the map $\beta_{\pi,\sigma}$ is independent of the choice of splitting $(r,s)$ versus $(r_1,s_1)$.
If we select the splitting $(r,s)$, then to define $\beta_{\pi,\sigma}$, we may choose $u=1: \pi^r:=\pi\to \pi$ and $v=1:\sigma^s:=\sigma\to \sigma$, giving
$\beta_{\pi,\sigma}=1\sigma_p(1)1\pi_p(1)^*=1$.
If we select the splitting $(r_1,s_1)$ and we choose arbitrary intertwiners $u:\pi^{r_1}\to \pi$ and $v: \sigma^{s_1}\to \sigma$, we have the formula
$$
1=\beta_{\pi,\sigma}=
v_{s_1}\sigma^{s_1}_p(u_{r_1}) u_{r_1}^* \pi_p(v_{s_1})^*
\qquad
\Longleftrightarrow
\qquad
v_{s_1}\sigma^{s_1}_p(u_{r_1})
=
\pi_p(v_{s_1})u_{r_1}
=
u_{r_1}\pi^{r_1}_p(v_{s_1}).
$$
We now calculate that for all $x\in A_p$,
\begin{align*}
(\pi\circ_p \sigma)_p(x)
&=
\pi_p(\sigma_p(x))
\\&=
u_{r_1}
\pi_{p}^{r_1}(v_{s_1})
\pi^{r_1}_p(\sigma^{s_1}_p(x))
\pi_{p}^{r_1}(v_{s_1})^*
u_{r_1}^*
\\&=
v_{s_1}\sigma^{s_1}_p(u_{r_1})
\pi^{r_1}_p(\sigma^{s_1}_p(x))
\sigma^{s_1}_p(u_{r_1})^*
v_{s_1}^*.
\end{align*}
Now as $(r_1,s_1)$ admits a reflection, there is a splitting $(a,b)$ of $p'$ and $c\in\cP$ such that $r_1,a\leq c$ and $s_1,b\leq c'$.
Choosing $u_a: \pi^a\to \pi^{r_1}$, we have
\begin{align*}
v_{s_1}\sigma^{s_1}_p(u_{r_1})
\pi^{r_1}_p(\sigma^{s_1}_p(x))
\sigma^{s_1}_p(u_{r_1})^*
v_{s_1}^*
&=
v_{s_1}\sigma^{s_1}_p(u_{r_1})
u_a
\pi^{a}_p(\sigma^{s_1}_p(x))
u_a^*
\sigma^{s_1}_p(u_{r_1})^*
v_{s_1}^*
\\&=
v_{s_1}\sigma^{s_1}_q(u_{r_1})
u_a
\sigma^{s_1}_p(x)
u_a^*
\sigma^{s_1}_p(u_{r_1})^*
v_{s_1}^*
\\&=
v_{s_1}\sigma^{s_1}_p(u_{r_1})
u_a
\sigma^{s_1}_p(\pi^a_p(x))
u_a^*
\sigma^{s_1}_p(u_{r_1})^*
v_{s_1}^*.\nonumber
\end{align*}
Now since $a,r_1\leq c\leq b'$, $u_a\in A_c$.
Since $\sigma^{s_1}$ is localized in $s_1\leq c'$ and $c,p\leq b'$, we have
$u_a=\sigma^{s_1}_{c}(u_a)=\sigma_{b'}^{s_1}(u_a)$,
so
\begin{align*}
v_{s_1}\sigma^{s_1}_p(u_{r_1})
u_a
\sigma^{s_1}_p(\pi^a_p(x))
u_a^*
\sigma^{s_1}_p(u_{r_1})^*
v_{s_1}^*
&=
v_{s_1}\sigma^{s_1}_{b'}(u_{r_1})
\sigma^{s_1}_{b'}(u_a)
\sigma^{s_1}_{b'}(\pi^a_{p}(x))
\sigma^{s_1}_{b'}(u_a^*)
\sigma^{s_1}_{b'}(u_{r_1})^*
v_{s_1}^*
\\&=
v_{s_1}\sigma^{s_1}_{b'}(u_{r_1}u_a \pi^a_p(x)u_a^*u_{r_1}^*)
v_{s_1}^*
\\&=
v_{s_1}\sigma^{s_1}_{b'}(\pi_p(x))
v_{s_1}^*
\\&=
\sigma_p(\pi_p(x))\nonumber
\end{align*}
as desired.
\end{proof}

\begin{prop}
Suppose $\pi,\sigma\in \SSS_p$.
Choose a splitting $(r,s)$ of $p$.
The operator $\beta_{\pi,\sigma}$ from Construction \ref{constr:Braiding} is an intertwiner
$\pi\circ_p\sigma \to \sigma\circ_p \pi$.
\end{prop}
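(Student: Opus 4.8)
The plan is to verify directly that $\beta_{\pi,\sigma} = v_s\sigma^s_p(u_r)u_r^*\pi_p(v_s)^*$ satisfies $\beta_{\pi,\sigma}(\pi\circ_p\sigma)_q(x) = (\sigma\circ_p\pi)_q(x)\beta_{\pi,\sigma}$ for every $q\in\cP$ and $x\in A_q$. The strategy is to reduce an arbitrary $q$ to the well-understood cases $p\leq q$ and $p\leq q'$ by first localizing still further: since $(r,s)$ is a splitting of $p$, by \ref{geom:qSmallqIndicator} applied to $r$ and $q$ (and to $s$ and $q$) we may pick a $q$-small $q$-indicator $\widetilde r\leq r$ and likewise $\widetilde s\leq s$, and then choose $\pi^{\widetilde r}\in\SSS_{\widetilde r}$, $\sigma^{\widetilde s}\in\SSS_{\widetilde s}$ with intertwiners $u_{\widetilde r}\colon\pi^{\widetilde r}\to\pi$, $v_{\widetilde s}\colon\sigma^{\widetilde s}\to\sigma$; by \ref{SSS:IntertwinerLocalized} these unitaries lie in $A_r$ and $A_s$ respectively (hence in $A_p$). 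Using Lemma \ref{lem:IndependenceOfBraidingOnChoiceOfSectorFixedRS} the formula for $\beta_{\pi,\sigma}$ is unchanged if we recompute it from the splitting $(\widetilde r,\widetilde s)$ of... — more carefully, I would invoke Lemma \ref{lem:MutuallyDisjointZigZagIndependence} via the trivial mutually disjoint zig-zag $(r,s)\leftrightsquigarrow(\widetilde r,s)\leftrightsquigarrow(\widetilde r,\widetilde s)$ contained in $p$ (each single-step wiggle $r\geq\widetilde r$ keeps disjointness from $s\supseteq\widetilde s$) to rewrite $\beta_{\pi,\sigma}=v_{\widetilde s}\sigma^{\widetilde s}_p(u_{\widetilde r})u_{\widetilde r}^*\pi_p(v_{\widetilde s})^*$.

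Next I would expand $(\pi\circ_p\sigma)_q$ and $(\sigma\circ_p\pi)_q$. Because $\widetilde r$ is a $q$-small $q$-indicator $\leq p$, Remark \ref{rem:ChangeTheOutside} (using that $u_{\widetilde r}\in A_p$) gives $(\pi\circ_p\sigma)_q(x)=\pi_p(v_{\widetilde s})(\pi_q\circ\sigma^{\widetilde s}_q)(x)\pi_p(v_{\widetilde s})^*$, and symmetrically $(\sigma\circ_p\pi)_q(x)=\sigma_p(u_{\widetilde r})(\sigma_q\circ\pi^{\widetilde r}_q)(x)\sigma_p(u_{\widetilde r})^*$ (here I am using that $\widetilde r$ and $\widetilde s$ are simultaneously $q$-small $q$-indicators; if necessary I shrink both below a common $q$-small $q$-indicator inside $p$ using Remark \ref{rem:SmallerqSmallqIndicators}, so that one indicator serves both composites). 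Substituting these and the expanded $\beta_{\pi,\sigma}$, the identity to be checked becomes, after cancelling the outer conjugations, an identity purely among the localized sectors $\pi^{\widetilde r},\sigma^{\widetilde s}$ and the unitaries $u_{\widetilde r},v_{\widetilde s}$, of the shape $\sigma^{\widetilde s}_p(u_{\widetilde r})\,(\pi^{\widetilde r}_q\circ\sigma^{\widetilde s}_q)(x) = (\sigma^{\widetilde s}_q\circ\pi^{\widetilde r}_q)(x)\,\sigma^{\widetilde s}_p(u_{\widetilde r})$ together with the analogous relation absorbing $\pi_p(v_{\widetilde s})$.

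At this point the $q$-indicator property of $\widetilde r$ (resp.\ $\widetilde s$) splits into two cases: $\widetilde r\leq q$ or $\widetilde r\leq q'$, and likewise for $\widetilde s$. In the subcase $\widetilde r\leq q'$, $\widetilde s\leq q'$ we have $p$-localized sectors restricting to the identity on $A_q$, so both sides reduce to conjugation by the same unitary and the identity holds after checking that $u_{\widetilde r},v_{\widetilde s}$ (which lie in $A_{\widetilde r}\subseteq A_{q'}$ resp.\ $A_{\widetilde s}\subseteq A_{q'}$ by Haag duality and \ref{SSS:IntertwinerLocalized}) commute with $A_q$. In the subcase $\widetilde r\leq q$ we have $\pi^{\widetilde r}_qA_q\subseteq A_q$, and the core relation is exactly the statement that $\sigma^{\widetilde s}$ (localized in $\widetilde s\leq s\leq r'$, disjoint from everything $\leq q$ once $\widetilde r\leq q$) commutes past $\pi^{\widetilde r}_q(A_q)$ — this is where locality of the superselection sector $\sigma^{\widetilde s}$ is used, combined with isotony to work inside a common algebra $A_c$ for a suitable $c$ with $q,\widetilde r\leq c$. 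The genuinely delicate subcase is $\widetilde r\leq q$ while $\widetilde s\leq q'$ (or vice versa), which is precisely the content of Lemma \ref{lem:LocalizedInDisjointCommute}: there $(\pi^{\widetilde r}\circ_p\sigma^{\widetilde s})_q = (\sigma^{\widetilde s}\circ_p\pi^{\widetilde r})_q$ need not hold on the nose, but Lemma \ref{lem:LocalizedInDisjointCommute} supplies exactly the relation $v_{s_1}\sigma^{s_1}_p(u_{r_1}) = u_{r_1}\pi^{r_1}_p(v_{s_1})$-type identity that makes $\beta$ intertwine. I expect this last case — reconciling the two orderings $\pi\circ\sigma$ and $\sigma\circ\pi$ on a $q$ that "sees" $r$ and $s$ asymmetrically — to be the main obstacle, and I would dispatch it by invoking Lemma \ref{lem:LocalizedInDisjointCommute} (whose proof already used the reflection from Proposition \ref{prop:AdmitsReflection}) applied to the splitting $(\widetilde r,\widetilde s)$ after noting it is a splitting of $q\vee(\text{something})$, i.e.\ after reducing to a $q$ with $p\leq q$ via isotony and then quoting the second and third bullets of Lemma \ref{lem:LocalizedInDisjointCommute}.
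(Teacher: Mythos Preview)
Your overall strategy---shrink the splitting to $q$-indicators, rewrite $\beta_{\pi,\sigma}$ for the new splitting via Lemma~\ref{lem:MutuallyDisjointZigZagIndependence}, then do case analysis---is close to the paper's and can be made to work, but your case analysis is miswired. After the cancellations you describe, what you actually need is
\[
\pi^{\widetilde r}_q\bigl(\sigma^{\widetilde s}_q(x)\bigr)=\sigma^{\widetilde s}_q\bigl(\pi^{\widetilde r}_q(x)\bigr)\qquad\forall\,x\in A_q.
\]
In the mixed subcase $\widetilde r\leq q$, $\widetilde s\leq q'$ (or vice versa), one of the two maps is the identity on $A_q$ and the other preserves $A_q$, so both sides are immediately equal---this case is \emph{trivial}, not delicate. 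The genuinely hard subcase is $\widetilde r\leq q$ \emph{and} $\widetilde s\leq q$: here you need $\pi^{\widetilde r}_q$ and $\sigma^{\widetilde s}_q$ to commute as endomorphisms of $A_q$, and that is \emph{not} a consequence of locality of superselection sectors (your parenthetical ``disjoint from everything $\leq q$'' is not true). This is precisely Lemma~\ref{lem:LocalizedInDisjointCommute}, applied with $p$ there replaced by $q$ (since $(\widetilde r,\widetilde s)$ is a splitting of $q$). Your final sentence about ``reducing to a $q$ with $p\leq q$'' does not make sense: $q$ is fixed and arbitrary.

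The paper sidesteps this whole case analysis. For each $q$ it first chooses a $q$-small $q$-indicator $\widetilde p\leq p$ and then splits $\widetilde p$ itself, so the entire splitting $(r,s)$ lives inside $\widetilde p$, which is on one side of $q$. Then Lemma~\ref{lem:LocalizedInDisjointCommute} applies directly (first or third bullet), and the intertwining is read off using the functoriality of $\circ_p$ on morphisms (Lemma~\ref{lem:MorphismTensorProduct}): $\beta_{\pi,\sigma}=(v_s\circ_p u_r)(u_r^*\circ_p v_s^*)$ is literally a composite of tensor-product morphisms, so each factor is already an intertwiner. This is both shorter and avoids unwinding the formulas by hand.
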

\begin{proof}
Fix $q\in\cP$.
By \ref{geom:qSmallqIndicator}, there exists $\widetilde{p} \leq p$ a $q$-small $q$-indicator and by \ref{geom:splitting}, there is a splitting $r, s \leq \widetilde{p}$.
The main idea is to use the isomorphism
\[
(\pi\circ_p \sigma)_q
\xleftrightarrow{\cong}
(\pi^{r}\circ_p \sigma^{s})_q
=
(\pi^{r}\circ_{\widetilde{p}} \sigma^{s})_q
\underset{\text{(Lem.~\ref{lem:LocalizedInDisjointCommute})}}{=}
(\sigma^{r}\circ_{\widetilde{p}} \pi^{s})_q
=
(\sigma^{r}\circ_p \pi^{s})_q
\xleftrightarrow{\cong}
(\sigma\circ_p \pi)_q.
\]
If $u_r: \pi^r \to \pi$ and $v_s: \sigma^s \to \sigma$ are intertwiners, by Lemma \ref{lem:MorphismTensorProduct}, we see explicitly that for $x\in A_q$,
\begin{align*}
    \beta_{\pi, \sigma} (\pi \circ_p \sigma)_q(x)  & = (v_s \circ_p u_r)(u_r^* \circ_p v_s^*) (\pi \circ_p \sigma)_q(x) = (v_s \circ_p u_r)(\pi^r \circ_p \sigma^s)_q(x) (u_r^* \circ_p v_s^*)\\
    & = (v_s \circ_p u_r)(\pi^r \circ_{\widetilde{p}} \sigma^s)_q(x) (u_r^* \circ_p v_s^*) = (v_s \circ_p u_r)(\sigma^s \circ_{\tilde{p}} \pi^r)_q(x) (u_r^* \circ_p v_s^*) \\
    &= (v_s \circ_p u_r)(\sigma^s \circ_p \pi^r)_q(x) (u_r^* \circ_p v_s^*) = (\sigma \circ_p \pi)_q(x) (v_s \circ_p u_r) (u_r^* \circ_p v_s^*) \\
    & = (\sigma \circ_p \pi)_q(x) \beta_{\pi, \sigma}
\end{align*}
proving that $\beta_{\pi, \sigma}$ is an intertwiner. 
\end{proof}

\subsection{Braid relations}

\begin{prop}
The map $\beta_{\pi, \sigma}$ defined in Construction \ref{constr:Braiding} defines a braiding on $\SSS_p$.
\end{prop}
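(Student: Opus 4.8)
By definition, showing that $\beta$ is a braiding means showing that the family $\{\beta_{\pi,\sigma}\}$ consists of isomorphisms, natural in both variables, and satisfying the two hexagon identities. The preceding proposition already gives that each $\beta_{\pi,\sigma}$ is an intertwiner $\pi\circ_p\sigma\to\sigma\circ_p\pi$, and from Construction \ref{constr:Braiding} it is a product of unitaries (here we use that $\pi_p$ and $\sigma^s_p$ are unital normal $*$-representations of $A_p$, hence carry unitaries to unitaries), so it is a unitary, and therefore a unitary isomorphism in $\SSS_p$ with inverse $\beta_{\pi,\sigma}^*$. Thus the two remaining tasks are naturality and the hexagons; throughout we fix the splitting $(r,s)$ of $p$ used to define $\beta$.

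For naturality I would take $T\colon\pi\to\check\pi$ and $S\colon\sigma\to\check\sigma$ in $\SSS_p$ and prove $\beta_{\check\pi,\check\sigma}(T\circ_p S)=(S\circ_p T)\beta_{\pi,\sigma}$. Choose $\pi^r,\check\pi^r$ localized at $r$, $\sigma^s,\check\sigma^s$ localized at $s$, and unitary intertwiners $u_r,\check u_r,v_s,\check v_s\in A_p$ as in Construction \ref{constr:Braiding}, and set $T^r:=\check u_r^*Tu_r$, $S^s:=\check v_s^*Sv_s$. By \ref{SSS:IntertwinerLocalized}, $T^r\in A_r$ and $S^s\in A_s$, so $[T^r,S^s]=0$ by locality; and since $\pi^r,\sigma^s$ are localized at $r,s$, isotony forces $\pi^r_p(S^s)=S^s$ and $\sigma^s_p(T^r)=T^r$. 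Using functoriality of $\circ_p$ (Lemma \ref{lem:MorphismTensorProduct}) and Lemma \ref{lem:LocalizedInDisjointCommute} to identify $\pi^r\circ_p\sigma^s=\sigma^s\circ_p\pi^r$ (and likewise with checks), one gets $T^r\circ_p S^s=T^rS^s=S^sT^r=S^s\circ_p T^r$; then, writing $T=\check u_rT^ru_r^*$ and $S=\check v_sS^sv_s^*$, the chain
\[
\beta_{\check\pi,\check\sigma}(T\circ_p S)
=(\check v_s\circ_p\check u_r)(T^r\circ_p S^s)(u_r^*\circ_p v_s^*)
=(\check v_s\circ_p\check u_r)(S^s\circ_p T^r)(u_r^*\circ_p v_s^*)
=(S\circ_p T)\,\beta_{\pi,\sigma}
\]
carries through, establishing naturality.

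For the hexagons the decisive simplification is that whenever $\pi$ is localized at $r$ and $\gamma$ is localized at $s$, we may take all intertwining unitaries in Construction \ref{constr:Braiding} to be $\id_H$, so $\beta_{\pi,\gamma}=\id_H$ (this choice is allowed by Lemma \ref{lem:IndependenceOfBraidingOnChoiceOfSectorFixedRS}, and it is consistent since $\pi\circ_p\gamma=\gamma\circ_p\pi$ by Lemma \ref{lem:LocalizedInDisjointCommute}). By naturality together with functoriality of $\circ_p$, each hexagon for a triple $(\pi,\sigma,\tau)$ holds if and only if it holds for any equivalent triple, so by \ref{SSS:AbilityToLocalize} I may put the triple in \emph{standard position}: for the first hexagon, $\pi$ localized at $r$ and $\sigma,\tau$ localized at $s$; for the second, $\pi,\sigma$ localized at $r$ and $\tau$ localized at $s$. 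In the first case $\sigma,\tau\in\SSS_s$, so $\sigma\circ_p\tau=\sigma\circ_s\tau\in\SSS_s$ by Theorem \ref{restrictingequiv} and is hence localized at $s$; then $\beta_{\pi,\sigma}=\beta_{\pi,\tau}=\beta_{\pi,\sigma\circ_p\tau}=\id_H$, the maps $\id_\sigma\circ_p\beta_{\pi,\tau}$ and $\beta_{\pi,\sigma}\circ_p\id_\tau$ are identities, and both sides of the first hexagon equal $\id_H$. The second hexagon is handled symmetrically, using $\pi\circ_p\sigma=\pi\circ_r\sigma\in\SSS_r$ (again Theorem \ref{restrictingequiv}) and \ref{SSS:LocalizedInLarger} to see that every braiding occurring in it is $\id_H$.

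I expect the main obstacle to be naturality: verifying the identity $T^r\circ_p S^s=S^s\circ_p T^r$ and, more delicately, correctly tracking the intertwining unitaries $u_r,\check u_r,v_s,\check v_s$ through the definition of $\circ_p$ on morphisms so that the displayed chain is valid. The companion "reduce to standard position" step is routine but worth stating carefully, since it is what makes the hexagons themselves essentially automatic — once one is in standard position, any two sectors localized in the disjoint halves $r$ and $s$ of the chosen splitting braid trivially, so each hexagon collapses to $\id_H=\id_H$.
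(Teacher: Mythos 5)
Your proof is correct, and it takes a genuinely different route from the paper's. The paper verifies naturality and both hexagon identities by explicit operator manipulations with the formula $\beta_{\pi,\sigma}=v_s\sigma^s_p(u_r)u_r^*\pi_p(v_s)^*$, pushing the localizing unitaries through step by step. You instead argue more conceptually: for naturality you conjugate by $u_r\circ_p v_s$ and $\check u_r\circ_p\check v_s$ to reduce to showing $T^r\circ_p S^s = S^s\circ_p T^r$, which follows because $T^r\in A_r$, $S^s\in A_s$ commute by locality and the twisting actions $\pi^r_p$, $\sigma^s_p$ act trivially on them; for the hexagons you use naturality (which you have just established) plus functoriality of $\circ_p$ to reduce to the ``standard position'' in which $\pi\in\SSS_r$ and $\sigma,\tau\in\SSS_s$ (resp.\ $\pi,\sigma\in\SSS_r$, $\tau\in\SSS_s$), where by Lemma \ref{lem:IndependenceOfBraidingOnChoiceOfSectorFixedRS} every braiding appearing in the identity equals $1$, so both sides are trivially $1$. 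This is cleaner and essentially categorical, trading the paper's long formula chase for a one-time reduction argument.

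Two small points worth tightening. First, where you write ``$\pi\circ_p\gamma=\gamma\circ_p\pi$ by Lemma \ref{lem:LocalizedInDisjointCommute},'' note that as stated that lemma only gives equality of $(\pi\circ_p\gamma)_q$ and $(\gamma\circ_p\pi)_q$ for $q$ with $p\leq q$, $p\leq q'$, or $q=p$; the equality for \emph{all} $q$ is what makes $\beta_{\pi,\gamma}=1$ an intertwiner, and that is supplied by the proposition immediately preceding this one (``the putative braiding is an intertwiner'') applied with $u_r=v_s=1$. Since you already invoke that proposition at the start, it would be cleaner to cite it here as well rather than the lemma. Second, when you ``put the triple in standard position,'' you should make explicit the one-line conjugation check you are relying on: precomposing both sides of the hexagon with the unitary $u\circ_p v\circ_p w$ and applying naturality and functoriality transforms each side into the corresponding side of the hexagon for the localized triple, composed with the same outer unitaries, so the two hexagons are equivalent. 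You clearly have this in mind, but spelling it out removes any appearance of hand-waving. With these clarifications the argument is complete and correct.
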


\begin{proof}
We check naturality and monoidality of $\beta$.

\item[\underline{Naturality:}]
Let $T \colon \pi \to \check{\pi}$ and $S \colon \sigma \to \check{\sigma}$ in $\SSS_p$.  
We must show that 
\[
\beta_{\pi, \check{\sigma}}(\id_{\pi} \circ_p S)
=
(S \circ_p \id_\pi)\beta_{\pi, \sigma}
\qquad\qquad
\text{and}
\qquad\qquad
\beta_{\check{\pi}, \sigma}(T \circ_p \id_\sigma)
=
(\id_\sigma \circ_p T)\beta_{\pi, \sigma}.
\]
We verify the second equation; the first is analogous. 
For our splitting $r,s\leq p$, we choose intertwiners
$u_r: \pi^r \to \pi$,
$v_s: \sigma^s \to \sigma$, and
$\check{u}_r: \check{\pi}^r \to \check{\pi}$,
and calculate
\begin{align*}
\beta_{\check{\pi}, \sigma} (T \circ_r \id_\sigma) 
& = 
v_s \sigma_p^s (\check{u}_r) \check{u}_r^* \check{\pi}_p (v_s^*) T 
&&  (\text{definition})\\
& = v_s \sigma_p^s (\check{u}_r) \check{u}_r^* T \pi_p (v_s^*) 
&& (\text{T intertwines $\pi_p$ and $\check{\pi}_p$}) \\
& = 
v_s \sigma_p^s (\check{u}_r) \sigma_r^s (\check{u}_r^* Tu_r) u_r^* \pi_p (v_s^*)
&& (\check{u}_r^* Tu_r \in A_r \ \text{by \ref{SSS:IntertwinerLocalized}}, \sigma_p^s \ \text{localized in } s \leq r^\prime ) \\
& = 
v_s \sigma_p^s (\check{u}_r) \sigma_p^s (\check{u}_r^* Tu_r) u_r^* \pi_p (v_s^*)
&& (A_r \subseteq A_p)\\
& = 
v_s \sigma_p^s (\check{u}_r)  \sigma_p^s (\check{u}_r^*) v_s^* \sigma_p (T u_r) v_s u_r^* \pi_p (v_s^*)  
&& (v_s \ \text{intertwines} \ \sigma^s \ \text{and} \ \sigma)\\
&= \sigma_p(T) v_s \sigma_p^s (u_r) u_r^* \pi_p (v_s^*) 
&& (v_s \ \text{intertwines} \ \sigma^s \ \text{and} \ \sigma)\\
& = (\id_\sigma \circ_p T) \beta_{\pi, \sigma}
&& (\text{definition})
\end{align*}

\item[\underline{Monoidality:}]
Let $\pi, \sigma, \tau \in \SSS_p$.
We must show that 
\[
\beta_{\pi \circ_p \sigma, \tau}
=
(\beta_{\pi, \tau} \circ_p \id_\sigma)(\id_\pi \circ_p \beta_{\sigma, \tau})
\qquad\qquad
\text{and}
\qquad\qquad
\beta_{\pi, \sigma \circ_p \tau}
=
(\id_\sigma \circ_p \beta_{\pi, \tau})(\beta_{\pi, \sigma} \circ_p \id_\tau).
\]
We verify the first equation; the second is analogous. 
For our splitting $r,s\leq p$, we choose intertwiners
$u_r: \pi^r \to \pi$,
$v_r: \sigma^r \to \sigma$, 
$v_s: \sigma^s \to \sigma$, and
$w_s: \tau^s \to \tau$.
Since $\pi^r \circ_p \sigma^r = \pi^r \circ_r \sigma^r$ is localized at $r$, 
\begin{align*}
\beta_{\pi \circ_p \sigma, \tau}  & = w_s \tau_p^s (u_r \pi_p^r (v_r )) \pi_p^r (v_r^*) u_r^* (\pi \circ_p \sigma )_p (w_s^*) && (\text{definition}) \\
& = w_s \tau_p^s (u_r) \tau_p^s (\pi_p^r (v_r)) \pi_p^r (v_r^*) u_r^* \pi_p (\sigma_p(w_s^*)) && (\text{definition}) \\
& = w_s \tau_p^s (u_r)  \pi_p^r (\tau_p^s (v_r)) \pi_p^r( v_r^*) u_r^* \pi_p (\sigma_p (w_s^*)) && (\text{by Lemma \ref{lem:LocalizedInDisjointCommute}}) \\
& = w_s \tau_p^s (u_r) u_r^* \pi_p(w_s)^* \pi_p (w_s \tau_p^s (v_r)v_r^* \sigma_p (w_s^*) ) 
&& (u_r \ \text{intertwines} \ \pi^r \ \text{and} \ \pi)
\\
& =  (\beta_{\pi, \tau} \circ_p \id_\sigma)(\id_\pi \circ_p \beta_{\sigma, \pi}) && (\text{definition})
\qedhere
\end{align*}
\end{proof}

\begin{prop}
\label{prop:BraidedEquivalence}
Suppose $a \leq b$ satisfy \ref{geom:SelfDisjoint}--\ref{geom:ZigZag}.
Then the inclusion $\SSS_a \hookrightarrow \SSS_b$ afforded by \ref{SSS:LocalizedInLarger} is a braided equivalence.
\end{prop}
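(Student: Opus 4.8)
The plan is to leverage Theorem~\ref{restrictingequiv}, which already establishes that the inclusion $F\colon\SSS_a\hookrightarrow\SSS_b$ afforded by \ref{SSS:LocalizedInLarger} is a strict tensor equivalence with trivial tensorator. Since $F$ is the identity on underlying Hilbert spaces, objects, and intertwiners, a braided functor in this strict setting amounts to the single identity $\beta^{\SSS_a}_{\pi,\sigma}=\beta^{\SSS_b}_{\pi,\sigma}$ of operators for all $\pi,\sigma\in\SSS_a$, so this is all that remains to verify.

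The key observation is that one can compute both braidings using a \emph{common} splitting. By \ref{geom:splitting}, $a$ admits a splitting $(r,s)$; since $r,s\leq a\leq b$ and the relation $r\leq s'$ is untouched, $(r,s)$ is also a splitting of $b$. By well-definedness of the braiding --- namely Theorem~\ref{thm:MutuallyDisjointZigZagsExist} together with Lemma~\ref{lem:MutuallyDisjointZigZagIndependence} --- we are entitled to use this single splitting $(r,s)$ for the braidings on both $\SSS_a$ and $\SSS_b$.

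Next, given $\pi,\sigma\in\SSS_a$, I would pick $\pi^r$ localized at $r$, $\sigma^s$ localized at $s$, and intertwining unitaries $u_r\colon\pi^r\to\pi$, $v_s\colon\sigma^s\to\sigma$, and use this same data on both sides of Construction~\ref{constr:Braiding}. By \ref{SSS:LocalizedInLarger}, $\pi^r$ and $\sigma^s$ are localized at $a$ (hence at $b$), and since $\pi,\sigma\in\SSS_a$ are localized at $a$, \ref{SSS:IntertwinerLocalized} forces $u_r,v_s\in A_a\subseteq A_b$. Construction~\ref{constr:Braiding} then gives
\[
\beta^{\SSS_a}_{\pi,\sigma}=v_s\sigma^s_a(u_r)u_r^*\pi_a(v_s)^*
\qquad\text{and}\qquad
\beta^{\SSS_b}_{\pi,\sigma}=v_s\sigma^s_b(u_r)u_r^*\pi_b(v_s)^*,
\]
and because $u_r,v_s\in A_a$, isotony of the superselection sectors $\sigma^s$ and $\pi$ (so that $\sigma^s_b|_{A_a}=\sigma^s_a$ and $\pi_b|_{A_a}=\pi_a$) makes the two right-hand sides literally equal. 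Hence $F$ is a braided functor, and combined with Theorem~\ref{restrictingequiv} it is a braided equivalence.

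The main thing to get right is not a computation but a matter of legitimacy: we must know the braiding on $\SSS_b$ does not depend on which splitting of $b$ is used, so that computing it from a splitting of $a$ yields the correct operator; this is exactly the content of Theorem~\ref{thm:MutuallyDisjointZigZagsExist} and Lemma~\ref{lem:MutuallyDisjointZigZagIndependence}, which we invoke. Everything else is bookkeeping with isotony and \ref{SSS:IntertwinerLocalized}.
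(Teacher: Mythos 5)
Your argument is correct and is essentially the paper's own proof: both pick a common splitting $(r,s)\leq a\leq b$, note it serves as a splitting of both $a$ and $b$, and conclude by isotony that the two braiding formulas coincide since $u_r,v_s\in A_a$. The only difference is that you explicitly name Theorem~\ref{thm:MutuallyDisjointZigZagsExist} and Lemma~\ref{lem:MutuallyDisjointZigZagIndependence} to justify evaluating $\beta^b$ with a splitting of $a$, a point the paper leaves implicit in Construction~\ref{constr:Braiding}'s ``choose any splitting''; this extra care is welcome but does not change the substance.
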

\begin{proof}
Suppose $\pi, \sigma \in \SSS_a$. 
Let $r, s \leq a$ be a splitting of $a$. 
Then as in Construction \ref{constr:Braiding} with chosen intertwiners
$u_r: \pi^r \to \pi$ and
$v_s: \sigma^s \to \sigma$, we have that the braiding on $\SSS_a$ is given by 
\[
\beta_{\pi, \sigma}^a
=
v_s\sigma^s_a(u_r) u_r^* \pi_a(v_s)^*.
\]
Now, since $a \leq b$, we see $r, s \leq b$ is a splitting of $b$, and thus by isotony, 
\[
\beta_{\pi, \sigma}^a
=
v_s\sigma^s_a(u_r) u_r^* \pi_a(v_s)^*b
=
v_s\sigma^s_b(u_r) u_r^* \pi_b(v_s)^*
=
\beta_{\pi, \sigma}^b.
\qedhere
\]
\end{proof}

\begin{cor}
When $\cP$ satisfies \ref{geom:SelfDisjoint}--\ref{geom:ZigZag},
for all $p,q\in\cP$, we have a zig-zag of braided tensor equivalences
$\SSS_p\cong \SSS_q$.
\end{cor}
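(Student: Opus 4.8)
The plan is to assemble this from the three ingredients already in place: connectivity of the poset (Lemma~\ref{lem:Connected}), the fact that each covering inclusion is a strict tensor equivalence (Theorem~\ref{restrictingequiv}), and the fact that each such inclusion is also braided (Proposition~\ref{prop:BraidedEquivalence}). First I would invoke Lemma~\ref{lem:Connected}, whose hypotheses \ref{geom:saturate} and \ref{geom:splitting} are among \ref{geom:SelfDisjoint}--\ref{geom:ZigZag}, to produce a zig-zag $(z_1=p,y_1,\dots,z_n,y_n,z_{n+1}=q)$ with $z_j,z_{j+1}\le y_j$ for all $j$. Since every element of a zig-zag lies below some $y_j$ or equals some $z_j$, and all of these inherit validity of the geometric axioms (the axioms are universally quantified over $\cP$), each $\SSS_{z_j}$, $\SSS_{y_j}$ carries its tensor product and braiding.

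Next I would record that for each $j$ the relations $z_j\le y_j$ and $z_{j+1}\le y_j$ give two inclusions
\[
\SSS_{z_j}\overset{\cong}{\hookrightarrow}\SSS_{y_j}\overset{\cong}{\hookleftarrow}\SSS_{z_{j+1}},
\]
each of which is a strict tensor equivalence by Theorem~\ref{restrictingequiv} and a braided functor by Proposition~\ref{prop:BraidedEquivalence}; hence each is a braided tensor equivalence. A tensor/braided equivalence has a quasi-inverse that is again a tensor/braided functor (this is standard: the quasi-inverse of a monoidal equivalence is canonically monoidal, and if the structure functor is braided so is the quasi-inverse). Composing the functor $\SSS_{z_{j+1}}\hookrightarrow\SSS_{y_j}$ with a chosen quasi-inverse of $\SSS_{z_j}\hookrightarrow\SSS_{y_j}$ yields a braided tensor equivalence $\SSS_{z_{j+1}}\xrightarrow{\ \simeq\ }\SSS_{z_j}$ (or its inverse in the other direction), and composing these $n$ equivalences along the zig-zag produces the desired braided tensor equivalence $\SSS_p\cong\SSS_q$.

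There is essentially no obstacle here: the only thing to be slightly careful about is the bookkeeping of directions in the zig-zag (alternating $\hookrightarrow$ and $\hookleftarrow$) and the remark that passing to quasi-inverses preserves the braided monoidal structure, which is a general category-theoretic fact rather than anything special to this setup. I would therefore phrase the proof as a one- or two-line appeal to Lemma~\ref{lem:Connected}, Theorem~\ref{restrictingequiv}, and Proposition~\ref{prop:BraidedEquivalence}, displaying the chain
\[
\SSS_p=\SSS_{z_1}\overset{\cong}{\hookrightarrow}\SSS_{y_1}\overset{\cong}{\hookleftarrow}\SSS_{z_2}\overset{\cong}{\hookrightarrow}\cdots\overset{\cong}{\hookleftarrow}\SSS_{z_n}\overset{\cong}{\hookrightarrow}\SSS_{y_n}\overset{\cong}{\hookleftarrow}\SSS_{z_{n+1}}=\SSS_q
\]
of braided tensor equivalences, exactly paralleling the corollary after Theorem~\ref{restrictingequiv} but now with ``tensor'' upgraded to ``braided tensor''.
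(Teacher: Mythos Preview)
Your proposal is correct and matches the paper's intended argument: the corollary is stated without proof precisely because it is the immediate upgrade of the earlier corollary after Theorem~\ref{restrictingequiv}, now invoking Proposition~\ref{prop:BraidedEquivalence} in place of (or in addition to) Theorem~\ref{restrictingequiv} along the zig-zag supplied by Lemma~\ref{lem:Connected}. Your displayed chain is exactly the one the paper has in mind; the remark about quasi-inverses is harmless but not strictly needed, since the statement only asserts a zig-zag of braided tensor equivalences rather than a single composed functor.
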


\section{Bounded spread Haag duality}
\label{sec:BoundedSpread}

We now weaken the notion of Haag duality to \emph{bounded spread Haag duality} in the spirit of \cite{2304.00068} for nets of algebras on the poset $\cC$ of cones in $\bbR^2$.
When $\Lambda$ is a cone, $\Lambda^{+s}$ means the cone obtained from $\Lambda$ by moving the boundaries of the cone perpendicularly by $s$ to obtain a larger cone.
In particular, for rectangles and cones,
\begin{equation}
\label{eq:ConeComplementSpread}
((\Lambda^{+s})^c)^{+s} = \Lambda^c.
\end{equation}
In order to weaken the Haag duality axiom, we add the small generation property 
to help with verification of locality similar to Lemma \ref{lem:SmallGenerationImpliesIsoneRepsAreLocal}.

\subsection{Bounded spread nets and their superselection sectors}

\begin{defn}
A \emph{bounded spread $\cC$-net of algebras} is a Hilbert space $H$
together with a von Neumann subalgebra $A_\Lambda\subset B(H)$ for each cone $\Lambda\in\cC$ satisfying
\begin{itemize}
\item (isotone)
$\Lambda\leq \Delta$ implies $A_\Lambda\subseteq A_\Delta$, 
\item (locality)
$\Lambda\leq \Delta'$ implies $A_\Lambda\subseteq A_\Delta'$,
\item (bounded spread Haag duality)
there is a global $s\geq 0$ called the \emph{duality spread} such that
$A_{\Lambda^c}' \subseteq A_{\Lambda^{+s}}$ 
(equivalently $A_{\Lambda^{+s}}' \subseteq A_{\Lambda^c}$)
for all $\Lambda\in\cC$,
\item (absorbing) 
for each $\Lambda\in\cP$, the representation $H$ for $A_\Lambda$ is absorbing, equivalently, each $A_\Lambda$ is properly infinite,\footnote{The result of Corollary \ref{cor:PNetsAreProperlyInfinite} still applies.
If $H$ is absorbing for each $A_\Lambda$, then each $A_{\Lambda^c}' \subseteq A_{\Lambda^{+s}}$ is properly infinite by Lemma \ref{lem:ProperlyInfiniteInclusion}.
Conversely, if each $A_\Lambda$ is properly infinite, then by locality, $\Lambda\leq \Delta'$ implies $A_\Lambda \leq A_{\Delta}'$ is again properly infinite by Lemma \ref{lem:ProperlyInfiniteInclusion}.} and
\item (small generated)
for each $\Delta\in\cP$, the von Neumann algebra $A_\Delta$ is generated by
$$
\set{A_\Lambda}{\text{$\Lambda\leq \Delta$ and $\Lambda$ is $\Delta$-small}}.
$$
\end{itemize}
Observe that a bounded spread $\cC$-net of algebras with duality spread $s=0$ is the previous notion of small generated $\cC$-net of algebras.
\end{defn}

\begin{ex}
Many $\cC$-nets of algebras coming from quantum spin systems can be shown to satisfy bounded spread Haag duality by constructing an \emph{intertwined} $\cC$-net of algebras which is known to satisfy Haag duality (see Proposition \ref{prop:IntertwinedProperties} below).
One such example coming from the Levin-Wen model was given in \cite[\S IV.E]{MR4808260} which will be further analyzed in \cite{2DBraidedSpinSystems}.

As an explicit example, if $\pi_0$ is the GNS representation of a product state, then $\pi_0$ satisfies Haag duality by \cite[Lem.~4.3]{MR4426734}, so $\pi_0 \circ \alpha$ satisfies bounded spread Haag duality if $\alpha$ is a quantum cellular automaton.  
Thus certain symmetry protected topological phases 
satisfy bounded spread Haag duality.
This fact will be used in \cite{SymmetryDefectsInfiniteVolume}.
\end{ex}

\begin{defn}
A \emph{(bounded spread) superselection sector} of $A=(H,\cC,A)$ is a Hilbert space $K$ together with normal maps $\pi_\Lambda:A_\Lambda\to B(K)$ satisfying
\begin{itemize}
\item (isotone)
$\Lambda\leq \Delta$ implies $A_\Lambda\subseteq A_\Delta$ and
\item 
(absorbing) 
for each $\Lambda\in\cP$, the representation $(K,\pi_\Lambda)$ for $A_\Lambda$ is absorbing, equivalently, injective.
\end{itemize}
The notion of intertwiner is the same as before.
The superselection sectors of a bounded spread net of algebras is again a $\rmW^*$-category.

Observe that when $A$ is a $\cC$-net of algebras satisfying Haag duality and the small generating property, a bounded spread superselection sector is exactly the previous notion of superselection sector by Lemma \ref{lem:SmallGenerationImpliesIsoneRepsAreLocal}.
\end{defn}

\begin{lem}
\label{lem:IsotonyImpliesBSLocality}
A bounded spread superselection sector $(\pi, K)$ of the bounded spread $\cC$-net of algebras $A = (H, \cC, A)$ satisfies the following locality condition: 
\begin{itemize}
    \item (locality) For every $\Lambda \in \cC$, $[\pi_{\Lambda^{+s}}(A_{\Lambda^c}'), \pi_{\Lambda^c}(A_{\Lambda^c})] = 0$.
\end{itemize}
\end{lem}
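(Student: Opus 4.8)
The plan is to follow the proof of Lemma~\ref{lem:SmallGenerationImpliesIsoneRepsAreLocal} almost verbatim, using the small generated axiom to pass to subcones small enough that isotony of $\pi$ applies. Fix a cone $\Lambda$, let $a\in A_{\Lambda^c}'$, and let $\Gamma\leq\Lambda^c$ be a $\Lambda^c$-small cone with $b\in A_\Gamma$. By bounded spread Haag duality $a\in A_{\Lambda^{+s}}$, and by isotony of $A$ we have $b\in A_\Gamma\subseteq A_{\Lambda^c}$, so $[a,b]=0$ in $B(H)$ since $a\in A_{\Lambda^c}'$. The first step I would carry out is to produce a single cone $\Xi$ with $\Lambda^{+s}\leq\Xi$ and $\Gamma\leq\Xi$: since $\Gamma$ is $\Lambda^c$-small there is a cone $\Sigma$ with $\Gamma,\Lambda\leq\Sigma$, and then $\Xi:=\Sigma^{+s}$ works, because enlargement of cones is monotone (so $\Lambda\leq\Sigma$ gives $\Lambda^{+s}\leq\Sigma^{+s}=\Xi$) while $\Gamma\leq\Sigma\leq\Sigma^{+s}=\Xi$. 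Then $a,b\in A_\Xi$ by isotony of $A$, and isotony of $\pi$ gives $\pi_{\Lambda^{+s}}(a)=\pi_\Xi(a)$ and $\pi_{\Lambda^c}(b)=\pi_\Gamma(b)=\pi_\Xi(b)$, whence
$$
\pi_{\Lambda^{+s}}(a)\,\pi_{\Lambda^c}(b)=\pi_\Xi(a)\pi_\Xi(b)=\pi_\Xi(ab)=\pi_\Xi(ba)=\pi_{\Lambda^c}(b)\,\pi_{\Lambda^{+s}}(a).
$$

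The second step is to globalize in $b$. Let $\cS$ denote the union of the $A_\Gamma$ over all $\Lambda^c$-small cones $\Gamma\leq\Lambda^c$; the small generated axiom gives $A_{\Lambda^c}=\cS''$. The computation above shows $\pi_{\Lambda^{+s}}(A_{\Lambda^c}')\subseteq\pi_{\Lambda^c}(\cS)'$. Taking commutants and using normality of $\pi_{\Lambda^c}$ — so that $\pi_{\Lambda^c}(A_{\Lambda^c})=\pi_{\Lambda^c}(\cS'')\subseteq\pi_{\Lambda^c}(\cS)''$, since a normal $*$-homomorphism maps the von Neumann algebra generated by a set into the von Neumann algebra generated by the image (Kaplansky density plus $\sigma$-weak continuity) — we obtain
$$
\pi_{\Lambda^c}(A_{\Lambda^c})\subseteq\pi_{\Lambda^c}(\cS)''\subseteq\pi_{\Lambda^{+s}}(A_{\Lambda^c}')',
$$
which is exactly the asserted identity $[\pi_{\Lambda^{+s}}(A_{\Lambda^c}'),\pi_{\Lambda^c}(A_{\Lambda^c})]=0$.

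The only geometric input is monotonicity of the enlargement $\Lambda\mapsto\Lambda^{+s}$ together with the fact that $\Lambda^c$-smallness of $\Gamma$ supplies a cone $\Sigma\supseteq\Gamma\cup\Lambda$; the only functional-analytic input beyond the axioms is normality of $\pi_{\Lambda^c}$ and $\sigma$-weak closedness of commutants. As in Lemma~\ref{lem:SmallGenerationImpliesIsoneRepsAreLocal}, the step requiring care is the bookkeeping of subscripts: one must check at each use of isotony of $\pi$ that the cones involved are genuinely comparable ($\Gamma\leq\Lambda^c$, $\Gamma\leq\Xi$, $\Lambda^{+s}\leq\Xi$) and that $a$ has been moved into $A_{\Lambda^{+s}}$ via bounded spread Haag duality before isotony is invoked. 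I do not expect any new axiom to be needed.
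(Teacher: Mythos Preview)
Your proof is correct and follows essentially the same route as the paper: reduce via the small generated axiom to $b\in A_\Gamma$ for a $\Lambda^c$-small (equivalently $\Lambda$-small) $\Gamma\leq\Lambda^c$, pick a cone $\Sigma\supseteq\Lambda\cup\Gamma$, enlarge to $\Sigma^{+s}$ so that both $\Lambda^{+s}$ and $\Gamma$ sit inside it, and use isotony of $\pi$ to compute the commutator inside $\pi_{\Sigma^{+s}}$. The only difference is cosmetic: the paper compresses your Step~2 into the single phrase ``by the small-generating property it suffices to show\ldots'', whereas you spell out the normality/commutant argument explicitly.
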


\begin{proof}
Let $\Lambda \in \cC$.
By the small-generating property, it suffices to show that for every $\Lambda$-small $\Delta \subseteq \Lambda^c$, 
\[
[\pi_{\Lambda^{+s}}(A_{\Lambda^c}'), \pi_{\Lambda^c}(A_{\Delta})] = 0.
\]
Let $\Delta \subseteq \Lambda^c$ be $\Lambda$-small.  
Then there exists $\Upsilon \in \cC$ such that $\Lambda, \Delta \subseteq \Upsilon$.  
We then have that $\Lambda^{+s}, \Delta \subseteq \Upsilon^{+s}$.  
Therefore, if $x \in A_{\Lambda^c}'$ and $y \in A_\Delta$, we have that 
\[
\pi_{\Lambda^{+s}}(x)\pi_{\Lambda^c}(y)
=
\pi_{\Lambda^{+s}}(x)\pi_{\Delta}(y)
=
\pi_{\Upsilon^{+s}}(x)\pi_{\Upsilon^{+s}}(y)
=
\pi_{\Upsilon^{+s}}(xy)
=
\pi_{\Upsilon^{+s}}(yx)
=
\pi_{\Lambda^c}(y)\pi_{\Lambda^{+s}}(x).
\qedhere
\]
\end{proof}

As before, we say that $(H, \pi) \in \SSS$ is \emph{localized at $\Lambda \in \cC$} if $\pi_{\Lambda^c} = \id_{A_{\Lambda^c}}$.
Note that \ref{SSS:AbilityToLocalize} and \ref{SSS:LocalizedInLarger} still hold, but \ref{SSS:LocalizedPreservesAlgebra} 
and
\ref{SSS:IntertwinerLocalized} respectively become:
\begin{enumerate}[label=(BSSS\arabic*), series=BSSS]
\setcounter{enumi}{1}
\item 
\label{BSSS:LocalizedPreservesAlgebraWithSpread}
If the superselection sector $(H,\pi)$ is localized at $\Lambda$, then $\pi_\Lambda A_\Lambda \subseteq A_{\Lambda^c}' \subseteq A_{\Lambda^{+s}}$.
\setcounter{enumi}{3}
\item 
\label{BSSS:IntertwinerLocalizedWithSpread}
If $(H,\pi)$ is localized at $\Lambda_1$, $(H,\sigma)$ is localized at $\Lambda_2$, and $\Lambda_1, \Lambda_2 \leq \Delta$,
then every intertwiner $u\in B(H)$ between $\pi$ and $\sigma$ lies in $A_{\Delta^c}' \subseteq A_{\Delta^{+s}}$.
\end{enumerate}

As before, we denote by $\SSS_\Lambda$ the full $\rmW^*$-subcategory of (bounded spread) superselection sectors localized at $\Lambda$.

\begin{construction}
If $\pi, \sigma \in \SSS_\Lambda$, we define $(\pi \circ_\Lambda \sigma)_\Delta$ for $\Delta \in \cC$ as follows. 
We pick $\widetilde{\Lambda} \subseteq \Lambda$ such that $\widetilde{\Lambda}^{+2s}$ is a $\Delta$-indicator and $\widetilde{\Lambda}^{+s}$ is $\Delta^{+s}$-small.
We then define
\[
(\pi \circ_\Lambda \sigma)_\Delta(x)
\coloneqq
u_{\widetilde{\Lambda}}\pi^{\widetilde{\Lambda}}_{\Lambda^{+s}}(v_{\widetilde{\Lambda}})
(\pi^{\widetilde{\Lambda}}_{\Delta^{+s}} \circ \sigma^{\widetilde{\Lambda}}_\Delta)(x)
\pi^{\widetilde{\Lambda}}_{\Lambda^{+s}}(v_{\widetilde{\Lambda}})^*u_{\widetilde{\Lambda}}^*
\]
for $x \in A_\Delta$.
Here, as usual, $\pi^{\widetilde{\Lambda}},\sigma^{\widetilde{\Lambda}}$ are localized in $\widetilde{\Lambda}$ and $u_{\widetilde{\Lambda}}, v_{\widetilde{\Lambda}}$ are unitaries satisfying that for every $\Sigma \in \cC$ 
\[
\pi_\Sigma(-)
=
u_{\widetilde{\Lambda}} \pi^{\widetilde{\Lambda}}_\Sigma(-) u_{\widetilde{\Lambda}}^*
\qquad\qquad
\text{and}
\qquad\qquad
\sigma_\Sigma(-)
=
v_{\widetilde{\Lambda}} \sigma^{\widetilde{\Lambda}}_\Sigma(-) v_{\widetilde{\Lambda}}^*.
\]
By a modification of the proof of Lemma \ref{lem:FusionWellDefined}, $(\pi \circ_\Lambda \sigma)_\Delta$ is independent of the choices made.  
\end{construction}

\begin{prop}
If $\pi, \sigma \in \SSS_\Lambda$, then $\pi \circ_\Lambda \sigma$ is a superselection sector localized in $\Lambda$.  
\end{prop}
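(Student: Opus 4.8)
The plan is to follow the proof of the analogous statement in the Haag-duality setting (the lemma showing $\pi\circ_p\sigma$ is a superselection sector localized at $p$), verifying the three defining conditions for $\pi\circ_\Lambda\sigma$ to be a bounded spread superselection sector localized at $\Lambda$: localization at $\Lambda$, isotony, and the absorbing (equivalently, injectivity) condition. Well-definedness of $(\pi\circ_\Lambda\sigma)_\Delta$ is already granted by the Construction, and locality need not be checked separately, as it holds automatically for any bounded spread superselection sector by Lemma~\ref{lem:IsotonyImpliesBSLocality}. For localization, I would take $\Delta=\Lambda^c$ and choose the auxiliary cone $\widetilde\Lambda\subseteq\Lambda$ deep enough inside $\Lambda$ that $\widetilde\Lambda^{+2s}\leq\Lambda$; then $\widetilde\Lambda$ is disjoint from $\Lambda^c=\Delta$ and, using \eqref{eq:ConeComplementSpread}, also from $\Delta^{+s}=(\Lambda^c)^{+s}$, so that $\sigma^{\widetilde\Lambda}_{\Lambda^c}=\id$ and $\pi^{\widetilde\Lambda}_{\Delta^{+s}}=\id$ by isotony and localization of $\pi^{\widetilde\Lambda},\sigma^{\widetilde\Lambda}$; hence the inner composite $\pi^{\widetilde\Lambda}_{\Delta^{+s}}\circ\sigma^{\widetilde\Lambda}_\Delta$ acts as the identity on $A_{\Lambda^c}$. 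It then remains to see that the outer unitary $u_{\widetilde\Lambda}\pi^{\widetilde\Lambda}_{\Lambda^{+s}}(v_{\widetilde\Lambda})$ lies in $A_{\Lambda^c}'$: one has $u_{\widetilde\Lambda},v_{\widetilde\Lambda}\in A_{\Lambda^c}'$ by \ref{BSSS:IntertwinerLocalizedWithSpread} (both $\pi^{\widetilde\Lambda},\sigma^{\widetilde\Lambda}$ and $\pi,\sigma$ are localized at $\Lambda$), and applying Lemma~\ref{lem:IsotonyImpliesBSLocality} to $\pi^{\widetilde\Lambda}$ with the cone $\Lambda$, together with $\pi^{\widetilde\Lambda}_{\Lambda^c}=\id$, yields $\pi^{\widetilde\Lambda}_{\Lambda^{+s}}(A_{\Lambda^c}')\subseteq A_{\Lambda^c}'$; so $u_{\widetilde\Lambda}\pi^{\widetilde\Lambda}_{\Lambda^{+s}}(v_{\widetilde\Lambda})\in A_{\Lambda^c}'$ commutes with every $x\in A_{\Lambda^c}$, giving $(\pi\circ_\Lambda\sigma)_{\Lambda^c}(x)=x$.

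For isotony, given $\Delta_1\leq\Delta_2$ I would first produce a single $\widetilde\Lambda\subseteq\Lambda$ that simultaneously witnesses the conditions needed to evaluate $\pi\circ_\Lambda\sigma$ at both $\Delta_1$ and $\Delta_2$: choose $\widetilde\Lambda_0$ so that $\widetilde\Lambda_0^{+2s}$ is a $\Delta_2$-indicator and $\widetilde\Lambda_0^{+s}$ is $\Delta_2^{+s}$-small, then pick $\widetilde\Lambda\subseteq\widetilde\Lambda_0$ with $\widetilde\Lambda^{+2s}$ a $\Delta_1$-indicator and $\widetilde\Lambda^{+s}$ being $\Delta_1^{+s}$-small, and note that passing to a subcone preserves being a $\Delta_2$-indicator and being $\Delta_2^{+s}$-small (the bounded spread analogue of Remark~\ref{rem:SmallerqSmallqIndicators}), so that this $\widetilde\Lambda$ serves for both. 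Using this common $\widetilde\Lambda$, the equality $(\pi\circ_\Lambda\sigma)_{\Delta_2}|_{A_{\Delta_1}}=(\pi\circ_\Lambda\sigma)_{\Delta_1}$ reduces to isotony of $\pi^{\widetilde\Lambda}$ and $\sigma^{\widetilde\Lambda}$, once one checks that $\sigma^{\widetilde\Lambda}_{\Delta_1}(A_{\Delta_1})\subseteq A_{\Delta_1^{+s}}$ so that $\pi^{\widetilde\Lambda}_{\Delta_2^{+s}}$ and $\pi^{\widetilde\Lambda}_{\Delta_1^{+s}}$ agree on it; this containment follows from \ref{BSSS:LocalizedPreservesAlgebraWithSpread} in the case $\widetilde\Lambda\leq\Delta_1$, and is immediate (all maps in sight being identities) in the case $\widetilde\Lambda\leq\Delta_1'$.

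For the absorbing condition, I would observe, exactly as in the Haag-duality case, that the defining formula exhibits $(\pi\circ_\Lambda\sigma)_\Delta$ as a conjugate by a unitary of the composite $\pi^{\widetilde\Lambda}_{\Delta^{+s}}\circ\sigma^{\widetilde\Lambda}_\Delta$ of normal $*$-homomorphisms, each injective by the absorbing axiom for $\pi^{\widetilde\Lambda}$ and $\sigma^{\widetilde\Lambda}$ (and Lemma~\ref{lem:EndoInjectiveIffAbs}); hence each $(\pi\circ_\Lambda\sigma)_\Delta$ is an injective normal representation of $A_\Delta$. Injectivity of all these maps, together with isotony and the automatic locality, then upgrades to absorbing by the argument recorded in the footnote to the definition of bounded spread superselection sectors: choose a cone disjoint from $\Delta$, use locality (Lemma~\ref{lem:IsotonyImpliesBSLocality}) and proper infiniteness of the relevant $A_\Sigma$, and apply Lemmas~\ref{lem:ProperlyInfiniteInclusion} and \ref{lem:absorbing}. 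I expect the main obstacle to be the bookkeeping of the $+s$-enlargements, in particular checking via \eqref{eq:ConeComplementSpread} the geometric containments among $\widetilde\Lambda,\widetilde\Lambda^{+s},\widetilde\Lambda^{+2s}$ and $\Delta,\Delta^{+s}$ and $\Lambda,\Lambda^{+s},\Lambda^c$ that each step requires, and the verification that the outer unitary lands in $A_{\Lambda^c}'$, which is precisely the point where the Lemma~\ref{lem:IsotonyImpliesBSLocality} form of locality, rather than a naive locality, is essential.
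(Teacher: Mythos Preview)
Your proposal is correct and follows essentially the same approach as the paper. The paper likewise defers isotony and absorbing to ``without much modification'' of the Haag-duality case, and for localization carries out the same computation with the same key input, Lemma~\ref{lem:IsotonyImpliesBSLocality}; the only difference is organizational: the paper unwinds $u_{\widetilde\Lambda}\pi^{\widetilde\Lambda}_{\Lambda^{+s}}(v_{\widetilde\Lambda})$ to $\pi_{\Lambda^{+s}}(v_{\widetilde\Lambda})u_{\widetilde\Lambda}$ via the intertwining relation and then applies Lemma~\ref{lem:IsotonyImpliesBSLocality} to $\pi$, whereas you apply it directly to $\pi^{\widetilde\Lambda}$ to place the whole outer unitary in $A_{\Lambda^c}'$ in one step.
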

\begin{proof}
The proofs for isotony and absorbing hold without much modification.  
To see that $\pi \circ_\Lambda \sigma$ is localized in $\Lambda$, let $\widetilde{\Lambda} \leq \Lambda$ satisfy that $\widetilde{\Lambda}^{+2s}$ is a $\Delta$-indicator and $\widetilde{\Lambda}^{+s}$ is $\Delta^{+s}$-small.
Let $x \in A_{\Lambda^c}$.  
We then have that 
\begin{align*}
(\pi \circ_\Lambda \sigma)_{\Lambda^c}(x)
&=
u_{\widetilde{\Lambda}}\pi^{\widetilde{\Lambda}}_{\Lambda^{+s}}(v_{\widetilde{\Lambda}})
(\pi^{\widetilde{\Lambda}}_{(\Lambda^c)^{+s}} \circ \sigma^{\widetilde{\Lambda}}_{\Lambda^c})(x)
\pi^{\widetilde{\Lambda}}_{\Lambda^{+s}}(v_{\widetilde{\Lambda}})^*u_{\widetilde{\Lambda}}^*
&&
\text{(definition)}
\\&=
u_{\widetilde{\Lambda}}\pi^{\widetilde{\Lambda}}_{\Lambda^{+s}}(v_{\widetilde{\Lambda}})
x
\pi^{\widetilde{\Lambda}}_{\Lambda^{+s}}(v_{\widetilde{\Lambda}})^*u_{\widetilde{\Lambda}}^*
&&
\text{($\pi^{\widetilde{\Lambda}}, \sigma^{\widetilde{\Lambda}}$ localized in $\widetilde{\Lambda} \subseteq \Lambda$)}
\\&=
\pi_{\Lambda^{+s}}(v_{\widetilde{\Lambda}})u_{\widetilde{\Lambda}}
x
u_{\widetilde{\Lambda}}^*
\pi_{\Lambda^{+s}}(v_{\widetilde{\Lambda}})^*
&&
\text{($u_{\widetilde{\Lambda}}$ intertwines $\pi^{\widetilde{\Lambda}}$ and $\pi$)}
\\&=
\pi_{\Lambda^{+s}}(v_{\widetilde{\Lambda}})x
\pi_{\Lambda^{+s}}(v_{\widetilde{\Lambda}})^*
&&
\text{($u_{\widetilde{\Lambda}} \in A_{\Lambda^c}'$)}
\\&=
\pi_{\Lambda^{+s}}(v_{\widetilde{\Lambda}})\pi_{\Lambda^c}(x)
\pi_{\Lambda^{+s}}(v_{\widetilde{\Lambda}})^*
&&
\text{($\pi$ localized in $\Lambda$)}
\\&=
\pi_{\Lambda^c}(x)
&&
\text{($v_{\widetilde{\Lambda}} \in A_{\Lambda^c}'$ and Lem.~\ref{lem:IsotonyImpliesBSLocality})}
\\&=
x
&&
\text{($\pi$ localized in $\Lambda$).}
\qedhere
\end{align*}

By modifying the proof of Lemma \ref{lem:MorphismTensorProduct}, we can see that if $T \colon \pi \to \check{\pi}$ and $S \colon \sigma \to \check{\sigma}$ in $\SSS_\Lambda$, then  $T \circ_{\Lambda} S \coloneqq T\pi_{\Lambda^{+s}}(S) = \check{\pi}_{\Lambda^{+s}}(S)T$ intertwines $\pi \circ_\Lambda \sigma$ and $\check{\pi} \circ_\Lambda \check{\sigma}$.  
Similarly, by modifying the proof of Lemma \ref{lem:StrictAssociativity}, one can see that $-\circ_\Lambda -$ is strictly associative.
Therefore $\SSS_\Lambda$ is a strict monoidal category.
Furthermore, by modifying the proof of Theorem \ref{restrictingequiv}, one can easily verify that if $\Lambda \leq \Delta$, then the inclusion $\SSS_\Lambda \hookrightarrow \SSS_\Delta$ afforded by \ref{SSS:LocalizedInLarger} is still a strict monoidal equivalence.

We can define a braiding on $\SSS_\Lambda$ by slightly modifying Construction \ref{constr:Braiding}.  
In particular, we let $\Lambda_1, \Lambda_2$ be a splitting of $\Lambda$ where $\Lambda_1^{+s} \leq \Lambda_2^c$.  
We then choose unitaries $u_{\Lambda_1},v_{\Lambda_2}\in A_{\Lambda^{+s}}$ which intertwine $\pi$ with $\pi^{\Lambda_1}\in \SSS_{\Lambda_1}$ and $\sigma$ with $\sigma^{\Lambda_2}\in \SSS_{\Lambda_2}$ respectively, i.e.,
$$
\pi(-) = u_{\Lambda_1}\pi^{\Lambda_1}(-)u_{\Lambda_1}^*
\qquad\qquad\qquad
\sigma(-) = v_{\Lambda_2} \sigma^{\Lambda_2}(-) v_{\Lambda_2}^*.
$$
We define a braiding from $\pi\circ_\Lambda \sigma$ to $\sigma\circ_\Lambda \pi$ by
$$
\beta_{\pi,\sigma} 
\coloneqq 
(v_{\Lambda_2} \circ_\Lambda u_{\Lambda_1})(u_{\Lambda_1}^* \circ_\Lambda v_{\Lambda_2}^*)
=
v_{\Lambda_2}\sigma^{\Lambda_2}_{\Lambda^{+s}}(u_{\Lambda_1}) u_{\Lambda_1}^* \pi_{\Lambda^{+s}}(v_{\Lambda_2})^*.
$$
The above map can be seen to be a braiding by modifying the proofs in \S\ref{sec:braiding}.
\end{proof}

\subsection{Intertwined nets of algebras}

\begin{defn}
Suppose $\{A_\Lambda\}_{\Lambda\in\cC}$ and $\{B_\Lambda\}_{\Lambda\in\cC}$ are two families of von Neumann subalgebras of $B(H)$.
We say $A,B$ are $t$-\emph{intertwined} if there is a global \emph{intertwining constant} $t\geq 0$ such that
$$
A_{\Lambda} \subseteq B_{\Lambda^{+t}}
\qquad\qquad
\text{and}
\qquad\qquad
B_{\Lambda} \subseteq A_{\Lambda^{+t}}
\qquad\qquad
\forall\,\Lambda\in\cC.
$$
\end{defn}

\begin{ex}
The recent article
\cite[\S IV.E]{MR4808260}
shows that a certain `edge restricted' subalgebra of the von Neumann cone algebras arising from the Levin-Wen string net model for a unitary fusion category $\cF$ is intertwined with the von Neumann cone algebras arising from a 2D braided fusion spin system with generator the canonical Lagrangian algebra in $Z(\cF)$.
This example will be further analyzed in \cite{2DBraidedSpinSystems}.

For an additional example of finite dimensional von Neumann algebras associated to the poset of intervals in $\bbR$, we refer the reader to \cite[\S III.C.2]{2309.13440}.
This is a particular example of Elliott's intertwining technique to construct isomorphisms of AF $\rmC^*$-algebras in \cite{MR0397420}.
\end{ex}

\begin{prop}
\label{prop:IntertwinedProperties}
Suppose $\{A_\Lambda\}_{\Lambda\in\cC}$ and $\{B_\Lambda\}_{\Lambda\in\cC}$ are $t$-intertwined families of von Neumann subalgebras of $B(H)$.
\begin{itemize}
\item 
If $A$ satisfies bounded spread Haag duality, then so does $B$.
\item
If each $A_\Lambda$ is properly infinite, then so is each $B_\Lambda$.
\item
If $H$ is absorbing for each $A_\Lambda$, then $H$ is absorbing for each $B_\Lambda$.
\end{itemize}
\end{prop}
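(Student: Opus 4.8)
The plan is to derive all three conclusions from the two intertwining inclusions $A_\Lambda\subseteq B_{\Lambda^{+t}}$ and $B_\Lambda\subseteq A_{\Lambda^{+t}}$ by repeatedly passing to commutants and invoking Lemma~\ref{lem:ProperlyInfiniteInclusion} (proper infiniteness passes upward along inclusions) together with Lemma~\ref{lem:absorbing}, which for a concrete von Neumann subalgebra $M\subseteq B(H)$ says that $H$ is absorbing for $M$ exactly when $M'$ is properly infinite, faithfulness being automatic. I would also use the elementary geometric facts that $(\Lambda^{+a})^{+b}=\Lambda^{+(a+b)}$, that $((\Lambda^{+t})^c)^{+t}=\Lambda^c$ (the case $s=t$ of \eqref{eq:ConeComplementSpread}), and that every cone $\Delta$ has the form $\Lambda^{+t}$ for some cone $\Lambda$ (shrink $\Delta$ by moving its bounding rays inward by $t$). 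Note that none of the three arguments uses isotony of the families, which is appropriate since only the intertwining relations are assumed.

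For the first bullet, suppose $A$ has duality spread $s$ and fix a cone $\Lambda$. First apply $A_\Delta\subseteq B_{\Delta^{+t}}$ with $\Delta=(\Lambda^{+t})^c$ together with $((\Lambda^{+t})^c)^{+t}=\Lambda^c$ to get $A_{(\Lambda^{+t})^c}\subseteq B_{\Lambda^c}$, hence $B_{\Lambda^c}'\subseteq A_{(\Lambda^{+t})^c}'$. Next apply bounded spread Haag duality for $A$ at the cone $\Lambda^{+t}$, giving $A_{(\Lambda^{+t})^c}'\subseteq A_{(\Lambda^{+t})^{+s}}=A_{\Lambda^{+t+s}}$. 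Finally apply $A_\Delta\subseteq B_{\Delta^{+t}}$ once more with $\Delta=\Lambda^{+t+s}$ to land in $B_{(\Lambda^{+t+s})^{+t}}=B_{\Lambda^{+2t+s}}$. Chaining these, $B_{\Lambda^c}'\subseteq B_{\Lambda^{+2t+s}}$ for every cone $\Lambda$, so $B$ satisfies bounded spread Haag duality with duality spread $2t+s$.

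For the second bullet, if each $A_\Lambda$ is properly infinite then, since $A_\Lambda\subseteq B_{\Lambda^{+t}}$, Lemma~\ref{lem:ProperlyInfiniteInclusion} shows each $B_{\Lambda^{+t}}$ is properly infinite; as every cone has this form, every $B_\Delta$ is properly infinite. For the third bullet, if $H$ is absorbing for each $A_\Lambda$ then each $A_\Lambda'$ is properly infinite by Lemma~\ref{lem:absorbing}; fixing a cone $\Lambda$, the inclusion $B_\Lambda\subseteq A_{\Lambda^{+t}}$ gives $A_{\Lambda^{+t}}'\subseteq B_\Lambda'$, so $B_\Lambda'$ is properly infinite by Lemma~\ref{lem:ProperlyInfiniteInclusion}, whence $H$ is absorbing for $B_\Lambda$ by Lemma~\ref{lem:absorbing} again. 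The only step requiring any care is keeping the chain of commutant inclusions and spread parameters straight in the first bullet; there is no genuine obstacle here, only bookkeeping with the cone-complement identity.
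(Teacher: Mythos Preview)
Your proof is correct and follows essentially the same route as the paper: the same commutant chain for bounded spread Haag duality yielding spread $s+2t$ (you derive the form $B_{\Lambda^c}'\subseteq B_{\Lambda^{+2t+s}}$ while the paper derives the equivalent form $B_{\Lambda^{+s+2t}}'\subseteq B_{\Lambda^c}$), and identical applications of Lemma~\ref{lem:ProperlyInfiniteInclusion} and Lemma~\ref{lem:absorbing} for the other two bullets.
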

\begin{proof}
\item[\underline{Bounded spread Haag duality:}]
If $A$ has duality spread $s$, then
\begin{align*}
B_{\Lambda^{+s+2t}}'
&\subseteq
A_{\Lambda^{+s+t}}'
&&
(\text{$t$-intertwined})
\\&\subseteq
A_{(\Lambda^{+t})^c}
&&
(\text{$A$ has duality spread $s$})
\\&\subseteq
B_{((\Lambda^{+t})^c)^{+t}}
&&
(\text{$t$-intertwined})
\\&=
B_{\Lambda^c}
&&
(\text{$(\Lambda^{+t})^c)^{+t}=\Lambda^c$ by \eqref{eq:ConeComplementSpread}}).
\end{align*}
\item[\underline{Properly infinite:}]
Since $A_\Lambda\subseteq B_{\Lambda^{+t}}$,
each $B_{\Lambda^{+t}}$ is properly infinite by \ref{lem:ProperlyInfiniteInclusion}.
The result follows.

\item[\underline{Absorbing:}]
By Lemma \ref{lem:absorbing}, $H$ is absorbing for each $A_\Lambda, B_\Lambda$ if and only if each $A_\Lambda',B_\Lambda'$ respectively
is properly infinite.
Since $B_{\Lambda}\subseteq A_{\Lambda^{+t}}$, 
$A_{\Lambda^{+t}}'\subseteq B_{\Lambda}'$, so $B_{\Lambda}'$ is properly infinite by Lemma \ref{lem:ProperlyInfiniteInclusion}.
\end{proof}

\begin{cor}
Suppose $(A,H)$ is a bounded spread $\cC$-net of algebras and $\{B_\Lambda\}_{\Lambda\in\cC}$ is an isotone, local, small generated family of von Neumann subalgebras of $B(H)$.
If $A,B$ are intertwined, then $B$ is also a bounded spread $\cC$-net of algebras.
\end{cor}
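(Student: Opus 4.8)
The plan is to verify the five defining properties of a bounded spread $\cC$-net of algebras for the family $\{B_\Lambda\}_{\Lambda\in\cC}$. By hypothesis, $B$ is isotone, local, and small generated, so the only work left is to check bounded spread Haag duality and the absorbing property for $B$. Both of these are immediate from Proposition \ref{prop:IntertwinedProperties} once we record that, since $(A,H)$ is assumed to be a bounded spread $\cC$-net of algebras, $A$ itself satisfies bounded spread Haag duality (with some duality spread $s$) and $H$ is absorbing for each $A_\Lambda$ (equivalently, each $A_\Lambda$ is properly infinite).

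Concretely, first I would invoke the first bullet of Proposition \ref{prop:IntertwinedProperties}: since $A$ and $B$ are $t$-intertwined and $A$ satisfies bounded spread Haag duality with duality spread $s$, so does $B$, with duality spread $s+2t$ as extracted from the displayed chain of inclusions in the proof of that proposition. Then I would invoke the third bullet of Proposition \ref{prop:IntertwinedProperties}: since $H$ is absorbing for each $A_\Lambda$, it is absorbing for each $B_\Lambda$. Combining these two conclusions with the three hypothesized properties (isotone, local, small generated) shows that $B$ is a bounded spread $\cC$-net of algebras.

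I expect essentially no obstacle here: the corollary is just a repackaging of Proposition \ref{prop:IntertwinedProperties} together with the definition of a bounded spread $\cC$-net. The only point requiring a little care is bookkeeping the new duality spread of $B$, but the proof of Proposition \ref{prop:IntertwinedProperties} already carries out that computation, so I would simply cite it.
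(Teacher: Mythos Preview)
Your proposal is correct and matches the paper's approach exactly: the corollary is stated without proof in the paper precisely because it is an immediate consequence of Proposition~\ref{prop:IntertwinedProperties} together with the hypotheses on $B$, just as you describe.
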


\begin{construction}
\label{const:IntertwinedNetsHaveSameSSS}
Suppose $(A,H)$ and $(B,H)$ are $t$-intertwined bounded spread $\cC$-nets of algebras.
For a superselection sector $(K,\pi^A)$ of $A$, we define 
$\pi^B_\Lambda:= \pi^A_{\Lambda^{+t}}|_{B_\Lambda}$.
These normal homomorphisms $\pi^B_\Lambda : B_\Lambda\to B(K)$ 
assemble into a superselection sector of $B$. 
\begin{itemize}
\item (Isotony)
Since $(K,\pi^A)$ is isotone, 
$\pi^A_{\Lambda^{+t}}|_{B_\Lambda}= \pi^A_{\Delta}|_{B_\Lambda}$
for any cone $\Delta$ containing $\Lambda^{+t}$.
Thus if $\Lambda\leq \Delta$, then $\Lambda^{+t}\leq \Delta^{+t}$, so 
$$
\pi^B_{\Delta}|_{B_\Lambda} 
= 
(\pi^A_{\Delta^{+t}}|_{B_\Delta})|_{B_\Lambda}
= 
\pi^A_{\Delta^{+t}}|_{B_\Lambda}
= 
\pi^A_{\Lambda^{+t}}|_{B_\Lambda}
=
\pi^B_{\Lambda}.
$$
\item (Absorbing)
Immediate by \ref{lem:EndoInjectiveIffAbs} as $\pi^B_{\Lambda}=\pi^A_{\Lambda^{+t}}|_{B_\Lambda}$ is injective.
\end{itemize}
Moreover, the map $(K,\pi^A)\mapsto (K,\pi^B)$ is functorial in that every morphism in $\SSS(A)$ is also a morphism in $\SSS(B)$.
\begin{itemize}
\item (Well-defined)
For a morphism $T \colon \pi^A \to \sigma^A$ in $\SSS(A)$,
and $\Lambda \in \cC$, $T\pi^A_\Lambda(-) = \sigma^A_\Lambda(-)T$.
In particular, $T\pi^A_{\Lambda^{+t}}(-) = \sigma^A_{\Lambda^{+t}}(-)T$, and thus $T\pi^B_\Lambda(-) = \sigma^B_\Lambda(-)T$ since $\pi^B_\Lambda$ and $\sigma^B_\Lambda$ are defined by restriction.
Thus $T \colon \pi^B \to \sigma^B$ is also a morphism in $\SSS(B)$.
\end{itemize}
\end{construction}

\begin{thm}
Suppose $(A,H)$ and $(B,H)$ are intertwined bounded spread $\cC$-nets of algebras.
The map $(K,\pi^A)\mapsto (K,\pi^B)$ is an isomorphism of $\rmW^*$-categories $\SSS(A)\cong \SSS(B)$ with inverse $(K,\pi^B)\mapsto (K,\pi^A)$.
\end{thm}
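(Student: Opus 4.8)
The plan is to exhibit the inverse functor explicitly and verify that the two composites are literally the identity. Since being $t$-intertwined is a symmetric relation, Construction~\ref{const:IntertwinedNetsHaveSameSSS} applies equally with the roles of $A$ and $B$ exchanged, producing a functor $\SSS(B)\to\SSS(A)$ sending $(L,\rho^B)$ to $(L,\rho^A)$ with $\rho^A_\Lambda\coloneqq\rho^B_{\Lambda^{+t}}\big|_{A_\Lambda}$. Write $F\colon\SSS(A)\to\SSS(B)$ and $G\colon\SSS(B)\to\SSS(A)$ for the two functors. Both act as the identity on the underlying Hilbert space and send an intertwiner, viewed as a bounded operator, to the very same operator, so it suffices to prove that $GF$ and $FG$ are the identity on objects; this will automatically force them to be the identity functors.

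First I would compute $GF(K,\pi^A)=(K,(\pi^B)^A)$. For a cone $\Lambda$,
\[
(\pi^B)^A_\Lambda
=\pi^B_{\Lambda^{+t}}\big|_{A_\Lambda}
=\big(\pi^A_{(\Lambda^{+t})^{+t}}\big|_{B_{\Lambda^{+t}}}\big)\big|_{A_\Lambda}
=\pi^A_{\Lambda^{+2t}}\big|_{A_\Lambda},
\]
where the last equality uses the containment $A_\Lambda\subseteq B_{\Lambda^{+t}}$ coming from $t$-intertwining, so that the two successive restrictions collapse to a single restriction to $A_\Lambda$. Since $\Lambda\leq\Lambda^{+2t}$, isotony of the superselection sector $\pi^A$ gives $\pi^A_{\Lambda^{+2t}}\big|_{A_\Lambda}=\pi^A_\Lambda$, hence $(\pi^B)^A=\pi^A$ and $GF=\id_{\SSS(A)}$ on objects. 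The identical computation with $A$ and $B$ interchanged shows $FG=\id_{\SSS(B)}$ on objects.

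Finally I would note that nothing further is needed for morphisms: an intertwiner $T$ of $\SSS(A)$ is mapped by $F$ to the same operator $T$, which is an intertwiner of $\SSS(B)$ by the functoriality already established in Construction~\ref{const:IntertwinedNetsHaveSameSSS}, and $G$ returns the same operator; since $GF$ is the identity on objects, $T$ is recovered as the original morphism of $\SSS(A)$, and symmetrically for $\SSS(B)$. Therefore $F$ is an isomorphism of $\rmW^*$-categories with inverse $G$. There is no substantive obstacle here; the only point that requires a moment's care is the bookkeeping of the nested restrictions, which works precisely because of the two-sided containment $A_\Lambda\subseteq B_{\Lambda^{+t}}\subseteq A_{\Lambda^{+2t}}$ built into the definition of $t$-intertwining together with isotony of the sectors.
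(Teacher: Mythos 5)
Your proposal is correct and follows essentially the same argument as the paper: apply Construction~\ref{const:IntertwinedNetsHaveSameSSS} in both directions, note the hom spaces coincide, and check that $((\pi^A)^B)^A_\Lambda = \pi^A_{\Lambda^{+2t}}\big|_{A_\Lambda} = \pi^A_\Lambda$ using the containment $A_\Lambda \subseteq B_{\Lambda^{+t}} \subseteq A_{\Lambda^{+2t}}$ together with isotony.
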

\begin{proof}
Observe that Construction \ref{const:IntertwinedNetsHaveSameSSS} can be applied with the roles of $A,B$ reversed, giving us functors both ways.
Since the hom spaces in $\SSS(A)$ and $\SSS(B)$ are identical, it remains to show $(K,((\pi^A)^B)^A)=(K,\pi^A)$ for any $(K,\pi^A)\in \SSS(A)$.
Indeed, for any $\Lambda\in\cC$, by isotony, 
\[
((\pi^A)^B)^A_\Lambda
=
(\pi^A)^B_{\Lambda^{+t}}|_{A_\Lambda}
=
(\pi^A_{\Lambda^{+2t}}|_{B_{\Lambda^{+t}}})|_{A_\Lambda}
=
\pi^A_{\Lambda^{+2t}}|_{A_\Lambda}
=
\pi^A_\Lambda.
\qedhere
\]
\end{proof}

\begin{thm}
Suppose $(A,H)$ and $(B,H)$ are intertwined bounded spread $\cC$-nets of algebras with intertwining constant $t\geq 0$.
Restricting to $\SSS(A)_\Lambda$, the superselection sectors of $A$ localized at $\Lambda$,
we get a strict unitary braided equivalence $\SSS(A)_{\Lambda} \to \SSS(B)_{\Lambda^{+t}}$
\end{thm}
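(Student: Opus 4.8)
The plan is to show that the isomorphism of $\rmW^*$-categories $\Phi\colon\SSS(A)\to\SSS(B)$, $\Phi(K,\pi^A)=(K,\pi^B)$ with $\pi^B_\Lambda:=\pi^A_{\Lambda^{+t}}|_{B_\Lambda}$, constructed for the previous theorem restricts to the asserted braided equivalence. First I would check that $\Phi$ carries $\SSS(A)_\Lambda$ into $\SSS(B)_{\Lambda^{+t}}$: if $\pi^A_{\Lambda^c}=\id$, then because $B_{(\Lambda^{+t})^c}\subseteq A_{((\Lambda^{+t})^c)^{+t}}=A_{\Lambda^c}$ by $t$-intertwining and \eqref{eq:ConeComplementSpread}, we get $\pi^B_{(\Lambda^{+t})^c}=\pi^A_{((\Lambda^{+t})^c)^{+t}}|_{B_{(\Lambda^{+t})^c}}=\id_{B_{(\Lambda^{+t})^c}}$, so $(K,\pi^B)$ is localized at $\Lambda^{+t}$. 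The restricted functor is fully faithful since $\Phi$ is, acts as the identity on hom-spaces (hence is a unitary functor), and is essentially surjective onto $\SSS(B)_{\Lambda^{+t}}$: given $\rho^B$ localized at $\Lambda^{+t}$, the sector $\Phi^{-1}(\rho^B)$ has, by \ref{SSS:AbilityToLocalize}, an equivalent representative in $\SSS(A)_\Lambda$, whose image under $\Phi$ is equivalent to $\rho^B$. Thus $\Phi$ restricts to an equivalence of $\rmW^*$-categories.

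Next I would verify strict monoidality. The unit is preserved on the nose, $(\mathbbm{1}^A)^B_\Lambda=\id_{A_{\Lambda^{+t}}}|_{B_\Lambda}=\id_{B_\Lambda}$. Writing $s$ for the duality spread of $A$ and $s+2t$ for that of $B$ (permissible by the proof of Proposition \ref{prop:IntertwinedProperties}), for morphisms $T\colon\pi^A\to\check\pi^A$, $S\colon\sigma^A\to\check\sigma^A$ in $\SSS(A)_\Lambda$ one has $T\circ^A_\Lambda S=T\pi^A_{\Lambda^{+s}}(S)$ and $T\circ^B_{\Lambda^{+t}}S=T\pi^B_{\Lambda^{+s+3t}}(S)=T\pi^A_{\Lambda^{+s+4t}}(S)$; since $S\in A_{\Lambda^c}'\subseteq A_{\Lambda^{+s}}$ by \ref{BSSS:IntertwinerLocalizedWithSpread} and $\pi^A$ is isotone, these agree, so $\Phi$ preserves $\circ$ on morphisms. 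The substance is the equality $(\pi^A\circ_\Lambda\sigma^A)^B=\pi^B\circ_{\Lambda^{+t}}\sigma^B$ on objects. Fixing $\Delta\in\cC$, I would use the geometric flexibility of cones (cf.\ Appendix \ref{appendix:ConesSatisfyGAs}) to choose a single sub-cone $\widetilde\Lambda\le\Lambda$, small enough and lying deep inside $\Delta$ or deep inside $\Delta^c$, so as to satisfy \emph{simultaneously} the finitely many indicator/smallness conditions needed to evaluate $(\pi^A\circ_\Lambda\sigma^A)_{\Delta^{+t}}$ by the bounded spread fusion construction for $A$ and $(\pi^B\circ_{\Lambda^{+t}}\sigma^B)_\Delta$ at the sub-cone $\widetilde\Lambda^{+t}\le\Lambda^{+t}$ for $B$. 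Choosing localized sectors $\pi^{\widetilde\Lambda},\sigma^{\widetilde\Lambda}$ and intertwiners $u,v$ on the $A$-side, their images $(\pi^{\widetilde\Lambda})^B,(\sigma^{\widetilde\Lambda})^B$ are localized at $\widetilde\Lambda^{+t}$ with the \emph{same} intertwiners $u,v$, and evaluating both fusion formulas on $x\in B_\Delta\subseteq A_{\Delta^{+t}}$ and using $\pi^B_\Sigma(y)=\pi^A_{\Sigma^{+t}}(y)$ together with isotony of $\pi^{\widetilde\Lambda}$ (every internal cone index is enlarged by a fixed amount, but the operator it is applied to already sits in the smaller algebra) shows the two expressions coincide term by term. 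Together with the morphism check this gives strict monoidality of $\Phi$.

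Finally, for the braiding I would show $\beta^B_{\pi^B,\sigma^B}=\beta^A_{\pi^A,\sigma^A}$ as operators. Using independence of the braiding on the choice of splitting (Lemma \ref{lem:MutuallyDisjointZigZagIndependence}), compute $\beta^A$ with a splitting $(\Lambda_1,\Lambda_2)$ of $\Lambda$ chosen with enough buffer that $\Lambda_1^{+s+3t}$ is disjoint from $\Lambda_2^{+t}$ — such a splitting exists by the same geometric reasoning as \ref{geom:splitting} for cones — and compute $\beta^B$ with the splitting $(\Lambda_1^{+t},\Lambda_2^{+t})$ of $\Lambda^{+t}$, which is legitimate precisely because $(\Lambda_1^{+t})^{+s+2t}=\Lambda_1^{+s+3t}$ is disjoint from $\Lambda_2^{+t}$. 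Feeding the compatible auxiliary data ($(\sigma^A)^{\Lambda_2}$ and the same $u_{\Lambda_1},v_{\Lambda_2}$) into the $B$-braiding formula and reducing via $\pi^B_\Sigma(y)=\pi^A_{\Sigma^{+t}}(y)$ and isotony collapses it to $v_{\Lambda_2}(\sigma^A)^{\Lambda_2}_{\Lambda^{+s}}(u_{\Lambda_1})u_{\Lambda_1}^*\pi^A_{\Lambda^{+s}}(v_{\Lambda_2})^*=\beta^A_{\pi^A,\sigma^A}$. The main obstacle throughout is the index bookkeeping in these last two steps: one must track precisely how the duality spread $s$ of $A$, the spread $s+2t$ of $B$, and the $t$-intertwining shift interact, and secure the small geometric lemmas — suitably buffered small indicators and buffered splittings for cones — that let the $A$-side auxiliary data be reused, after a uniform $+t$ enlargement, as valid $B$-side auxiliary data.
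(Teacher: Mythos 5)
Your proof is correct and follows essentially the same route as the paper's: show $\Phi$ restricts to a functor landing in $\SSS(B)_{\Lambda^{+t}}$, check strict monoidality by evaluating the two bounded-spread fusion formulas against a common choice of auxiliary localization data (shifted by $+t$), and check the braiding by feeding the same splitting (shifted by $+t$) and the same intertwiners into both braiding formulas and collapsing indices via $\pi^B_\Sigma=\pi^A_{\Sigma^{+t}}|_{B_\Sigma}$ and isotony. The one place where your write-up adds genuine value is the braiding: the paper dispatches it with a single sentence ("it follows from the fact that it is the identity on intertwiners"), whereas you make explicit that one must also choose a splitting $(\Lambda_1,\Lambda_2)$ with enough buffer that $(\Lambda_1^{+t},\Lambda_2^{+t})$ is a valid splitting of $\Lambda^{+t}$ for $B$'s duality spread $s+2t$, and that the $u$'s and $v$'s serve as valid localizing unitaries on both sides; your index reduction $(\Lambda^{+t})^{+(s+2t)}=\Lambda^{+s+3t}$ followed by isotony down to $\Lambda^{+s}$ is exactly the bookkeeping the paper leaves implicit. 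The localization check ($\pi^B_{(\Lambda^{+t})^c}=\id$ via $((\Lambda^{+t})^c)^{+t}=\Lambda^c$) and essential surjectivity via \ref{SSS:AbilityToLocalize} are also not spelled out in the paper, and you supply them cleanly. The only soft spot is the claim that a single $\widetilde\Lambda$ can be chosen to satisfy simultaneously the $A$-side and $B$-side indicator/smallness conditions; this is geometrically clear for cones and is also left at the same level of rigor in the paper, so it is not a gap relative to what the paper itself does.
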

\begin{proof}
We first define the following notation: for $\pi \in \SSS(A)_\Lambda$, we let $\pi^B$ be the image of $\pi$ under the equivalence $\SSS(A)_{\Lambda} \to \SSS(B)_{\Lambda^{+t}}$.
Now, let $\pi, \sigma \in \SSS(A)_\Lambda$.
Let $\Delta \in \cC$, and let $\widetilde{\Lambda} \leq \Lambda$ satisfy that $\widetilde{\Lambda}^{+2s}$ is a $\Delta$-indicator and $\widetilde{\Lambda}^{+s}$ is $\Delta^{+s}$-small.
We choose $\pi^{\widetilde{\Lambda}},\sigma^{\widetilde{\Lambda}} \in SSS(A)_{\widetilde{\Lambda}}$ and unitaries $u_{\widetilde{\Lambda}}, v_{\widetilde{\Lambda}}$ satisfying that for every $\Sigma \in \cC$ 
\[
\pi_\Sigma(-)
=
u_{\widetilde{\Lambda}} \pi^{\widetilde{\Lambda}}_\Sigma(-) u_{\widetilde{\Lambda}}^*
\qquad\qquad
\text{and}
\qquad\qquad
\sigma_\Sigma(-)
=
v_{\widetilde{\Lambda}} \sigma^{\widetilde{\Lambda}}_\Sigma(-) v_{\widetilde{\Lambda}}^*.
\]
Then we have that 
\[
(\pi \circ_\Lambda \sigma)_\Delta(-)
=
u_{\widetilde{\Lambda}}\pi^{\widetilde{\Lambda}}_{\Lambda^{+s}}(v_{\widetilde{\Lambda}})
(\pi^{\widetilde{\Lambda}}_{\Delta^{+s}} \circ \sigma^{\widetilde{\Lambda}}_\Delta)(-)
\pi^{\widetilde{\Lambda}}_{\Lambda^{+s}}(v_{\widetilde{\Lambda}})^*u_{\widetilde{\Lambda}}^*
\]
Now, $u_{\widetilde{\Lambda}} \colon \pi^B \to (\pi^{\widetilde{\Lambda}})^B$ and $v_{\widetilde{\Lambda}} \colon \sigma^B \to (\sigma^{\widetilde{\Lambda}})^B$, and $(\pi^{\widetilde{\Lambda}})^B, (\sigma^{\widetilde{\Lambda}})^B$ are localized in $\widetilde{\Lambda}^{+t}$.
Therefore, we have that
\[
(\pi \circ_\Lambda \sigma)^B_{\Delta^{+t}}(-)
=
u_{\widetilde{\Lambda}}(\pi^{\widetilde{\Lambda}})^B_{\Lambda^{+s+t}}(v_{\widetilde{\Lambda}})
((\pi^{\widetilde{\Lambda}})^B_{\Delta^{+s+t}} \circ (\sigma^{\widetilde{\Lambda}})^B_{\Delta^{+t}})(-)
(\pi^{\widetilde{\Lambda}})^B_{\Lambda^{+s+t}}(v_{\widetilde{\Lambda}})^*u_{\widetilde{\Lambda}}^*
=
(\pi^B \circ_{\Lambda^{+t}} \sigma^B)_{\Delta^{t}}(-).
\]
This shows that $(\pi \circ_\Lambda \sigma)^B = \pi^B \circ_{\Lambda^{+t}} \sigma^B$.
Similarly, if $T \colon \pi \to \check{\pi}$ and $S \colon \sigma \to \check{\sigma}$ in $\SSS(A)_\Lambda$, then 
\[
T \circ_{\Lambda}^A S
=
T\pi_{\Lambda^{+s}}(S)
=
T\pi^B_{\Lambda^{+s+t}}(S)
=
T \circ_\Lambda^B S.
\]
Therefore, the equivalence $\SSS(A)_\Lambda \to \SSS(B)_{\Lambda^{+t}}$ is a strict monoidal equivalence.

It remains to show that this equivalence is braided.
However, this follows by from the fact that it is the identity on intertwiners.
\end{proof}

\section{Equivalence with Naaijkens superselection sectors}

In this section, we prove that our definition of superselection sectors for the poset of cones in $\bbR^2$ agrees with the definition used to study topologically ordered quantum spin systems in (2+1)-dimensions \cite{MR2804555, MR3426207,MR4362722,2306.13762,2310.19661}.
In this setting, we assume that the net of cone algebras satisfies bounded spread Haag duality.  
We first describe an abstract characterization of the approach used in \cite{MR660538, MR2804555} and show that we obtain a braided monoidal equivalence with our approach.  
We then relate our approach to the approach used in the study of spin systems.

\subsection{Buchholz--Fredenhagen superselection sectors}
\label{subsec:BFBraidedMonoidal}

Let $\cC$ be the poset of cones in $\bbR^2$. 
We let $(A, H)$ be a bounded spread $\cC$-net of algebras. 
We fix a cone $\Omega \in \cC$, which will determine a `forbidden direction.'
Analogously to \cite{MR660538, MR2804555}, we define a $\rmC^*$-algebra 
\[
A^{\aux}
\coloneqq
\overline{\bigcup_{\Lambda : \Lambda \Cap \Omega = \emptyset} A_{\Lambda}}^{\|\cdot\|}.
\]
In this section, we assume that for every $\Lambda \in \cC$, $(A_\Lambda \cap A^{\aux})'' = A_\Lambda \subset B(H)$.

\begin{rem}
One way to ensure that $(A_\Lambda \cap A^{\aux})'' = A_\Lambda$ for every $\Lambda\in \cC$
is if each $A_\Lambda$ is generated as a von Neumann algebra by subalgebras $A_B$ associated to open bounded regions $B \subset \Lambda \subset \bbR^2$.
Observe that this condition implies that $(A_\Lambda \cap A^{\aux})'' = A_\Lambda$ for every $\Lambda \in \cC$ since $A_B \subseteq A^{\aux}$ for every open bounded $B \subset \bbR^2$.
Indeed, for every open bounded $B$, there is a cone $\Delta\in \cC$ with $B\subset \Delta$ and $\Delta\Cap \Omega=\emptyset$.
\end{rem}

We now construct another tensor category of \emph{Buchholz--Fredenhagen} superselection sectors localized
in $\Lambda \in \cC$ with $\Lambda \Cap \Omega = \emptyset$ analogously to the one in \cite[\S 4]{MR660538}. 
Since we will now have multiple notions of superselection sectors, in this section, we write $\SSS^{GF}_\Lambda$ for the category of \emph{Gabbiani--Fr\"ohlich} superselection sectors for $(A,H)$ constructed in \S\ref{sec:Fusion}-\ref{sec:BoundedSpread} above.

\begin{defn}
Let $\pi \colon A^{\aux} \to B(H)$. 
We say that $\pi$ is \emph{localized} in $\Lambda \in \cC$ if $\pi|_{A_{\Lambda^c} \cap A^{\aux}} = \id$.  
We say that $\pi$ is \emph{transportable} if for every cone $\Delta \in \cC$, there exists  $\pi^\Delta \colon A^{\aux} \to B(H)$ that is unitarily equivalent to $\pi$ such that $\pi^\Delta$ is localized in $\Delta$.

A \emph{Buchholz--Fredenhagen superselection sector} of the bounded spread $\cC$-net $(A, H)$
is a map $\pi \colon A^{\aux} \to B(H)$ that is localized in some $\Lambda \in \cC$ and transportable.

We write $\SSS_\Lambda^{BF}$ to denote the category of Buchholz--Fredenhagen superselection sectors localized in $\Lambda$. 
The morphisms in this category are intertwiners, that is, $T \colon \pi \to \sigma$ if $T \in B(H)$ and $T\pi(x) = \sigma(x)T$ for all $x \in A^{\aux}$.
\end{defn}

\begin{facts}
Here are some useful facts about Buchholz--Fredenhagen superselection sectors that mirror Facts \ref{facts:SSSFacts}.
\begin{enumerate}[label=(BF\arabic*)]
\item 
\label{BF:LocalizedPreservesAlgebra}
Suppose $\pi \colon A^{\aux} \to B(H)$ is localized in $\Lambda$.
Then $\pi(A_\Lambda \cap A^{\aux}) \subset A_{\Lambda^{+s}}$.
\begin{proof}
By bounded spread Haag duality, it suffices to show that $\pi(A_\Lambda \cap A^{\aux}) \subset A_{\Lambda^c}'$.  
Let $x \in A_\Lambda \cap A^{\aux}$ and $y \in A_{\Lambda^c} \cap A^{\aux}$.  
Then we have that 
\[
\pi(x)y
=
\pi(x)\pi(y)
=
\pi(xy)
=
\pi(yx)
=
\pi(y)\pi(x)
=
y\pi(x).
\]
Therefore, we have that 
\[
\pi(x)
\in
(A_{\Lambda^c} \cap A^{\aux})'
=
(A_{\Lambda^c} \cap A^{\aux})'''
=
A_{\Lambda^c}'.
\qedhere
\]
\end{proof}
\item 
\label{BF:LocalizedInLarger}
Suppose $\pi \colon A^{\aux} \to B(H)$ is localized in $\Lambda$ and $\Lambda \leq \Delta$.
Then $\pi$ is also localized in $\Delta$.  
\begin{proof}
Suppose $\Sigma \in \cC$ is disjoint from $\Delta$.
Then $\Sigma$ is disjoint from $\Lambda$.
Therefore, since $\pi$ is localized in $\Lambda$, $\pi|_{A_\Sigma \cap A^{\aux}} = \id$.
\end{proof}
\item 
\label{BF:EndoOfAaux}
Let $\Lambda \in \cC$ such that $\Lambda \Cap \Omega = \emptyset$. 
If $\pi \colon A^{\aux} \to B(H)$ is localzed in $\Lambda$, then $\pi(A^{\aux}) \subseteq A^{\aux}$.
\begin{proof}
Let $\Delta \in \cC$ with $\Delta \Cap \Omega = \emptyset$. 
It suffices to show that $\pi(A_\Delta) \subset A^{\aux}$.
Since $\Lambda \Cap \Omega = \Delta \Cap \Omega = \emptyset$, there exists $\Sigma \in \cC$ with $\Sigma \Cap \Omega = \emptyset$ such that $\Lambda, \Delta \leq \Sigma$.  
We then have that $A_\Delta \subset A_\Sigma$ and that $\pi$ is localized in $\Sigma$ by \ref{BF:LocalizedInLarger}.
Since $\Sigma \Cap \Omega = \emptyset$, $A_\Sigma \subset A_{\Sigma^{+s}} \subset A^{\aux}$, so by \ref{BF:LocalizedPreservesAlgebra}, 
\[
\pi(A_\Delta)
\subseteq
\pi(A_\Sigma)
\subset
A_{\Sigma^{+s}}
\subset
A^{\aux}.
\qedhere
\]
\end{proof}

\item 
\label{BF:IntertwinerLocalized}
Let $\Lambda \in \cC$ and $\pi, \sigma \colon A^{\aux} \to B(H)$ be localized in $\Lambda$.
Suppose $T \colon \pi \to \sigma$ is an intertwiner.
Then $T \in A_{\Lambda^{+s}}$.
In particular, if $\Lambda \Cap \Omega = \emptyset$, then $T \in A^{\aux}$.

\begin{proof}
It suffices to show that $T \in A_{\Lambda^c}'$ by bounded spread Haag duality.
Let $x \in A_{\Lambda^c} \cap A^{\aux}$. 
Then since $\pi, \sigma$ are localized in $\Lambda$, we have that
\[
Tx
=
T\pi(x)
=
\sigma(x)T
=
xT.
\]
Therefore, we have that 
\[
T 
\in
(A_{\Lambda^c} \cap A^{\aux})'
=
A_{\Lambda^c}'.
\qedhere
\]
\end{proof}

\item 
\label{BF:NormalAbsorbing}
Suppose $\Lambda \in \cC$ and $\pi \in \SSS_\Lambda^{BF}$.
Then for every $\Delta \in \cC$, $\pi|_{A_\Delta \cap A^{\aux}}$ has a unique normal extension to $A_\Delta$, which is absorbing.

\begin{proof}
Since $\pi \in \SSS_\Lambda^{BF}$, $\pi$ is transportable.
In particular, there exists $\pi^{\Delta^c} \colon A^{\aux} \to B(H)$ localized in $\Delta^c$ such that $\pi^{\Delta^c} \cong \pi$.
Let $u_{\Delta^c} \colon \pi^{\Delta^c} \to \pi$ be a unitary witnessing this equivalence.
We then have that for $x \in A_\Delta \cap A^{\aux}$, 
\[
\pi(x)
=
u_{\Delta^c}\pi^{\Delta^c}(x)u_{\Delta^c}^*
=
u_{\Delta^c}xu_{\Delta^c}^*.
\]
The above formula is clearly normal and injective, so $\pi$ has a unique normal extension to $A_\Delta$, which is injective and thus absorbing by Lemma \ref{lem:EndoInjectiveIffAbs}.
\end{proof}
\end{enumerate}
\end{facts}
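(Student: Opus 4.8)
The strategy is to use transportability to replace $\pi$ by a unitarily equivalent representative that acts as the identity on $A_\Delta\cap A^{\aux}$, so that the sought normal extension to $A_\Delta$ is visibly given by conjugation by the transporting unitary. Concretely, I would apply transportability of $\pi\in\SSS^{BF}_\Lambda$ at $\Delta^c$ (which lies in $\cC$, since $\cC$ is closed under the order-reversing involution $\Lambda\mapsto\Lambda^c$): this produces a map $\pi^{\Delta^c}\colon A^{\aux}\to B(H)$ localized in $\Delta^c$ together with a unitary $u\colon\pi^{\Delta^c}\to\pi$, so that $\pi(x)=u\pi^{\Delta^c}(x)u^*$ for all $x\in A^{\aux}$. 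Since $(\Delta^c)^c=\Delta$ by the involution axiom, ``localized in $\Delta^c$'' means exactly $\pi^{\Delta^c}|_{A_\Delta\cap A^{\aux}}=\id$, and hence
\[
\pi(x)=uxu^*\qquad\qquad\forall\,x\in A_\Delta\cap A^{\aux}.
\]

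Next I would produce the extension. Set $\widetilde\pi_\Delta:=\Ad(u)|_{A_\Delta}\colon A_\Delta\to B(H)$, $a\mapsto uau^*$. This is a unital $*$-isomorphism of $A_\Delta$ onto the von Neumann algebra $uA_\Delta u^*$, hence automatically normal, and it agrees with $\pi$ on $A_\Delta\cap A^{\aux}$ by the displayed identity. Uniqueness is then forced by the standing hypothesis of this subsection that $(A_\Delta\cap A^{\aux})''=A_\Delta$: two normal maps on $A_\Delta$ agreeing on the $\sigma$-weakly dense $*$-subalgebra $A_\Delta\cap A^{\aux}$ must coincide.

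Finally, for the absorbing claim I would note that $u^*\colon H\to H$ is an $A_\Delta$-linear unitary from the module ${}_{\widetilde\pi_\Delta}H$ onto the defining module ${}_{A_\Delta}H$, since $u^*\widetilde\pi_\Delta(a)\xi=u^*uau^*\xi=a(u^*\xi)$ for all $a\in A_\Delta$, $\xi\in H$; as ${}_{A_\Delta}H$ is absorbing by the (bounded spread) $\cC$-net axioms, so is ${}_{\widetilde\pi_\Delta}H$. (When one has exact Haag duality one could instead compute $\widetilde\pi_\Delta(A_\Delta)'=uA_\Delta'u^*=uA_{\Delta^c}u^*$, which is properly infinite by Corollary \ref{cor:PNetsAreProperlyInfinite}, and invoke Lemma \ref{lem:absorbing}; the module-isomorphism argument has the advantage of working verbatim in the bounded spread setting.)

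I do not expect a substantial obstacle here: the content lies entirely in correctly translating ``localized in $\Delta^c$'' through the involution and in invoking the density hypothesis $(A_\Delta\cap A^{\aux})''=A_\Delta$, which is exactly what makes ``the unique normal extension to $A_\Delta$'' well-posed. The one point I would take care to verify is that transportability is genuinely available at $\Delta^c$, in parallel with the way Facts \ref{BF:LocalizedPreservesAlgebra}--\ref{BF:IntertwinerLocalized} are deployed.
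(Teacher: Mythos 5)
Your proof is correct and follows essentially the same route as the paper: both use transportability at $\Delta^c$ to obtain $\pi(x)=uxu^*$ on $A_\Delta\cap A^{\aux}$, define the extension by $\Ad(u)|_{A_\Delta}$, and derive uniqueness from the standing density hypothesis $(A_\Delta\cap A^{\aux})''=A_\Delta$. The one place where you diverge is the absorbing claim: the paper concludes ``injective, thus absorbing by Lemma~\ref{lem:EndoInjectiveIffAbs},'' whereas you observe directly that $u^*$ is an $A_\Delta$-linear unitary ${}_{\widetilde\pi_\Delta}H\to{}_{A_\Delta}H$, transporting the absorbing property from the net's defining module. Your version is actually tidier: Lemma~\ref{lem:EndoInjectiveIffAbs} is stated for endomorphisms $\rho\colon M\to M$, and $\Ad(u)|_{A_\Delta}$ need not map $A_\Delta$ into itself (the transporting unitary $u$ intertwines sectors localized in $\Lambda$ and $\Delta^c$, so by \ref{BF:IntertwinerLocalized} it lives in some $A_{E^{+s}}$ that need not sit inside $A_\Delta$), so invoking that lemma requires an extra reduction that the paper elides; the module-isomorphism argument avoids this point entirely and, as you note, works verbatim under bounded spread Haag duality.
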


\begin{construction}
\label{Const:BFBraidedMonoidal}
Let $\Lambda \in \cC$ such that $\Lambda \Cap \Omega = \emptyset$.
We now define a strict tensor product and a braiding on $\SSS_\Lambda^{BF}$.

The monoidal product on $\SSS_\Lambda^{BF}$ is given by composition of endomorphisms.
In detail, for $\pi, \sigma \in \SSS_\Lambda^{BF}$, $\pi \otimes_{BF} \sigma \coloneqq \pi \circ \sigma$, and for $T\colon \pi_1 \to \pi_2$ and $S \colon \sigma_1 \to \sigma_2$ in $\SSS_\Lambda^{BF}$, $T \otimes_{BF} S \coloneqq T\pi_1(S) = \pi_2(S)T$.
Since $\Lambda \Cap \Omega = \emptyset$, $\sigma(A^{\aux}) \subseteq A^{\aux}$ by \ref{BF:EndoOfAaux}, so $\pi \circ \sigma$ is well-defined.
Similarly, $S \in A^{\aux}$ by \ref{BF:IntertwinerLocalized}, so $T \otimes_{BF} S$ is well-defined.
Therefore, $-\otimes_{BF} -$ endows $\SSS_\Lambda^{BF}$ with the structure of a strict monoidal category.

The braiding on $\SSS_\Lambda^{BF}$ is defined analogously to the braiding in \cite{MR2804555} and in fact agrees with the braiding in \cite{MR4362722}.
Let $\Delta \in \cC$ be a cone such that $\Delta \Cap \Omega = \emptyset$ and such that $\Delta^{+s}$ is disjoint from $\Lambda$.
To fix a convention, we additionally assume that $\Delta$ is counterclockwise of $\Lambda$, meaning that if one rotates $\Lambda$ counterclockwise, the rotated cone intersects $\Delta$ before intersecting $\Omega$.
Given $\pi, \sigma \in \SSS_\Lambda^{BF}$, we pick $\sigma^\Delta \cong \sigma$ localized in $\Delta$ and a unitary $v_\Delta \colon \sigma^{\Delta} \to \sigma$.
The braiding is then defined by 
\[
\beta_{\pi, \sigma}
\coloneqq
(v_\Delta \otimes_{BF} \pi)(\pi \otimes_{BF} v_\Delta^*)
=
v_\Delta \pi(v_\Delta^*).
\]
By modifying the proofs in \S\ref{sec:braiding}, one can see that $\beta_{\pi, \sigma}$ is a braiding that is independent of the choices made.
\end{construction}

\begin{thm}
\label{thm:GF-BFBraidedEquivalence}
Let $\Lambda \in \cC$ such that $\Lambda \Cap \Omega = \emptyset$.
The map $\pi \mapsto \pi^{\aux}$, where $\pi^{\aux} \colon A^{\aux} \to B(H)$ is the unique extension of $\pi$ to $A^{\aux}$ guaranteed by isotony,
is a strict braided equivalence $\SSS_\Lambda^{GF} \to \SSS_\Lambda^{BF}$.
\end{thm}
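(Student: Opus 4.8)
The plan is to produce an explicit inverse to $\pi\mapsto\pi^{\aux}$ and then verify strict compatibility with the tensor products and braidings. Given a Buchholz--Fredenhagen sector $\rho\in\SSS^{BF}_\Lambda$, I would define a Gabbiani--Fr\"ohlich sector $\widehat{\rho}$ whose component at a cone $\Delta$ is the unique normal (and, by \ref{BF:NormalAbsorbing} and Lemma \ref{lem:EndoInjectiveIffAbs}, absorbing) extension of $\rho|_{A_\Delta\cap A^{\aux}}$ to $A_\Delta$; here the standing assumption $(A_\Delta\cap A^{\aux})''=A_\Delta$ for every $\Delta\in\cC$ guarantees such an extension is unique whenever it exists, and \ref{BF:NormalAbsorbing} supplies its existence. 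Isotony of $\widehat\rho$ follows by restricting these extensions and invoking uniqueness, absorption is \ref{lem:EndoInjectiveIffAbs}, locality is automatic by Lemma \ref{lem:IsotonyImpliesBSLocality}, and $\widehat\rho$ is localized at $\Lambda$ since $\rho|_{A_{\Lambda^c}\cap A^{\aux}}=\id$ forces $\widehat\rho_{\Lambda^c}=\id_{A_{\Lambda^c}}$ by uniqueness of the normal extension. The standing assumption then shows $\pi\mapsto\pi^{\aux}$ and $\rho\mapsto\widehat\rho$ are mutually inverse on objects: $\pi^{\aux}$ recovers each $\pi_\Delta$ as the normal extension of $\pi^{\aux}|_{A_\Delta\cap A^{\aux}}$, and $(\widehat\rho)^{\aux}=\rho$ because they agree on the norm-dense subalgebra $\bigcup_{\Delta\Cap\Omega=\emptyset}A_\Delta\subseteq A^{\aux}$.

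For the morphism level, a bounded operator $T$ intertwines $\pi,\sigma$ in $\SSS^{GF}_\Lambda$ iff it intertwines all $\pi_\Delta|_{A_\Delta\cap A^{\aux}}$ and $\sigma_\Delta|_{A_\Delta\cap A^{\aux}}$ (using normality of the $\pi_\Delta,\sigma_\Delta$ and the density assumption), iff it intertwines $\pi^{\aux},\sigma^{\aux}$ on $\bigcup_{\Delta\Cap\Omega=\emptyset}A_\Delta$, iff $T\in\Hom(\pi^{\aux}\to\sigma^{\aux})$; so the functor is the identity on hom-spaces, hence strictly fully faithful, and it is essentially surjective since it is bijective on objects. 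Separately I should record that $\pi^{\aux}$ genuinely lies in $\SSS^{BF}_\Lambda$: it is localized at $\Lambda$ by the displayed computation, and transportable because \ref{SSS:AbilityToLocalize} produces, for any cone $\Delta$, a GF sector localized at $\Delta$ unitarily equivalent to $\pi$ via some $u\in B(H)$, and the same $u$ intertwines $\pi^{\aux}$ with that sector's $\aux$-extension, which is localized at $\Delta$.

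Next, strict monoidality. One must check $(\pi\circ_\Lambda\sigma)^{\aux}=\pi^{\aux}\circ\sigma^{\aux}$ and $T\circ_\Lambda S=T\pi^{\aux}(S)$. Both sides of the first identity are normal, so it suffices to test on $A_\Sigma$ for cones $\Sigma$ disjoint from $\Omega$; choosing $\widetilde\Lambda\subseteq\Lambda$ with $\widetilde\Lambda^{+2s}$ a $\Sigma$-indicator and $\widetilde\Lambda^{+s}$ suitably small as in the bounded-spread fusion construction, and using that the localizing unitaries lie in cone algebras $\subseteq A^{\aux}$ (by \ref{BSSS:IntertwinerLocalizedWithSpread}) together with Remark \ref{rem:ChangeTheOutside}, the defining formula for $(\pi\circ_\Lambda\sigma)_\Sigma$ collapses to $\pi^{\aux}(\sigma^{\aux}(-))$; this composite is legal because $\sigma^{\aux}(A^{\aux})\subseteq A^{\aux}$ by \ref{BF:EndoOfAaux}. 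For morphisms, $T\circ_\Lambda S=T\pi_{\Lambda^{+s}}(S)$ with $S\in A^{\aux}$ by \ref{BF:IntertwinerLocalized}, and $\pi_{\Lambda^{+s}}(S)=\pi^{\aux}(S)$ after absorbing $\Lambda^{+s}$ into a cone disjoint from $\Omega$, so $T\circ_\Lambda S=T\otimes_{BF}S$. Strict unitality is immediate since both tensor units are $\{\id_{A_\Delta}\}$, resp.\ $\id_{A^{\aux}}$.

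The main obstacle is matching the two braidings, since the GF braiding of Construction \ref{constr:Braiding} is built from a splitting $(\Lambda_1,\Lambda_2)$ of $\Lambda$ that transports \emph{both} $\pi$ and $\sigma$ to disjoint sub-cones of $\Lambda$, whereas the BF braiding transports only $\sigma$ to a far cone $\Delta$ counterclockwise of $\Lambda$ and disjoint from $\Omega$. My approach is to observe that both are instances of the single recipe ``transport the two sectors to disjoint, standard-oriented localization regions, where the braiding is trivial by Lemma \ref{lem:LocalizedInDisjointCommute}, then conjugate back by the localizing unitaries.'' Using independence of the GF braiding on the choice of splitting (Lemma \ref{lem:MutuallyDisjointZigZagIndependence} and Theorem \ref{thm:MutuallyDisjointZigZagsExist}) and on the choice of localizing data (Lemma \ref{lem:IndependenceOfBraidingOnChoiceOfSectorFixedRS}), one reduces the computation of $\beta^{GF}_{\pi,\sigma}$ to the case $\pi$ localized at a cone $r$ clockwise of a cone $s$, with $r\leq s'$, both disjoint from $\Omega$ (possible since $\Lambda\Cap\Omega=\emptyset$), where $\beta^{GF}=1$; independence of the BF braiding on the choice of $\Delta$, together with naturality, reduces $\beta^{BF}_{\pi^{\aux},\sigma^{\aux}}$ to the same standard situation, where likewise $\beta^{BF}=v_\Delta\pi^{\aux}(v_\Delta^*)=1$ upon taking $\Delta=s$ and $v_\Delta=1$. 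Since the two braidings agree in this standard position and both are natural and monoidal (with the counterclockwise BF convention corresponding to ``$r$ clockwise of $s$''), they agree on all of $\SSS^{GF}_\Lambda$, and the equivalence $\pi\mapsto\pi^{\aux}$ is strictly braided. I expect the genuinely delicate point to be justifying this reduction carefully — in particular that transporting $\pi$ as well as $\sigma$ does not change $\beta^{BF}$, which I would extract from naturality of $\beta^{BF}$ under the intertwiners implementing the transport.
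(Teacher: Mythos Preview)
Your equivalence-of-categories and monoidality arguments are correct, though the monoidality step is harder than necessary. The paper bypasses the localizing unitaries entirely: given a cone $\Delta$ with $\Delta\Cap\Omega=\emptyset$, pick $\Sigma\in\cC$ with $\Lambda,\Delta\leq\Sigma$ and $\Sigma\Cap\Omega=\emptyset$; then $(\pi\circ_\Lambda\sigma)_\Sigma=\pi_{\Sigma^{+s}}\circ\sigma_\Sigma$ directly (Case~2 of the fusion construction, since $\Lambda\leq\Sigma$), and restricting to $A_\Delta\subseteq A_\Sigma$ yields $(\pi\circ_\Lambda\sigma)^{\aux}=\pi^{\aux}\circ\sigma^{\aux}$ on $A_\Delta$ in one line.

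Your braiding argument has a genuine gap. In Construction~\ref{Const:BFBraidedMonoidal} the cone $\Delta$ must satisfy $\Delta^{+s}\leq\Lambda^c$, so you may not set $\Delta=s\subseteq\Lambda$ and $v_\Delta=1$; the independence of $\beta^{BF}$ from the choice of $\Delta$ is only established for such \emph{external} $\Delta$. Your naturality reduction to showing $\beta^{BF}_{(\pi^r)^{\aux},(\sigma^s)^{\aux}}=1$ is the right idea, but to finish you must keep $\Delta$ outside $\Lambda$ and argue instead that the transporter $v_\Delta\colon(\sigma^s)^\Delta\to(\sigma^s)^{\aux}$ lies in $A_{\Gamma^{+s}}$ for some cone $\Gamma\supseteq s,\Delta$ disjoint from $r$ (this is where the orientation matters: with $\Delta$ counterclockwise of $\Lambda$ and $s$ counterclockwise of $r$ inside $\Lambda$, such a $\Gamma$ exists), whence $(\pi^r)^{\aux}(v_\Delta)=v_\Delta$ and $\beta^{BF}=1$.

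The paper avoids this detour. It simply observes that for $\Sigma\supseteq\Lambda,\Delta$ with $\Sigma\Cap\Omega=\emptyset$ one has
\[
\beta^{BF}_{\pi^{\aux},\sigma^{\aux}}
=
v_\Delta\,\pi^{\aux}(v_\Delta^*)
=
v_\Delta\,\pi_{\Sigma^{+s}}(v_\Delta^*),
\]
and recognizes the right-hand side as \emph{literally} the GF braiding formula in $\SSS^{GF}_\Sigma$ for the splitting $(\Lambda,\Delta)$ of $\Sigma$, with $u_\Lambda=1$ since $\pi$ is already localized in $\Lambda$. Proposition~\ref{prop:BraidedEquivalence} then identifies this with the braiding on $\SSS^{GF}_\Lambda$. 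This is both shorter and sidesteps the orientation bookkeeping you flagged as delicate.
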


\begin{proof}
We first show that the map $\pi \to \pi^{\aux}$ is an equivalence of categories.  
Since $(A_\Delta \cap A^{\aux})'' = A_\Delta$ for all $\Delta \in \cC$, $T \in B(H)$ intertwines $\pi, \sigma \in \SSS_\Lambda^{GF}$ if and only if $T$ intertwines $\pi^{\aux}, \sigma^{\aux}$.
Therefore, $\pi \mapsto \pi^{\aux}$ defines a fully faithful functor.  
This functor is essentially surjective by \ref{BF:NormalAbsorbing}.
Specifically, if $\rho \in \SSS_\Lambda^{BF}$, then by \ref{BF:NormalAbsorbing}, for every $\Delta \in \cC$, $\rho|_{A_\Delta \cap A^{\aux}}$ has a unique normal extension $\pi_\Delta$ to $A_\Delta$, which is absorbing.  
We then obtain $\pi \in \SSS_\Lambda^{GF}$, and $\pi^{\aux} = \rho$ by construction.

We now show that this equivalence is strict monoidal.
We first show that for $\pi, \sigma \in \SSS_\Lambda^{GF}$, $(\pi \circ_\Lambda \sigma)^{\aux} = \pi^{\aux} \circ \sigma^{\aux}$.
It suffices to show that for every $\Delta \in \cC$ with $\Delta \Cap \Omega = \emptyset$, $(\pi \circ_\Lambda \sigma)^{\aux}(x) = (\pi^{\aux} \circ \sigma^{\aux})(x)$ for every $x \in A_{\Delta}$.
Let $\Delta \in \cC$ with $\Delta \Cap \Omega = \emptyset$.
Then there exists a $\Sigma \in \cC$ with $\Sigma \Cap \Omega = \emptyset$ such that $\Lambda, \Delta \leq \Sigma$.  
Since $\Lambda \leq \Sigma$, $(\pi \circ_\Lambda \sigma)_{\Sigma} = \pi_{\Sigma^{+s}} \circ \sigma_{\Sigma}$.
In addition, since $\Delta \leq \Sigma$, we have that $A_{\Delta} \subseteq A_{\Sigma}$.  
Thus, for any $x \in A_{\Delta}$, we have that
\[
(\pi \circ_\Lambda \sigma)^{\aux}(x)
=
(\pi \circ_\Lambda \sigma)_{\Sigma}(x)
=
(\pi_{\Sigma^{+s}} \circ \sigma_{\Sigma})(x)
=
(\pi^{\aux} \circ \sigma^{\aux})(x).
\]
Furthermore, if $T \colon \pi \to \check\pi$ and $S \colon \sigma \to \check\sigma$ in $\SSS_\Lambda^{GF}$, we have that 
\[
T\circ_\Lambda S
=
T\pi_{\Lambda^{+s}}(S)
=
T\pi^{\aux}(S)
=
T \otimes_{BF} S,
\]
where the tensor product at the end is in the category $\SSS_\Lambda^{BF}$.
Therefore, the functor $\pi \mapsto \pi^{\aux}$ is a strict monoidal equivalence.

It remains to show that this equivalence is braided.
Let $\pi, \sigma \in \SSS_\Lambda^{GF}$.
Let $\Delta \in \cC$ be a cone such that $\Delta \Cap \Omega = \emptyset$ and such that $\Delta^{+s}$ is disjoint from $\Lambda$.
As before, we additionally assume that $\Delta$ is counterclockwise of $\Lambda$.  
We choose $\sigma^\Delta \cong \sigma$ localized in $\Delta$ and a unitary $v_\Delta \colon \sigma^{\Delta} \to \sigma$.
Also, since $\Delta \Cap \Omega = \Lambda \Cap \Omega = \emptyset$, there exists $\Sigma \in \cC$ with $\Lambda, \Delta \subset \Sigma$ such that $\Sigma \Cap \Omega = \emptyset$.
We then have that 
\[
\beta_{\pi^{\aux}, \sigma^{\aux}}
=
v_\Delta \pi^{\aux}(v_\Delta^*)
=
v_\Delta \pi_{\Sigma^{+s}}(v_\Delta^*).
\]
The expression on the right is exactly the bounded spread version of the braiding defined in Construction \ref{constr:Braiding} on $\SSS_\Sigma^{GF}$.
By Proposition \ref{prop:BraidedEquivalence}, this is the same as the braiding on $\SSS_\Lambda^{GF}$.
Therefore, the functor $\pi \mapsto \pi^{\aux}$ defines a braided equivalence.
\end{proof}

\subsection{Naaijkens superselection sectors}

Let $\Gamma$ be an infinite planar lattice, let $\fA \coloneqq \bigotimes_{j \in \Gamma} M_d(\bbC)$, and let $\fA_{\loc}$ be the finitely supported (local) operators in $\fA$.  
For $\Lambda \subseteq \Gamma$, we define $\fA_\Lambda \coloneqq \bigotimes_{j \in \Lambda} M_d(\bbC)$.
Let $\omega_0 \colon \fA \to \bbC$ be a pure state, and let $(\pi_0, \Omega, \cH_0)$ be its GNS representation.  
We assume that $\pi_0$ satisfies bounded spread Haag duality with respect to cones, that is, there exists a global $s \geq 0$ for any cone $\Lambda$, $\pi_0(\fA_{\Lambda^c})' \subseteq \pi_0(\fA_{\Lambda^{+s}})''$.  
For simplicity, we write $\cR_\Lambda \coloneqq \pi_0(\fA_\Lambda)''$ for a cone $\Lambda$.  
Since $\omega_0$ is a pure state, $\cR_\Lambda$ is a factor for any cone $\Lambda$.  
We additionally assume that $\cR_\Lambda$ is an infinite factor for every cone $\Lambda$.  
This is true for instance when $\omega_0$ is translation invariant, by a standard argument used in \cite[Thm.~5.1]{MR2804555}.  
However, there are other ways to ensure that $\cR_\Lambda$ is always infinite.
For example, $\cR_\Lambda$ is infinite if $\omega_0$ is a gapped ground state for a Hamiltonian with uniformly bounded finite range interactions \cite[Lem.~5.3]{MR4362722}.
In this case, the collection of von Neumann algebras $\cR_\Lambda$, together with the vacuum Hilbert space $\cH_0$, forms a bounded spread $\cC$-net of algebras.
(Note that the representation $\cR_\Lambda \hookrightarrow B(\cH_0)$ is absorbing by Lemma \ref{lem:absorbing}.)

\begin{defn}
A \emph{Naaijkens superselection sector} is a representation $\pi$ of $\fA$ satisfying that for all cones $\Lambda \in \cC$, 
\[
\pi|_{\fA_\Lambda}
\cong
\pi_0|_{\fA_\Lambda}.
\]
\end{defn}

It is well-known that for every $\Lambda \in \cC$ with $\Lambda \Cap \Omega = \emptyset$, the category of Naaijkens superselection sectors is equivalent to $\SSS_\Lambda^{BF}$ (see for instance \cite{MR2804555,MR3135456}).
Furthermore, the monoidal product and braiding that is defined for Naaijkens superselection sectors in \cite{MR2804555, MR3426207, MR4362722, 2306.13762} is exactly the monoidal product and braiding defined in Construction \ref{Const:BFBraidedMonoidal}.
Therefore, the following result is a corollary of Theorem \ref{thm:GF-BFBraidedEquivalence}.

\begin{cor}
Let $\Lambda \in \cC$ such that $\Lambda \Cap \Omega = \emptyset$.
Then $\SSS_\Lambda^{GF}$ is braided monoidally equivalent to the category of Naaijkens superselection sectors localized at a cone $\Lambda$.
\end{cor}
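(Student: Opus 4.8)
The plan is to obtain the stated equivalence by composing two braided monoidal equivalences. One of them is already in hand: Theorem~\ref{thm:GF-BFBraidedEquivalence} gives a strict braided equivalence $\SSS_\Lambda^{GF} \to \SSS_\Lambda^{BF}$ via $\pi \mapsto \pi^{\aux}$, where here the relevant bounded spread $\cC$-net is $(\cR, \cH_0)$ with $\cR_\Lambda = \pi_0(\fA_\Lambda)''$ and the auxiliary algebra is $A^{\aux} = \overline{\bigcup_{\Lambda \Cap \Omega = \emptyset} \cR_\Lambda}^{\|\cdot\|}$. (One first checks the standing hypothesis of \S\ref{subsec:BFBraidedMonoidal}, namely $(\cR_\Lambda \cap A^{\aux})'' = \cR_\Lambda$, which holds because $\cR_\Lambda$ is generated by the $\pi_0(\fA_B)$ for open bounded $B \subset \Lambda$, and each such $\fA_B$ lies in some $\cR_\Delta \subseteq A^{\aux}$ with $\Delta \Cap \Omega = \emptyset$.) So the real content is to assemble the (well-known) braided equivalence between $\SSS_\Lambda^{BF}$ and the category of Naaijkens superselection sectors localized at $\Lambda$, and to verify that it matches the braided monoidal structure of Construction~\ref{Const:BFBraidedMonoidal}.

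First I would recall the correspondence at the level of categories. Given a Naaijkens sector $\pi$, i.e.\ a representation of $\fA$ with $\pi|_{\fA_\Delta} \cong \pi_0|_{\fA_\Delta}$ for every cone $\Delta$, one transports it (using Haag duality for $\pi_0$, exactly as in \cite{MR2804555,MR3135456}) to a representation that agrees with $\pi_0$ on $\fA_{\Delta^c}$ for a cone $\Delta$ disjoint from $\Omega$; identifying the representation space with $\cH_0$ and restricting to $A^{\aux}$ yields a map $A^{\aux} \to B(\cH_0)$ that is localized in $\Delta$ and transportable, hence an object of $\SSS_\Delta^{BF}$, which one then transports to $\Lambda$. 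Conversely, given $\rho \in \SSS_\Lambda^{BF}$, fact~\ref{BF:NormalAbsorbing} gives for each cone $\Delta$ a unique normal extension of $\rho|_{\cR_\Delta \cap A^{\aux}}$ to $\cR_\Delta$, and composing with $\fA_\Delta \subseteq \cR_\Delta$ produces a representation of $\fA$ one checks is a Naaijkens sector. On morphisms both directions are the identity (an intertwiner is an intertwiner), so the functor is fully faithful; essential surjectivity is the transport argument just described. This is precisely the equivalence recorded in \cite{MR2804555,MR3135456}.

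Next I would check that this equivalence respects the braided monoidal structure. On the Naaijkens side the monoidal product is composition of the localized (normal, hence $A^{\aux}$-valued by \ref{BF:EndoOfAaux}) representatives and the braiding is $\beta_{\pi,\sigma} = v_\Delta\,\pi(v_\Delta^*)$ for $\sigma^\Delta \cong \sigma$ localized in a cone $\Delta$ disjoint from $\Lambda$ and counterclockwise of $\Lambda$ relative to $\Omega$; this is exactly the monoidal product and braiding of \cite{MR2804555,MR3426207,MR4362722,2306.13762}, and it is literally the formula of Construction~\ref{Const:BFBraidedMonoidal}. Hence the equivalence between Naaijkens sectors and $\SSS_\Lambda^{BF}$ is braided monoidal, and composing it with $\pi \mapsto \pi^{\aux}$ from Theorem~\ref{thm:GF-BFBraidedEquivalence} gives the desired braided monoidal equivalence $\SSS_\Lambda^{GF} \simeq \{\text{Naaijkens sectors localized at }\Lambda\}$.

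The step I expect to be the main obstacle is bookkeeping rather than new mathematics: pinning down conventions so that the two descriptions of the braiding coincide on the nose — the choice of forbidden direction $\Omega$, the counterclockwise orientation, and the role of the duality spread $s$ in \ref{BF:LocalizedPreservesAlgebra} and \ref{BF:IntertwinerLocalized} — and making sure the passage from the quasi-local $\rmC^*$-algebra $\fA$ to the von Neumann algebras $\cR_\Lambda$ and to $A^{\aux}$ preserves all of this (the appeal to Proposition~\ref{prop:BraidedEquivalence} to move freely between localizing cones is exactly what makes this comparison robust). Once the conventions are aligned, the corollary follows immediately by composition.
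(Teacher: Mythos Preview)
Your proposal is correct and follows essentially the same approach as the paper: compose the strict braided equivalence $\SSS_\Lambda^{GF}\to\SSS_\Lambda^{BF}$ of Theorem~\ref{thm:GF-BFBraidedEquivalence} with the well-known equivalence between $\SSS_\Lambda^{BF}$ and Naaijkens sectors (cited to \cite{MR2804555,MR3135456}), noting that the monoidal product and braiding on the Naaijkens side are literally those of Construction~\ref{Const:BFBraidedMonoidal}. If anything, you are more explicit than the paper, which simply states the equivalence as well-known and deduces the corollary; your verification of the standing hypothesis $(\cR_\Lambda\cap A^{\aux})''=\cR_\Lambda$ via generation by bounded regions is a useful addition that the paper leaves implicit.
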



\appendix
\section{Geometric axioms for posets of intervals and cones}

\subsection{Intervals on \texorpdfstring{$S^1$}{S1}}
\label{appendix:IntervalsSatisfyGAs}

We prove the geometric axioms for intervals on $S^1$.
\begin{defn}
An interval $I$ in $S^1$ is a pair of distinct points $x,y\in S^1$ and a choice of `interior'.
$$
\tikzmath{
\draw[thick] (0,0) circle (1cm);
\filldraw[red] (45:1) circle (.05cm);
\filldraw[red] (135:1) circle (.05cm);
\draw[thick,red] (45:1) arc(45:135:1cm) (135:1);
}
\qquad\text{or}\qquad
\tikzmath{
\draw[thick] (0,0) circle (1cm);
\filldraw[red] (45:1) circle (.05cm);
\filldraw[red] (135:1) circle (.05cm);
\draw[thick,red] (45:1) arc(45:-225:1cm) (135:1);
}
$$
\end{defn}
In this subsection, we take $\mathcal{P}$ to be the poset of intervals on the circle given by inclusion and with an involution given by complements. 
For distinct points $x,y\in S^1$, we write $[x,y]\in\mathcal{P}$ for the interval going clockwise from $x$ to $y$.

It is clear that intervals on $S^1$ satisfy \ref{geom:SelfDisjoint}--\ref{geom:splitting}.

\begin{prop}
    Intervals on the circle satisfy the zig-zag axiom \ref{geom:ZigZag}.
\end{prop}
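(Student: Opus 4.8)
The plan is to reduce the statement to a concrete case analysis on the circle. Fix intervals $p,q$ and two $q$-small $q$-indicators $\widetilde p,\widehat p\le p$. The key geometric observation is that being a $q$-indicator means $\widetilde p$ (resp. $\widehat p$) is contained in $q$ or in $q'$; being $q$-small means it fits inside some interval containing $q$ and inside some interval containing $q'$, which on the circle forces $\widetilde p$ to be a \emph{proper} sub-interval avoiding at least one endpoint of $q$ — equivalently, $\widetilde p$ is contained in an interval of the form $(a,b)$ where exactly one endpoint of $q$ lies in the closure. First I would record this: a $q$-small interval has length less than $|S^1|$ minus a bit, and more importantly $p\setminus \overline q$ and $p\cap q^{\circ}$ are each (possibly empty) unions of at most two open sub-arcs of $p$.

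Next I would split into cases according to whether $\widetilde p,\widehat p$ lie on the ``same side'' of $q$. \textbf{Case 1:} both $\widetilde p\le q$ (or both $\le q'$). Since $p$ is an interval and both $\widetilde p,\widehat p$ are sub-intervals of $p\cap q$ (an interval, because $p$ and $q$ are intervals and their intersection, when it meets $p$ in the relevant component, is a sub-arc of $p$), the interval $y_1:=p\cap q$ — or a slight enlargement of it still inside $p$ and still $q$-small — contains both, giving the one-step zig-zag $(\widetilde p, y_1, \widehat p)$ with $y_1\le q$ hence a $q$-indicator and $q$-small. \textbf{Case 2:} $\widetilde p\le q$ and $\widehat p\le q'$ (the asymmetric case). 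Here $p$ must meet \emph{both} $q$ and $q'$, so $p$ contains an endpoint of $q$ in its interior; since $p$ is $q$-small-ish we can arrange that $p$ contains exactly one endpoint $x$ of $q$ in its interior (if $p$ contains both endpoints, $p\setminus q$ has two components and I would first shrink $\widehat p$ into one of them using Remark~\ref{rem:SmallerqSmallqIndicators}, reducing to the one-endpoint situation). Now pick a small interval $z_2\le p$ straddling $x$: one half in $q$, one half in $q'$; then $z_2$ is a $q$-indicator? — no, $z_2$ is not, so instead I take $z_2$ to be a tiny sub-interval of $p$ on, say, the $q$-side of $x$, and take $y_1:=\widetilde p\vee z_2$ (the smallest sub-interval of $p$ containing both, which lies in $p\cap q$ after possibly shrinking $z_2$) and $y_2:=z_2'\cap p$-type enlargement containing $z_2$ and $\widehat p$ inside $p\cap q'$. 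Concretely the zig-zag is $(\widetilde p,\,y_1,\,z_2,\,y_2,\,\widehat p)$ with $y_1\le q$, $z_2\le q$ (or $q'$), $y_2\le q'$, and each of $y_1,y_2$ chosen as a sub-arc of $p$ staying on one side of $x$, hence $q$-small.

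The main obstacle I anticipate is \emph{bookkeeping of the ``enlargement'' steps}: each intermediate $y_j$ must simultaneously be a sub-interval of $p$, contain the two required neighbors, lie entirely in $q$ or entirely in $q'$ (to be a $q$-indicator), and be $q$-small. On $S^1$ one cannot always take $y_j=p\cap q$ since that may fail to be $q$-small if $p$ wraps most of the way around; the fix is that $\widetilde p$ and its neighbor already sit in a single component of $p\cap q$ or $p\cap q'$, and a single component of $p\setminus\{x,\text{endpoints of }q\}$ is automatically $q$-small because it misses at least one endpoint of $q$ on each side. I would state this ``single-component is $q$-small'' sub-lemma explicitly and then the case analysis becomes routine. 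A final remark would note that the whole argument only used that $p,q$ are arcs and the complement operation is $I\mapsto S^1\setminus\overline I$, so no separate treatment of the two choices of ``interior'' is needed.
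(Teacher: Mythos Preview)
Your overall strategy---split into ``same side'' versus ``opposite sides'' of $q$---matches the paper's, but you have misread the axiom, and this misreading breaks your Case~2 construction. In \ref{geom:ZigZag} only the $z_j$ are required to be $q$-indicators; the $y_j$ are required only to be $q$-small. You state explicitly that ``each intermediate $y_j$ must \dots\ lie entirely in $q$ or entirely in $q'$ (to be a $q$-indicator),'' and this forces you into an impossible configuration: with $z_2\le q$ near the boundary point $x$ and $\widehat p\le q'$, your $y_2$ must contain both and therefore straddles $x$, so it cannot lie in $q'$ as you claim.

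The paper exploits exactly the freedom you denied yourself. Its crossing step takes a small interval $r\le p$ that \emph{straddles} the boundary point of $q$; this $r$ is $q$-small (it misses the other endpoint of $q$) but is not a $q$-indicator, and it serves as the middle $y$ in a zig-zag $(\widetilde p,\widetilde p,z_1,r,z_2,\widehat p,\widehat p)$ where $z_1\le\widetilde p\cap r$ and $z_2\le\widehat p\cap r$ are the required $q$-indicators. Within a single component of $p\cap q$ (or $p\cap q'$) the paper connects by overlapping intervals rather than by taking the whole component as a single $y_1$; this sidesteps your correctly-identified obstacle that $p\cap q$ itself may fail to be $q$-small. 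Once you drop the spurious $q$-indicator condition on the $y_j$, your plan essentially becomes the paper's proof.
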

\begin{proof}
    Consider two $p,q\in\cP$. For $q$-small $q$-indicators $\widetilde{p},\widehat{p}\leq p$, if $r\in \widetilde{p}\cap\widehat{p}$, then $r$ is a $q$-small $q$-indicator. Therefore, $\widetilde{p},\widehat{p}$ are connected by a zig-zag as in $\ref{geom:ZigZag}$ given by $(\widetilde{p},\widetilde{p},r,\widehat{p},\widehat{p})$. Any two $q$-small $q$-indicator intervals in a connected component of the intersection of $p$ and $q$ (or $p$ and $q'$) may be connected in this way. 

    Now suppose there is a boundary point of $q$ which ends in the interior of $p$.  
    Then we can find small enough $q$-small $q$-indicator intervals $\widetilde{p},\widehat{p}\leq p$ on either side of this boundary which end very close to this boundary. Then there is a $q$-small $r\leq p$ straddling the boundary of $q$ with $z_1\in r\cap \widetilde{p}$ and $z_2\in r\cap \widehat{p}$. The following diagram demonstrates this geometry.
$$
\tikzmath{
\draw[thick] (-5,0) -- (5,0);
\draw[thick, blue] (2,-.1) --node[below]{$\scriptstyle q$} (-5,-.1);
\filldraw[blue] (2,-.1) circle (.05cm);
\draw[thick, red] (-2,.1) --node[above]{$\scriptstyle p$} (0,.1) -- (5,.1);
\filldraw[red] (-2,.1) circle (.05cm);
\draw[thick, purple] (0,.2) --node[above]{$\scriptstyle \widetilde{p}$} (1,.2) -- (1.9,.2);
\filldraw[purple] (0,.2) circle (.05cm);
\filldraw[purple] (1.9,.2) circle (.05cm);
\draw[thick, orange] (2.1,.2) -- (3,.2) --node[above]{$\scriptstyle \widehat{p}$} (4,.2);
\filldraw[orange] (2.1,.2) circle (.05cm);
\filldraw[orange] (4,.2) circle (.05cm);
\draw[thick, cyan] (1,.3) --node[above]{$\scriptstyle r$} (3,.3);
\filldraw[cyan] (1,.3) circle (.05cm);
\filldraw[cyan] (3,.3) circle (.05cm);
\draw[thick] (1.2,.4) --node[above]{$\scriptstyle z_1$} (1.8,.4);
\filldraw[] (1.2,.4) circle (.05cm);
\filldraw[] (1.8,.4) circle (.05cm);
\draw[thick] (2.2,.4) --node[above]{$\scriptstyle z_2$} (2.8,.4);
\filldraw[] (2.2,.4) circle (.05cm);
\filldraw[] (2.8,.4) circle (.05cm);
}
$$
    It is clear that  $(\widetilde{p},\widetilde{p},z_1,r,z_2,\widetilde{p},\widetilde{p})$ is a zig-zag as in \ref{geom:ZigZag}.

    These zig-zags within and between the connected components of $p$ intersect $q$ are enough to establish zig-zags as in \ref{geom:ZigZag} between all $q$-small $q$-indicators in $p$.
\end{proof}

\begin{rem}
Every interval on $S^1$ satisfies the reflection axiom \ref{geom:Braid}, i.e., every splitting admits a reflection.
This can be seen from the diagram below.
\[
\tikzmath{
\draw[thick, black] (3,0) --node[below]{$\scriptstyle p$} (-3,0) -- (3,0);
\filldraw[black] (-3,0) circle (.05cm);
\filldraw[black] (3,0) circle (.05cm);
\draw[thick, gray] (-5,.1) --node[below]{$\scriptstyle c$} (-5,0.1) -- (-.5,0.1);
\filldraw[gray] (-5,0.1) circle (.05cm);
\filldraw[gray] (-.5,0.1) circle 
(.05cm);
\draw[thick, gray] (5,.1) --node[below]{$\scriptstyle d$} (5,0.1) -- (.5,0.1);
\filldraw[gray] (5,0.1) circle (.05cm);
\filldraw[gray] (.5,0.1) circle (.05cm);
\draw[thick, green] (-3.5,.2) --node[above]{$\scriptstyle a$} (-4.5,0.2) -- (-3.5,0.2);
\filldraw[green] (-4.5,0.2) circle (.05cm);
\filldraw[green] (-3.5,0.2) circle (.05cm);
\draw[thick, orange] (3.5,.2) --node[above]{$\scriptstyle b$} (4.5,0.2) -- (3.5,0.2);
\filldraw[orange] (4.5,0.2) circle (.05cm);
\filldraw[orange] (3.5,0.2) circle (.05cm);
\draw[thick, blue] (-1,.2) --node[above]{$\scriptstyle r$} (-2.5,0.2) -- (-1,0.2);
\filldraw[blue] (-1,0.2) circle (.05cm);
\filldraw[blue] (-2.5,0.2) circle (.05cm);
\filldraw[blue] (-1,0.2) circle (.05cm);
\filldraw[blue] (-2.5,0.2) circle (.05cm);
\draw[thick, red] (1.,.2) --node[above]{$\scriptstyle s$} (2.5,0.2) -- (1,0.2);
\filldraw[red] (1,0.2) circle (.05cm);
\filldraw[red] (2.5,0.2) circle (.05cm);
}
\]
\end{rem}

\subsection{Cones in \texorpdfstring{$\bbR^2$}{R2}}
\label{appendix:ConesSatisfyGAs}

\begin{defn}
A cone $\Lambda$ in $\bbR^2$ is a point $\lambda_0\in \bbR^2$ called the \emph{apex} together with two distinct rays from $\lambda_0$ to $\infty$ and a choice of `interior'. 
$$
\tikzmath{
\filldraw (0,0) node[below]{$\scriptstyle z$} circle (.05cm);
\fill[fill=white, rounded corners = 5pt] (-2,-1.414) rectangle (2,1.414);
\filldraw[draw=red,thick, fill=red!30] (135:2cm) -- (0,0) -- (45:2cm);
\filldraw[red] (0,0) node[below]{$\scriptstyle \lambda_0$} circle (.05cm);
}
\qquad\text{or}\qquad
\tikzmath{
\filldraw[red] (0,0) node[below]{$\scriptstyle z$} circle (.05cm);
\fill[fill=red!30, rounded corners = 5pt] (-2,-1.414) rectangle (2,1.414);
\filldraw[draw=red, thick, fill=white] (135:2cm) -- (0,0) -- (45:2cm);
\filldraw[red] (0,0) node[below]{$\scriptstyle \lambda_0$} circle (.05cm);
}
$$
\end{defn}

\begin{defn}
To each cone $\Lambda$  in $\bbR^2$, we can associate a canonical interval by
$I(\Lambda):=(\Lambda-\lambda_0)\cap S^1$.
Observe that $I$ is a poset map which preserves the order 2 involutions.
\end{defn}

\begin{ex}
For all cones $\Lambda$ and $s>0$, $I(\Lambda^{+s})=I(\Lambda)$.
\end{ex}

\begin{facts}
We have the following facts regarding cones and their canonical intervals whose proofs are left to the reader.
Let $\Lambda,\Delta$ be cones in $\bbR^2$.
\begin{enumerate}[label=($\Lambda$\arabic*)]
\item
\label{cones:IntersectionEquivalence}
The intersection $\Lambda\cap \Delta$ contains a cone if and only if $I(\Lambda)\cap I(\Delta)$ contains an interval.
That is, $\Lambda\Cap \Delta\neq\emptyset$ if and only if $I(\Lambda)\cap I(\Delta)\neq \emptyset$.
\item 
\label{cones:IntersectionExistence}
For every interval $J\subset I(\Lambda)\cap I(\Delta)$, there is a cone $\Gamma \subset \Lambda\cap \Delta$ with $I(\Gamma)=J$.
\item 
\label{cones:UnionEquivalence}
The union $\Lambda\cup \Delta$ is contained in a cone if and only if $I(\Lambda)\cup I(\Delta)\neq S^1$.
\item
\label{cones:UnionExistence}
For any interval $J\subset S^1\setminus (I(\Lambda)\cup I(\Delta))$,
there is a cone $\Sigma$ containing $\Lambda\cup \Delta$ with $I(\Sigma^c)=J$.
\end{enumerate}
\end{facts}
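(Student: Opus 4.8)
The plan is to set up a dictionary between a cone and the pair (apex, arc of directions) and then reduce all four clauses to elementary statements about arcs on $S^1$. I would write $\Lambda = \lambda_0 + C_\Lambda$, where $C_\Lambda := \{t\theta : t \ge 0,\ \theta \in I(\Lambda)\}$ is the directional cone based at the origin, so that $I(\Lambda) = C_\Lambda \cap S^1$, the complement $\Lambda^c$ is the cone with the same apex $\lambda_0$ and $I(\Lambda^c) = I(\Lambda)^c$, and $C_\Lambda$ is closed and convex precisely when $|I(\Lambda)| \le \pi$. The ingredient I would record first is a ``ray lemma'': if $\theta \in \Int(I(\Lambda))$ then for every $x_0$ the ray $\{x_0 + t\theta : t \ge 0\}$ eventually lies in $\Int(\Lambda)$ (the direction of $x_0 + t\theta - \lambda_0$ tends to $\theta$), while conversely any ray contained in $\Lambda$ has direction in the closed arc $I(\Lambda)$. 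After that, each clause is bookkeeping about arcs together with one choice of apex made ``far out''.

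For \ref{cones:IntersectionEquivalence}: if $I(\Lambda) \cap I(\Delta)$ contains an interval, pick $\theta$ in its interior, use the ray lemma in direction $\theta$ to produce a point $p \in \Int(\Lambda) \cap \Int(\Delta)$, fix a thin arc $J' \ni \theta$ with $\overline{J'} \subseteq I(\Lambda) \cap I(\Delta)$, and set $\Gamma := p + C_{J'}$: if $p$ is far enough out then $p - \lambda_0$ and $p - \delta_0$ both have direction in $J'$, so every point of $\Gamma$ has direction in $J' \subseteq I(\Lambda)$ as seen from $\lambda_0$ (and likewise for $\Delta$), whence $\Gamma \subseteq \Lambda \cap \Delta$. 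Conversely, the directions of any cone $\Gamma \subseteq \Lambda \cap \Delta$ form an interval inside $I(\Lambda) \cap I(\Delta)$ by the ray lemma; the degenerate case where $I(\Lambda) \cap I(\Delta)$ is a single point admits no common sub-cone either, which accounts for the reformulation. For \ref{cones:IntersectionExistence} the same construction works with $J$ in place of $J'$, except that the apex $\gamma_0$ must be taken far out along the \emph{bisector} of $J$: then the direction, seen from $\lambda_0$, of a point $\gamma_0 + w$ with $w \in C_J$ is a short-arc interpolation between the bisector direction and a point of $J$, and since the bisector is within distance $|J|/2 < \pi$ of every point of $J$ this interpolation stays inside $J \subseteq I(\Lambda)$; hence $\gamma_0 + C_J \subseteq \Lambda \cap \Delta$ with $I(\gamma_0 + C_J) = J$.

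Clauses \ref{cones:UnionEquivalence} and \ref{cones:UnionExistence} I would deduce from the first two by complementation, using $\Lambda \cup \Delta \subseteq \Sigma \Leftrightarrow \Sigma^c \subseteq \Lambda^c \cap \Delta^c$ and $I(\Lambda^c) = I(\Lambda)^c$. Thus $\Lambda \cup \Delta$ lies in a cone iff $\Lambda^c \cap \Delta^c$ contains a cone iff, by \ref{cones:IntersectionEquivalence}, $(I(\Lambda) \cup I(\Delta))^c$ contains an interval iff $I(\Lambda) \cup I(\Delta) \ne S^1$, which is \ref{cones:UnionEquivalence}. And given an interval $J \subseteq S^1 \setminus (I(\Lambda) \cup I(\Delta)) = I(\Lambda^c) \cap I(\Delta^c)$, \ref{cones:IntersectionExistence} yields a cone $\Gamma \subseteq \Lambda^c \cap \Delta^c$ with $I(\Gamma) = J$; then $\Sigma := \Gamma^c$ satisfies $\Lambda \cup \Delta \subseteq \Sigma$ and $I(\Sigma^c) = J$, which is \ref{cones:UnionExistence}.

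The only genuine care-points, and the places a careless picture argument could fail, are the open/closed boundary convention for cones (fixed once in the definition, so that identities like $I(\Sigma^c) = J$ hold literally rather than up to endpoints) and reflex cones: when $|I(\Lambda)| > \pi$ one cannot invoke convexity of $\Lambda$ itself, so in \ref{cones:IntersectionEquivalence} one must keep $\Gamma$ thin and run the ``nearby directions add up nearby'' estimate, and in \ref{cones:IntersectionExistence} the bisector choice is exactly what makes the argument survive a reflex $I(\Lambda)$ or $I(\Delta)$; this matters because in \ref{cones:UnionExistence} the complements $\Lambda^c, \Delta^c$ fed into \ref{cones:IntersectionExistence} are typically reflex. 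Everything else is routine plane geometry, which is why the paper leaves it to the reader.
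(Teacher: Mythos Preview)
The paper explicitly leaves these facts to the reader, so there is no proof in the paper to compare against. Your approach---the dictionary $\Lambda \leftrightarrow (\lambda_0, I(\Lambda))$, the ray lemma, the apex-on-the-bisector construction for \ref{cones:IntersectionExistence}, and the complementation reduction for \ref{cones:UnionEquivalence} and \ref{cones:UnionExistence}---is correct and is precisely the routine plane-geometry verification the authors have in mind. Your flagging of the reflex-cone case is apt: the short-arc interpolation argument (the bisector of $J$ is within angular distance $|J|/2<\pi$ of every point of $J$, so positive combinations stay in $J$) is exactly what is needed when the target cone is non-convex, and this is unavoidable since the complements fed into \ref{cones:IntersectionExistence} in the last step are typically reflex.
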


\noindent
Clearly cones in $\bbR^2$ satisfy \ref{geom:SelfDisjoint}--\ref{geom:splitting}.
We now sketch a proof of the zig-zag axiom \ref{geom:ZigZag}.

For $r>0$, define $C_r$ to be the circle of radius $r$ about $0$ and the interval
$$
I_r(\Lambda):= r^{-1}\cdot (\Lambda \cap C_r)\subset S^1.
$$
Observe that
$I(\Lambda) = \lim_{r\to \infty} I_r(\Lambda)$
in the sense that the endpoints of $I(\Lambda)$ on $S^1$ are the limit of the endpoints of the $I_r(\Lambda)\subset S^1$.
We leave the verification of the following straightforward lemma to the reader.

\begin{lem}
\label{lem:ZoomOut}
For every $r,\varepsilon>0$, there is an $R>r>0$ such that for all cones $\Lambda$ with apex $\lambda\in \overline{B_r(0)}$, 
the endpoints of $I_s(\Lambda)$ and $I(\Lambda)$ are within $\varepsilon$ whenever $s\geq R$.
\qed
\end{lem}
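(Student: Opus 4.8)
The plan is to reduce the statement to a one-parameter trigonometric estimate applied to each of the two bounding rays of $\Lambda$. Recall that the two endpoints of $I(\Lambda)\subset S^1$ are exactly the unit directions $u_1,u_2$ of the two rays of $\Lambda$, while the two endpoints of $I_s(\Lambda)$ are $p_1/s$ and $p_2/s$, where $p_i$ is the point at which the $i$-th ray of $\Lambda$ meets the circle $C_s$. So it suffices to prove: given $r,\varepsilon>0$ there is $R>r$ such that for every $s\ge R$, every apex $\lambda\in\overline{B_r(0)}$, and every direction $u\in S^1$, the forward intersection point $p$ of the ray $\{\lambda+tu:t\ge 0\}$ with $C_s$ satisfies $\angle(p/|p|,u)<\varepsilon$, where $\angle$ denotes angular distance on $S^1$.

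First I would observe that for $s>r\ge|\lambda|$ the apex $\lambda$ lies in the open disk $B_s(0)$, so the ray, starting inside the disk and running to infinity, crosses $C_s=\partial B_s(0)$ exactly once and transversally; call this point $p$, and note that the crossing is outward, i.e.\ $p\cdot u>0$. Decompose $\lambda=(\lambda\cdot u)u+f$ with $f\perp u$, so that $|f|\le|\lambda|\le r$ and $p=f+\tau u$ for some $\tau\in\bbR$. From $|p|^2=|f|^2+\tau^2=s^2$ and $p\cdot u=\tau>0$ we get $\tau=\sqrt{s^2-|f|^2}$, hence
\[
\cos\angle(p/|p|,u)=\frac{p\cdot u}{|p|}=\frac{\tau}{s}=\sqrt{1-\frac{|f|^2}{s^2}},
\qquad\text{so}\qquad
\angle(p/|p|,u)=\arcsin\!\Big(\frac{|f|}{s}\Big)\le\arcsin\!\Big(\frac{r}{s}\Big).
\]
Now choose $R>\max\{r,\ r/\sin\varepsilon\}$ if $\varepsilon<\pi/2$, and $R=r+1$ otherwise (then $\arcsin(r/s)<\pi/2\le\varepsilon$ automatically): for every $s\ge R$ we get $\angle(p/|p|,u)<\varepsilon$, uniformly in $\lambda$ and $u$. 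Applying this to $u=u_1$ and $u=u_2$ shows each endpoint of $I_s(\Lambda)$ lies within $\varepsilon$ of the corresponding endpoint of $I(\Lambda)$.

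Since the statement concerns only the endpoints, this already finishes the argument; I would add one sentence noting that the pairing of endpoints is unambiguous for large $s$: the same estimate applied to a ray from $\lambda$ pointing into the interior sector of $\Lambda$ produces a point of $I_s(\Lambda)$ converging to an interior point of $I(\Lambda)$, which together with the endpoint convergence identifies $I_s(\Lambda)$ as the arc lying on the same side as $I(\Lambda)$.

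There is no serious obstacle — the lemma is elementary, as the paper indicates. The two points needing a little care are the existence and uniqueness of the forward intersection point (immediate once $s>r$) and the uniformity of the bound $\arcsin(r/s)$ over all apexes in $\overline{B_r(0)}$ and all ray directions, which holds because $r$ is the only geometric quantity entering the estimate.
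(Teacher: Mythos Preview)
Your argument is correct. The paper does not supply a proof of this lemma at all---it is marked with a bare \qed and the preceding sentence reads ``We leave the verification of the following straightforward lemma to the reader''---so there is nothing to compare against; your trigonometric estimate $\angle(p/|p|,u)\le\arcsin(r/s)$, uniform over apexes in $\overline{B_r(0)}$ and directions $u$, is exactly the kind of elementary verification the authors had in mind.
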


We now give our strategy to prove \ref{geom:ZigZag} for cones from the proof for intervals.
\begin{enumerate}[label=\underline{Step \arabic*:}]
\item 
Suppose $\Lambda,\Delta$ are two cones and $\widetilde{\Lambda},\widehat{\Lambda}$ are two $\Delta$-small $\Delta$-indicators in $\Lambda$.
Let $r>0$ so that the apexes of  $\widetilde{\Lambda},\widehat{\Lambda},\Lambda,\Delta$ lie in $\overline{B_r(0)}$.
Choose $\varepsilon>0$ sufficiently small and $R>r>0$ sufficiently large so that Lemma \ref{lem:ZoomOut} holds for $\widetilde{\Lambda},\widehat{\Lambda},\Lambda,\Delta$.
\item 
Choose an $S>R$ sufficiently large and carefully choose a zig-zag as in \ref{geom:ZigZag} for the intervals $I_{S}(\widetilde{\Lambda}),I_S(\widehat{\Lambda}),I_S(\Lambda),I_S(\Delta)$ on $C_S$.
Call this zig-zag $(I_{S}(\widetilde{\Lambda})=J_1,K_1,J_2,K_2,\dots,J_{n+1}=I_S(\widehat{\Lambda}))$.
\item
Carefully pick apexes for the $J_j$ and $K_k$ on $C_R$ so that the cones they produce form the desired zig-zag for $\widetilde{\Lambda},\widehat{\Lambda}$ as in \ref{geom:ZigZag}.
\end{enumerate}

The $\varepsilon>0$ chosen in Step 1 should be sufficiently small relative to the lengths of the intervals $I(\widetilde{\Lambda}),I(\widehat{\Lambda}),I(\Lambda),I(\Delta)$ and
their complements, and all the subsequent pairwise intersections.
The adverb `carefully' in Steps 2 and 3 means to take into account this $\varepsilon$-tolerance.

\bibliographystyle{alpha}
{\footnotesize{
\bibliography{../../../bibliography/bibliography}
}}
\end{document}